\theoremstyle{plain}
\newtheorem{thm}{Theorem}[section]
\newtheorem{cor}{Corollary}[section]
\newtheorem{lem}{Lemma}[section]
\newtheorem{prop}{Proposition}[section]
\newtheorem{claim}{Claim}[section]
\theoremstyle{remark}
\theoremstyle{definition}
\newtheorem{defn}{Definition}[section]
\newtheorem{rem}{Remark}[section]
\newtheorem{asmpt}{Basic Assumption}[section]
\title{The symplectic structure\\ of a toric conic transform}
\author{Roberto Paoletti\footnote{\noindent{\bf Address:}
Dipartimento di Matematica e Applicazioni, Universit\`a degli Studi
di Milano-Bicocca, Via R. Cozzi 55, 20126 Milano,
Italy; {\bf e-mail}: roberto.paoletti@unimib.it }}
\date{}
\begin{document}
\maketitle

\begin{abstract}
Suppose that a compact $r$-dimensional torus $T^r$ acts in a holomorphic and Hamiltonian
manner on polarized complex $d$-dimensional projective manifold $M$, 
with nowhere vanishing moment map $\Phi$.
Assuming that $\Phi$ is transverse to the ray through a given weight $\boldsymbol{\nu}$, associated
to these data there is a complex $(d-r+1)$-dimensional polarized projective orbifold
$\widehat{M}_{\boldsymbol{\nu}}$ (referred to as the $\boldsymbol{\nu}$-th \textit{conic transform} of $M$). Namely, $\widehat{M}_{\boldsymbol{\nu}}$ is a suitable
quotient of the inverse image of the ray in the unit circle bundle of the polarization of $M$.
With the aim to clarify the geometric significance of this construction, we consider the
special case where $M$ is toric, and show that $\widehat{M}_{\boldsymbol{\nu}}$ is itself a 
K\"{a}hler toric
obifold, whose moment polytope is obtained from the one of $M$ by 
a certain \lq transform\rq\, operation (depending on $\Phi$ and $\boldsymbol{\nu}$).
\end{abstract}

\section{Introduction}

Consider a $d$-dimensional connected projective manifold $M$, with complex structure $J$, 
and let $(A,h)$ be a
positive holomorphic line bundle on $M$. 
Thus $A$ is ample, $h$ is a Hermitian metric on it, and 
the unique covariant derivative $\nabla$ on 
$A$ compatible with both the metric and the complex structure has
curvature $\Theta=-2\,\imath\,\omega$, with $\omega\in \Omega^2(M)$ a K\"{a}hler form
on $(M,J)$.

We shall denote by $A^\vee$ the dual line bundle to $A$, 
and by $X\subset A^\vee$ the unit circle bundle, with bundle projection
$\pi:X\rightarrow M$. 
Then $\nabla$ corresponds to a connection 1-form $\alpha\in \Omega^1(X)$, which is a contact form on $X$ and satisfies 
\begin{equation}
\label{eqn:contactalpha}
\mathrm{d}\alpha=2\,\pi^*(\omega),\quad .
\end{equation}

Let $T^r$ be an $r$-dimensional compact torus, 
with Lie algebra $\mathfrak{t}^r$ and coalgebra ${\mathfrak{t}^r}^\vee$.
Furthermore, let $\mu^M:T^r\times M\rightarrow M$ a holomorphic
and Hamiltonian action of $T^r$ on $(M,2\,\omega,J)$, with moment map
$\Phi:M\rightarrow {\mathfrak{t}^r}^\vee$ (see e.g. \cite{gs-stp} for general
background on Hamiltonian actions and moment maps). 

Any $\boldsymbol{\xi}\in {\mathfrak{t}^r}$ thus determines a Hamiltonian vector field $\boldsymbol{\xi}_M$ on $M$. 
As is well-known \cite{ko}, $\Phi$ determines a natural lift of 
$\boldsymbol{\xi}_M$ 
to a contact vector
field $\boldsymbol{\xi}_X=\boldsymbol{\xi}_X^\Phi$ on $(X,\alpha)$, given by
\begin{equation}
\label{eqn:contact lift}
\boldsymbol{\xi}_X:=\boldsymbol{\xi}_M^\sharp-\langle\Phi,\boldsymbol{\xi}\rangle\,
\partial_\theta;
\end{equation}
here $V^\sharp$ is the horizontal lift to $X$, with respect to $\alpha$, of a vector field $V$ on $M$, and $\partial_\theta$ is the generator of the structure 
$S^1$-action on $X$, given by counterclockwise fiber rotation.
The flow of $\boldsymbol{\xi}_X$ preserves the contact and CR structures of $X$,
and the flows of $\boldsymbol{\xi}_X$ and $\boldsymbol{\xi}_X'$ commute, for any
$\boldsymbol{\xi},\,\boldsymbol{\xi}'\in {\mathfrak{t}^r}$.

We shall make the stronger hypothesis that
$\mu^M$ lifts to a metric preserving line bundle action on $A$, 
and the induced action $\mu^X:T^r\times X\rightarrow X$ has the correspondence
$\boldsymbol{\xi}\mapsto \boldsymbol{\xi}_X$ in (\ref{eqn:contact lift}) as its differential.
We shall say that $\mu^X$ is the (contact and CR) lift of the homolomorphic Hamiltonian action
$(\mu^M,\Phi)$.

For example, when $r=1$ and $\mu^M$ is trivial, $\Phi:M\rightarrow {\mathfrak{t}^1}^\vee$
is constant; choosing $\Phi=\imath$ in (\ref{eqn:contact lift}) yields the circle action $\rho^X$ 
generated by $-\partial_\theta$, thus
given by clockwise fiber rotation.
If $\partial_\theta^{S^1}$ is the standard generator of the Lie algebra
of $S^1$, $\partial_\theta=(\partial_\theta^{S^1})_X$ in (\ref{eqn:contact lift}) is the vector field on $X$ generating
the structure $S^1$-action given by counter-clockise fiber rotation, while $-\partial_\theta$
is the vector field generating $\rho^X$ (clockwise fiber rotation); we parametrize $S^1$ by
$\theta\mapsto e^{\imath\,\theta}$.

Let us fix a non-zero weight $\boldsymbol{\nu}\in {\mathfrak{t}^r}^\vee$.
The results below rest on the following Basic Assumption
on $(\Phi,\boldsymbol{\nu})$, henceforth referred to a
BA \ref{asmpt:basic}.

\begin{asmpt} \label{asmpt:basic}
The following holds:

\begin{enumerate}
\item $\boldsymbol{\nu}$ is primitive (or coprime);
\item $\Phi$ is nowhere vanishing, that is, $\mathbf{0}\not\in \Phi(M)$;
\item $\Phi$ is transverse to the ray $\mathbb{R}_+\cdot \boldsymbol{\nu}$.
\end{enumerate}

\end{asmpt}

Under these circumstances, a polarized K\"{a}hler orbifold
$(\widehat{M}_{\boldsymbol{\nu}},\widehat{\omega}_{\boldsymbol{\nu}},\widehat{J}_{\boldsymbol{\nu}})$
can be constructed from the previous data,
by taking a suitable quotient by a locally free action of 
$T^r$ of a locus in $X$ defined
by $(\Phi,\boldsymbol{\nu})$; here $\widehat{\omega}_{\boldsymbol{\nu}}$ and $\widehat{J}_{\boldsymbol{\nu}}$ 
denote the (orbifold) symplectic and complex structures on $\widehat{M}_{\boldsymbol{\nu}}$ \cite{pao-JGP}.
We refer to \cite{pao-JGP} 
(where $\widehat{M}_{\boldsymbol{\nu}}$ is denoted $N_{\boldsymbol{\nu}}$ and 
$\widehat{\omega}_{\boldsymbol{\nu}}$ by $\eta_{\boldsymbol{\nu}}$)
for a discussion of the relevance 
of this geometric construction in geometric quantization; it generalizes the 
one of weighted projective spaces as quotients of an odd-dimensional sphere.
Here we aim to clarify the relation between the symplectic
structures of $M$ and $\widehat{M}_{\boldsymbol{\nu}}$ in the toric setting: as we shall see, 
assuming
that $M$ is a toric manifold, 
$(\widehat{M}_{\boldsymbol{\nu}},\,2\,\widehat{\omega}_{\boldsymbol{\nu}})$ 
is a toric symplectic orbifold, and its marked moment polytope
$\widehat{\Delta}_{\boldsymbol{\nu}}$ can be explicitly 
recovered from the moment polytope $\Delta$ of $M$
(by \cite{lt} toric symplectic orbifolds are classified by marked
convex rational simple polytopes).

Before stating the result precisely, let us briefly recall the geometric construction in
point, referring to \cite{pao-IJM} and \cite{pao-JGP} for details.
Let us set 
\begin{equation}
\label{eqn:Mnudefn}
M_{\boldsymbol{\nu}}:=\Phi^{-1}(\mathbb{R}_+\,\boldsymbol{\nu})\subseteq M,\quad
X_{\boldsymbol{\nu}}:=\pi^{-1}(M_{\boldsymbol{\nu}})\subseteq X.
\end{equation}
Then, assuming BA \ref{asmpt:basic}, the following holds:
\begin{enumerate}
\item $M_{\boldsymbol{\nu}}\subseteq M$ is a connected and compact (real) submanifold, 
of codimension $r-1$;
\item $\mu^X$ is locally free on $X_{\boldsymbol{\nu}}$.
\end{enumerate}
We may and will assume without loss that $\mu^X$ is generically free
on $X$ (and $X_{\boldsymbol{\nu}}$).
Then the quotient 
\begin{equation}
\label{eqn:defn Mnu}
\widehat{M}_{\boldsymbol{\nu}}:=X_{\boldsymbol{\nu}}/T^r
\end{equation}
(denoted $N_{\boldsymbol{\nu}}$
in \cite{pao-JGP}) is naturally a $(d-r+1)$-dimensional complex orbifold,
and comes equipped with a K\"{a}hler structure $(\widehat{M}_{\boldsymbol{\nu}},\,
\widehat{\omega}_{\boldsymbol{\nu}},
\widehat{J}_{\boldsymbol{\nu}})$ induced by $(M,\omega,J)$;
here $T^r$ acts on $X_{\boldsymbol{\nu}}$ by the restriction of $\mu^X$
to $X_{\boldsymbol{\nu}}$.
We shall call $\widehat{M}_{\boldsymbol{\nu}}$ the $\boldsymbol{\nu}$-th \textit{conic transform} of $M$;
it depends on $\mu^X$, hence on $\Phi$.

We are interested in clarifying the geometry of $\widehat{M}_{\boldsymbol{\nu}}$ in the toric setting,
thus assuming that $M$ be toric, with structure action
$\gamma^M:T^d\times M\rightarrow M$ and moment polytope $\Delta\subseteq \mathfrak{t}^\vee$.

Let us briefly recall the Delzant construction of $M$ from $\Delta$; obviously
with no pretense of exhaustiveness, we refer
to \cite{del}, \cite{g-toric}, and \cite{lt} for more complete discussions.
Since $M$ is smooth, $\Delta$ is a Delzant polytope \cite{g-toric}.
We shall denote by $\mathcal{F}(\Delta)$ the collection 
of all faces of $\Delta$, by $\mathcal{F}_l(\Delta)\subseteq \mathcal{F}(\Delta)$ 
the subset of codimension-$l$ faces, and specifically
by $\mathcal{G}(\Delta)=\mathcal{F}_1(\Delta)$ the subset of facets.
If $\mathcal{G}(\Delta)=\{F_1,\ldots,F_k\}$, then 
for every $j=1,\ldots,k$ there exists unique 
\begin{equation}
\label{eqn:upsilonjlambdaj}
\boldsymbol{\upsilon}_j\in L(T^d):=\ker\big(\exp_{T^d}(2\pi\cdot )\big)\subset \mathfrak{t}^d
=\mathrm{Lie}(T^d),\quad \lambda_j\in \mathbb{R},
\end{equation}
with $\boldsymbol{\upsilon}_j$ primitive, such that
\begin{equation}
\label{eqn:Delta inters}
\Delta=\bigcap_{j=1}^k\{\ell \in \mathfrak{t}^\vee\,:\,\ell (\boldsymbol{\upsilon}_j)\ge \lambda_j\},
\end{equation}
and for every $j=1,\ldots,k$ the relative interior of $F_j$ (open facet) is
\begin{equation}
\label{eqn:jth face}
F_j^0=\{\ell \in \mathfrak{t}^\vee\,:\,\ell (\boldsymbol{\upsilon}_j)= \lambda_j,\,
\ell (\boldsymbol{\upsilon}_{j'})> \lambda_{j'}\,\forall\,j'\neq j\}.
\end{equation}

Let us set
\begin{equation}
\label{eqn:standardsimp}
\omega_0:=\frac{\imath}{2}\,\sum_{j=1}^k\,\mathrm{d}z_j\wedge \mathrm{d}\overline{z}_j=
\sum_{j=1}^k\,\mathrm{d}x_j\wedge \mathrm{d}y_j.
\end{equation}
Then $M$ can be regarded as symplectic reduction of $(\mathbb{C}^k,2\,\omega_0)$ under the standard action of $T^k$, as follows. 
Denote the general element of $T^k$ by 
$e^{\imath\,\boldsymbol{\vartheta}}=\left(e^{\imath\,\vartheta_1},\ldots,e^{\imath\,\vartheta_k}\right)$;
for $e^{\imath\,\boldsymbol{\vartheta}}\in T^k$ and 
$\mathbf{z}=(z_j)_{j=1}^k\in \mathbb{C}^k$, let us set
$e^{\imath\,\boldsymbol{\vartheta}}\bullet \mathbf{z}:=\left( e^{\imath\,\vartheta_j}\,z_j \right)_{j=1}^k$.
For any choice of $\boldsymbol{\lambda}=(\lambda_j)_{j=1}^k\in \mathbb{R}^k$, the action 
\begin{equation}
\label{eqn:GammaCk}
\Gamma^{\mathbb{C}^k}:\left( e^{\imath\,\boldsymbol{\vartheta}},\mathbf{z}  \right)\in T^k\times \mathbb{C}^k
\mapsto e^{-\imath\,\boldsymbol{\vartheta}}\bullet \mathbf{z}\in \mathbb{C}^k
\end{equation}
is then Hamiltonian on $\left(\mathbb{C}^k,2\,\omega_0 \right)$, with moment map
\begin{equation}
\label{eqn:psilambda}
\Psi_{\boldsymbol{\lambda}}:\mathbf{z}\in \mathbb{C}^k\mapsto \imath\,\sum_{j=1}^k\left(|z_j|^2+\lambda_j\right)\,\mathbf{e}_j^*,
\end{equation}
where $(\mathbf{e}_j)_{j=1}^k$ is the canonical basis of $\mathbb{R}^k$ and $(\mathbf{e}_j^*)_{j=1}^k$ is the dual basis.
The linear map $\mathbb{R}^k\rightarrow \mathbb{R}^d$ such that
$\mathbf{e}_j\mapsto \mathbf{u}_j$ induces a short exact sequence of tori
$0\rightarrow N\rightarrow T^k\rightarrow T^d\rightarrow 0$;
hence $\Gamma$ restricts to a Hamiltonian action of $N$ 
on $\left(\mathbb{C}^k,2\,\omega_0 \right)$, with a moment map
$\Psi^N_{\boldsymbol{\lambda}}$ naturally induced from $\Psi_{\boldsymbol{\lambda}}$. 
Given that $\Delta$ is Delzant, $N$ acts freely on 
$Z_{\Delta}:={\Psi^N_{\boldsymbol{\lambda}}}^{-1}(\mathbf{0})$;
then $M=Z_{\Delta}/N$, with its symplectic structure
$2\,\omega$,
is the Marsden-Weinstein reduction of $\left(\mathbb{C}^k,2\,\omega_0 \right)$ for the action of $N$.
By the arguments of \cite{gs-gq}, the standard complex structure 
$J_0$ of $\mathbb{C}^k$ descends to
a compatible complex structure $J$ on $M$, whence 
$(M,\omega,J)$ is a K\"{a}hler manifold.

Furthermore, $\Gamma$ descends to a holomorphic and Hamiltonian action of $T^d=T^k/N$ on 
$(M,2\,\omega,J)$, $\gamma^M:T^d\times M\rightarrow M$; the 
moment map $\Psi:M\rightarrow {\mathfrak{t}^d}^\vee$
is obtained by descending to the quotient the restriction
$$
\left.\Psi_{\boldsymbol{\lambda}}\right|_Z:Z_{\Delta}\rightarrow \mathfrak{n}^0\cong {\mathfrak{t}^d}^\vee.
$$

In addition, if $\boldsymbol{\lambda}\in \mathbb{Z}^k$ this
construction can be extended by the arguments of \cite{gs-gq} 
so as to obtain an induced toric positive line bundle
$(A,h)$ on $M$, with curvature $\Theta=-2\,\imath\,\omega$ (\S \label{sctn:prel}); that $(A,h)$ is
toric means that $\gamma^M$ lifts to a metric preserving line bundle action of $T^d$ on
$A$. Hence by restriction we obtain a contact CR action $\gamma^X:T^d\times X\rightarrow X$
lifting $\gamma^M$, where
$X\subset A^\vee$ is the unit circle bundle.

In addition, we suppose given an effective holomorphic and Hamiltonian action 
$\mu^M:T^r\times M\rightarrow M$ of an $r$-dimensional
compact torus $T^r$ on $M$, with moment map $\Phi:M\rightarrow {\mathfrak{t}^r}^\vee$
satisfying BA \ref{asmpt:basic} for a certain $\boldsymbol{\nu}\in {\mathfrak{t}^r}^\vee$, 
and commuting with $\gamma^M$. 
Thus $\mu^M$ factors through an injective group homomorphism
$T^r\rightarrow T^k$, hence we may assume wihout loss of generality
that $T^r\leqslant T^d$ and that $\mu^M$ is the retriction of
$\gamma^M$ to $T^r$; 
therefore, letting $\iota:\mathfrak{t}^r\hookrightarrow
\mathfrak{t}^d$ be the Lie algebra inclusion, 
\begin{equation}
\label{eqn:Trpsi}
\Phi=\iota^t\circ \Psi+\boldsymbol{\delta}
\end{equation}
for some constant 
$\boldsymbol{\delta}\in {\mathfrak{t}^r}^\vee$. Equivalently, given $\tilde{\boldsymbol{\delta}}\in 
{\mathfrak{t}^d}^\vee$ such that $\boldsymbol{\delta}=\iota^t( \tilde{\boldsymbol{\delta}} )$,
\begin{equation}
\label{eqn:Trpsi1}
\Phi=\iota^t\circ (\Psi_{\tilde{\boldsymbol{\delta}}}),\qquad\text{where}\qquad
\Psi_{\tilde{\boldsymbol{\delta}}}:=\Psi+\tilde{\boldsymbol{\delta}}.
\end{equation}

Let us assume that $(\mu^M,\Phi)$ lifts to $\mu^X:T^r\times X\rightarrow X$ according to the previous procedure. While $\mu^M$ is the restriction of $\gamma^M$ to $T^r$, 
$\mu^X$ is the restriction of $\gamma^X$ only if $\boldsymbol{\delta}=\mathbf{0}$
in (\ref{eqn:Trpsi}).

If $\mu^X$ exists, we can consider the
conic transform $\widehat{M}_{\boldsymbol{\nu}}$ with respect to $\mu^X$; as mentioned,
$(\widehat{M}_{\boldsymbol{\nu}}, 2\,\widehat{\omega}_{\boldsymbol{\nu}})$ 
turns out to be a symplectic toric orbifold. 
Furthermore, its associated marked convex rational simple polytope 
$(\widehat{\Delta}_{\boldsymbol{\nu}},\mathbf{s}_{\boldsymbol{\nu}})$
is obtained by
applying a suitable \lq transform\rq\, to $\Delta$ (depending on $\boldsymbol{\nu}$).

Since the situation is at its simplest when $r=1$,
we shall describe this case first. 
Thus $\mu^M:T^1\times M\rightarrow M$ is a Hamiltonian action
on $(M,2\,\omega)$, 
with a nowhere vanishing moment map $\Phi:M\rightarrow{\mathfrak{t}^1}^\vee$;
the primitive integral weight $\boldsymbol{\nu}\in {\mathfrak{t}^1}^\vee$ is uniquely determined by the condition
that $M_{\boldsymbol{\nu}}\neq \emptyset$.
Then $M_{\boldsymbol{\nu}}=M$, $X_{\boldsymbol{\nu}}=X$, $\mu^X$ is locally free, 
and 
$\widehat{M}_{\boldsymbol{\nu}}=X/T^1$ (the quotient is with respect to $\mu^X$).


Let us choose a complementary torus $T^{d-1}_c$ to $T^1$ in $T^d$, that is,
$T^d\cong T^{d-1}_c\times T^1$. If $\mathfrak{t}^{d-1}_c\leqslant
\mathfrak{t}^d$
is the Lie algebra of $T^{d-1}_c$, the corresponding lattices $L(T^{d-1}_c)\subset \mathfrak{t}^{d-1}_c$ and 
$L(T^1)\subset \mathfrak{t}^1$ are complementary in $L(T^d)$ (see
\S \ref{sctn:proofT1} on how $\widehat{\Delta}_{\boldsymbol{\nu}}$ depends 
on $T^{d-1}_c$). 

Let $\widetilde{\boldsymbol{\nu}}\in L(T^1)$
be the unique primitive lattice vector such that $\boldsymbol{\nu}
\left(\widetilde{\boldsymbol{\nu}}\right)>0$; since the weight lattice is the dual
lattice to $L(T)$, primitivity implies $\boldsymbol{\nu}
\left(\widetilde{\boldsymbol{\nu}}\right)=1$, that is,
$\boldsymbol{\nu}=\widetilde{\boldsymbol{\nu}}^*\in L(T^1)^\vee$ is the dual vector to 
$\widetilde{\boldsymbol{\nu}}$. Then $\boldsymbol{\delta}=\delta\,\boldsymbol{\nu}\in {\mathfrak{t}^1}^\vee$, where $\delta=\boldsymbol{\delta}(\widetilde{\boldsymbol{\nu}})
\in \mathbb{Z}$
(notation as in (\ref{eqn:Trpsi})).


With $\Delta$ and $\boldsymbol{\upsilon}_j$ as in (\ref{eqn:Delta inters}), for each $j=1,\ldots,k$ there are unique
$\boldsymbol{\upsilon}_j'\in \mathfrak{t}_c^{d-1}$ and $\rho_{j}\in \mathbb{Z}$ such that
\begin{equation}
\label{eqn:upsilonj decmp}
\boldsymbol{\upsilon}_j=\boldsymbol{\upsilon}_j'+\rho_{j}\tilde{\boldsymbol{\nu}}.
\end{equation}
For every $j=1,\ldots,k$ let us define 
\begin{equation}
\label{eqn:hatupsilonj}
\widehat{\boldsymbol{\upsilon}}_j:=\boldsymbol{\upsilon}_j'-(\lambda_j+\rho_{j}\,\delta)\,\,\tilde{\boldsymbol{\nu}}
\end{equation}
and 
\begin{equation}
\label{eqn:Deltanu}
\widehat{\Delta}_{\boldsymbol{\nu}}:=
\bigcap_{j=1}^k\{\ell \in \mathfrak{t}^\vee\,:\,\ell \big(\widehat{\boldsymbol{\upsilon}}_j\big)\ge -\rho_j\}.
\end{equation}
%
Thus $\widehat{\Delta}_{\boldsymbol{\nu}}$ is obtained from $\Delta$ 
by replacing each pair
$(\rho_j,\lambda_j)$ by the pair $(-(\lambda_j+\rho_j\,\delta),-\rho_j)$.

We shall see that $(\widehat{M}_{\boldsymbol{\nu}},\,2\,\widehat{\omega}_{\boldsymbol{\nu}})$ is a toric symplectic orbifold, and that $\widehat{\Delta}_{\boldsymbol{\nu}}$ is its moment polytope.
To complete the combinatorial description of $(\widehat{M}_{\boldsymbol{\nu}},\,2\,\widehat{\omega}_{\boldsymbol{\nu}})$ following \cite{lt},
we need to specify the corresponding marking of $\widehat{\Delta}_{\boldsymbol{\nu}}$, 
that is, the assignment to each of its facets $\widehat{F}_j$ of an appropriate integer $s_j\ge 1$.
We shall denote the marking by $\mathbf{s}_{\boldsymbol{\nu}}=(s_j)_{j=1}^k\in \mathbb{N}^k$, 
and the marked polytope by the pair $(\widehat{\Delta}_{\boldsymbol{\nu}},\,\mathbf{s}_{\boldsymbol{\nu}})$.

We premise a further piece of notation. 
Given a rank-$r$ integral lattice $L\subset V$ in a real vector space, and a basis
$(\ell_1,\ldots,\ell_r)$ of $L$, if $\ell\in L$ we shall denote by $(l)$ the greatest common divisor of
the coefficients of $\ell$ in the given basis, that is, 
\begin{equation}
\label{eqn:gcd lattice}
(\ell):=\mathrm{G.C.D.}(\lambda_1,\ldots,\lambda_r)\quad \text{if}\quad
 \ell=\sum_{j=1}^r \lambda_j\,\ell_j.
\end{equation}
The definition is well-posed, since $(\ell)$ is independent of the choice 
of a basis of $L$.
Furthermore, the following holds:
\begin{enumerate}
\item $\ell$ is primitive in $L$ if and only if $(\ell)=1$;
\item if $T$ is a (real) torus and $\boldsymbol{\xi}\neq \mathbf{0}\in L=L(T)$, then
$e^{\vartheta\,\boldsymbol{\xi}}=1\in T$ if and only if
$e^{\imath\,\vartheta}$ is a $(\boldsymbol{\xi})$-th root of unity.
\end{enumerate}




Let us define $\mathbf{s}_{\boldsymbol{\nu}}=(s_j)\in \mathbb{N}^k$ by setting 
$$
s_j:=\left(\widehat{\boldsymbol{\upsilon}}_j\right)\qquad
(j=1,\ldots,k).
$$

\begin{thm}
\label{thm:case r=1}
Under the above assumptions, thus with $r=1$, $\big(\widehat{M}_{\boldsymbol{\nu}},\,2\,\widehat{\omega}_{\boldsymbol{\nu}}    \big)$
is the symplectic toric orbifold with associated marked polytope 
$\big(\widehat{\Delta}_{\boldsymbol{\nu}},\,\mathbf{s}_{\boldsymbol{\nu}}\big)$.

\end{thm}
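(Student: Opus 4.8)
The plan is to exhibit $\widehat{M}_{\boldsymbol{\nu}}=X_{\boldsymbol{\nu}}/T^1=X/T^1$ directly as a symplectic reduction of $\mathbb{C}^k$ by a suitable torus action, and then read off its Delzant/Lerman--Tolman data. Since $r=1$, we have $M_{\boldsymbol{\nu}}=M$ and $X_{\boldsymbol{\nu}}=X$, so no transversality locus needs to be carved out inside $M$. The key structural observation is that $X\subset A^\vee$ is itself obtained from $\mathbb{C}^{k}$ by reduction: concretely, the total space of $A^\vee$ sits inside $\mathbb{C}^{k+1}$ (adding the fiber coordinate of the polarization), and $X$ is the reduction of a sphere bundle. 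More efficiently, I would work at the level of $\mathbb{C}^k$ with the combined torus $N\times T^1$ (where $N$ is the Delzant kernel defining $M$, and $T^1\leqslant T^d$ is the acting circle lifted to $X$), so that $\widehat{M}_{\boldsymbol{\nu}}$ appears as the reduction of $(\mathbb{C}^k,2\,\omega_0)$ by a rank-$(k-d+1)$ subtorus of $T^k$.

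\emph{First} I would make precise which subtorus of $T^k$ does the reduction. The manifold $M$ is $Z_\Delta/N$ where $N=\ker(T^k\to T^d)$; the circle $T^1\leqslant T^d$ lifts, via $\gamma^X$ and the shift by $\boldsymbol{\delta}$ encoded in $\tilde{\boldsymbol\delta}$, to a contact action on $X$ whose generator I must express as an element of $\mathfrak t^k$. Using $\widetilde{\boldsymbol\nu}\in L(T^1)$ and the decomposition $\boldsymbol{\upsilon}_j=\boldsymbol{\upsilon}_j'+\rho_j\widetilde{\boldsymbol\nu}$ from (\ref{eqn:upsilonj decmp}), the preimage in $T^k$ of the circle $T^1$ is generated by the vector with $j$-th component $\rho_j$; the shift by $\delta$ (recall $\boldsymbol\delta=\delta\,\boldsymbol\nu$) modifies the level at which we reduce. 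The subtorus $\widehat N\leqslant T^k$ effecting the reduction to $\widehat{M}_{\boldsymbol\nu}$ is the one generated by $N$ together with this lifted circle, equivalently the kernel of the map $T^k\to T^{d-1}_c$ sending $\mathbf e_j\mapsto \boldsymbol{\upsilon}_j'$. \emph{Then} the moment-map computation identifies $\widehat{M}_{\boldsymbol\nu}$ as ${\Psi^{\widehat N}_{\boldsymbol\lambda}}^{-1}(\mathbf 0)/\widehat N$, and the residual $T^{d-1}_c$-action makes it toric of the correct dimension $(d-1)$ (real dimension $2(d-1)$).

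\emph{Next}, to extract the polytope I would compute the image of the residual moment map. The facet inequalities of a reduced toric space are governed by the images $\boldsymbol{\upsilon}_j'\in\mathfrak t^{d-1}_c$ of the standard generators together with the reduction levels. The shift $\widehat{\boldsymbol{\upsilon}}_j=\boldsymbol{\upsilon}_j'-(\lambda_j+\rho_j\delta)\widetilde{\boldsymbol\nu}$ in (\ref{eqn:hatupsilonj}) is exactly what one obtains by tracking the fiber ($\widetilde{\boldsymbol\nu}$) direction through the reduction: the $\lambda_j$ came from the original level (\ref{eqn:psilambda}), and the $\rho_j\delta$ term is the contribution of the moment-map shift $\tilde{\boldsymbol\delta}$ along the $j$-th coordinate. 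The resulting inequalities $\ell(\widehat{\boldsymbol{\upsilon}}_j)\ge -\rho_j$ cut out precisely $\widehat\Delta_{\boldsymbol\nu}$ of (\ref{eqn:Deltanu}). \emph{Finally}, for the marking I would invoke the Lerman--Tolman classification \cite{lt}: the orbifold structure transverse to the facet $\widehat F_j$ is determined by how the stabilizer acts, and since the reduction is by $\widehat N$ acting with the weight vector $\widehat{\boldsymbol{\upsilon}}_j$, the generic isotropy along $\widehat F_j$ is the cyclic group of order $(\widehat{\boldsymbol{\upsilon}}_j)$; this is exactly $s_j$, using property (2) of the invariant $(\cdot)$ defined in (\ref{eqn:gcd lattice}).

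\emph{The hard part} will be the bookkeeping of the two twists simultaneously: the passage from $M$ to $X$ (adding the polarization/fiber direction and the contact lift (\ref{eqn:contact lift})), and the shift $\boldsymbol\delta$ relating $\mu^X$ to the restriction of $\gamma^X$. One must verify that the generator of the lifted $T^1$-action on $X$, written in $\mathfrak t^k$, really is $(\rho_j)_j$ up to the fiber rotation $\partial_\theta$, and that the level shift produces precisely the combination $\lambda_j+\rho_j\delta$ rather than some other linear form. I expect this to require carefully composing the Delzant reduction with the circle-bundle reduction and checking signs, particularly because $\Gamma^{\mathbb C^k}$ in (\ref{eqn:GammaCk}) uses the inverse action $e^{-\imath\boldsymbol\vartheta}\bullet\mathbf z$ and $\rho^X$ is clockwise rotation. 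Once the generator and level are pinned down, identifying $\widehat\Delta_{\boldsymbol\nu}$ and the markings $s_j=(\widehat{\boldsymbol\upsilon}_j)$ should follow from the standard reduction formulas for toric orbifolds, so the genuine content is the correct identification of the reducing torus and its moment level.
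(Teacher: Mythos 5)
There is a genuine gap, and it is fatal to the route you chose: you have mis-identified the space. The conic transform $\widehat{M}_{\boldsymbol{\nu}}=X/T^1$ has \emph{complex dimension $d$} (real dimension $2d$), as it is the quotient of the $(2d+1)$-dimensional circle bundle $X$ by a locally free circle action; it is \emph{not} the Marsden--Weinstein reduction of $M$ (or of $\mathbb{C}^k$) by $T^1$, which is what your construction produces. Reducing $(\mathbb{C}^k,2\,\omega_0)$ by the rank-$(k-d+1)$ torus $\widehat{N}=\ker\big(T^k\to T^{d-1}_c,\ \mathbf{e}_j\mapsto\boldsymbol{\upsilon}_j'\big)$ yields a space of real dimension $2k-2(k-d+1)=2(d-1)$, exactly as you state (``toric of the correct dimension $(d-1)$''), but this is one complex dimension too small. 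The dimension that goes missing is precisely the fiber direction of $X$, which survives in $X/T^1$ because $\Phi^{\tilde{\boldsymbol{\nu}}}>0$ makes the $T^1$-orbits transverse to the fibers of $\pi$; it descends to the residual $S^1$-action $\rho^{\widehat{M}_{\boldsymbol{\nu}}}$, which is the second factor of the acting torus $\widehat{T}^d=T^{d-1}_c\times S^1$ in (\ref{eqn:hatTd}). Your framework cannot even express the answer: a $(d-1)$-dimensional reduction by $\widehat{N}$ has moment polytope in ${\mathfrak{t}^{d-1}_c}^\vee$ with normals $\boldsymbol{\upsilon}_j'$, whereas $\widehat{\Delta}_{\boldsymbol{\nu}}$ in (\ref{eqn:Deltanu}) lives in the $d$-dimensional coalgebra and its normals $\widehat{\boldsymbol{\upsilon}}_j$ from (\ref{eqn:hatupsilonj}) carry the essential components $-(\lambda_j+\rho_j\,\delta)$ along $\tilde{\boldsymbol{\nu}}$, as does the moment map $\hat{\Xi}=(\Psi'/\Phi^{\tilde{\boldsymbol{\nu}}},\,\imath/\Phi^{\tilde{\boldsymbol{\nu}}})$ of Proposition \ref{prop:hamiltonian on Mhat}, whose second entry records the fiber direction.

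A related structural problem is that the lifted circle does not sit inside $T^k$ at all: by (\ref{eqn:momentGAMMA_Y}) the lift of the torus action to the circle bundle acts nontrivially on the fiber coordinate $w$ through the weights $\boldsymbol{\lambda}$ (and the shift $\delta$), so ``$N$ together with the lifted circle'' is a subtorus of $T^k\times S^1$, not of $T^k$. Your first instinct was the right one: keep the fiber coordinate and work in $\mathbb{C}^{k+1}=\mathbb{C}^k\times\mathbb{C}$, reducing by the rank-$(k-d+1)$ torus generated by the $\Gamma^{L^\vee}_{\boldsymbol{\lambda}}$-lift of $N$ and the lift of $T^1$ (this has the correct corank: $2(k+1)-2(k-d+1)=2d$), with the level chosen so that $w\neq 0$ on the zero locus — this is the same mechanism by which weighted projective spaces arise from $\mathbb{C}^{n+1}$, and it is consistent with the paper's identification $X/T^1\cong A^\vee_0/\mathbb{T}^1$ in (\ref{eqn:AXcomplisomo}). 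Carried out honestly, that would be a genuinely different (and arguably more computational) proof than the paper's, which instead works intrinsically on $X$: it rescales the connection form to $\widehat{\alpha}_{\boldsymbol{\nu}}=\alpha/\Phi^{\tilde{\boldsymbol{\nu}}}$, shows the action $\beta^{\widehat{M}_{\boldsymbol{\nu}}}$ of $\widehat{T}^d$ is Hamiltonian and toric, and reads off normals and markings from stabilizer computations on $X$. But the ``more efficient'' version you settled on discards exactly the direction that distinguishes the conic transform from an ordinary symplectic reduction, so the proposal as written does not prove the theorem.
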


The following consequence generalizes to conic transforms a well-known property of weighted projective spaces
\cite{kaw}.

\begin{cor}
\label{cor:equalcohomology/Q}
Under the previous assumptions (thus with $r=1$), 
$$
H^l(M,\mathbb{Q})\cong H^l\big(\widehat{M}_{\boldsymbol{\nu}},\,\mathbb{Q} \big)
\qquad (l=0,1,\ldots).
$$
\end{cor}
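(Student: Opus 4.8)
The plan is to exploit that, for $r=1$, the manifold $M$ and the orbifold $\widehat{M}_{\boldsymbol{\nu}}$ are two quotients of one and the same space $X$ by locally free circle actions, over bases of equal dimension, and then to compare their Gysin sequences. Indeed here $X_{\boldsymbol{\nu}}=X$, so that $M=X/S^1$ under the structure action generated by $\partial_\theta$, whereas $\widehat{M}_{\boldsymbol{\nu}}=X/T^1$ under $\mu^X$; moreover $\dim_{\mathbb{C}}\widehat{M}_{\boldsymbol{\nu}}=d-r+1=d=\dim_{\mathbb{C}}M$. The structure projection $\pi:X\to M$ is a principal $S^1$-bundle whose rational Euler class is a positive multiple of $[\omega]$, by (\ref{eqn:contactalpha}). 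Dually, since $\mu^X$ is locally free under BA \ref{asmpt:basic}, the projection $X=X_{\boldsymbol{\nu}}\to\widehat{M}_{\boldsymbol{\nu}}$ is a Seifert $S^1$-orbibundle, and the descent of $\alpha$ identifies its rational Euler class with a positive multiple of the orbifold K\"ahler class $[\widehat{\omega}_{\boldsymbol{\nu}}]$. Thus, on each base $B\in\{M,\widehat{M}_{\boldsymbol{\nu}}\}$, the Euler class of the pertinent fibration is a K\"ahler class.

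Next I would run the Gysin sequence with $\mathbb{Q}$-coefficients (valid for Seifert orbibundles) for each fibration. Writing $L$ for cup product with the Euler class on $B$, the long exact sequence splits into short exact sequences
\[
0\longrightarrow \operatorname{coker}\!\big(H^{l-2}(B;\mathbb{Q})\xrightarrow{\,L\,}H^{l}(B;\mathbb{Q})\big)\longrightarrow H^{l}(X;\mathbb{Q})\longrightarrow \ker\!\big(H^{l-1}(B;\mathbb{Q})\xrightarrow{\,L\,}H^{l+1}(B;\mathbb{Q})\big)\longrightarrow 0.
\]
Since the Euler class is K\"ahler and $\dim_{\mathbb{C}}B=d$, the Hard Lefschetz theorem (classical on the smooth projective $M$, and valid on the compact K\"ahler orbifold $\widehat{M}_{\boldsymbol{\nu}}$ by Baily's Hodge theory) makes $L\colon H^{m}(B)\to H^{m+2}(B)$ injective for $m\le d-1$ and surjective for $m\ge d-1$. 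Hence for every $l\le d$ the kernel term vanishes and $b_l(X)=b_l(B)-b_{l-2}(B)$. These relations recover the lower-half Betti numbers of the base from those of $X$ via $b_l(B)=\sum_{i\ge 0}b_{l-2i}(X)$ for $l\le d$, and rational Poincar\'e duality on $B$ then fixes the remaining $b_l(B)$ for $l>d$. In particular the entire Betti sequence of $B$ is determined by $\{b_l(X)\}_{l\le d}$ alone.

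Because $X$ is literally the same total space in both fibrations and the two bases share the complex dimension $d$, they are forced to have identical Betti numbers, giving $H^l(M;\mathbb{Q})\cong H^l(\widehat{M}_{\boldsymbol{\nu}};\mathbb{Q})$ for every $l$. For $M=\mathbb{P}^d$ (so $X=S^{2d+1}$ and $\widehat{M}_{\boldsymbol{\nu}}$ a weighted projective space) this specializes exactly to the fact that a weighted projective space is a rational cohomology projective space, recovering \cite{kaw}. The main obstacle, and the point deserving the most care, is the verification that $X\to\widehat{M}_{\boldsymbol{\nu}}$ is genuinely a Lefschetz-type circle orbibundle---that its rational Euler class really is the K\"ahler class $[\widehat{\omega}_{\boldsymbol{\nu}}]$, and that Hard Lefschetz and the Gysin sequence are available in the orbifold category; once these standard but nontrivial inputs are secured, the comparison is automatic.

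As an alternative I would instead argue combinatorially, showing directly that $\Delta$ and $\widehat{\Delta}_{\boldsymbol{\nu}}$ are combinatorially equivalent. Passing to homogenized facet data $(\boldsymbol{\upsilon}_j,\lambda_j)$ and using the splitting $\mathfrak{t}^d=\mathfrak{t}^{d-1}_c\oplus\langle\tilde{\boldsymbol{\nu}}\rangle$, the prescription (\ref{eqn:hatupsilonj}) acts as the identity on the $\mathfrak{t}^{d-1}_c$-part and as the integral matrix $\left(\begin{smallmatrix}-\delta&1\\1&0\end{smallmatrix}\right)$, of determinant $-1$, on the $(\tilde{\boldsymbol{\nu}}$-coefficient, supporting value$)$-plane; hence the two moment cones over $\Delta$ and $\widehat{\Delta}_{\boldsymbol{\nu}}$ are $GL_{d+1}(\mathbb{Z})$-equivalent, so their face lattices are isomorphic via $F_j\leftrightarrow\widehat{F}_j$. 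One would then invoke that the rational Betti numbers of a symplectic toric orbifold are the $h$-numbers of its polytope, hence combinatorial invariants insensitive to the marking $\mathbf{s}_{\boldsymbol{\nu}}$, yielding the same conclusion.
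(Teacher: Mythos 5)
Your main (Gysin) argument is correct, but it takes a genuinely different route from the paper's proof, and the difference is worth spelling out. The paper stays entirely inside toric geometry: by Theorem~9.1 of \cite{lt} both $M$ and $\widehat{M}_{\boldsymbol{\nu}}$ carry structures of complex toric varieties whose fans are determined by $\Delta$ and $\widehat{\Delta}_{\boldsymbol{\nu}}$; the dimension-preserving bijection between the faces of the two polytopes (Proposition \ref{prop:bijectivefaces}), transported through polar duality, shows that the two complete simplicial fans have the same number of cones in every dimension, and \S 4.5 of \cite{ful} then pins down the rational Betti numbers from those counts. You instead exploit that $M$ and $\widehat{M}_{\boldsymbol{\nu}}$ are quotients of the \emph{same} total space $X$ by two locally free circle actions over bases of equal complex dimension $d$, and recover the Betti numbers of each base from those of $X$ via the rational Gysin sequence, Hard Lefschetz, and orbifold Poincar\'{e} duality; since $b_l(X)=b_l(B)-b_{l-2}(B)$ for $l\le d$ forces $b_l(B)=\sum_{i\ge 0}b_{l-2i}(X)$ there, and duality determines the rest, the two bases must have identical Betti numbers. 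What your route buys: it never uses the toric hypothesis, so it proves the corollary for \emph{any} polarized $M$ satisfying BA \ref{asmpt:basic} with $r=1$ --- a strictly more general statement, and one closer in spirit to Kawasaki's comparison of the two quotients of $S^{2d+1}$ in \cite{kaw}. What it costs: the orbifold inputs must genuinely be secured, namely the rational Gysin sequence for Seifert fibrations (equivalently, $H^*_{S^1}(X;\mathbb{Q})\cong H^*(X/S^1;\mathbb{Q})$ for locally free actions), Hard Lefschetz on compact K\"{a}hler orbifolds via Baily's Hodge theory, and rational Poincar\'{e} duality for compact oriented orbifolds --- all standard, as you say, but heavier machinery than the elementary fan-counting the paper relies on. Your alternative combinatorial argument, by contrast, is essentially the paper's own proof in different clothing: it establishes the face-lattice isomorphism $F_j\leftrightarrow \widehat{F}_j$ (which the paper obtains from Proposition \ref{prop:bijectivefaces}, or equivalently from the projective involution $\varrho$ of the preliminaries) by exhibiting the cones over $\Delta+\tilde{\boldsymbol{\delta}}$ and $\widehat{\Delta}_{\boldsymbol{\nu}}$ as unimodularly equivalent, and then quotes the same fact that rational Betti numbers of compact simplicial toric varieties are combinatorial invariants, independent of the marking.

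Two cosmetic points. The Euler class of $X\subset A^\vee\to M$ is $c_1(A^\vee)$, a \emph{negative} multiple of $[\omega]$; this is immaterial, since Hard Lefschetz is insensitive to rescaling the Lefschetz class by a nonzero real number. Likewise, whether the matrix in your homogenized picture is $\left(\begin{smallmatrix}-\delta&1\\1&0\end{smallmatrix}\right)$ or $\left(\begin{smallmatrix}-\delta&-1\\-1&0\end{smallmatrix}\right)$ depends on the sign convention for the homogenizing coordinate; both are integral of determinant $-1$, which is all the argument needs.
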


Let us now consider a general $r\le d$. 

Let $\boldsymbol{\nu}^\perp\leqslant \mathfrak{t}^r$ 
be the kernel of $\boldsymbol{\nu}$, and $T^{r-1}_{\boldsymbol{\nu}^\perp}\leqslant T^r$
the corresponding subtorus. 
Under Basic Assumption \ref{asmpt:basic}, $T^{r-1}_{\boldsymbol{\nu}^\perp}$ acts locally
freely on $M_{\boldsymbol{\nu}}$; then
$\overline{M}_{\boldsymbol{\nu}}:=M_{\boldsymbol{\nu}}/T^{r-1}_{\boldsymbol{\nu}^\perp}$,
the Marsden-Weinstein reduction of $M$ with respect 
$T^{r-1}_{\boldsymbol{\nu}^\perp}$, is a K\"{a}hler orbifold. The transversality requirement 
in Basic Assumption \ref{asmpt:basic} can be conveniently reformulated in a transversality 
condition between $\Delta+\tilde{\boldsymbol{\delta}}$ and 
${\boldsymbol{\nu}^\perp}^0\subseteq {\mathfrak{t}^d}^\vee$
(the annhilator of $\boldsymbol{\nu}^\perp$), see \S \ref{sctn:momentpolytope}.
We shall for simplicity require that $T^{r-1}_{\boldsymbol{\nu}^\perp}$
acts freely on $M_{\boldsymbol{\nu}}$, which amounts to 
$\overline{\Delta}_{\boldsymbol{\nu}}':=(\Delta+\tilde{\boldsymbol{\delta}})\cap 
{\boldsymbol{\nu}^\perp}^0$
being a Delzant polytope (see \S \ref{sctn:smoothnessDelta}). 
Then $\overline{M}_{\boldsymbol{\nu}}$ is naturally a toric K\"{a}hler manifold,
acted upon by the quotient torus $T^{d-r+1}_q:=T^d/T^{r-1}_{\boldsymbol{\nu}^\perp}$;
the associated moment polytope $\overline{\Delta}_{\boldsymbol{\nu}}$ can be identified with
$\overline{\Delta}_{\boldsymbol{\nu}}'$ under the natural isomorphism between $\mathfrak{t}^{d-r+1}_q$
(the Lie algebra of $T^{d-r+1}_q$) and ${\boldsymbol{\nu}^\perp}^0$.
The general case can then be reduced to the case $r=1$,
with $M$ replaced by $\overline{M}_{\boldsymbol{\nu}}$.

Let us choose:
\begin{enumerate}
\item a complementary subtorus
$\widehat{T}^{1}_{\boldsymbol{\nu}}\leqslant T^r$ to $T^{r-1}_{\boldsymbol{\nu}^\perp}$,
so that exists a unique 
primitive $\widetilde{\boldsymbol{\nu}}\in L(\widehat{T}^{1}_{\boldsymbol{\nu}})$ with
$\boldsymbol{\nu}(\widetilde{\boldsymbol{\nu}})=1$;
\item a complementary subtorus 
$T^{d-r}_c\leqslant T^d$ to $T^r$, with Lie algebra 
$\mathfrak{t}^{d-r}_c\leqslant \mathfrak{t}^d$,
so that
\begin{equation}
\label{eqn:factorizationT}
T^d\cong T^{d-r}_c\times {T}^{r}
\cong T^{d-r}_c\times \widehat{T}^{1}_{\boldsymbol{\nu}}\times  T^{r-1}_{\boldsymbol{\nu}^\perp},
\end{equation}
\begin{equation}
\label{eqn:factorizationfrakT}
\mathfrak{t}^d\cong \mathfrak{t}^{d-r}_c\times \widehat{\mathfrak{t}}^{1}_{\boldsymbol{\nu}}
\times  \mathfrak{t}^{r-1}_{\boldsymbol{\nu}^\perp}.
\end{equation}
\end{enumerate} 

With notation as in
(\ref{eqn:Delta inters}) and (\ref{eqn:jth face}),
suppose that the $k$ facets of $\Delta$ have been so numbered that 
$\mathcal{G}_{ \boldsymbol{\nu} }(\Delta):=\{ F_1, \ldots ,F_l  \}\subseteq \mathcal{G}(\Delta)$
is the subset of those facets of $\Delta$ such that
$(F_j+\tilde{\boldsymbol{\delta}})\cap {\boldsymbol{\nu}^\perp}^0\neq \emptyset$
(it then follows that 
$(F_j^0+\tilde{\boldsymbol{\delta}})\cap {\boldsymbol{\nu}^\perp}^0\neq \emptyset$,
see \S \ref{sctn:transverse polytopes}).
For every $j=1,\ldots,l$, let us decompose $\boldsymbol{\upsilon}_j$ according to 
(\ref{eqn:factorizationfrakT}):
\begin{equation}
\label{eqn:decompj0}
\boldsymbol{\upsilon}_j=\boldsymbol{\upsilon}_j'+\rho_j\,\widetilde{\boldsymbol{\nu}}+
\boldsymbol{\upsilon}_j'',
\end{equation} 
for unique $\boldsymbol{\upsilon}_j'\in L(T^{d-r}_c)$, $\rho_j\in \mathbb{Z}$,
$\boldsymbol{\upsilon}_j''\in L(T^{r-1}_{\boldsymbol{\nu}^\perp})$.
If $\delta_j:=\tilde{\boldsymbol{\delta}}(\boldsymbol{\upsilon}_j)$, 
$\overline{\Delta}_{\boldsymbol{\nu}}$ is canonically identifiable 
under the natural isomorphism 
$\mathfrak{t}^{d-r+1}_q\cong{\boldsymbol{\nu}^\perp}^0$
with the Delzant polytope 
\begin{equation}
\label{eqn:Deltanu'}
{\boldsymbol{\nu}^\perp}^0\supseteq \overline{\Delta}'_{\boldsymbol{\nu}}:=
\bigcap_{j=1}^l\{\gamma \in {\boldsymbol{\nu}^\perp}^0\,:\,
\gamma (\boldsymbol{\upsilon}_j'+\rho_j\,\widetilde{\boldsymbol{\nu}})\ge \lambda_j+\delta_j\}.
\end{equation}

Let us set
\begin{equation}
\label{eqn:hatupsilonjnur}
\widehat{\boldsymbol{\upsilon}}_j:=
\boldsymbol{\upsilon}_j'-(\lambda_j+\delta_{j})\,\,\tilde{\boldsymbol{\nu}},\qquad
s_j:=\left(\widehat{\boldsymbol{\upsilon}}_j\right)\qquad
(j=1,\ldots,l).
\end{equation}
\begin{equation}
\label{eqn:Delta'nur}
\widehat{\Delta}_{\boldsymbol{\nu}}':=
\bigcap_{j=1}^l\{\ell \in {\boldsymbol{\nu}^\perp}^0\,:\,\ell \big(\widehat{\boldsymbol{\upsilon}}_j\big)\ge -\rho_j\},
\end{equation}

Finally, let $\widehat{\Delta}_{\boldsymbol{\nu}}\subset \mathfrak{t}^{d-r+1}_q$ be the polytope 
corresponding to $\widehat{\Delta}'_{\boldsymbol{\nu}}\subset{\boldsymbol{\nu}^\perp}^0$, and let
$\mathbf{s}_{\boldsymbol{\nu}}:=(s_j)\in \mathbb{N}^l$.

\begin{thm}
\label{thm:case r general}
Under Basic ssumption \ref{asmpt:basic}, suppose in addition that $\Delta+\tilde{\boldsymbol{\delta}}$
and ${\boldsymbol{\nu}^\perp}^0$ are transverse and that the intersection is Delzant.
Then $\big(\widehat{M}_{\boldsymbol{\nu}},\,2\,\widehat{\omega}_{\boldsymbol{\nu}}    \big)$
is the symplectic toric orbifold with associated marked polytope 
$\big(\widehat{\Delta}_{\boldsymbol{\nu}},\,\mathbf{s}_{\boldsymbol{\nu}}\big)$.

\end{thm}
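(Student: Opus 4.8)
The plan is to deduce Theorem~\ref{thm:case r general} from the already-settled case $r=1$ (Theorem~\ref{thm:case r=1}) by carrying out the quotient defining $\widehat{M}_{\boldsymbol{\nu}}$ in two stages: dividing out first the subtorus $T^{r-1}_{\boldsymbol{\nu}^\perp}=\ker\boldsymbol{\nu}$ and then the residual circle $\widehat{T}^1_{\boldsymbol{\nu}}$. Using the factorization (\ref{eqn:factorizationT}) and associativity of quotients,
\[
\widehat{M}_{\boldsymbol{\nu}}=X_{\boldsymbol{\nu}}/T^r=\big(X_{\boldsymbol{\nu}}/T^{r-1}_{\boldsymbol{\nu}^\perp}\big)\big/\widehat{T}^1_{\boldsymbol{\nu}},
\]
so it suffices to identify the intermediate quotient $\overline{X}_{\boldsymbol{\nu}}:=X_{\boldsymbol{\nu}}/T^{r-1}_{\boldsymbol{\nu}^\perp}$ with the unit circle bundle of the prequantization of the reduced space $\overline{M}_{\boldsymbol{\nu}}$, and then to recognize the final quotient as the $r=1$ conic transform of $\overline{M}_{\boldsymbol{\nu}}$.

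For the reduction step, note that since $T^{r-1}_{\boldsymbol{\nu}^\perp}$ is the kernel of $\boldsymbol{\nu}$, the ray $\mathbb{R}_+\boldsymbol{\nu}$ projects to the origin of ${\mathfrak{t}^{r-1}_{\boldsymbol{\nu}^\perp}}^\vee$; hence $M_{\boldsymbol{\nu}}$ lies in the zero level set of the $T^{r-1}_{\boldsymbol{\nu}^\perp}$-moment map, and $\overline{M}_{\boldsymbol{\nu}}$ is its Marsden--Weinstein reduction, a smooth toric K\"{a}hler manifold of complex dimension $d-r+1$ acted on by $T^{d-r+1}_q$. Invoking prequantization-commutes-with-reduction at the lattice level $\mathbf{0}$, the restriction $A|_{M_{\boldsymbol{\nu}}}$ descends to a toric prequantization $\overline{A}$ of $(\overline{M}_{\boldsymbol{\nu}},2\,\overline{\omega})$ whose unit circle bundle is precisely $\overline{X}_{\boldsymbol{\nu}}$, endowed with the induced contact and CR structures. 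The residual $\widehat{T}^1_{\boldsymbol{\nu}}$-action inherits a contact lift with induced moment map $\overline{\Phi}$, which is nowhere vanishing and transverse to $\mathbb{R}_+\boldsymbol{\nu}$; thus Basic Assumption~\ref{asmpt:basic} holds for $(\overline{\Phi},\boldsymbol{\nu})$. Because all of $M_{\boldsymbol{\nu}}$ maps into $\mathbb{R}_+\boldsymbol{\nu}$, we are exactly in the degenerate situation $(\overline{M}_{\boldsymbol{\nu}})_{\boldsymbol{\nu}}=\overline{M}_{\boldsymbol{\nu}}$, $\overline{X}_{\boldsymbol{\nu}}=\overline{X}$, matching the $r=1$ discussion above.

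It then remains to identify the toric data of $\overline{M}_{\boldsymbol{\nu}}$ and to apply Theorem~\ref{thm:case r=1}. By the description of symplectic reduction as slicing of the moment polytope, the moment polytope of $\overline{M}_{\boldsymbol{\nu}}$ is $(\Delta+\tilde{\boldsymbol{\delta}})\cap{\boldsymbol{\nu}^\perp}^0$: writing $\Delta+\tilde{\boldsymbol{\delta}}=\bigcap_j\{\ell:\ell(\boldsymbol{\upsilon}_j)\ge\lambda_j+\delta_j\}$ and restricting to $\gamma\in{\boldsymbol{\nu}^\perp}^0$, where $\gamma(\boldsymbol{\upsilon}_j'')=0$, produces the inequalities defining $\overline{\Delta}'_{\boldsymbol{\nu}}$ in (\ref{eqn:Deltanu'}), with the transversality hypothesis ensuring that precisely the facets $j=1,\ldots,l$ survive and the Delzant hypothesis ensuring smoothness. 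Applying the $r=1$ recipe (\ref{eqn:hatupsilonj})--(\ref{eqn:Deltanu}) to this facet data --- with $T^{d-r}_c$ in the role of $T^{d-1}_c$, weight $\boldsymbol{\nu}$, and the constants $\delta_j=\tilde{\boldsymbol{\delta}}(\boldsymbol{\upsilon}_j)$ in the role of $\rho_j\,\delta$ --- returns exactly $\widehat{\boldsymbol{\upsilon}}_j=\boldsymbol{\upsilon}_j'-(\lambda_j+\delta_j)\,\tilde{\boldsymbol{\nu}}$, the polytope $\widehat{\Delta}'_{\boldsymbol{\nu}}$ of (\ref{eqn:Delta'nur}), and the marking $s_j=(\widehat{\boldsymbol{\upsilon}}_j)$ of (\ref{eqn:hatupsilonjnur}); transporting along $\mathfrak{t}^{d-r+1}_q\cong{\boldsymbol{\nu}^\perp}^0$ then yields the marked polytope $(\widehat{\Delta}_{\boldsymbol{\nu}},\mathbf{s}_{\boldsymbol{\nu}})$.

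The step I expect to be the main obstacle is the two-stage identification of the contact circle bundle in the first paragraph, that is, proving carefully that $X_{\boldsymbol{\nu}}/T^{r-1}_{\boldsymbol{\nu}^\perp}$ really is the unit circle bundle of the reduced prequantization, with residual contact lift and CR structure matching the hypotheses of Theorem~\ref{thm:case r=1}. This requires prequantization to commute with reduction in the contact rather than merely the symplectic category, and integrality to be preserved through the slicing --- in particular that each $\delta_j\in\mathbb{Z}$ --- so that $\overline{A}$ is a genuine toric line bundle and the $r=1$ theorem is literally applicable to $\overline{M}_{\boldsymbol{\nu}}$.
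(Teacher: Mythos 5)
Your proposal follows essentially the same route as the paper: a two-stage quotient through $Y_{\boldsymbol{\nu}}=X_{\boldsymbol{\nu}}/T^{r-1}_{\boldsymbol{\nu}^\perp}$ (your $\overline{X}_{\boldsymbol{\nu}}$), identification of this intermediate quotient as the unit circle bundle of a reduced positive line bundle on $\overline{M}_{\boldsymbol{\nu}}$ with the residual $T^1_{\boldsymbol{\nu}}$-action as the contact lift of $(\mu^{\overline{M}_{\boldsymbol{\nu}}},\overline{\Phi})$ (the paper's Proposition \ref{prop:reduction 1 dimensional case}), identification of the reduced moment polytope with $(\Delta+\tilde{\boldsymbol{\delta}})\cap{\boldsymbol{\nu}^\perp}^0$, and then literal application of Theorem \ref{thm:case r=1} with the same substitutions you list. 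The step you flag as the main obstacle is precisely what the paper settles in \S \ref{sctn:MnuYnu} via the quotient contact form $\alpha_{\boldsymbol{\nu}}$ and the Guillemin--Sternberg reduction machinery, so your plan is sound as stated.
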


We have an analogue of Corollary \ref{cor:equalcohomology/Q},
linking the cohomology groups of the symplectic reduction 
$\overline{M}_{\boldsymbol{\nu}}$ and of the conic reduction 
$\widehat{M}_{\boldsymbol{\nu}}$. 
By the theory of \cite{kir}, $H^l(\overline{M}_{\boldsymbol{\nu}},\mathbb{Q})$ is tightly related to
the equivariant cohomology of $M$ for the action of $T^{r-1}_{\boldsymbol{\nu}^\perp}$.

\begin{cor}
\label{cor:equal cohomology r general}
Under the hypothesis of Theorem \ref{thm:case r general},
$$
H^l(\overline{M}_{\boldsymbol{\nu}},\mathbb{Q})\cong H^l\big(\widehat{M}_{\boldsymbol{\nu}},\,\mathbb{Q} \big)
\qquad (l=0,1,\ldots).
$$
\end{cor}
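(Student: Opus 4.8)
The plan is to deduce the statement from its $r=1$ counterpart, Corollary~\ref{cor:equalcohomology/Q}, via the reduction in stages outlined just before Theorem~\ref{thm:case r general}. First I would note that, under the standing hypotheses (transversality of $\Delta+\tilde{\boldsymbol{\delta}}$ and ${\boldsymbol{\nu}^\perp}^0$, with Delzant intersection), the reduced space $\overline{M}_{\boldsymbol{\nu}}=M_{\boldsymbol{\nu}}/T^{r-1}_{\boldsymbol{\nu}^\perp}$ is a genuine toric K\"{a}hler \emph{manifold}, polarized by the line bundle descended from $A$, with structure torus $T^{d-r+1}_q=T^d/T^{r-1}_{\boldsymbol{\nu}^\perp}$ and Delzant moment polytope $\overline{\Delta}'_{\boldsymbol{\nu}}$ of \eqref{eqn:Deltanu'}. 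The residual circle $\widehat{T}^1_{\boldsymbol{\nu}}$ of \eqref{eqn:factorizationT} then acts on $\overline{M}_{\boldsymbol{\nu}}$ holomorphically and in Hamiltonian fashion, with moment map the descent of $\langle\Phi,\widetilde{\boldsymbol{\nu}}\rangle$; since $\Phi$ is nowhere zero and $\Phi(M_{\boldsymbol{\nu}})\subseteq\mathbb{R}_+\,\boldsymbol{\nu}$, this descended moment map is strictly positive, so the residual action satisfies the $r=1$ form of Basic Assumption~\ref{asmpt:basic} with weight $\boldsymbol{\nu}$.

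The second step identifies $\widehat{M}_{\boldsymbol{\nu}}$ with the $r=1$ conic transform of $\overline{M}_{\boldsymbol{\nu}}$ relative to $\widehat{T}^1_{\boldsymbol{\nu}}$. This is a reduction-in-stages statement for circle bundles: writing $X_{\boldsymbol{\nu}}/T^r=\big(X_{\boldsymbol{\nu}}/T^{r-1}_{\boldsymbol{\nu}^\perp}\big)/\widehat{T}^1_{\boldsymbol{\nu}}$, one checks that the intermediate quotient $X_{\boldsymbol{\nu}}/T^{r-1}_{\boldsymbol{\nu}^\perp}$ is exactly the unit circle bundle of the reduced polarization over $\overline{M}_{\boldsymbol{\nu}}$, so that the remaining $\widehat{T}^1_{\boldsymbol{\nu}}$-quotient is by definition the $r=1$ conic transform of $\overline{M}_{\boldsymbol{\nu}}$. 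At the combinatorial level this is already legible in the formulas: the data \eqref{eqn:hatupsilonjnur}--\eqref{eqn:Delta'nur} are precisely the $r=1$ data \eqref{eqn:hatupsilonj}--\eqref{eqn:Deltanu} attached to the polytope $\overline{\Delta}'_{\boldsymbol{\nu}}$ and the weight $\boldsymbol{\nu}$ on $\widehat{T}^1_{\boldsymbol{\nu}}$, upon using $\delta_j=\tilde{\boldsymbol{\delta}}(\boldsymbol{\upsilon}_j)$ in place of $\rho_j\,\delta$ (the two coincide in the $r=1$ case, where $\boldsymbol{\delta}=\delta\,\boldsymbol{\nu}$ and $\boldsymbol{\nu}(\boldsymbol{\upsilon}_j)=\rho_j$).

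Granting these identifications, Corollary~\ref{cor:equalcohomology/Q} applied to $\overline{M}_{\boldsymbol{\nu}}$ in place of $M$ gives
\[
H^l(\overline{M}_{\boldsymbol{\nu}},\mathbb{Q})\cong H^l\big(\widehat{M}_{\boldsymbol{\nu}},\mathbb{Q}\big)\qquad(l=0,1,\dots),
\]
which is the assertion. For completeness I would recall why no rational information is lost in the $r=1$ corollary: the rational cohomology of a compact toric orbifold is concentrated in even degrees and its Betti numbers are the entries of the $h$-vector of the (simple) moment polytope, hence depend only on the combinatorial type; and the transform $(\rho_j,\lambda_j)\mapsto(-(\lambda_j+\rho_j\,\delta),-\rho_j)$ is effected by the matrix $\left(\begin{smallmatrix}-\delta & -1\\ -1 & 0\end{smallmatrix}\right)\in\mathrm{GL}_2(\mathbb{Z})$, so it preserves all facet incidences and therefore the combinatorial type of the polytope. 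The markings $s_j=(\widehat{\boldsymbol{\upsilon}}_j)$ record only the finite-isotropy orbifold structure along the facets and are invisible to $H^*(-,\mathbb{Q})$.

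The main obstacle I anticipate is the second step: making the reduction-in-stages identification of $\widehat{M}_{\boldsymbol{\nu}}$ with the $r=1$ conic transform of $\overline{M}_{\boldsymbol{\nu}}$ fully rigorous, including the matching of the reduced polarizations and of the contact/CR lift. One must verify that the lift of the residual $\widehat{T}^1_{\boldsymbol{\nu}}$-action used to build the conic transform of $\overline{M}_{\boldsymbol{\nu}}$ agrees with the one induced by $\mu^X$, i.e. that the constant $\boldsymbol{\delta}$ descends correctly through the quotient; this is exactly where the bookkeeping of \eqref{eqn:Trpsi}--\eqref{eqn:Trpsi1} and the Delzant hypothesis on $\overline{\Delta}'_{\boldsymbol{\nu}}$ (ensuring $\overline{M}_{\boldsymbol{\nu}}$ is smooth, so that Corollary~\ref{cor:equalcohomology/Q} applies verbatim) enter essentially.
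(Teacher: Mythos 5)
Your proposal is correct and coincides with the paper's own route: the paper proves Theorem \ref{thm:case r general} precisely by the reduction in stages you describe (Proposition \ref{prop:reduction 1 dimensional case} identifies $Y_{\boldsymbol{\nu}}=X_{\boldsymbol{\nu}}/T^{r-1}_{\boldsymbol{\nu}^\perp}$ with the unit circle bundle of the reduced polarization $(A_{\boldsymbol{\nu}},h_{\boldsymbol{\nu}})$ on the toric manifold $\overline{M}_{\boldsymbol{\nu}}$, and $\widehat{M}_{\boldsymbol{\nu}}=Y_{\boldsymbol{\nu}}/T^1_{\boldsymbol{\nu}}$ is then the $r=1$ conic transform of $\overline{M}_{\boldsymbol{\nu}}$), after which Corollary \ref{cor:equalcohomology/Q} applied to $\overline{M}_{\boldsymbol{\nu}}$ yields the claim exactly as you say. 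Your concluding remarks on markings being invisible to rational cohomology and on combinatorial invariance merely re-justify Corollary \ref{cor:equalcohomology/Q}, which is already available, so they are harmless.
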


\section{The case $r=1$}

\subsection{Preliminaries}
Before embarking on the proof of Theorem \ref{thm:case r=1}, we need to recall some basic
constructions from toric geometry, referring to \cite{del}, \cite{g-toric}
and \cite{g-JDG} for details. We premise a digression on the geometric 
relation between $\Delta$ and $\widehat{\Delta}_{\boldsymbol{\nu}}$.

\subsubsection{The transform of a polytope}

Although not logically necessary, it is suggestive to describe the passage
from $\Delta$ to $\widehat{\Delta}_{\boldsymbol{\nu}}$ in terms of a general
\lq transform\rq\, operation on rational polytopes in a finite-dimensional 
real vector space
with a full-rank lattice $L$, depending on the datum of a decomposition of
$L$ as the product of an oriented rank-$1$ sublattice and a complementary sublattice.

Let $V$ be a $d$-dimensional real vector space, $L\subset V$ a full-rank lattice, $V^\vee$
the dual vector space, and $L^\vee$ the dual lattice. Suppose that $\Delta\subset V^\vee$
is $d$-dimensional rational simple convex polytope (terminology as in \cite{lt}).
This means that there exist primitive $\mathbf{v}_i\in L$ and $\lambda_i\in \mathbb{R}$,
$i=1,\ldots,k$, such that
\begin{equation}
\label{eqn:rational polytope}
\Delta=\bigcap_{j=1}^k\left\{\ell\in V^\vee\,:\,
\ell (\mathbf{v}_j)\ge \lambda_j\right\},
\end{equation}
and that exactly $d$ facets of $\Delta$ meet at each of its vertexes. 
In addition, we shall say that $\Delta$ is
\textit{integral} if $\lambda_j\in \mathbb{Z}$ for every $j$.

Suppose given:
\begin{enumerate}

\item a primitive lattice vector $\mathbf{v}\neq \mathbf{0}\in L$;
\item $\boldsymbol{\delta}\in \mathrm{span}(\mathbf{v})^\vee$ such that
$\boldsymbol{\delta}(\mathbf{v})\in \mathbb{Z}$ and
\begin{equation}
\label{eqn:positive reqt ell}
\ell(\mathbf{v})+\boldsymbol{\delta}(\mathbf{v})>0\qquad \forall\,\ell\in \Delta;
\end{equation}

\item a complementary sublattice $L'\subset L$ to $\mathbb{Z}\cdot \mathbf{v}$, so that setting
$V':=L'\otimes \mathbb{R}$ we have $V=V'\oplus \mathrm{span}(\mathbf{v})$ and 
dually $V^\vee={V'}^\vee\oplus
\mathrm{span}(\mathbf{v}^*)$, where $\mathbf{v}^*\in \mathrm{span}(\mathbf{v})^\vee$ 
is dual to $\mathbf{v}$.

\end{enumerate}

Then we may uniquely extend 
$\boldsymbol{\delta}$ to
$\tilde{\boldsymbol{\delta}}\in L^\vee\cap\mathrm{span}(\mathbf{v}^*)\subseteq V^\vee$ so that
$\tilde{\boldsymbol{\delta}}=\delta\,\mathbf{v}^*$ with $\delta\in \mathbb{Z}$
(a different choice of $\tilde{\boldsymbol{\delta}}$ 
would result in a translation of the transormed polytope).
By (\ref{eqn:positive reqt ell}),
$\Delta+\tilde{\boldsymbol{\delta}}$ lies in the open half-space 
$V^\vee_+\subset V^\vee$
where pairing with $\mathbf{v}$
is positive.

Any $\ell\in V^\vee_+$ can be written uniquely as
$\ell=\ell'+\ell (\mathbf{v})\,\mathbf{v}^*$, where $\ell'\in {V'}^\vee$ and 
$\ell (\mathbf{v})>0$. Let us define an involution $\varrho:V^\vee_+\rightarrow V^\vee_+$
by setting
\begin{equation}
\label{eqn:varrho}
\varrho (\ell):=\frac{1}{\ell (\mathbf{v})}\,\ell'+   \frac{1}{\ell (\mathbf{v})}\,\mathbf{v}^*.
\end{equation}

Let us determine 
$\rho(\Delta +\tilde{\boldsymbol{\delta}})$.
For each $j$, we can write uniquely
$\mathbf{v}_j=\mathbf{v}_j'+\rho_j\,\mathbf{v}$ where $\mathbf{v}_j'\in L'$ and $\rho_j\in \mathbb{Z}$.
Hence $\tilde{\boldsymbol{\delta}}(\mathbf{v}_j)=\delta\,\rho_j$.
We have
\begin{equation}
\label{eqn:rational polytope transl}
\Delta +\tilde{\boldsymbol{\delta}}=\bigcap_{j=1}^k\left\{\ell\in V^\vee\,:\,
\ell (\mathbf{v}_j)=\ell'(\mathbf{v}_j')+\rho_j\,\ell(\mathbf{v})\ge \lambda_j+\delta\,\rho_j\right\},
\end{equation}
Since $\varrho=\varrho^{-1}$, by (\ref{eqn:varrho}) and (\ref{eqn:rational polytope transl}) we have
\begin{eqnarray}
\label{eqn:rational polytope tranformed}
\widehat{\Delta}:=\varrho(\Delta +\tilde{\boldsymbol{\delta}})&=&\bigcap_{j=1}^k\left\{\ell\in V^\vee_+\,:\,
\varrho(\ell) (\mathbf{v}_j)=\frac{1}{\ell(\mathbf{v})}\,\left[
\ell'(\mathbf{v}_j')+\rho_j\right]\ge \lambda_j+\delta\,\rho_j\right\}\nonumber\\
&=&\bigcap_{j=1}^k\left\{\ell\in V^\vee_+\,:\,
\ell'(\mathbf{v}_j')-(\lambda_j+\delta\,\rho_j)\,\ell(\mathbf{v})\ge -\rho_j \right\}.
\end{eqnarray}

Thus $\widehat{\Delta}$ is the convex polytope obtained from $\Delta$
by replacing each primitive normal vector $\mathbf{v}_j=\mathbf{v}_j'+\rho_j\,\mathbf{v}$ with 
the integral vector $\widehat{\mathbf{v}}_j:=
\mathbf{v}_j'-(\lambda_j+\delta\,\rho_j)\,\mathbf{v}$, and each $\lambda_j$ with $-\rho_j$.
Clearly, $\widehat{\Delta}$ is rational; it is not claimed that each $\widehat{\mathbf{v}}_j$ be primitive, hence neither that 
$\hat{\Delta}$ be integral.

Furthemore, (\ref{eqn:rational polytope tranformed}) shows that, if $F_j$ is the facet
of $\Delta$ normal to $\mathbf{v}_j$, then $\widehat{F}_j:=\rho (F_j+\boldsymbol{\delta})$
is the facet of $\widehat{\Delta}$ normal to $\widehat{\mathbf{v}}_j$; this correspondence passes
to intersection of facets, i.e. faces. Thus we have a bijection between the set of faces of each
given dimension of $\Delta$ and $\widehat{\Delta}$, hence in particular between the families of
their respective vertexes.
In particular, 
the vertexes of $\widehat{\Delta}$ are the images
by $\varrho$ of the vertexes of $\Delta$,
and furthermore $\widehat{\Delta}$ is simple since so is $\Delta$.

\subsubsection{The toric line bundle $A$ and its circle bundle}
Let us review 
the construction of the positive toric line bundle $(A,h)$ 
on $M$ from the Delzant polytope $\Delta_{\boldsymbol{\lambda}}$, 
for $\boldsymbol{\lambda}\in \mathbb{Z}^k$, based on 
pairing the Delzant construction of $M$ 
as a symplectic quotient of $\mathbb{C}^k$
with the construction of a polarization on the quotient in \cite{gs-gq}.
Consider the trivial line bundle 
$L:=\mathbb{C}^k\times \mathbb{C}$, and define a Hermitian metric $\kappa$ on $L$ by setting 
$$
\kappa_{\mathbf{z}}\big((\mathbf{z},w),\,(\mathbf{z},v)\big)
:=w\,\overline{v}\,e^{-\|\mathbf{z}\|^2} \quad (\mathbf{z}\in \mathbb{C}^k,\,
w,v\in \mathbb{C}).
$$
The unit circle bundle $Y\subset L^\vee$ (that is, in $L$ with the dual metric)
is then 
\begin{eqnarray}
\label{eqn:unitcircleL}
Y&:=&\left\{ (\mathbf{z},w)\in \mathbb{C}^k\times \mathbb{C}  \,:\,
|w|=e^{-\frac{1}{2}\,\|\mathbf{z}\|^2}    \right\}\nonumber\\
&=&\left\{ \left(\mathbf{z},e^{-\frac{1}{2}\,\|\mathbf{z}\|^2}\,e^{\imath\,\theta}\right)
\,:\,\mathbf{z}\in \mathbb{C}^k,\,
e^{\imath\,\theta}\in S^1 \right\}\cong \mathbb{C}^k\times S^1.
\end{eqnarray}
In the following we shall implicitly identify $Y$ and $\mathbb{C}^k\times S^1$.
In terms of the previous diffeomorphism,
the unique compatible connection 1-form is
\begin{equation}
\label{eqn:betaY}
\beta:=\frac{\imath}{2}\,\sum_{j=1}^k\big[z_j\,\mathrm{d}\overline{z}_j-\overline{z}_j\,\mathrm{d}z_j\big]
+\mathrm{d}\theta .
\end{equation}
Thus, $\beta(\partial_\theta)=1$ and $\ker(\beta)\subset TY$ is the horizontal subspace.

If $f:\mathbb{C}^k\rightarrow \mathbb{C}$ is $\mathcal{C}^\infty$, the corresponding
section $\sigma_f$ of $L$ has pointwise norm
$$
\|\sigma_f(\mathbf{z})\|_\kappa =
|f(\mathbf{z})| \, e^{-\frac{1}{2}\,\|\mathbf{z}\|^2}.
$$
Applying this with $f=1$ we obtain that, letting $\Theta_0$ be the curvature of the unique compatible connection on $L$, 
$$
\Theta_0=\overline{\partial}\,\partial\,\left(-\|\mathbf{z}\|^2\right)
=\sum_{j=1}^k\mathrm{d}z_j\wedge \mathrm{d}\overline{z}_j=
-2\,\imath\,\omega_0,
$$
where $\omega_0=(\imath/2)\,\sum_{j=1}^k\mathrm{d}z_j\wedge \mathrm{d}\overline{z}_j$ is the standard symplectic form on $\mathbb{C}^k$.

Given that $\boldsymbol{\lambda}\in \mathbb{Z}^k$,
the Hamiltonian action $(\Gamma^{\mathbb{C}^k},\Psi_{\boldsymbol{\lambda}})$
(see (\ref{eqn:GammaCk}) and (\ref{eqn:psilambda}))
has the contact CR lift $\Gamma^Y_{\boldsymbol{\lambda}}:T^k\times Y\rightarrow Y$ given by
\begin{equation}
\label{eqn:momentGAMMA_Y}
\Gamma^Y_{\boldsymbol{\lambda}}:\left(e^{\imath\,\boldsymbol{\vartheta}},
(\mathbf{z},w)\right)\mapsto 
\left(\Gamma^{\mathbb{C}^k}(e^{\imath\,\boldsymbol{\vartheta}},\mathbf{z}),  
e^{-\imath\,\langle\boldsymbol{\lambda},\boldsymbol{\vartheta}\rangle}\,w   \right)=
\left(e^{-\imath\,\boldsymbol{\vartheta}}\cdot \mathbf{z},  
e^{-\imath\,\langle\boldsymbol{\lambda},\boldsymbol{\vartheta}\rangle}\,w   \right).
\end{equation}
This is the restriction a similarly defined
metric preserving linearization 
$\Gamma^{L^\vee}_{\boldsymbol{\lambda}}:T^k\times L^\vee\rightarrow L^\vee$;
dually, we also have a linearization
$\Gamma^{L}_{\boldsymbol{\lambda}}:T^k\times L\rightarrow L$.

As in \cite{gs-gq}, we can take the quotient and obtain a positive line bundle
$(A,h)$ on $M=Z_{\Delta}/N$, by setting
\begin{equation}
\label{eqn:positivelb M}
A:=\left.L\right|_{Z_\Delta}/N=(Z_\Delta\times \mathbb{C})/N,
\end{equation}
with associated unit circle bundle $X\subset A^\vee$ given by
\begin{equation}
\label{eqn:Xcircle bundle quot}
X=\left.Y\right|_{Z_\Delta}/N\cong (Z_\Delta\times S^1)/N.
\end{equation}

\subsubsection{The complexification $\widetilde{N}$ and its stable locus}
\label{sctn:complexification}

Besides representing $M$ as a Marsden-Weinstein reduction for the quotient of $N$, 
it is useful to consider 
its parallel description as a GIT quotient for the action of the complexification
$\widetilde{N}$ (\cite{g-toric}, \cite{g-JDG}). 
In the following, for every compact group $T$, $\widetilde{T}$ is the complexification
of $T$.

Every face $F\in \mathcal{F}(\Delta)$ of codimension $c_F$ of $\Delta$
is uniquely an intersection of facets; hence there exists 
a unique increasing multi-index
$I_F:=\{i_1(F),\ldots,i_{c_F}(F)\}\subset \{1,\ldots,k\}^{c_F}$ such that
$F=\bigcap_{j=i_1(F)}^{i_{c_F}(F)}F_j$.
Let us set
\begin{equation}
\label{eqn:OFCk}
\mathbb{O}_F:=\left\{\mathbf{z}=(z_j)\in \mathbb{C}^k\,:\,
z_j=0\,\Leftrightarrow\,j\in I_F\right\}\qquad (F\in \mathcal{F}(\Delta)).
\end{equation}
The following holds:

\begin{enumerate}
\item $\mathbb{O}_F$ is an orbit of $\tilde{T}^k$, and
$\mathbb{O}_F\cong {\mathbb{C}^*}^{k-c_F}$ equivariantly;
\item the stabilizer 
in $T^k$
of every $\mathbf{z}\in \mathbb{O}_F$ is the subtorus
$T^k_F$ with Lie algebra 
$$
\mathfrak{t}^k_F:=\imath\,\mathrm{span}_{\mathbb{R}}\{\mathbf{e}_j\,:\,j\in I_F\},
$$
and the corresponding statement holds for the stabilizer 
in the complexification, $\widetilde{T}^k_F\leqslant \tilde{T}^k$;
\item $\mathbb{C}_\Delta:=\bigcup_{F\in \mathcal{F}(\Delta)}\mathbb{O}_F$ is the open subset
of stable points for the action of $\widetilde{T}^k$ on $\mathbb{C}^k$ with the given linearization or,
equivalently, the $\widetilde{T}^k$-saturation of $Z_\Delta$;
\item $\widetilde{N}$ acts freely and properly on $\mathbb{C}_\Delta$;
\item $M=\mathbb{C}_\Delta/\widetilde{N}$ as a complex manifold;
\item for every $F\in \mathcal{F}(\Delta)$, $M_F^0:=\mathbb{O}_F/\widetilde{N}$ is 
a $\widetilde{T}^d$-orbit and 
a complex submanifold of $M$ of codimension $c_F$; 
\item $M_F^0=\Psi^{-1}(F^0)$, where
$F^0$ is the interior of $F$ 
(recall that $\Psi:M\rightarrow {\mathfrak{t}^d}^\vee$
is the moment map).
\end{enumerate}

We have the following (\cite{del}, \cite{g-toric}, \cite{g-JDG}).

\begin{lem}
\label{lem:stabilizerMF}
For any $F\in \mathcal{F}(\Delta)$, let $T^d_F\leqslant T^d$
be the subtorus with Lie subalgebra
$$
\mathfrak{t}^d_F:=\mathrm{span}_{\mathbb{R}}\{\mathbf{u}_j\,:\,j\in I_J\}.
$$
Then
\begin{enumerate}
\item the isomorphism $T^k/N\cong T^d$ induces an isomorphism
$\rho_F:T^k_F\cong T^d_F$;
\item $T^d_F$ is the stabilizer in $T^k$ of every $m\in M_F^0$.

\item $M_F:=\Psi^{-1}(F)$ is the complex submanifold of fixed points of $T^d_F$;

\item
$M_F^0$ is the dense open subset of $M_F$ of those points whose stabilizer is exactly 
$T^d_F$.
\end{enumerate}
Similar statements hold in the complexifications.

\end{lem}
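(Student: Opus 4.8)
The plan is to deduce everything from the two parallel descriptions of $M$ recalled above — the symplectic quotient $M=Z_\Delta/N$ and the GIT quotient $M=\mathbb{C}_\Delta/\widetilde{N}$ — together with the orbit stratification $\mathbb{C}_\Delta=\bigsqcup_{F}\mathbb{O}_F$ and the identities $M_{F'}^0=\mathbb{O}_{F'}/\widetilde{N}=\Psi^{-1}\big((F')^0\big)$. Everything then reduces to the isomorphism in $(1)$ plus bookkeeping with the partial order on faces. Throughout, $p:T^k\to T^d$ will denote the quotient projection with kernel $N$, whose differential is the map $\mathbf{e}_j\mapsto \mathbf{u}_j$; here the $\mathbf{u}_j$ are the primitive inward facet normals (the $\boldsymbol{\upsilon}_j$ of $(\ref{eqn:Delta inters})$).

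For $(1)$ I would take $\rho_F$ to be the restriction of $p$ to $T^k_F$. Injectivity is immediate from freeness: $T^k_F\cap N=\ker(p|_{T^k_F})$ fixes every point of $\mathbb{O}_F\subseteq\mathbb{C}_\Delta$, and since $N$ acts freely on the stable locus (the Delzant hypothesis) this intersection is trivial. Hence $p|_{T^k_F}$ is an injective homomorphism of tori, so $\dim p(T^k_F)=\dim T^k_F=c_F$; in particular $\{\mathbf{u}_j:j\in I_F\}$ must be linearly independent and $\dim\mathfrak{t}^d_F=c_F$. Surjectivity onto $T^d_F$ then follows because $p(T^k_F)$ is a connected subtorus whose Lie algebra is exactly $\mathfrak{t}^d_F$, and $T^d_F$ is by definition the connected subtorus with that (rational) Lie algebra. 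The same argument applied to $\widetilde{N}\leqslant\widetilde{T}^k$ gives the complexified isomorphism $\widetilde{T}^k_F\cong\widetilde{T}^d_F$.

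For $(2)$ I would lift $m\in M_F^0$ to $\mathbf{z}\in\mathbb{O}_F$. An element $g=p(\tilde g)\in T^d$ fixes $m=[\mathbf{z}]$ precisely when some lift $\tilde g\in T^k$ carries $\mathbf{z}$ into its $N$-orbit, i.e. $\tilde g\in N\cdot\mathrm{Stab}_{T^k}(\mathbf{z})=N\cdot T^k_F$; applying $p$ and using $(1)$ gives $\mathrm{Stab}_{T^d}(m)=p(T^k_F)=T^d_F$. Parts $(3)$ and $(4)$ then follow from the stratification. Since the faces contained in $F$ partition $F$, one has $\Psi^{-1}(F)=\bigsqcup_{F'\subseteq F}M_{F'}^0$, and on the $\mathbb{C}^k$ side $\bigcup_{F'\subseteq F}\mathbb{O}_{F'}=\overline{\mathbb{O}_F}\cap\mathbb{C}_\Delta$ is the stable locus intersected with the coordinate subspace $\{z_j=0:j\in I_F\}$. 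Hence $M_F=\big(\overline{\mathbb{O}_F}\cap\mathbb{C}_\Delta\big)/\widetilde{N}$ is a closed, connected complex submanifold of $M$. For $F'\subseteq F$ we have $I_F\subseteq I_{F'}$, whence $T^d_F\leqslant T^d_{F'}=\mathrm{Stab}(M_{F'}^0)$ by $(2)$; thus $T^d_F$ fixes every point of $M_F$, proving $(3)$, while on the open dense stratum $M_F^0$ the stabilizer is exactly $T^d_F$ and on each deeper stratum it is strictly larger (dimensions grow by $(1)$), proving $(4)$. The complexified versions are identical with $T$ replaced by $\widetilde{T}$.

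The step needing the most care is the precise sense of ``the submanifold of fixed points'' in $(3)$: the full fixed locus $\mathrm{Fix}(T^d_F)$ can be strictly larger than $M_F$ and disconnected — for $\mathbb{P}^1\times\mathbb{P}^1$ the two opposite edges yield the same rank-one subtorus, whose fixed locus is the disjoint union of the two corresponding $\mathbb{P}^1$'s. The reading I would establish is that $M_F$ is the connected component of $\mathrm{Fix}(T^d_F)$ meeting $M_F^0$: at $m\in M_F^0$ the tangent space to $M_F$ already exhausts the fixed subspace $(T_mM)^{T^d_F}$, since the remaining $c_F$ normal weights are nonzero (as $\mathrm{Stab}(m)=T^d_F$ exactly). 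Thus $M_F$ is open as well as closed in $\mathrm{Fix}(T^d_F)$, hence a connected component. This identification, rather than the essentially formal parts $(1)$, $(2)$ and $(4)$, is where I expect the real work to lie.
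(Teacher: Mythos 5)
The paper does not actually prove this lemma: it is stated as a quotation from the toric literature (\cite{del}, \cite{g-toric}, \cite{g-JDG}), so there is no internal proof to compare yours against. Measured on its own terms, your argument is correct, and it is essentially the derivation those references give: everything is extracted from the facts the paper lists immediately before the lemma, namely the stratification $\mathbb{C}_\Delta=\bigcup_F\mathbb{O}_F$, the identification $\mathrm{Stab}_{T^k}(\mathbf{z})=T^k_F$ on each stratum, freeness of $N$ (resp.\ $\widetilde{N}$) on $Z_\Delta$ (resp.\ $\mathbb{C}_\Delta$), and $M_F^0=\mathbb{O}_F/\widetilde{N}=\Psi^{-1}(F^0)$. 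Parts (1), (2) and (4) are fine as written (you also silently correct the statement's typos: $I_J$ should be $I_F$, and in (2) the stabilizer is taken in $T^d$, not $T^k$). Two small points of care: in (2) the lift of $m$ should be taken in $Z_\Delta\cap\mathbb{O}_F$, since it is the $N$-orbit, not the $\widetilde{N}$-orbit, that characterizes equality downstairs in $M=Z_\Delta/N$ (otherwise you must also check $\widetilde{N}\,\widetilde{T}^k_F\cap T^k=N\,T^k_F$, which does follow from $\mathfrak{n}\cap\mathfrak{t}^k_F=\{0\}$ via polar decomposition); and in (4) the strictness $T^d_F\lneq T^d_{F'}$ for $F'\subsetneq F$ uses the simplicity of $\Delta$ (linear independence of the normals through a face), available here because $\Delta$ is Delzant.

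Your handling of (3) is the real content, and it is right. Read literally, ``$M_F$ is the complex submanifold of fixed points of $T^d_F$'' is false in general: your $\mathbb{P}^1\times\mathbb{P}^1$ example is exactly the standard counterexample, and the same imprecision recurs later in the paper, where $M_j$ is called \emph{the} fixed locus of the one-parameter subgroup generated by $\boldsymbol{\upsilon}_j$. The correct statement, which is what the cited references prove and what the rest of the paper actually needs, is the one you establish: $M_F$ is the connected component of the fixed locus containing $M_F^0$. One local weakness: the parenthetical justification that the normal weights at $m\in M_F^0$ are nonzero ``as $\mathrm{Stab}(m)=T^d_F$ exactly'' is not by itself a proof, since exactness of the isotropy group at a point says nothing a priori about infinitesimal weights on the normal space. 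The correct justification is already inside your framework: at $\mathbf{z}\in\mathbb{O}_F\cap Z_\Delta$ the directions normal to $\overline{\mathbb{O}_F}$ are the coordinates $z_j$, $j\in I_F$, on which $T^k_F$ acts by the nontrivial characters $e^{\imath\,\boldsymbol{\vartheta}}\mapsto e^{-\imath\,\vartheta_j}$, while the $\widetilde{N}$-orbit directions are $T^k_F$-fixed; passing to the quotient gives $(T_mM)^{T^d_F}=T_mM_F$, after which your open-and-closed argument identifying $M_F$ with a component of the fixed locus is complete.
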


Let us denote by $P:Z_\Delta\rightarrow M$ and by $\tilde{P}:\mathbb{C}_\Delta\rightarrow M$
the projections. Then $\mathbb{C}_\Delta=\tilde{T}^k\cdot Z_\Delta$ and 
$P=\tilde{P}|_{Z_\Delta}$.

\subsubsection{The lifted action of $T^d$ on $X$}

By passing to the quotient, $\Gamma^{\mathbb{C}^d}$ and 
$\Gamma_{\boldsymbol{\lambda}}^Y$ determine corresponding actions 
$$
\gamma^M:T^d\times M\rightarrow M,\quad
\gamma^X:T^d\times X\rightarrow X;
$$
given that $\Gamma^Y_{\boldsymbol{\lambda}}$ is the contact and CR lift of 
$(\Gamma^{\mathbb{C}^k},\Psi_{\boldsymbol{\lambda}})$,
$\gamma^X$ is the contact and CR lift of $(\gamma^{M},\Psi)$.

Given $m\in M_F^0$, $T^d_F\leqslant T^d$ acts on $X_m=\pi^{-1}(m)\subset X$ by a 
character that we now specify. Let us choose 
$\mathbf{z}\in P^{-1}(m)\subset \mathbb{O}_F\cap Z_\Delta$. Since $N$ acts freely on $Z_\Delta$,
the projection $L|_{Z_\Delta}\rightarrow A$ restricts to an isomorphism 
$L_\mathbf{z}\cong A_m$, which is equivariant with respect to the isomorphism
$\rho_F:T^k_F\cong T^d_F$ in Lemma \ref{lem:stabilizerMF}.

If $\imath\,\boldsymbol{\vartheta}\in \mathfrak{t}^k_F$ and 
$(\mathbf{z},w)\in Y_\mathbf{z}$, then by (\ref{eqn:momentGAMMA_Y})
\begin{equation}
\label{eqn:momentGAMMA_Ygiù}
\Gamma^Y_{\boldsymbol{\lambda}}\left(e^{\imath\,\boldsymbol{\vartheta}},
(\mathbf{z},w)\right)=
\left(\mathbf{z},  
e^{-\imath\,\langle\boldsymbol{\lambda},\boldsymbol{\vartheta}\rangle}\,w   \right).
\end{equation}
Since $\rho_F(e^{\imath\,\mathbf{e}_j})=e^{\boldsymbol{\upsilon}_j}$ for 
$j\in I_F$, for $x=(m,\ell)\in X_m$ we have
\begin{eqnarray}
\label{eqn:gammaX}
\gamma^X\left(e^{\sum_{j\in I_F}\vartheta_j\,\boldsymbol{\upsilon}_j},
(m,\ell)\right)&=&
\left(m,  
e^{-\imath\,\langle\boldsymbol{\lambda},\boldsymbol{\vartheta}\rangle}\,\ell  \right)\nonumber\\
&=&e^{-\imath\,\langle\boldsymbol{\lambda},\boldsymbol{\vartheta}\rangle}\,x
=\chi_{F}\left(e^{\sum_{j\in I_F}\vartheta_j\,\boldsymbol{\upsilon}_j}\right)^{-1}\,x,
\end{eqnarray}
where 
\begin{equation}
\label{eqn:characterChiF}
\chi_F:e^{\sum_{j\in I_F}\vartheta_j\,\boldsymbol{\upsilon}_j}\in 
T^d_F\mapsto e^{\imath\,\sum_{j\in I_F}\vartheta_j\,\lambda_j}\in S^1.
\end{equation}

We can reformulate this as follows.

\begin{lem}
\label{lem:characterFgamma}
Suppose that $F$ is a face of $\Delta$, that $m\in F^0$, and that
$x\in X_m$. Then for every $t\in T^d_m$ we have
$
\gamma^X_t(x)=\rho^X_{\chi_F(t)}(x)
$.

\end{lem}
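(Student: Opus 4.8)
The plan is to recognize that the Lemma simply repackages the fiberwise computation already carried out in (\ref{eqn:gammaX}), once we identify the stabilizer $T^d_m$ and reconcile the sign conventions relating $\gamma^X$, the character $\chi_F$, and the structure circle action $\rho^X$.

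First I would pin down the stabilizer. The hypothesis $m\in F^0$ means $m\in M_F^0=\Psi^{-1}(F^0)$, so Lemma \ref{lem:stabilizerMF} gives that the stabilizer of $m$ in $T^d$ is exactly $T^d_F$; hence $T^d_m=T^d_F$, and every $t\in T^d_m$ may be written as $t=e^{\sum_{j\in I_F}\vartheta_j\,\boldsymbol{\upsilon}_j}$ since $\mathfrak{t}^d_F=\mathrm{span}_{\mathbb{R}}\{\boldsymbol{\upsilon}_j\,:\,j\in I_F\}$. This reduces the claim to elements of this specific form, which is precisely the form treated in (\ref{eqn:gammaX}).

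Next I would invoke that computation directly: for such $t$ and any $x=(m,\ell)\in X_m$, the lifted action satisfies $\gamma^X_t(x)=e^{-\imath\,\langle\boldsymbol{\lambda},\boldsymbol{\vartheta}\rangle}\,x=\chi_F(t)^{-1}\,x$, where $e^{\imath\,\phi}\,x$ denotes counterclockwise fiber rotation by $\phi$ (the flow of $\partial_\theta$) and $\chi_F$ is the character (\ref{eqn:characterChiF}). It then remains only to match this against $\rho^X$. By definition $\rho^X$ is generated by $-\partial_\theta$, that is, it is clockwise fiber rotation, so $\rho^X_{e^{\imath\,\phi}}(x)=e^{-\imath\,\phi}\,x$. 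Writing $\chi_F(t)=e^{\imath\,\phi}$ with $\phi=\langle\boldsymbol{\lambda},\boldsymbol{\vartheta}\rangle$, we obtain $\rho^X_{\chi_F(t)}(x)=e^{-\imath\,\phi}\,x=\chi_F(t)^{-1}\,x=\gamma^X_t(x)$, which is the assertion.

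The proof is thus essentially bookkeeping, and the only point requiring care is the matching of sign conventions. The inverse character appearing in $\gamma^X_t$ originates in the minus sign in the linearization (\ref{eqn:momentGAMMA_Y}), while $\rho^X$ acts by the inverse because it is clockwise rather than counterclockwise rotation; both therefore reduce to multiplication by $\chi_F(t)^{-1}$ in the counterclockwise convention, so that the identity holds on the nose rather than only up to a sign in the exponent. I would also record, for completeness, that $\chi_F(t)$ is independent of the chosen expression $t=e^{\sum_{j\in I_F}\vartheta_j\,\boldsymbol{\upsilon}_j}$, a well-definedness implicit in (\ref{eqn:characterChiF}) and guaranteed by the Delzant condition on $\Delta$.
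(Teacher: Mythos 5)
Your proposal is correct and coincides with the paper's own argument: the paper states Lemma \ref{lem:characterFgamma} precisely as a reformulation of the computation (\ref{eqn:gammaX}), which is exactly what you reproduce after identifying $T^d_m=T^d_F$ via Lemma \ref{lem:stabilizerMF} and reconciling the clockwise convention for $\rho^X$ with the inverse character appearing in (\ref{eqn:gammaX}). The only micro-caveat is that the well-definedness of $\chi_F$ in (\ref{eqn:characterChiF}) uses the integrality $\boldsymbol{\lambda}\in\mathbb{Z}^k$ in addition to the Delzant (primitive system) property, but this does not affect the argument.
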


\subsubsection{The lifted action of $T^1$ on $X$}

We have remarked that $\mu^M:T^1\times M\rightarrow M$ is the restriction
of $\gamma^M:T^d\times M\rightarrow M$ to $T^1$ while, on the other hand,
$\mu^X:T^1\times X\rightarrow X$ won't be the restriction
of $\gamma^X:T^d\times X\rightarrow X$ to $T^1$, unless 
$\boldsymbol{\delta}=\mathbf{0}\in {\mathfrak{t}^1}^\vee$ in (\ref{eqn:Trpsi}).
Since however both $\mu^X$ and the restriction of $\gamma^X$ to $T^1$ lift
$\mu^M$, there is a character $\chi=\chi_{\boldsymbol{\delta}}:T^1\rightarrow S^1$ such that
\begin{equation}
\label{eqn:char mediatra}
\mu^X_h(x)=\gamma^X_h\circ \rho^X_{\chi(h)}(x)\quad (x\in X,\,h\in T^1).
\end{equation}
Let us make $\chi$ explicit. 
Since $\widetilde{\boldsymbol{\nu}}$ is primitive, the map $e^{\imath\,\vartheta}\in S^1\mapsto
e^{\vartheta\,\widetilde{\boldsymbol{\upsilon}}}\in T^1$ is an isomorphism of Lie groups. 

\begin{lem}
\label{lem:deltachi}
We have
$\delta\in \mathbb{Z}$ and $\chi(e^{\vartheta\,\widetilde{\boldsymbol{\nu}}})=e^{\imath\,\delta\,\vartheta}$.
\end{lem}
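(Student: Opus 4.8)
The plan is to compare the infinitesimal generators of $\mu^X$ and of the restriction $\gamma^X|_{T^1}$ on $X$, and then exponentiate. Both actions lift the same action $\mu^M=\gamma^M|_{T^1}$ on $M$, so their generators induce the same horizontal field on $X$ and can differ only in the vertical ($\partial_\theta$) direction; the entire content of the lemma is that this vertical discrepancy is measured by $\delta$.

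First I would fix the primitive generator $\widetilde{\boldsymbol{\nu}}\in L(T^1)\subset\mathfrak{t}^1$ and write out the two contact lifts of $\boldsymbol{\xi}=\widetilde{\boldsymbol{\nu}}$ via (\ref{eqn:contact lift}). The generator of $\mu^X$ is $\widetilde{\boldsymbol{\nu}}_M^\sharp-\langle\Phi,\widetilde{\boldsymbol{\nu}}\rangle\,\partial_\theta$, whereas that of $\gamma^X|_{T^1}$ is $\widetilde{\boldsymbol{\nu}}_M^\sharp-\langle\iota^t\circ\Psi,\widetilde{\boldsymbol{\nu}}\rangle\,\partial_\theta$. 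Subtracting, the horizontal terms cancel, and using $\Phi=\iota^t\circ\Psi+\boldsymbol{\delta}$ from (\ref{eqn:Trpsi}) together with $\boldsymbol{\delta}=\delta\,\boldsymbol{\nu}$ and $\boldsymbol{\nu}(\widetilde{\boldsymbol{\nu}})=1$, the difference of the two generators comes out to be exactly $-\delta\,\partial_\theta$.

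Next I would exponentiate. Because $\partial_\theta$ generates the structure $S^1$-action, it commutes with every contact lift; hence the time-$\vartheta$ flow of the generator of $\mu^X$ factors as $\gamma^X_{e^{\vartheta\widetilde{\boldsymbol{\nu}}}}$ followed by the time-$\vartheta$ flow of $-\delta\,\partial_\theta$. Since $\rho^X$ is generated by $-\partial_\theta$ (clockwise fiber rotation) and we parametrize $S^1$ by $\theta\mapsto e^{\imath\theta}$, the time-$\vartheta$ flow of $-\delta\,\partial_\theta$ is precisely $\rho^X_{e^{\imath\delta\vartheta}}$. Comparing with the defining relation (\ref{eqn:char mediatra}) then yields $\chi(e^{\vartheta\widetilde{\boldsymbol{\nu}}})=e^{\imath\delta\vartheta}$.

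It remains to see that $\delta\in\mathbb{Z}$, and this is where the hypothesis that $(\mu^M,\Phi)$ genuinely lifts to a $T^1$-action $\mu^X$ is used. Since $\widetilde{\boldsymbol{\nu}}$ is primitive, $e^{2\pi\widetilde{\boldsymbol{\nu}}}=1$ in $T^1$, so $\mu^X_{e^{2\pi\widetilde{\boldsymbol{\nu}}}}=\mathrm{id}$; as also $\gamma^X_{e^{2\pi\widetilde{\boldsymbol{\nu}}}}=\mathrm{id}$, the factorization above forces $\rho^X_{e^{2\pi\imath\delta}}=\mathrm{id}$, hence $e^{2\pi\imath\delta}=1$ and $\delta\in\mathbb{Z}$. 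The only delicate point in the whole argument is the orientation bookkeeping in this exponentiation step --- keeping straight that $\partial_\theta$ is counterclockwise while $\rho^X$ is clockwise --- so that the sign of $\delta$ in the exponent is correct; everything else follows routinely from the commutativity of $\partial_\theta$ with the contact lifts.
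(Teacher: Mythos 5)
Your proof is correct and follows essentially the same route as the paper: both compare the two contact lifts of $\widetilde{\boldsymbol{\nu}}$ via (\ref{eqn:contact lift}) and (\ref{eqn:Trpsi}) to find that they differ by $-\delta\,\partial_\theta$, exponentiate this to get $\mu^X_{e^{\vartheta\widetilde{\boldsymbol{\nu}}}}=\gamma^X_{e^{\vartheta\widetilde{\boldsymbol{\nu}}}}\circ\rho^X_{e^{\imath\delta\vartheta}}$, and then use $\widetilde{\boldsymbol{\nu}}\in L(T^1)$ together with the freeness of $\rho^X$ to force $\delta\in\mathbb{Z}$. Your orientation bookkeeping ($\partial_\theta$ counterclockwise, $\rho^X$ generated by $-\partial_\theta$) matches the paper's equation (\ref{eqn:chidelta}) exactly.
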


\begin{proof}
Recall that, by choice of $\widetilde{\boldsymbol{\nu}}$, 
$\boldsymbol{\nu}=\widetilde{\boldsymbol{\nu}}^*\in {\mathfrak{t}^1}^\vee$
is the dual basis to $\widetilde{\boldsymbol{\nu}}$, and so
$\boldsymbol{\delta}=\delta\,\boldsymbol{\nu}$, where $\delta = \boldsymbol{\delta}(\widetilde{\boldsymbol{\nu}})$.
Let us write $\widetilde{\boldsymbol{\nu}}_X^{\Phi}$ and $\widetilde{\boldsymbol{\nu}}_X^{\Psi}$ 
for the vector field on $X$ induced by 
$\widetilde{\boldsymbol{\nu}}$ under $\mu^X$ and $\gamma^X$, respectively. 
In view of (\ref{eqn:contact lift}), we obtain
\begin{eqnarray}
\label{eqn:nuXPhiPsi}
\widetilde{\boldsymbol{\nu}}_X^{\Phi}&=&\widetilde{\boldsymbol{\nu}}_M^\sharp
-\langle\Phi,\widetilde{\boldsymbol{\nu}}\rangle\,\partial_\theta
\nonumber\\
&=&\widetilde{\boldsymbol{\nu}}_M^\sharp-\langle\Psi,\widetilde{\boldsymbol{\nu}}\rangle\,\partial_\theta
-\delta\,\partial_\theta=\widetilde{\boldsymbol{\nu}}_X^{\Psi}-\delta\,\partial_\theta.
\end{eqnarray}
Hence for every $x\in X$ and $e^{\vartheta\,\boldsymbol{\nu}}\in T^1$
\begin{equation}
\label{eqn:chidelta}
\mu^X_{e^{\vartheta\,\widetilde{\boldsymbol{\nu}}}}(x)=
e^{-\imath\,\delta\,\vartheta}\gamma^X_{e^{\vartheta\,\widetilde{\boldsymbol{\nu}}}}(x)=\gamma^X_{e^{\vartheta\,\widetilde{\boldsymbol{\nu}}}}\circ 
\rho^X_{e^{\imath\,\delta\,\vartheta}}(x).
\end{equation}
Given that $\widetilde{\boldsymbol{\nu}}\in L(T^1)$, (\ref{eqn:chidelta})
implies $\rho^X_{e^{2\pi\,\imath\,\delta}}=\mathrm{id}_X$. Since $\rho^X$ is free, 
this implies $\delta\in \mathbb{Z}$. Since (\ref{eqn:chidelta}) holds for any $\vartheta$,
the second claim follows as well.

\end{proof}

\subsubsection{$\widehat{M}_{\boldsymbol{\nu}}$ and its K\"{a}hler structure}

By assumption, $\mu^X:T^1\times X\rightarrow X$ lifts $\mu^M:T^1\times M\rightarrow M$;
since $\Phi^{\tilde{\boldsymbol{\nu}}}>0$, $\mu^X$ is locally free by (\ref{eqn:contact lift}), 
and given that
$\mu^M$ is holomorphic $\mu^X$ preserves the CR structure
of $X$.  
Hence the quotient $\widehat{M}_{\boldsymbol{\nu}}:=X/\mu^X$ is a 
$d$-dimensional complex orbifold with complex structure
$\widehat{J}_{\boldsymbol{\nu}}$ (\cite{pao-loa}, \cite{pao-JGP}).
Furthermore, $\mu^X$ is effective, hence generically free;
therefore the projection $\widehat{\pi}_{\boldsymbol{\nu}}
:X\rightarrow \widehat{M}_{\boldsymbol{\nu}}$ is a principal $V$-bundle with
structure group $T^1$. 

We shall now see that
$(\widehat{M}_{\boldsymbol{\nu}},\,\widehat{J}_{\boldsymbol{\nu}})$ carries a 
K\"{a}hler structure $\widehat{\omega}_{\boldsymbol{\nu}}$, 
naturally induced from $\omega$. Aside from slight changes in notation,
the discussion is close to the ones in \S 2 of \cite{pao-loa}
and \S 5.3 of \cite{pao-JGP}, so we'll be rather sketchy.
To lighten notation, we shall adopt the following conventions.

\begin{enumerate}
\item if $(m,x,\widehat{m})\in M\times X\times \widehat{M}_{\boldsymbol{\nu}}$,
we shall write $m\leftarrow x\rightarrow \widehat{m}$ to mean 
$\pi(x)=m$ and $\widehat{\pi}_{\boldsymbol{\nu}}(x)=\widehat{m}$;
\item if $(m,\widehat{m})\in M\times \widehat{M}_{\boldsymbol{\nu}}$,
we shall write $m\sim \widehat{m}$ to mean that $m\leftarrow x\rightarrow \widehat{m}$
for some $x\in X$;
\item if $U\subseteq M$, we shall set 
$\widehat{U}:=\widehat{\pi}_{\boldsymbol{\nu}}\left( \pi^{-1}(U) \right)$;
\item we shall generally omit symbols of pull-backs for
functions, and denote by the same symbol a function $f:M\rightarrow \mathbb{C}$
and its pull-back $\pi^*(f):X\rightarrow \mathbb{C}$;
\item similarly, if
$f$ is invariant and hence $\pi^*(f)$ descends to $\widehat{M}_{\boldsymbol{\nu}}$, we shall also 
denote by
$f:\widehat{M}_{\boldsymbol{\nu}}\rightarrow \mathbb{C}$ the descended function.
\end{enumerate}

Let the invariant differential $1$-form $\widehat{\alpha}_{\boldsymbol{\nu}}\in \Omega^1(X)$ be
defined by
\begin{equation}
\label{eqn:hatalphanu}
\widehat{\alpha}_{\boldsymbol{\nu}}:=\frac{1}{\Phi^{\tilde{\boldsymbol{\nu}}}}\,\alpha.
\end{equation}
Then $\iota(\tilde{\boldsymbol{\nu}}_X)\,\widehat{\alpha}_{\boldsymbol{\nu}}=-1$, hence 
$\widehat{\alpha}_{\boldsymbol{\nu}}$ is a connection $1$-form for $\widehat{\pi}_{\boldsymbol{\nu}}$.
Hence there is a unique orbifold 2-form $\widehat{\omega}_{\boldsymbol{\nu}}$ on
$\widehat{M}_{\boldsymbol{\nu}}$ such that 
$\mathrm{d}\widehat{\alpha}_{\boldsymbol{\nu}}=2\,\widehat{\pi}_{\boldsymbol{\nu}}^*(\widehat{\omega}_{\boldsymbol{\nu}})$.
Since by (\ref{eqn:hatalphanu}) $\ker(\alpha)=\ker(\widehat{\alpha}_{\boldsymbol{\nu}})$, $\pi$ 
and $\widehat{\pi}_{\boldsymbol{\nu}}$ share the same horizontal bundle, i.e.,
$\mathcal{H}_x(\pi)=\mathcal{H}_x(\widehat{\pi}_{\boldsymbol{\nu}})$ for every $x\in X$. 
On the other hand,
since $\Phi^{\tilde{\boldsymbol{\nu}}}>0$ by (\ref{eqn:contact lift}) we have 
$\tilde{\boldsymbol{\nu}}_X(x)\not\in \mathcal{H}(\pi)_x$ at every $x\in X$. 
Hence we can split the tangent bundle $TX$ of $X$ in the two alternative ways:
\begin{equation}
\label{eqn:two splittings}
TX=\mathcal{H}(\pi)\oplus \mathrm{span}(\partial_\theta)=
\mathcal{H}_x(\widehat{\pi}_{\boldsymbol{\nu}})\oplus \mathrm{span}(\tilde{\boldsymbol{\nu}}_X).
\end{equation}
In particular, if
$m\leftarrow x\rightarrow \widehat{m}$ then there are complex linear isomorphisms 
\begin{equation}
\label{eqn:uniformizing unitaryiso}
T_mM\cong \mathcal{H}_x(\pi)\cong T_{\widehat{m}}(\widehat{M}_{\boldsymbol{\nu}}),
\end{equation}
where the latter denotes the uniformizing tangent space of $\widehat{M}_{\boldsymbol{\nu}}$
at $\tilde{m}$.
Since 
$$
2\,\widehat{\pi}_{\boldsymbol{\nu}}^*(\widehat{\omega}_{\boldsymbol{\nu}})=
\mathrm{d}\widehat{\alpha}_{\boldsymbol{\nu}}=\frac{1}{\Phi^{\tilde{\boldsymbol{\nu}}}}
\,2\,\pi^*(\omega)
-\frac{1}{{\Phi^{\tilde{\boldsymbol{\nu}}}}^2}\,\mathrm{d}{\Phi^{\tilde{\boldsymbol{\nu}}}}
\wedge \alpha,
$$
the triple $\left(T_{\widehat{m}}(\widehat{M}_{\boldsymbol{\nu}}), \widehat{J}_{\boldsymbol{\nu}},
\widehat{\omega}_{\boldsymbol{\nu}}\right)$ is isomorphic to 
$(T_mM,J_m,\,\omega_m/\Phi^{\tilde{\boldsymbol{\nu}}})$,
so that
$\widehat{\omega}_{\boldsymbol{\nu}}$ is a K\"{a}hler form on $\widehat{M}_{\boldsymbol{\nu}}$.

\subsubsection{Horizontal and contact lifts with respect to $\widehat{\pi}_{\boldsymbol{\nu}}$}

Since $\tilde{\boldsymbol{\nu}}$ is primitive, the map
$e^{\imath\,\vartheta}\in S^1\mapsto 
e^{\imath\,\vartheta\,\tilde{\boldsymbol{\nu}}} \in T^1$ is an isomorphism of Lie groups.
Composing the latter with the effective action $\mu^X$, we obtain an effective action of
$S^1$ on $X$, which is free on a dense invariant subset. Therefore, there exists a dense
(and smooth) open subset $\widehat{M}_{\boldsymbol{\nu}}'\subseteq \widehat{M}_{\boldsymbol{\nu}}$
over which $\widehat{\pi}_{\boldsymbol{\nu}}$ restricts to principal $S^1$-bundle.
Let us set $X':=\widehat{\pi}_{\boldsymbol{\nu}}^{-1}(\widehat{M}_{\boldsymbol{\nu}}')$.

Given a smooth orbifold vector field $\boldsymbol{\upsilon}$ on $\widehat{M}_{\boldsymbol{\nu}}$, we
shall say that a (smooth) vector field on $X$ is the horizontal lift of 
$\boldsymbol{\upsilon}$ (with respect to $\widehat{\pi}_{\boldsymbol{\nu}}$) 
if it is horizontal (i.e., tangent to 
$\mathcal{H}(\pi)=\mathcal{H}(\widehat{\pi}_{\boldsymbol{\nu}})$) and 
$\widehat{\pi}_{\boldsymbol{\nu}}$-related to $\boldsymbol{\upsilon}$ over 
$\widehat{M}_{\boldsymbol{\nu}}'$. 

\begin{prop}
\label{prop:horizontal lift orbifold}
Any smooth orbifold vector field $\boldsymbol{\upsilon}$ on $\widehat{M}_{\boldsymbol{\nu}}$
has a unique horizontal lift to $X$.
\end{prop}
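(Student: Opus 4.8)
The plan is to construct the lift fibrewise from the isomorphism recorded in (\ref{eqn:uniformizing unitaryiso}), to obtain uniqueness for free, and to reduce the one genuinely delicate point — smoothness across the non-free orbifold locus — to a slice computation in a uniformizing chart. First I note that uniqueness is essentially immediate. Over the dense open set $\widehat{M}'_{\boldsymbol{\nu}}$ the map $\widehat{\pi}_{\boldsymbol{\nu}}$ is an honest principal $S^1$-bundle, so any two horizontal lifts of $\boldsymbol{\upsilon}$ are both horizontal and $\widehat{\pi}_{\boldsymbol{\nu}}$-related to $\boldsymbol{\upsilon}$ there, hence agree on $X'=\widehat{\pi}_{\boldsymbol{\nu}}^{-1}(\widehat{M}'_{\boldsymbol{\nu}})$; being smooth vector fields that coincide on a dense subset, they coincide on all of $X$. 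Thus it only remains to produce one smooth horizontal lift.

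For existence I would first define the lift as a (rough) section of $\mathcal{H}$ over all of $X$. Since $\Phi^{\tilde{\boldsymbol{\nu}}}>0$ everywhere, $\tilde{\boldsymbol{\nu}}_X$ is nowhere vanishing and $\widehat{\alpha}_{\boldsymbol{\nu}}(\tilde{\boldsymbol{\nu}}_X)=-1$, so the second splitting in (\ref{eqn:two splittings}) holds at \emph{every} point of $X$, not merely over $X'$; consequently $\mathrm{d}_x\widehat{\pi}_{\boldsymbol{\nu}}$ restricts to a linear isomorphism $\mathcal{H}_x\xrightarrow{\sim} T_{\widehat{m}}\widehat{M}_{\boldsymbol{\nu}}$ onto the uniformizing tangent space, exactly as in (\ref{eqn:uniformizing unitaryiso}). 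I then let $\boldsymbol{\upsilon}^\sharp(x)$ be the unique vector in $\mathcal{H}_x$ mapping to $\boldsymbol{\upsilon}(\widehat{m})$ under this isomorphism. By construction $\boldsymbol{\upsilon}^\sharp$ is horizontal and, over $\widehat{M}'_{\boldsymbol{\nu}}$, is $\widehat{\pi}_{\boldsymbol{\nu}}$-related to $\boldsymbol{\upsilon}$; moreover it is $\mu^X$-invariant. Indeed $\mathcal{H}=\ker\widehat{\alpha}_{\boldsymbol{\nu}}$ is $\mu^X$-invariant (as $\widehat{\alpha}_{\boldsymbol{\nu}}$ is invariant) and $\widehat{\pi}_{\boldsymbol{\nu}}\circ\mu^X_t=\widehat{\pi}_{\boldsymbol{\nu}}$, so $(\mu^X_t)_*\boldsymbol{\upsilon}^\sharp$ is again horizontal and projects to $\boldsymbol{\upsilon}$, whence it equals $\boldsymbol{\upsilon}^\sharp$ by uniqueness of the preimage. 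All of this is purely pointwise and needs no regularity input.

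The remaining — and only genuinely delicate — point is smoothness of $\boldsymbol{\upsilon}^\sharp$ at points of $X\setminus X'$, which I expect to be the main obstacle. Since smoothness is local, I would fix $x_0$ with finite stabiliser $\Gamma=\mathbb{Z}/p\leqslant S^1$ and invoke the slice theorem for the $S^1$-action induced by $\mu^X$ (which is proper, $S^1$ being compact): there is a $\Gamma$-invariant slice $S\ni x_0$, transverse to the orbit, such that $\big(S,\Gamma,\widehat{\pi}_{\boldsymbol{\nu}}|_S\big)$ is a uniformizing chart around $\widehat{m}_0=\widehat{\pi}_{\boldsymbol{\nu}}(x_0)$ and the tube $\mu^X(S^1\times S)$ is a neighbourhood of the orbit. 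By the very definition of a smooth orbifold vector field, $\boldsymbol{\upsilon}$ is represented on this chart by a $\Gamma$-invariant smooth field $\boldsymbol{\upsilon}_S$ on $S$; applying the pointwise recipe above over $S$ — legitimate because the isomorphism $\mathcal{H}_x\cong T_{\widehat{m}}\widehat{M}_{\boldsymbol{\nu}}$ holds at every $x\in S$ — produces a smooth, $\Gamma$-invariant horizontal field on $S$, which I then spread over the whole tube by $\mu^X$-invariance. The spread-out field is smooth (the slice action is smooth) and, on the overlap with $X'$, coincides with the field defined above.

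Finally I would observe that such tubes together with $X'$ cover $X$, and that on every overlap the local descriptions agree — once more by the uniqueness argument on the dense free locus $X'$, which meets each overlap. Hence they glue to a globally smooth horizontal vector field, which is the required $\boldsymbol{\upsilon}^\sharp$; its uniqueness was established at the outset. The technical heart of the argument is thus the verification that the slice-wise horizontal lift is smooth and patches correctly through the uniformizers, the pointwise existence of the lift being automatic from the everywhere-valid splitting (\ref{eqn:two splittings}).
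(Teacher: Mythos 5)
Your proof is correct and follows essentially the same route as the paper's: uniqueness from density of the free locus $X'$, and existence by passing to a slice for the $\mu^X$-action, representing $\boldsymbol{\upsilon}$ there by a stabilizer-invariant smooth field, taking horizontal components via the splitting (\ref{eqn:two splittings}), spreading equivariantly over the invariant tube, and gluing the local constructions by the same density argument. The only cosmetic difference is that you first define a global pointwise lift and then verify its smoothness in slice charts, whereas the paper directly builds smooth local horizontal lifts on tubes and glues them.
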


We shall denote the horizontal lift in Proposition (\ref{prop:horizontal lift orbifold})
by $\boldsymbol{\upsilon}^\flat$.

\begin{proof}
Any two horizontal lifts of $\boldsymbol{\upsilon}$
clearly coincide on
$X'$, hence everywhere
in $X$.
As to existence, obviously the horizontal lift exists over the smooth locus (i.e. on $X'$), 
so the point is to see that
it has a smooth extension over the singular locus. 

Suppose $\widehat{m}=\widehat{\pi}_{\boldsymbol{\nu}}(x)\in \widehat{M}_{\boldsymbol{\nu}}$,
and let $F_1\subset X$ be a slice for $\mu^X$ through $x$. Thus $F_1$ uniformizes an open neighbourhood
of $\widehat{m}$, and $\boldsymbol{\upsilon}$ corresponds to a vector field
$\mathbf{v}_1$ on $F_1$, invariant under the action of the stabilizer subgroup $T^1_x$ of $x$ in
$T^1$. Furthermore, a suitable invariant tubular neighbourhood $U_1\subseteq X$ of the $T^1$-orbit of
$x$ is equivariantly diffeomorphic to $T^1\times _{T_x^1}F_1$.
Hence we can push forward $\mathbf{v}_1$ (or, more precisely, $(\mathbf{0},\mathbf{v}_1)$) 
under the local diffeomorphism $T^1_x\times F_1\rightarrow U_1$, and obtain a smooth
vector field $\boldsymbol{\upsilon}_1'$ on $U_1$ which is $\widehat{\pi}_{\boldsymbol{\nu}}$-related
to $\boldsymbol{\upsilon}$ on $U_1\cap X'$. Let $\boldsymbol{\upsilon}_1$ denote the horizontal
component of $\boldsymbol{\upsilon}_1'$ with respect to $\widehat{\pi}_{\boldsymbol{\nu}}$,
that is, its projection on 
$\mathcal{H}_x(\widehat{\pi}_{\boldsymbol{\nu}})$ along
$\mathrm{span}(\tilde{\boldsymbol{\nu}}_X)$ in (\ref{eqn:two splittings}).
Then $\boldsymbol{\upsilon}_1$ is a smooth vector field on $U_1$, horizontal and
$\widehat{\pi}_{\boldsymbol{\nu}}$-related
to $\boldsymbol{\upsilon}$ on $U_1\cap X'$.

Another such vector field $\boldsymbol{\upsilon}_2$ similarly constructed on an invariant
open set $U_2$ will necessarily coincide with $\boldsymbol{\upsilon}_1$ on 
$U_1\cap U_2\cap X'$, whence on all of $U_1\cap U_2$ if the latter is non-empty.
Hence by glueing these local constructions we obtain the desired lift.
\end{proof}

Suppose that $f$ is a $\mathcal{C}^\infty$ real function on $\widehat{M}_{\boldsymbol{\nu}}$,
and let $\boldsymbol{\upsilon}_f$ be its Hamiltonian orbifold vector field with respect to $2\,\widehat{\omega}_{\boldsymbol{\nu}}$.
Let us define 
\begin{equation}
\label{eqn:contact upsilon f}
\boldsymbol{\upsilon}_f^c:=\boldsymbol{\upsilon}_f^\flat+f\,\tilde{\boldsymbol{\nu}}_X.
\end{equation}

\begin{prop}
\label{prop:contact lift hat}
$\boldsymbol{\upsilon}_f^c$ is a contact vector field on $(X,\hat{\alpha}_{\boldsymbol{\nu}})$.
If in addition the flow of $\boldsymbol{\upsilon}_f$ is holomorphic on 
$(\widehat{M}_{\boldsymbol{\nu}},\,J_{\boldsymbol{\nu}})$, then the flow of 
$\boldsymbol{\upsilon}_f^c$ preserves the CR structure of $X$.
\end{prop}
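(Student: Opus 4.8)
The plan is to establish the two assertions separately: the contact property by a short Cartan-calculus computation exploiting the defining features of $\widehat{\alpha}_{\boldsymbol{\nu}}$, and the CR-invariance by transporting the holomorphy of the flow of $\boldsymbol{\upsilon}_f$ upstairs through $\widehat{\pi}_{\boldsymbol{\nu}}$. First I would compute $\mathcal{L}_{\boldsymbol{\upsilon}_f^c}\widehat{\alpha}_{\boldsymbol{\nu}}$ by Cartan's formula $\mathcal{L}_{\boldsymbol{\upsilon}_f^c}=\iota(\boldsymbol{\upsilon}_f^c)\,\mathrm{d}+\mathrm{d}\,\iota(\boldsymbol{\upsilon}_f^c)$. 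Since $\boldsymbol{\upsilon}_f^\flat$ is horizontal, hence lies in $\ker(\widehat{\alpha}_{\boldsymbol{\nu}})$, while $\iota(\tilde{\boldsymbol{\nu}}_X)\widehat{\alpha}_{\boldsymbol{\nu}}=-1$, definition (\ref{eqn:contact upsilon f}) gives $\iota(\boldsymbol{\upsilon}_f^c)\widehat{\alpha}_{\boldsymbol{\nu}}=-f$, so the exact term is $-\mathrm{d}f$. For the remaining term I use $\mathrm{d}\widehat{\alpha}_{\boldsymbol{\nu}}=2\,\widehat{\pi}_{\boldsymbol{\nu}}^*(\widehat{\omega}_{\boldsymbol{\nu}})$: the vertical summand $f\,\tilde{\boldsymbol{\nu}}_X$ contributes nothing, as $\tilde{\boldsymbol{\nu}}_X$ is tangent to the fibres of $\widehat{\pi}_{\boldsymbol{\nu}}$, whereas $\boldsymbol{\upsilon}_f^\flat$ is $\widehat{\pi}_{\boldsymbol{\nu}}$-related to $\boldsymbol{\upsilon}_f$, so that $\iota(\boldsymbol{\upsilon}_f^c)\,\mathrm{d}\widehat{\alpha}_{\boldsymbol{\nu}}=2\,\widehat{\pi}_{\boldsymbol{\nu}}^*\!\big(\iota(\boldsymbol{\upsilon}_f)\widehat{\omega}_{\boldsymbol{\nu}}\big)=\widehat{\pi}_{\boldsymbol{\nu}}^*(\mathrm{d}f)=\mathrm{d}f$, using the defining relation $\iota(\boldsymbol{\upsilon}_f)(2\,\widehat{\omega}_{\boldsymbol{\nu}})=\mathrm{d}f$ of the Hamiltonian field and the convention that $f$ names also its pull-back. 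Adding the two contributions yields $\mathcal{L}_{\boldsymbol{\upsilon}_f^c}\widehat{\alpha}_{\boldsymbol{\nu}}=\mathrm{d}f-\mathrm{d}f=0$; thus $\boldsymbol{\upsilon}_f^c$ is in fact a \emph{strict} contact vector field, whose flow preserves $\widehat{\alpha}_{\boldsymbol{\nu}}$ itself and a fortiori the contact distribution $\ker(\widehat{\alpha}_{\boldsymbol{\nu}})=\mathcal{H}$.

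For the CR assertion, write $\mathrm{e}^{t\,\boldsymbol{\upsilon}_f^c}$ and $\mathrm{e}^{t\,\boldsymbol{\upsilon}_f}$ for the flows of $\boldsymbol{\upsilon}_f^c$ on $X$ and of $\boldsymbol{\upsilon}_f$ on $\widehat{M}_{\boldsymbol{\nu}}$. Since the horizontal summand $\boldsymbol{\upsilon}_f^\flat$ projects to $\boldsymbol{\upsilon}_f$ and the vertical summand $f\,\tilde{\boldsymbol{\nu}}_X$ projects to $0$, the field $\boldsymbol{\upsilon}_f^c$ is $\widehat{\pi}_{\boldsymbol{\nu}}$-related to $\boldsymbol{\upsilon}_f$, whence $\widehat{\pi}_{\boldsymbol{\nu}}\circ\mathrm{e}^{t\,\boldsymbol{\upsilon}_f^c}=\mathrm{e}^{t\,\boldsymbol{\upsilon}_f}\circ\widehat{\pi}_{\boldsymbol{\nu}}$. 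By the first part the flow preserves $\mathcal{H}$, so its differential carries $\mathcal{H}_x$ isomorphically onto $\mathcal{H}_{y}$, with $y=\mathrm{e}^{t\,\boldsymbol{\upsilon}_f^c}(x)$. For $m\leftarrow x\rightarrow\widehat{m}$ I would then consider the square whose vertical arrows are the complex-linear isomorphisms $\widehat{\pi}_{\boldsymbol{\nu}*}\colon\mathcal{H}_x\to T_{\widehat{m}}\widehat{M}_{\boldsymbol{\nu}}$ of (\ref{eqn:uniformizing unitaryiso}), whose top arrow is the restriction of the flow differential to $\mathcal{H}$, and whose bottom arrow is the differential of $\mathrm{e}^{t\,\boldsymbol{\upsilon}_f}$; it commutes because the flows are $\widehat{\pi}_{\boldsymbol{\nu}}$-related. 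The holomorphy hypothesis makes the bottom arrow $\widehat{J}_{\boldsymbol{\nu}}$-linear, the vertical arrows intertwine $\widehat{J}_{\boldsymbol{\nu}}$ with $J|_{\mathcal{H}}$, and hence the flow differential commutes with $J$ on $\mathcal{H}$. As the flow already preserves $\mathcal{H}$, it is a CR automorphism, which is the claim.

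The $\widehat{\pi}_{\boldsymbol{\nu}}$-relatedness of $\boldsymbol{\upsilon}_f^\flat$ and $\boldsymbol{\upsilon}_f$ and the compatibility of the complex structures in (\ref{eqn:uniformizing unitaryiso}) are recorded in the preceding subsections, so the only genuine point needing care is the behaviour over the orbifold singular locus, and this I expect to be the main (if mild) obstacle. Away from the free locus $\widehat{M}_{\boldsymbol{\nu}}'$ both \lq $\widehat{\pi}_{\boldsymbol{\nu}}$-related\rq\ and \lq holomorphic flow\rq\ must be read in the uniformized sense, exactly as in the proof of Proposition \ref{prop:horizontal lift orbifold}: one passes to a slice $F_1\subset X$ through $x$ uniformizing a neighbourhood of $\widehat{m}$, verifies the commuting square there, where every map is an honest linear isomorphism, and descends using invariance under the stabilizer $T^1_x$. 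Smoothness of $\boldsymbol{\upsilon}_f^c$ across the singular locus is not an issue, since it is a globally defined smooth vector field on the compact manifold $X$, so its flow exists for all time and the infinitesimal conclusions integrate without further ado.
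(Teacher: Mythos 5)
Your proof is correct and follows essentially the same route as the paper's: the identical Cartan-formula computation, with $\iota(\boldsymbol{\upsilon}_f^c)\,\mathrm{d}\widehat{\alpha}_{\boldsymbol{\nu}}=\mathrm{d}f$ and $\mathrm{d}\big(\iota(\boldsymbol{\upsilon}_f^c)\,\widehat{\alpha}_{\boldsymbol{\nu}}\big)=-\mathrm{d}f$ summing to zero, followed by the same CR argument via preservation of the horizontal bundle, $\widehat{\pi}_{\boldsymbol{\nu}}$-relatedness to the holomorphic flow downstairs, and the complex-linear isomorphisms (\ref{eqn:uniformizing unitaryiso}). Your closing remarks on the orbifold singular locus and uniformizing slices only make explicit what the paper leaves implicit.
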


\begin{proof}
We have (writing $f$ for $\hat{\pi}^*_{\boldsymbol{\nu}}(f)$)
$$
\iota(\boldsymbol{\upsilon}_f^c)\,\mathrm{d}\hat{\alpha}_{\boldsymbol{\nu}}=
\iota(\boldsymbol{\upsilon}_f^c)\,2\,\hat{\pi}^*_{\boldsymbol{\nu}}\left( \hat{\omega}^*_{\boldsymbol{\nu}}  \right)=\mathrm{d}f,\quad
\mathrm{d}\left( \iota(\boldsymbol{\upsilon}_f^c)\, \hat{\alpha}_{\boldsymbol{\nu}} \right)=
-\mathrm{d}f.
$$
Hence $L_{\boldsymbol{\upsilon}_f^c}\hat{\alpha}_{\boldsymbol{\nu}}=
\iota(\boldsymbol{\upsilon}_f^c)\,\mathrm{d}\hat{\alpha}_{\boldsymbol{\nu}}+
\mathrm{d}\left( \iota(\boldsymbol{\upsilon}_f^c)\, \hat{\alpha}_{\boldsymbol{\nu}} \right)=0$.
This proves the first statement. 

On the other hand, the flow of $\boldsymbol{\upsilon}_f^c$
preserves the horizontal tangent bundle and covers a holomorphic flow on 
$\widehat{M}_{\boldsymbol{\nu}}$; the second statement then 
follows in view of the unitary isomorphisms
(\ref{eqn:uniformizing unitaryiso}).
\end{proof}

By the same principle, we can consider lifts of Hamiltonian actions for 
$\widehat{\pi}_{\boldsymbol{\nu}}$ just as one does for $\pi$.
Suppose given a holomorphic and Hamiltonian action 
$\varsigma^{\widehat{M}_{\boldsymbol{\nu}}}$
of a compact and connected Lie group $G$ on 
$(\widehat{M}_{\boldsymbol{\nu}},\,2\,\widehat{\omega}_{\boldsymbol{\nu}})$ (in the orbifold sense, 
see \cite{lt}), with moment map $\Lambda:\widehat{M}_{\boldsymbol{\nu}}\rightarrow \mathfrak{g}^\vee$. 
Thus any $\boldsymbol{\xi}\in \mathfrak{g}$ determines 
an induced Hamiltonian (orbifold) vector field $\boldsymbol{\xi}_{\widehat{M}_{\boldsymbol{\nu}}}$
on $\widehat{M}_{\boldsymbol{\nu}}$. Applying (\ref{eqn:contact upsilon f})
with $\boldsymbol{\upsilon}=\boldsymbol{\xi}_{\widehat{M}_{\boldsymbol{\nu}}}$,
thus setting $\boldsymbol{\xi}_{X}:=\boldsymbol{\xi}_{\widehat{M}_{\boldsymbol{\nu}}}^c$, 
we associate a contact and CR vector field
on $X$ to each $\boldsymbol{\xi}\in \mathfrak{g}$.
A standard argument shows that this assignment defines an infinitesimal contact
and CR action of $\mathfrak{g}$ on $X$. If this infinitesimal action is the differential
of a Lie group action $\varsigma^{X}$ of $G$ on $X$, we shall call the latter the (contact and
CR) lift of $(\varsigma^{\widehat{M}_{\boldsymbol{\nu}}}, \Lambda)$.

When $G$ acts  on both $M$ and $\widehat{M}_{\boldsymbol{\nu}}$, we have in principle
two lifts in the picture and two different meanings for $\boldsymbol{\xi}_X$. We will
clarify this point in the following section.

\subsubsection{Transfering Hamiltonian actions from $M$ to $\widehat{M}_{\boldsymbol{\nu}}$}

Suppose that $G$ is a connected compact Lie group and let
$\Xi^M:G\times M\rightarrow M$ be a holomorphic and Hamiltonian action, with
moment map $\Upsilon:M\rightarrow\mathfrak{g}^\vee$. Assume the following:
\begin{enumerate}
\item $(\Xi^M,\Upsilon)$
lifts to the contact CR action $\Xi^X:G\times X\rightarrow X$;
\item $\Xi^M$ and $\mu^M$ commute.
\end{enumerate}
Then one can see that $\Xi^X$ commutes with $\mu^X$; therefore
$\Xi^X$ descends to an action
$\Xi^{\widehat{M}_{\boldsymbol{\nu}}}:G\times \widehat{M}_{\boldsymbol{\nu}}\rightarrow
\widehat{M}_{\boldsymbol{\nu}}$.

\begin{prop}
\label{prop:twisted lift hat}
Under the previous assumptions, 
$\Xi^{\widehat{M}_{\boldsymbol{\nu}}}$ is holomorphic and Hamiltonian on the
K\"{a}hler orbifold $(\widehat{M}_{\boldsymbol{\nu}},\,
2\,\widehat{\omega}_{\boldsymbol{\nu}},\,\widehat{J}_{\boldsymbol{\nu}})$,
with moment map 
$$
\hat{\Upsilon}_{\boldsymbol{\nu}}:=\frac{1}{\Phi^{\tilde{\boldsymbol{\nu}}}}\,\Upsilon.
$$
Furthermore, $\Xi^X$ is also the contact and CR lift of 
$\big(\Xi^{\widehat{M}_{\boldsymbol{\nu}}},\,\hat{\Upsilon}_{\boldsymbol{\nu}}\big)$.
\end{prop}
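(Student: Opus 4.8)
The plan is to verify the three assertions—holomorphicity, the Hamiltonian property with the claimed moment map, and the identification of $\Xi^X$ as the contact-CR lift—by pulling everything back to $X$ and exploiting the two compatible splittings (\ref{eqn:two splittings}) together with the relation (\ref{eqn:hatalphanu}) between $\alpha$ and $\widehat{\alpha}_{\boldsymbol{\nu}}$. First I would record why $\Xi^X$ descends at all: since $\Xi^M$ commutes with $\mu^M$, the contact lifts $\Xi^X$ and $\mu^X$ have commuting flows (the lift in (\ref{eqn:contact lift}) is functorial in the Hamiltonian data, and commuting Hamiltonian flows on $M$ lift to commuting contact flows on $X$); hence $\Xi^X$ preserves the $\mu^X$-orbits and induces $\Xi^{\widehat{M}_{\boldsymbol{\nu}}}$ on the quotient. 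Because $\Xi^X$ is CR and preserves the horizontal bundle $\mathcal{H}(\pi)=\mathcal{H}(\widehat{\pi}_{\boldsymbol{\nu}})$, the induced flow on $\widehat{M}_{\boldsymbol{\nu}}$ is holomorphic via the isomorphisms (\ref{eqn:uniformizing unitaryiso}); this settles holomorphicity.

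For the Hamiltonian property, let $\boldsymbol{\xi}\in\mathfrak{g}$ and write $\boldsymbol{\xi}_X$ for the contact vector field lifting $\boldsymbol{\xi}_M$ as in (\ref{eqn:contact lift}), so that $\iota(\boldsymbol{\xi}_X)\,\alpha=-\langle\Upsilon,\boldsymbol{\xi}\rangle$ (the pull-back of the $M$-moment map). The key computation is to pair $\boldsymbol{\xi}_X$ with $\widehat{\alpha}_{\boldsymbol{\nu}}$: by (\ref{eqn:hatalphanu}),
\begin{equation}
\iota(\boldsymbol{\xi}_X)\,\widehat{\alpha}_{\boldsymbol{\nu}}
=\frac{1}{\Phi^{\tilde{\boldsymbol{\nu}}}}\,\iota(\boldsymbol{\xi}_X)\,\alpha
=-\frac{\langle\Upsilon,\boldsymbol{\xi}\rangle}{\Phi^{\tilde{\boldsymbol{\nu}}}}
=-\langle\widehat{\Upsilon}_{\boldsymbol{\nu}},\boldsymbol{\xi}\rangle .
\end{equation}
I would then decompose $\boldsymbol{\xi}_X$ along the second splitting in (\ref{eqn:two splittings}) as $\boldsymbol{\xi}_X=(\boldsymbol{\xi}_X)^{\mathcal{H}}+c\,\tilde{\boldsymbol{\nu}}_X$, show that the horizontal part is $\widehat{\pi}_{\boldsymbol{\nu}}$-related to $\boldsymbol{\xi}_{\widehat{M}_{\boldsymbol{\nu}}}$ and that the vertical coefficient is exactly $c=\langle\widehat{\Upsilon}_{\boldsymbol{\nu}},\boldsymbol{\xi}\rangle$, so that $\boldsymbol{\xi}_X$ has precisely the form (\ref{eqn:contact upsilon f}) with $f=\langle\widehat{\Upsilon}_{\boldsymbol{\nu}},\boldsymbol{\xi}\rangle$. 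Contracting the formula for $\mathrm{d}\widehat{\alpha}_{\boldsymbol{\nu}}=2\,\widehat{\pi}_{\boldsymbol{\nu}}^*(\widehat{\omega}_{\boldsymbol{\nu}})$ with the horizontal part and comparing with $\mathrm{d}\langle\widehat{\Upsilon}_{\boldsymbol{\nu}},\boldsymbol{\xi}\rangle$ then yields Hamilton's equation $\iota(\boldsymbol{\xi}_{\widehat{M}_{\boldsymbol{\nu}}})\,2\,\widehat{\omega}_{\boldsymbol{\nu}}=\mathrm{d}\langle\widehat{\Upsilon}_{\boldsymbol{\nu}},\boldsymbol{\xi}\rangle$ on the quotient, which is the assertion that $\widehat{\Upsilon}_{\boldsymbol{\nu}}=\Upsilon/\Phi^{\tilde{\boldsymbol{\nu}}}$ is a moment map for $\Xi^{\widehat{M}_{\boldsymbol{\nu}}}$; equivariance of $\widehat{\Upsilon}_{\boldsymbol{\nu}}$ follows from that of $\Upsilon$ together with the $G$-invariance of $\Phi^{\tilde{\boldsymbol{\nu}}}$ (which holds because $\Xi$ commutes with $\mu$). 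Finally, once the contact lift $\boldsymbol{\xi}_X$ is exhibited in the canonical form (\ref{eqn:contact upsilon f}) relative to $\widehat{\pi}_{\boldsymbol{\nu}}$ and $\widehat{\alpha}_{\boldsymbol{\nu}}$, the last sentence of the proposition is immediate from the definition of the contact-CR lift given just before the statement, by uniqueness of the vector field of that form.

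The main obstacle I anticipate is the bookkeeping that reconciles the \emph{two} meanings of $\boldsymbol{\xi}_X$ flagged in the paragraph preceding the proposition: the vector field obtained by lifting $\boldsymbol{\xi}_M$ through the contact form $\alpha$ on $\pi:X\to M$, versus the one obtained by lifting $\boldsymbol{\xi}_{\widehat{M}_{\boldsymbol{\nu}}}$ through $\widehat{\alpha}_{\boldsymbol{\nu}}$ on $\widehat{\pi}_{\boldsymbol{\nu}}:X\to\widehat{M}_{\boldsymbol{\nu}}$. The crux is the identity $\langle\widehat{\Upsilon}_{\boldsymbol{\nu}},\boldsymbol{\xi}\rangle=\langle\Upsilon,\boldsymbol{\xi}\rangle/\Phi^{\tilde{\boldsymbol{\nu}}}$, which forces the same $\boldsymbol{\xi}_X$ to serve both roles; verifying this rests on the fact that the horizontal bundles for $\pi$ and $\widehat{\pi}_{\boldsymbol{\nu}}$ coincide while their fiber generators $\partial_\theta$ and $\tilde{\boldsymbol{\nu}}_X$ differ, so one must track the change of vertical normalization carefully. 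The orbifold points require a small supplement—on the singular locus the preceding slice argument of Proposition \ref{prop:horizontal lift orbifold} guarantees the horizontal decomposition is smooth—but since both Hamilton's equation and the moment-map identity are tensorial and hold on the dense free locus $\widehat{M}_{\boldsymbol{\nu}}'$, they extend by continuity to all of $\widehat{M}_{\boldsymbol{\nu}}$.
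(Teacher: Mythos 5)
Your proposal is correct, and its analytic core coincides with the paper's: the identity $\iota(\boldsymbol{\xi}_X)\,\widehat{\alpha}_{\boldsymbol{\nu}}=-\Upsilon^{\boldsymbol{\xi}}/\Phi^{\tilde{\boldsymbol{\nu}}}$, the resulting decomposition of $\boldsymbol{\xi}_X$ into a $\widehat{\pi}_{\boldsymbol{\nu}}$-horizontal part plus $\widehat{\Upsilon}^{\boldsymbol{\xi}}\,\tilde{\boldsymbol{\nu}}_X$ (this is exactly the paper's formula (\ref{eqn:twisted lift doppio senso})), the Cartan-calculus contraction of $\mathrm{d}\widehat{\alpha}_{\boldsymbol{\nu}}$ using that $\boldsymbol{\xi}_X$ and $\tilde{\boldsymbol{\nu}}_X$ are contact for $\widehat{\alpha}_{\boldsymbol{\nu}}$, and the reading-off of the last claim from the canonical form (\ref{eqn:contact upsilon f}). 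Where you genuinely diverge is in how the orbifold-level assertion is certified: the paper devotes most of its proof to a slice construction, building a local $G$-action $\Xi^F$ on a uniformizing chart $F$ via the correction factors $s(g,y)\in T^1$, proving Lemma \ref{lem:xiFy} (that the chart representative $\boldsymbol{\xi}_F$ is the projection of $\boldsymbol{\xi}_X$ onto $T_yF$ along $\tilde{\boldsymbol{\nu}}_X$), and then verifying Hamilton's equation chart by chart; this matters because \lq\lq Hamiltonian\rq\rq\ in the Lerman--Tolman orbifold sense is a statement about uniformizing charts, singular points included. You instead verify the identity upstairs on $X$, descend it over the dense free locus $\widehat{M}_{\boldsymbol{\nu}}'$, and extend by continuity, invoking the smoothness of local representatives guaranteed by the slice argument of Proposition \ref{prop:horizontal lift orbifold}. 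Your route is shorter and legitimate: an identity between smooth tensors on a uniformizing chart that holds on the dense preimage of the free locus holds on the whole chart. What the paper's slice machinery buys in exchange is an explicit description of the induced vector field at singular points (the local action $\Xi^F$ and the function $\sigma$), which the paper reuses implicitly elsewhere, whereas your argument establishes the equation without producing that local model. Do note that your continuity step silently requires knowing that $\boldsymbol{\xi}_{\widehat{M}_{\boldsymbol{\nu}}}$, $\widehat{\omega}_{\boldsymbol{\nu}}$ and $\widehat{\Upsilon}_{\boldsymbol{\nu}}$ all admit smooth chart representatives compatible with the free-locus data; since you flag this and point to the right tool, the gap is one of detail, not of substance.
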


\begin{proof}
Given that $\Xi^M$ commutes with $\mu^M$, it preserves $\Phi^{\tilde{\boldsymbol{\nu}}}$.
Since $\Xi^X$ preserves $\alpha$ and $\Phi^{\tilde{\boldsymbol{\nu}}}$, it generates a
flow of contactomorphisms for $\widehat{\alpha}_{\boldsymbol{\nu}}$.
Therefore, the flow of $\Xi^X$ preserves 
$\widehat{\pi}_{\boldsymbol{\nu}}^*(\widehat{\omega}_{\boldsymbol{\nu}})
=\mathrm{d}\widehat{\alpha}_{\boldsymbol{\nu}}/2$.
Since $\Xi^X$ lifts $\Xi^{\widehat{M}_{\boldsymbol{\nu}}}$ by
$\widehat{\pi}_{\boldsymbol{\nu}}$, we conclude that
$\Xi^{\widehat{M}_{\boldsymbol{\nu}}}$ is a symplectic vector field for 
$\widehat{\omega}_{\boldsymbol{\nu}}$.

Since $\Upsilon:M\rightarrow \mathfrak{g}^\vee$ is $G$-equivariant 
by assumption and $\Phi^{\tilde{\boldsymbol{\nu}}}$
is $G$-invariant because $\Xi^M$ and $\mu^M$ commute, 
$\hat{\Upsilon}_{\boldsymbol{\nu}}:\widehat{M}_{\boldsymbol{\nu}}\rightarrow
\mathfrak{g}^\vee$ is $G$-equivariant.
Thus it suffices to prove that 
$(\Xi^{\widehat{M}_{\boldsymbol{\nu}}},\hat{\Upsilon}_{\boldsymbol{\nu}})$ is weakly Hamiltonian.

Suppose $\hat{m}=\widehat{\pi}_{\boldsymbol{\nu}}(x)\in \widehat{M}_{\boldsymbol{\nu}}$.
Choose a slice $F\subset X$ at $x$ for $\mu^X$, and view it as the uniformizing
open set of an open neighbourhood of $\hat{m}$ in $\widehat{M}_{\boldsymbol{\nu}}$. 
We obtain a local action $\Xi^F$ of $G$ on $F$ as follows. 
For any $y$ in a neighbourhood $F'\subseteq F$ of $x$ 
and $g$ in a neighbourhood $G'\subset G$
of the identity $e_G$,
there exists a unique $s(g,y)\in T^1$ such that 
$\mu^X_{s(g,y)}\circ \Xi^X_g(y)\in F$. Let us set
\begin{equation}
\label{eqn:localactionF}
\Xi^F:(g,y)\in G'\times F'\mapsto\mu^X_{s(g,y)}\circ \Xi^X_g(y)\in F.
\end{equation}
If $g_1,\,g_2\in G'$ are sufficiently close to the identity,
\begin{eqnarray*}
\Xi^F_{g_1}\circ\Xi^F_{g_2}(y)&=&\Xi^F_{g_1}\left( \mu^X_{s(g_2,y)}\circ \Xi^X_{g_2}(y)   \right)\\
&=& \mu^X_{s(g_1,\Xi^F_{g_2}(y))}\circ\Xi^X_{g_1}\circ \mu^X_{s(g_2,y)}\circ \Xi^X_{g_2}(y)\\
&=&\mu^X_{s(g_1,\Xi^F_{g_2}(y))}\circ \mu^X_{s(g_2,y)}\circ\Xi^X_{g_1}\circ \Xi^X_{g_2}(y)\\
&=&\mu^X_{s(g_1\,g_2,y))}\circ \Xi^X_{g_1\,g_2}(y)=\Xi^F_{g_1\,g_2}(y).
\end{eqnarray*}

Given $\boldsymbol{\xi}\in \mathfrak{g}$, the induced vector field $\boldsymbol{\xi}_F$ 
on $F$ may be computed by considering the restricted local action of the $1$-parameter subgroup
$\tau\mapsto e^{\tau\,\boldsymbol{\xi}}\in G$, hence by differentiating at $\tau=0$ the
path $\Xi^F_{e^{\tau\,\boldsymbol{\xi}}}(y)=\mu^X_{s(e^{\tau\,\boldsymbol{\xi}},y)}
\circ \mu^X_{e^{\tau\,\boldsymbol{\xi}}}(y)$. We conclude the following.

\begin{lem}
\label{lem:xiFy}
There exists a $\mathcal{C}^\infty$-function
$\sigma:\mathfrak{g}\times F\rightarrow \mathbb{R}$ such that 
for any $\boldsymbol{\xi}\in \mathfrak{g}$ and $y\in F$
$$
\boldsymbol{\xi}_F(y)=\sigma(\boldsymbol{\xi},y)\,\tilde{\boldsymbol{\nu}}_X(y)+
\boldsymbol{\xi}_X(y).
$$
\end{lem}
Here $\boldsymbol{\xi}_X$ is as in (\ref{eqn:contact lift}), with $\Upsilon$ in place
of $\Phi$.

At any $y\in F$, we have a direct sum decomposition
$T_yX=T_yF\oplus \mathrm{span}\big(\tilde{\boldsymbol{\nu}}_X(y)  \big)$.
Thus Lemma \ref{lem:xiFy} may be reformulated as follows.

\begin{cor}
For any $y\in F$ and 
$\boldsymbol{\xi}\in \mathfrak{g}$,
$\boldsymbol{\xi}_F(y)$ is the projection of $\boldsymbol{\xi}_X(y)$
on $T_yF$ along $\mathrm{span}\big(\tilde{\boldsymbol{\nu}}_X(y)  \big)$.
\end{cor}

By the commutativity of $\mu^X$ and $\Xi^X$, the stabilizer subgroup of $x$
in $T^1$ acts on $F$ preserving the previous direct sum of vector bundles on $F$.
It follows that $\boldsymbol{\xi}_F$ is an invariant vector field on $F$, and the
collection of all such is the induced vector field 
$\boldsymbol{\xi}_{\widehat{M}_{\boldsymbol{\nu}}}$ on $\widehat{M}_{\boldsymbol{\nu}}$.

Letting $\jmath:F\hookrightarrow X$ be the inclusion, let us set
$\alpha_F:=\jmath^*(\widehat{\alpha}_{\boldsymbol{\nu}})$ and $\omega_F:=\mathrm{d}\alpha_F/2$.
The collection of all pairs $(F,\omega_F)$ represents 
$\widehat{\omega}_{\boldsymbol{\nu}}$.

If $y\in F$ as above, $\iota\big(\boldsymbol{\xi}_F(y)  \big)\,(\mathrm{d}\alpha_F)_y$
is the restriction to $T_yF\subset T_yX$ of 
$\iota\big(\boldsymbol{\xi}_F(y)  \big)\,(\mathrm{d}\widehat{\alpha}_{\boldsymbol{\nu}})_y$.
On the other hand, by Lemma \ref{lem:xiFy} we have
\begin{eqnarray*}
\iota\big(\boldsymbol{\xi}_F(y)  \big)\,(\mathrm{d}\widehat{\alpha}_{\boldsymbol{\nu}})_y
&=&\iota\Big( \boldsymbol{\xi}_X(y)
+\sigma(\boldsymbol{\xi},y)\,\tilde{\boldsymbol{\nu}}_X(y)\Big)\,
(\mathrm{d}\widehat{\alpha}_{\boldsymbol{\nu}})_y\\
&=&\left[\iota( \boldsymbol{\xi}_X)\,
(\mathrm{d}\widehat{\alpha}_{\boldsymbol{\nu}})\right]_y
+\sigma(\boldsymbol{\xi},y)\,\left[\iota(\tilde{\boldsymbol{\nu}}_X)\,
(\mathrm{d}\widehat{\alpha}_{\boldsymbol{\nu}})\right]_y\\
&=&-\left[\mathrm{d}\left( \iota( \boldsymbol{\xi}_X)\, \widehat{\alpha}_{\boldsymbol{\nu}})  \right)\right]_y-\sigma(\boldsymbol{\xi},y)\,\left[\mathrm{d}
(\iota(\tilde{\boldsymbol{\nu}}_X)\,\widehat{\alpha}_{\boldsymbol{\nu}})\right]_y\\
&=&\mathrm{d}_y\left( \frac{\Upsilon^{\boldsymbol{\xi}}}{\Phi^{\tilde{\boldsymbol{\nu}}}} \right)
+\sigma(\boldsymbol{\xi},y)\,\mathrm{d}_y(1)\\
&=&\mathrm{d}_y\left( \frac{\Upsilon^{\boldsymbol{\xi}}}{\Phi^{\tilde{\boldsymbol{\nu}}}} \right)
=\mathrm{d}_y\left(\widehat{\Upsilon}^{\boldsymbol{\xi}} \right).
\end{eqnarray*}
We have used that both $\boldsymbol{\xi}_X$ and $\tilde{\boldsymbol{\nu}}_X$ are contact vector
fields for $\widehat{\alpha}_{\boldsymbol{\nu}}$.

To prove the last statement of Proposition \ref{prop:twisted lift hat},
we need to verify that $\boldsymbol{\xi}_X=\boldsymbol{\xi}_{\widehat{M}_{\boldsymbol{\nu}}}^c$
for every $\boldsymbol{\xi}\in \mathfrak{g}$. Since both 
$\boldsymbol{\xi}_X$ and $\boldsymbol{\xi}_{\widehat{M}_{\boldsymbol{\nu}}}^c$
lift $\boldsymbol{\xi}_X=\boldsymbol{\xi}_{\widehat{M}_{\boldsymbol{\nu}}}^c$ under
$\widehat{\pi}_{\boldsymbol{\nu}}$, it suffices to show that
the coefficient of $\boldsymbol{\xi}_X$ along $\tilde{\boldsymbol{\nu}}_X$
is $\widehat{\Upsilon}^{\boldsymbol{\xi}}$. 
Therefore, the equality
\begin{equation}
\label{eqn:twisted lift doppio senso}
\boldsymbol{\xi}_X=\left(\boldsymbol{\xi}_M^\sharp-\widehat{\Upsilon}^{\boldsymbol{\xi}}\,
\tilde{\boldsymbol{\nu}}_M^\sharp\right)+ \widehat{\Upsilon}^{\boldsymbol{\xi}}\,
\tilde{\boldsymbol{\nu}}_X,
\end{equation}
implies that $\boldsymbol{\xi}_M^\sharp-\widehat{\Upsilon}^{\boldsymbol{\xi}}\,
\tilde{\boldsymbol{\nu}}_M^\sharp$ is the horizontal lift (with respect to
$\widehat{\pi}_{\boldsymbol{\nu}}$) of $\boldsymbol{\xi}_{\widehat{M}_{\boldsymbol{\nu}}}$,
and that $\boldsymbol{\xi}_X=\boldsymbol{\xi}_{\widehat{M}_{\boldsymbol{\nu}}}^c$.
\end{proof}

\subsubsection{The torus $\widehat{T}^d$ and its action on $\widehat{M}_{\boldsymbol{\nu}}$}

As in the Introduction, let $T^{d-1}_c\leqslant T^d$ be a complementary subtorus to $T^1$.
Let us define a new torus
\begin{equation}
\label{eqn:hatTd}
\widehat{T}^d:=T^{d-1}_c\times S^1.
\end{equation}
Since $\rho^X$ (the action of $S^1$ on $X$ with generator $-\partial_\theta$)
and $\gamma^X$ (the contact CR action of $T^d$ on $X$)
commute, the restriction of $\gamma^X$ to $T^{d-1}_c$ and $\rho^X$
may be combined to yield a new action 
\begin{equation}
\label{eqn:gammaXhat}
\beta^X:\widehat{T}^d\times X\rightarrow X.
\end{equation}
Since furthermore $\beta^X$ commutes with $\mu^X:T^1\times X\rightarrow X$,
it descends to an action 
\begin{equation}
\label{eqn:beta descended}
\beta^{\widehat{M}_{\boldsymbol{\nu}}}:\widehat{T}^d\times \widehat{M}_{\boldsymbol{\nu}}
\rightarrow \widehat{M}_{\boldsymbol{\nu}}.
\end{equation}

In fact, $(\ref{eqn:gammaXhat})$ is the contact and CR lift of a Hamiltonian action $\beta^M$
of $\widehat{T}^d$ on $(M,2\,\omega)$. 
Given the decomposition $\mathfrak{t}^d=\mathfrak{t}^{d-1}_c\oplus \mathrm{span}(\tilde{\boldsymbol{\nu}})$
the moment map $\Psi$ of $\gamma^M$ may be written $\Psi=\Psi'+\Psi''$, where
$\Psi':M\rightarrow {\mathfrak{t}^{d-1}_c}^\vee$, $\Psi'':M\rightarrow {\mathrm{span}(\tilde{\boldsymbol{\nu}})}^\vee$.
The restriction of $\gamma^M$ to
$T^{d-1}_c$ is Hamiltonian, with moment map 
$\Psi'$. On the other, hand, with the usual identification 
of the Lie algebra and coalgebra of $S^1$ with $\imath\,\mathbb{R}$,
$\rho^X$ is the contact lift of the the trivial action of $S^1$
on $(M,\,2\,\omega)$ with costant moment map $\imath$.
Therefore, 
$\beta^X$ is the contact lift of the Hamiltonian action 
$\beta^M:\widehat{T}^d\times M\rightarrow M$ with moment map
$\Xi=(\Psi',\imath)$. In view of Proposition \ref{prop:twisted lift hat},
we conclude the following.

\begin{prop}
\label{prop:hamiltonian on Mhat}
$\beta^{\widehat{M}_{\boldsymbol{\nu}}}$ in (\ref{eqn:beta descended}) 
is Hamiltonian, with moment map
$$
\hat{\Xi}:=\left( \frac{\Psi'}{\Phi^{\tilde{\boldsymbol{\nu}}}} ,\,\frac{\imath}{\Phi^{\tilde{\boldsymbol{\nu}}}} \right):\widehat{M}_{\boldsymbol{\nu}}\rightarrow
{\widehat{\mathfrak{t}}^{d}}
{}^\vee={\mathfrak{t}^{d-1}_c}^\vee\oplus \imath\,\mathbb{R}.
$$
\end{prop}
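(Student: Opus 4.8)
The plan is to deduce the statement directly from Proposition \ref{prop:twisted lift hat}, applied with $G=\widehat{T}^d$, $\Xi^M=\beta^M$, and candidate moment map $\Upsilon=\Xi=(\Psi',\imath)$; then $\Xi^X=\beta^X$ is to play the role of the contact CR lift. Thus the entire argument reduces to checking the two hypotheses of that proposition—that $(\beta^M,\Xi)$ lifts to $\beta^X$, and that $\beta^M$ commutes with $\mu^M$—after which the moment map $\hat{\Xi}$ is simply read off as $\Xi/\Phi^{\tilde{\boldsymbol{\nu}}}$.

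First I would verify the lifting hypothesis, which the preamble to the statement has essentially assembled. Decomposing $\mathfrak{t}^d=\mathfrak{t}^{d-1}_c\oplus\mathrm{span}(\tilde{\boldsymbol{\nu}})$ and $\Psi=\Psi'+\Psi''$ accordingly, note that for $\boldsymbol{\xi}\in\mathfrak{t}^{d-1}_c$ one has $\langle\Psi'',\boldsymbol{\xi}\rangle=0$, so by (\ref{eqn:contact lift}) the restriction of $\gamma^X$ to $T^{d-1}_c$ is exactly the contact lift of $\big(\gamma^M|_{T^{d-1}_c},\Psi'\big)$. On the other factor, $\rho^X$ is—by the $r=1$, trivial-$\mu^M$ example of the Introduction—the contact lift of the trivial $S^1$-action on $(M,2\,\omega)$ with constant moment map $\imath$. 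Since the correspondence (\ref{eqn:contact lift}) is additive in the pair (vector field, Hamiltonian), combining these two contact lifts produces the contact lift of the product action $\beta^M$ with product moment map $\Xi=(\Psi',\imath)$, and this combination is precisely $\beta^X$. As $\gamma^M$ is holomorphic and the $S^1$-factor acts trivially, $\beta^M$ is holomorphic, and $\Xi$ is $\widehat{T}^d$-equivariant because the torus is abelian.

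Next I would verify commutativity: since $T^d$ is abelian, $\gamma^M|_{T^{d-1}_c}$ commutes with $\gamma^M|_{T^1}=\mu^M$, while the trivial $S^1$-component of $\beta^M$ commutes with everything; hence $\beta^M$ and $\mu^M$ commute. With both hypotheses in hand, Proposition \ref{prop:twisted lift hat} applies verbatim and gives that $\beta^{\widehat{M}_{\boldsymbol{\nu}}}$ is holomorphic and Hamiltonian on $\big(\widehat{M}_{\boldsymbol{\nu}},2\,\widehat{\omega}_{\boldsymbol{\nu}},\widehat{J}_{\boldsymbol{\nu}}\big)$ with moment map $\hat{\Xi}=\Xi/\Phi^{\tilde{\boldsymbol{\nu}}}=\big(\Psi'/\Phi^{\tilde{\boldsymbol{\nu}}},\,\imath/\Phi^{\tilde{\boldsymbol{\nu}}}\big)$, as claimed. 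I expect no genuine obstacle here: the substantive content—that dividing the moment map on $M$ by $\Phi^{\tilde{\boldsymbol{\nu}}}$ yields the moment map downstairs—is supplied entirely by Proposition \ref{prop:twisted lift hat}. The only points meriting a word of care are the identification of the product moment map $\Xi=(\Psi',\imath)$ and the additivity of the contact-lift correspondence (\ref{eqn:contact lift}), both of which are routine.
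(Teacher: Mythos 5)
Your proposal is correct and follows essentially the same route as the paper: the paper likewise splits $\Psi=\Psi'+\Psi''$ along $\mathfrak{t}^d=\mathfrak{t}^{d-1}_c\oplus\mathrm{span}(\tilde{\boldsymbol{\nu}})$, identifies $\beta^X$ as the contact lift of $(\beta^M,\Xi)$ with $\Xi=(\Psi',\imath)$ (using that $\rho^X$ lifts the trivial action with constant moment map $\imath$), and then invokes Proposition \ref{prop:twisted lift hat}. Your explicit checks of the commutativity hypothesis and of $\langle\Psi'',\boldsymbol{\xi}\rangle=0$ for $\boldsymbol{\xi}\in\mathfrak{t}^{d-1}_c$ are details the paper leaves implicit, but they do not change the argument.
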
 

We now argue that $\beta^{\widehat{M}_{\boldsymbol{\nu}}}$ can be
complexified to a holomorphic action of $\widehat{\mathbb{T}}^d$,
the complexification of $\widehat{T}^d$, on $\widehat{M}_{\boldsymbol{\nu}}$.
More generally, for any compact Lie group $G$ we shall denote its complexification by
$\mathbb{G}$.

To this end, we consider  the complement of the zero section
$A^\vee_0\subset A^\vee$, and observe that
all the actions involved on $X$ uniquely extend to complexified actions
on $A^\vee_0$.
Thus $\mu^X$ extends to
$\widetilde{\mu}^{A^\vee_0}:\mathbb{T}^1\times A^\vee_0\rightarrow A^\vee_0$,
$\rho^X$ to $\widetilde{\rho}^{A^\vee_0}:\mathbb{C}^*\times A^\vee_0\rightarrow A^\vee_0$,
$\gamma^X$ to $\widetilde{\gamma}^{A^\vee_0}:\mathbb{T}^d\times A^\vee_0\rightarrow A^\vee_0$,
$\beta^X$ to $\widetilde{\beta}^{A^\vee_0}:\widehat{\mathbb{T}}^d\times A^\vee_0\rightarrow A^\vee_0$;
clearly $\widehat{\mathbb{T}}^d=\mathbb{T}^{d-1}_c\times \mathbb{C}^*$.

In view of the discussion in \S 2, \S 3, and \S 5 of \cite{pao-JGP} (applied with $r=1$), 
under Basic Assuption \ref{asmpt:basic} the following holds:

\begin{enumerate}
\item $A^\vee_0=\mathbb{T}^1\cdot X$ (the $\widetilde{\mu}^{A^\vee_0}$-saturation of $X$);
\item $\widetilde{\mu}^{A^\vee_0}$ is proper and locally free;
\item there is a natural biholomorphism 
\begin{equation}
\label{eqn:AXcomplisomo}
X/T^1\cong A^\vee_0/\mathbb{T}^1
\end{equation}
where the former quotient is taken with respect to $\mu^X$, and the latter with 
respect to $\widetilde{\mu}^{A^\vee_0}$.
\end{enumerate}

Since $\widetilde{\beta}^{A^\vee_0}$ commutes with $\widetilde{\mu}^{A^\vee_0}$,
it descends to the quotient and we conclude the following.

\begin{prop}
\label{prop:complexified action}
$\beta^{\widehat{M}_{\boldsymbol{\nu}}}$ admits a unique holomorphic extension
$\widetilde{\beta}^{\widehat{M}_{\boldsymbol{\nu}}}:
\widehat{\mathbb{T}}^d\times \widehat{M}_{\boldsymbol{\nu}}\rightarrow \widehat{M}_{\boldsymbol{\nu}}$.
\end{prop}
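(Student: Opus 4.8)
The plan is to obtain $\widetilde{\beta}^{\widehat{M}_{\boldsymbol{\nu}}}$ not by complexifying the orbifold action $\beta^{\widehat{M}_{\boldsymbol{\nu}}}$ directly, but by descending the already-constructed complexified action $\widetilde{\beta}^{A^\vee_0}$ through the holomorphic presentation $\widehat{M}_{\boldsymbol{\nu}}\cong A^\vee_0/\mathbb{T}^1$ of (\ref{eqn:AXcomplisomo}). All the hard structural input is imported from the three properties recalled just before the statement: $\widetilde{\mu}^{A^\vee_0}$ is proper and locally free with $A^\vee_0=\mathbb{T}^1\cdot X$, so $A^\vee_0/\mathbb{T}^1$ is a complex orbifold, the projection $q\colon A^\vee_0\to A^\vee_0/\mathbb{T}^1$ is a holomorphic quotient map for a locally free holomorphic action, and this quotient is biholomorphic to $\widehat{M}_{\boldsymbol{\nu}}=X/T^1$.

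First I would check that $\widetilde{\beta}^{A^\vee_0}$ commutes with $\widetilde{\mu}^{A^\vee_0}$. On $X$ the compact actions $\beta^X$ and $\mu^X$ commute by construction, since $\beta^X$ is assembled from $\gamma^X$ and $\rho^X$, both of which commute with $\mu^X$. As $\widetilde{\beta}^{A^\vee_0}$ and $\widetilde{\mu}^{A^\vee_0}$ are the unique holomorphic extensions of these real actions to the complex manifold $A^\vee_0$, their commutation is a holomorphic identity that already holds on $X$ and therefore propagates to all of $A^\vee_0=\mathbb{T}^1\cdot X$ by holomorphic continuation. Hence each $\widetilde{\beta}^{A^\vee_0}_g$ permutes the $\mathbb{T}^1$-orbits, so there is a unique descended map $\widetilde{\beta}^{\widehat{M}_{\boldsymbol{\nu}}}\colon\widehat{\mathbb{T}}^d\times\widehat{M}_{\boldsymbol{\nu}}\to\widehat{M}_{\boldsymbol{\nu}}$ with $q\circ\widetilde{\beta}^{A^\vee_0}_g=\widetilde{\beta}^{\widehat{M}_{\boldsymbol{\nu}}}_g\circ q$. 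Since $q$ is a holomorphic orbifold quotient map and $\widetilde{\beta}^{A^\vee_0}$ is holomorphic, the descended map is holomorphic in the orbifold sense, and the group-action identities pass down from those upstairs.

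Next I would verify compatibility with $\beta^{\widehat{M}_{\boldsymbol{\nu}}}$. By construction $\widetilde{\beta}^{A^\vee_0}$ restricts on the compact subgroup $\widehat{T}^d\leqslant\widehat{\mathbb{T}}^d$ to $\beta^X$ on $X$, and $\beta^X$ descends under $\widehat{\pi}_{\boldsymbol{\nu}}$ exactly to $\beta^{\widehat{M}_{\boldsymbol{\nu}}}$; under the biholomorphism (\ref{eqn:AXcomplisomo}) the projections $q$ and $\widehat{\pi}_{\boldsymbol{\nu}}$ are identified, whence $\widetilde{\beta}^{\widehat{M}_{\boldsymbol{\nu}}}|_{\widehat{T}^d}=\beta^{\widehat{M}_{\boldsymbol{\nu}}}$, so $\widetilde{\beta}^{\widehat{M}_{\boldsymbol{\nu}}}$ is indeed an extension. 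Uniqueness then follows from the identity principle: $\widehat{T}^d$ is a maximal totally real submanifold of $\widehat{\mathbb{T}}^d$, indeed Zariski dense, so any two holomorphic $\widehat{\mathbb{T}}^d$-actions on $\widehat{M}_{\boldsymbol{\nu}}$ agreeing with $\beta^{\widehat{M}_{\boldsymbol{\nu}}}$ on $\widehat{T}^d$ have orbit maps that coincide on $\widehat{T}^d$ and hence everywhere.

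The step I expect to carry the real weight is the well-posedness and holomorphicity of the descent across the orbifold singularities of the locally free (but not free) quotient $A^\vee_0/\mathbb{T}^1$. The set-theoretic descent is immediate from commutativity, but verifying that the descended action lifts to equivariant holomorphic maps of uniformizing systems, compatibly with the finite stabilizer groups, is where care is needed; I would handle this by working in $\widetilde{\mu}^{A^\vee_0}$-slices, exactly in the spirit of the slice arguments already used for $\widehat{\pi}_{\boldsymbol{\nu}}$ in the proofs of Propositions \ref{prop:horizontal lift orbifold} and \ref{prop:twisted lift hat}.
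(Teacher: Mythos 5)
Your proposal is correct and follows essentially the same route as the paper: the paper likewise obtains $\widetilde{\beta}^{\widehat{M}_{\boldsymbol{\nu}}}$ by descending the complexified action $\widetilde{\beta}^{A^\vee_0}$ through the quotient presentation $\widehat{M}_{\boldsymbol{\nu}}\cong A^\vee_0/\mathbb{T}^1$ of (\ref{eqn:AXcomplisomo}), citing the commutation of $\widetilde{\beta}^{A^\vee_0}$ with $\widetilde{\mu}^{A^\vee_0}$ as the reason the descent is well defined. You simply spell out the steps the paper leaves implicit (commutation by holomorphic continuation from $X$, uniqueness via density of $\widehat{T}^d$ in $\widehat{\mathbb{T}}^d$, and slice arguments at orbifold points), all of which are consistent with the paper's intent.
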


We aim to relate the stabilizer of $m\in M$ under $\gamma^M$ to the stabilizer of
$\hat{m}\in \widehat{M}_{\boldsymbol{\nu}}$ under under $\beta^{\widehat{M}_{\boldsymbol{\nu}}}$
if $m\leftarrow x\rightarrow\hat{m}$. 
More generally, we can consider the same issue for the complexified actions
$\widetilde{\gamma}^M:\mathbb{T}^d\times M\rightarrow M$ and 
$\widetilde{\beta}^{\widehat{M}_{\boldsymbol{\nu}}}$;
by Proposition 1.6
of \cite{sj-holoslice}, the stabilizer of $m$ under $\widetilde{\gamma}^M$ is the complexification
of the stabilizer under $\gamma^M$.

There is a dense open subset $M^0\subset M$  
where $\gamma^M$ is free; then  
$\widetilde{\gamma}^M$ is free and transitive on $M^0$.
Let us consider
the corresponding open set
\begin{equation}
\label{eqn:hatM0}
\widehat{M}^0:=\widehat{M^0}=\widehat{\pi}_{\boldsymbol{\nu}}\big( \pi^{-1}(M^0) \big)\subset 
\widehat{M}_{\boldsymbol{\nu}}.
\end{equation}
Let us set $X^0:=\pi^{-1}(M^0)$.

\begin{prop}
\label{prop:free action}
Under the previous assumptions, the following holds.
\begin{enumerate}
\item $\beta^{\widehat{M}_{\boldsymbol{\nu}}}$ is free on $\widehat{M}^0$.
\item $\widetilde{\beta}^{\widehat{M}_{\boldsymbol{\nu}}}$ is free and transitive
on $\widehat{M}^0$.
\item $\widehat{\pi}_{\boldsymbol{\nu}}$ restricts to a principal $S^1$-bundle 
$X^0\rightarrow \widehat{M}^0$.
\end{enumerate}

In particular, $\widehat{M}^0$ is smooth.
\end{prop}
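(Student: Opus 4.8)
The plan is to deduce all three assertions from a single group-theoretic picture on $X$ (and on its complexification), in which $\mu^X$ and $\beta^X$ appear as the actions of two complementary subgroups of one large torus that acts freely over the free locus. Since $\gamma^X$ and $\rho^X$ commute, they combine into an action of $G:=T^d\times S^1$ on $X$, namely $(t,u)\cdot x:=\gamma^X_t\big(\rho^X_u(x)\big)$. Inside $G$ I single out two subtori: the graph $H:=\{(h,\chi(h)):h\in T^1\}$ of the character $\chi$ of Lemma \ref{lem:deltachi}, for which the identity $\mu^X_h=\gamma^X_h\circ\rho^X_{\chi(h)}$ shows that $H\cong T^1$ acts exactly by $\mu^X$; and $\widehat{T}^d=T^{d-1}_c\times S^1$ of (\ref{eqn:hatTd}), acting by $\beta^X$ of (\ref{eqn:gammaXhat}). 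Using the splitting $T^d\cong T^{d-1}_c\times T^1$ together with the primitivity of $\widetilde{\boldsymbol{\nu}}$ and the integrality $\delta\in\mathbb{Z}$ from Lemma \ref{lem:deltachi}, I would verify that $H$ and $\widehat{T}^d$ are complementary in $G$, i.e.\ $H\cap\widehat{T}^d=\{e\}$ and $H\cdot\widehat{T}^d=G$; equivalently the multiplication map $H\times\widehat{T}^d\to G$ is an isomorphism, and hence $\widehat{T}^d\cong G/H$.

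For (3), I would first show that $\mu^X$ is free on $X^0=\pi^{-1}(M^0)$. If $\mu^X_h(x)=x$ with $\pi(x)\in M^0$, then projecting by $\pi$ and using that $\mu^M=\gamma^M|_{T^1}$ (fiber rotation fixing the base point) gives $\gamma^M_h(\pi(x))=\pi(x)$, whence $h=e$ by freeness of $\gamma^M$ on $M^0$. Since $M^0$ is $\gamma^M$-invariant, $X^0$ is $\mu^X$-saturated and $\widehat{M}^0=X^0/\mu^X$; a free and proper $T^1\cong S^1$ action then exhibits $\widehat{\pi}_{\boldsymbol{\nu}}\colon X^0\to\widehat{M}^0$ as a principal $S^1$-bundle, and in particular $\widehat{M}^0$ is a smooth manifold.

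For (2), I would pass to the complexification and use the biholomorphism (\ref{eqn:AXcomplisomo}), which identifies $\widehat{M}^0$ with the quotient of $A^\vee_0|_{M^0}$ by $\widetilde{\mu}^{A^\vee_0}$, i.e.\ by the complexified subgroup $\mathbb{H}\cong\mathbb{C}^*$. Complexifying $G$ to $\mathbb{G}=\mathbb{T}^d\times\mathbb{C}^*$ acting on $A^\vee_0|_{M^0}$ by $\widetilde{\gamma}^{A^\vee_0}$ and $\widetilde{\rho}^{A^\vee_0}$, the freeness and transitivity of $\widetilde{\gamma}^M$ on $M^0$ together with transitivity of $\mathbb{C}^*$ along each fiber make $A^\vee_0|_{M^0}$ a single $\mathbb{G}$-orbit, i.e.\ a $\mathbb{G}$-torsor. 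The complementarity above complexifies verbatim (again using $\delta\in\mathbb{Z}$ and primitivity of $\widetilde{\boldsymbol{\nu}}$) to $\widehat{\mathbb{T}}^d\cong\mathbb{G}/\mathbb{H}$, with $\widehat{\mathbb{T}}^d=\mathbb{T}^{d-1}_c\times\mathbb{C}^*$; since the residual $\mathbb{G}/\mathbb{H}$-action on the torsor quotient $A^\vee_0|_{M^0}/\mathbb{H}=\widehat{M}^0$ is automatically free and transitive, so is $\widetilde{\beta}^{\widehat{M}_{\boldsymbol{\nu}}}$. Assertion (1) then follows by restricting this free complex action to the real subgroup $\widehat{T}^d\leqslant\widehat{\mathbb{T}}^d$, under which $\widetilde{\beta}^{\widehat{M}_{\boldsymbol{\nu}}}$ restricts to $\beta^{\widehat{M}_{\boldsymbol{\nu}}}$.

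The main obstacle is the complementarity step and its clean transfer to the quotient: one must check that $H$ (resp.\ $\mathbb{H}$) meets $\widehat{T}^d$ (resp.\ $\widehat{\mathbb{T}}^d$) only in the identity and that together they fill $G$ (resp.\ $\mathbb{G}$), which is precisely where the hypotheses $(\widetilde{\boldsymbol{\nu}})=1$ and $\delta\in\mathbb{Z}$ enter; granting this, the general principle that a free (resp.\ free and transitive) action of $G$ descends to a free (resp.\ free and transitive) action of $G/H$ on the quotient by $H$ does all the remaining work. A secondary point requiring care is checking that $A^\vee_0|_{M^0}$ is genuinely a $\mathbb{G}$-torsor, i.e.\ that combining the transitive $\mathbb{T}^d$-action on the base orbit $M^0$ with the fiberwise $\mathbb{C}^*$-action yields a transitive action with trivial stabilizers on the total space.
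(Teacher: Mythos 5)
Your proposal is correct, and its computational core is the same as the paper's: the identity $\mu^X_h=\gamma^X_h\circ\rho^X_{\chi(h)}$ of (\ref{eqn:char mediatra}), the complementarity of $T^{d-1}_c$ and $T^1$ in $T^d$, the integrality $\delta\in\mathbb{Z}$ from Lemma \ref{lem:deltachi}, freeness/transitivity of $\widetilde{\gamma}^M$ on $M^0$, and freeness of $\rho^X$. But the organization is genuinely different. The paper works element-wise on $X$, in the opposite order: it first proves statement 1 directly, by lifting a stabilizing pair $(t,e^{\imath\theta})$ to a relation $\gamma^X_t\circ\rho^X_{e^{\imath\theta}}(x)=\mu^X_h(x)$ and untangling it with $\chi$ to conclude $t=h=1$; it then obtains freeness of the complexified action either \lq\lq by a similar argument\rq\rq\ or by citing Proposition 1.6 of \cite{sj-holoslice}, proves transitivity by lifting the transitivity of the (complexified) torus action on $M^0$ through $X$ and factoring $t'=t\,h$, and finally proves statement 3 exactly as you do. You instead compress all of these manipulations into one structural fact — the graph $H$ of $\chi$ and $\widehat{T}^d$ are complementary subtori of $G=T^d\times S^1$, so $\widehat{T}^d\cong G/H$, and likewise after complexification — and then prove statement 2 first by exhibiting $A^\vee_0|_{M^0}$ as a $\mathbb{G}$-torsor and descending along the quotient by $\mathbb{H}$; statement 1 falls out by restriction to the real form, which is legitimate since Proposition \ref{prop:complexified action} identifies that restriction with $\beta^{\widehat{M}_{\boldsymbol{\nu}}}$. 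What your route buys: freeness and transitivity come in a single stroke, the real statement needs no separate argument, and the real/complex bookkeeping is cleaner (the paper's transitivity computation silently uses a complexified element while writing $t'\in T^d$). The price is the step you only flag in passing: you must check that under (\ref{eqn:AXcomplisomo}) the open set $\widehat{M}^0$ really is $A^\vee_0|_{M^0}/\mathbb{T}^1$, i.e.\ that the $\widetilde{\mu}^{A^\vee_0}$-saturation of $X^0$ equals $A^\vee_0|_{M^0}$; this holds because $\widetilde{\mu}^{A^\vee_0}$ covers $\widetilde{\gamma}^M|_{\mathbb{T}^1}$ on the base and $M^0$ is $\mathbb{T}^d$-invariant, but it should be stated explicitly, since it is exactly what places your torsor over the correct base.
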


\begin{proof}
Suppose $m\leftarrow x\rightarrow \hat{m}$ with $m\in M^0$,
and that $(t,e^{\imath\theta})\in \widehat{T}^d$ stabilizes $\hat{m}$.
Hence there exits $h\in T^1$ such that
$$
\gamma^X_t\circ \rho^X_{e^{\imath\theta}}(x)=\mu^X_h(x).
$$
With $\chi$ as in (\ref{eqn:char mediatra}) and
Lemma \ref{lem:deltachi}, we conclude that
$$
\gamma^X_{t\,h^{-1}}\circ \rho^X_{e^{\imath\theta}\,\chi(h)^{-1}}(x)=x\quad
\Rightarrow\quad\gamma^M_{t\,h^{-1}}(m)=m.
$$
Hence $t\,h^{-1}=1\in T^d$, and since $T^{d-1}_c$ and $T^1$ are complementary subtori
we conclude $t=h=1$. Hence $e^{\imath\,\theta}=1$ as well.

This proves the first statement; that
$\widetilde{\beta}^{\widehat{M}_{\boldsymbol{\nu}}}$ is free on
$\widehat{M}_0$ then 
follows either by a similar argument using complexifications, or else by appealing to Proposition 1.6
of \cite{sj-holoslice}.

We prove that $\widetilde{\beta}^{\widehat{M}_{\boldsymbol{\nu}}}$ is transitive 
on $\widehat{M}_0$. 
Suppose $\hat{m}_j\in \widehat{M}_0$, $j=1,2$. Then there exist $m_j\in M^0$ and
$x_j\in \pi^{-1}(m_j)\subseteq X^0$, such that 
$m_j\leftarrow x_j\rightarrow \hat{m}_j$. There exists $t'\in T^d$ such that 
$\gamma^M_{t'}(m_1)=m_2$. We can factor $t'$ uniquely as
$t'=t\,h$, where $t\in T^{d-1}_c$ and $h\in T^1$. Lifting first 
this relation to $X$, and then descending to $\widehat{M}_{\boldsymbol{\nu}}$, 
this means that for some
$e^{\imath\theta}\in S^1$ we have
\begin{eqnarray}
\label{eqn:lifted relX}
\gamma^X_{t}\circ \gamma^X_h(x_1)=\rho^X_{e^{\imath\,\theta}}(x_2)
&
\Rightarrow&
 \gamma^X_{t}\circ  \rho_{\chi(h)\,e^{\imath\,\theta}}^{-1}(x_1)=\mu^X_{h^{-1}}(x_2)\nonumber\\
 &\Rightarrow&\widetilde{\beta}^{\widehat{M}_{\boldsymbol{\nu}}}_{\hat{t}}(\hat{m}_1)=
 \hat{m}_2,
\end{eqnarray}
where $\hat{t}:=\big(t,\chi(h)^{-1}\,e^{-\imath\,\theta}\big)\in \widehat{T}^d$, and $\chi$ is
as in (\ref{eqn:char mediatra}).

Finally, since $\mu^X$ lifts the restriction of $\gamma^M$ to $T^1$, and on the other hand 
$\gamma^M$ is free on $M^0$,
it follows that $\mu^X$ is free on $X^0$; the third statement follows.
\end{proof}

\begin{cor}
\label{cor:toric variety}
$\widehat{M}_{\boldsymbol{\nu}}$ (with the Hamiltonian 
action $(\widetilde{\beta}^{\widehat{M}_{\boldsymbol{\nu}}},\,\hat{\Xi})$) is a symplectic toric orbifold and
a complex toric variety.
\end{cor}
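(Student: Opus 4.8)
The plan is to assemble the pieces established in the preceding propositions and check them directly against the definition of a symplectic toric orbifold in \cite{lt} and against the defining feature of a complex toric variety, namely the presence of a dense open torus orbit. Since $r=1$, essentially all the analytic and geometric content has already been produced; what remains is a dimension count, a compactness/connectedness remark, and the passage from freeness on a dense open set to global effectiveness.

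First I would record the numerology. As $\widehat{M}_{\boldsymbol{\nu}}$ is a $d$-dimensional complex orbifold, its real dimension is $2d$; on the other hand $\widehat{T}^d=T^{d-1}_c\times S^1$ has real dimension $d=\frac12\dim_{\mathbb{R}}\widehat{M}_{\boldsymbol{\nu}}$, and its complexification $\widehat{\mathbb{T}}^d=\mathbb{T}^{d-1}_c\times \mathbb{C}^*$ has complex dimension $d=\dim_{\mathbb{C}}\widehat{M}_{\boldsymbol{\nu}}$. Thus the dimensions are precisely those required for a toric structure. Next I would note that $\widehat{M}_{\boldsymbol{\nu}}=X/\mu^X$ is the quotient of the compact connected circle bundle $X$ (over the compact connected $M$) by the locally free, proper action of the compact group $T^1$; hence $\widehat{M}_{\boldsymbol{\nu}}$ is a compact connected symplectic (indeed K\"{a}hler) orbifold, carrying the form $2\,\widehat{\omega}_{\boldsymbol{\nu}}$ constructed above.

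For the symplectic toric assertion, I would invoke Proposition \ref{prop:hamiltonian on Mhat} to get that $\beta^{\widehat{M}_{\boldsymbol{\nu}}}$ is Hamiltonian with moment map $\hat{\Xi}$, and Proposition \ref{prop:free action}(1) to get effectiveness: the action is free on the dense open subset $\widehat{M}^0$, so any element of $\widehat{T}^d$ fixing $\widehat{M}_{\boldsymbol{\nu}}$ pointwise fixes $\widehat{M}^0$ and must be the identity. A compact connected symplectic orbifold of dimension $2d$ equipped with an effective Hamiltonian action of the $d$-torus $\widehat{T}^d$ is, by definition \cite{lt}, a symplectic toric orbifold. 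For the complex toric variety structure, I would use Proposition \ref{prop:complexified action} to obtain the holomorphic action $\widetilde{\beta}^{\widehat{M}_{\boldsymbol{\nu}}}$ of $\widehat{\mathbb{T}}^d$, and Proposition \ref{prop:free action}(2) to conclude that this action is free and transitive on $\widehat{M}^0$; thus $\widehat{M}^0$ is a single orbit, equivariantly biholomorphic to $\widehat{\mathbb{T}}^d$, and it is dense and open in $\widehat{M}_{\boldsymbol{\nu}}$, which is exactly the dense open orbit defining the complex toric variety.

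Since every substantive ingredient has been proved earlier, the argument is definition-checking, and the only delicate points are the dimension match $\dim\widehat{T}^d=\frac12\dim_{\mathbb{R}}\widehat{M}_{\boldsymbol{\nu}}$ and the deduction of global effectiveness from freeness on a dense open set; these are precisely the hypotheses needed to enter the classification of \cite{lt}, so no genuine obstacle is expected beyond careful bookkeeping.
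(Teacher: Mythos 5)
Your proof is correct and follows exactly the route the paper intends: the corollary is stated without proof precisely because it is the assembly of Proposition \ref{prop:hamiltonian on Mhat} (Hamiltonian structure with moment map $\hat{\Xi}$), Proposition \ref{prop:complexified action} (holomorphic $\widehat{\mathbb{T}}^d$-action), and Proposition \ref{prop:free action} (freeness, hence effectiveness, on the dense open set $\widehat{M}^0$, and transitivity of the complexified action there), together with the dimension and compactness bookkeeping you supply.
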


Let us consider a general triple $m\leftarrow x\rightarrow \hat{m}$, 
and denote by $T^d_m\leqslant T^d$ the stabilizer of $m$ for $\gamma^M$, and
by $\widehat{T}^d_{\hat{m}}\leqslant \widehat{T}^d$ the stabilizer of $\hat{m}$ for $\widetilde{\beta}^{\widehat{M}_{\boldsymbol{\nu}}}$.
We want to describe the relation between $T^d_m$ and 
$\widehat{T}^d_{\hat{m}}$.

Let $T^1_x\leqslant T^1$ be the stabilizer of $x$ in $T^1$ under $\mu^X$.
Recall that $\mu^M$ is the restriction of $\gamma^M$, that is,
$\mu^M=\left.\gamma^M\right|_{T^1\times M}$; hence we can unambiguously 
denote by $T^1_m=T^1\cap T^d_m$ the $\mu^M$-stabilizer of $m\in M$.

\begin{lem}
\label{lem:finite subgroupT1}
If $m=\pi(x)$, then $T^1_x$ is a finite subgroup of $T^1_m$.
\end{lem}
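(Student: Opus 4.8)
The plan is to establish the two assertions of the lemma—containment and finiteness—separately, each as a direct consequence of facts already recorded above.

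First I would prove the inclusion $T^1_x\leqslant T^1_m$. Since $\mu^X$ lifts $\mu^M$ through $\pi$, we have $\pi\circ \mu^X_h=\mu^M_h\circ \pi$ for every $h\in T^1$. Hence if $h\in T^1_x$, so that $\mu^X_h(x)=x$, then applying $\pi$ gives
$$
\mu^M_h(m)=\mu^M_h\big(\pi(x)\big)=\pi\big(\mu^X_h(x)\big)=\pi(x)=m,
$$
so that $h\in T^1_m$. Recalling that $\mu^M$ is the restriction of $\gamma^M$ to $T^1$, and that $T^1_m=T^1\cap T^d_m$ denotes precisely the $\mu^M$-stabilizer of $m$, this yields $T^1_x\leqslant T^1_m$.

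Next I would show that $T^1_x$ is finite, the key ingredient being the local freeness of $\mu^X$. By (\ref{eqn:contact lift}), the infinitesimal generator of $\mu^X$ in the direction $\widetilde{\boldsymbol{\nu}}$ is $\widetilde{\boldsymbol{\nu}}_X=\widetilde{\boldsymbol{\nu}}_M^\sharp-\Phi^{\widetilde{\boldsymbol{\nu}}}\,\partial_\theta$. Under BA \ref{asmpt:basic} (in the case $r=1$) we have $\Phi^{\widetilde{\boldsymbol{\nu}}}>0$ everywhere on $M$, and since $\partial_\theta$ is vertical while $\widetilde{\boldsymbol{\nu}}_M^\sharp$ is horizontal with respect to $\alpha$, the field $\widetilde{\boldsymbol{\nu}}_X$ is nowhere vanishing on $X$. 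Consequently the stabilizer $T^1_x$ has trivial Lie algebra, i.e.\ it is discrete; being a closed subgroup of the compact torus $T^1$, it is therefore finite.

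There is no genuine obstacle here—the argument is formal once local freeness of $\mu^X$ is recorded. If one prefers a more explicit phrasing of the finiteness, one may invoke the isomorphism $e^{\imath\vartheta}\mapsto e^{\vartheta\,\widetilde{\boldsymbol{\nu}}}$ of $S^1$ with $T^1$: the only closed subgroups of $T^1$ are the finite cyclic ones and $T^1$ itself, and the possibility $T^1_x=T^1$ is ruled out precisely because $\widetilde{\boldsymbol{\nu}}_X(x)\neq \mathbf{0}$. Either route gives the claimed finiteness, completing the proof.
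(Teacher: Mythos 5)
Your proof is correct and follows essentially the same route as the paper: the inclusion $T^1_x\leqslant T^1_m$ comes from the equivariance of $\pi$ (since $\mu^X$ lifts $\mu^M$, the restriction of $\gamma^M$ to $T^1$), and finiteness comes from local freeness of $\mu^X$ plus closedness in the compact torus $T^1$. The only cosmetic difference is that you re-derive the local freeness from the nonvanishing of $\widetilde{\boldsymbol{\nu}}_X$ in (\ref{eqn:contact lift}), whereas the paper simply cites it as already established (by exactly that argument) earlier in the section.
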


\begin{proof}
Since $\mu^X$ is locally free, $T^1_x$ is a discrete subgroup of $T^1$,
hence finite. Furthermore, since 
$\mu^X$ lifts $\mu^M$, which is the restriction of $\gamma^M$ to $T^1$, we also have
$T^1_x\leqslant T^d_m$. In fact, if $t\in T^1_x$ then 
$\mu^X_t(x)=x$ then $$\mu^M_t(m)=\pi\circ \mu^X_t(x)=
m,$$ hence $t\in T^d_m$.
\end{proof}

Thus $T^1_x\leqslant T^d_m$ is finite, and $T^d_m$ is a subtorus of $T^d$ \cite{del}; hence
$T^d_m/T^1_x$ is a torus of the same dimension as $T^d_m$.

\begin{prop}
\label{prop:same dim stabilizers}
If $m\leftarrow x\rightarrow \hat{m}$, there is a natural isomorphism
$T^d_m/T^1_x\cong\widehat{T}^d_{\hat{m}}$. In particular, $T^d_m$ and
$\widehat{T}^d_{\hat{m}}$ are tori of the same dimension.
\end{prop}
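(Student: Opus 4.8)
The plan is to construct an explicit smooth group homomorphism $\Theta\colon T^d_m\to \widehat{T}^d_{\hat m}$, identify its kernel with $T^1_x$, show it is onto, and then read off the dimension statement from Lemma \ref{lem:finite subgroupT1}. Fix the face $F$ of $\Delta$ with $m\in F^0$, so that $T^d_m=T^d_F$ and Lemma \ref{lem:characterFgamma} applies. Using the splitting $T^d\cong T^{d-1}_c\times T^1$, I write each $s\in T^d_m$ uniquely as $s=t_s\,h_s$ with $t_s\in T^{d-1}_c$ and $h_s\in T^1$, and set
$$
\Theta(s):=\big(t_s,\ \chi(h_s)^{-1}\,\chi_F(s)^{-1}\big)\in T^{d-1}_c\times S^1=\widehat{T}^d,
$$
where $\chi_F$ is the character of Lemma \ref{lem:characterFgamma} and $\chi$ is the twisting character of (\ref{eqn:char mediatra}), made explicit by Lemma \ref{lem:deltachi}. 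Since $T^{d-1}_c$ and $T^1$ commute in $T^d$ and both $\chi$ and $\chi_F$ are homomorphisms, $\Theta$ is manifestly a smooth homomorphism.

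The first substantive step is to check that $\Theta$ actually lands in $\widehat{T}^d_{\hat m}$. Here I would start from $\gamma^X_s(x)=\rho^X_{\chi_F(s)}(x)$ (Lemma \ref{lem:characterFgamma}), substitute $\gamma^X_{h_s}(x)=\rho^X_{\chi(h_s)^{-1}}\,\mu^X_{h_s}(x)$ coming from (\ref{eqn:char mediatra}), and commute the various fiber rotations past $\gamma^X$ and $\mu^X$, using that $\gamma^X$ and $\rho^X$ commute and that $\beta^X$ — hence both $\gamma^X|_{T^{d-1}_c}$ and $\rho^X$ — commutes with $\mu^X$. This produces
$$
\gamma^X_{t_s}(x)=\mu^X_{h_s^{-1}}\,\rho^X_{\chi(h_s)\,\chi_F(s)}(x),
$$
so that $\beta^X_{\Theta(s)}(x)=\gamma^X_{t_s}\,\rho^X_{e^{\imath\theta}}(x)=\mu^X_{h_s^{-1}}(x)$ once the choice $e^{\imath\theta}=\chi(h_s)^{-1}\chi_F(s)^{-1}$ cancels the leftover rotation. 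Thus $\beta^X_{\Theta(s)}(x)$ lies in the $\mu^X$-orbit of $x$, which is exactly the statement that $\Theta(s)$ fixes $\hat m$.

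Next I would compute the kernel. If $\Theta(s)=(1,1)$ then $t_s=1$, hence $s=h_s\in T^1\cap T^d_m=T^1_m$, and $\chi(s)\,\chi_F(s)=1$. Conversely, combining (\ref{eqn:char mediatra}) with Lemma \ref{lem:characterFgamma} gives $\mu^X_s(x)=\rho^X_{\chi(s)\,\chi_F(s)}(x)$ for $s\in T^1_m$, so freeness of $\rho^X$ shows $s\in T^1_x$ precisely when $\chi(s)\,\chi_F(s)=1$; therefore $\ker\Theta=T^1_x$. For surjectivity, given $(t,e^{\imath\theta})\in\widehat{T}^d_{\hat m}$ I pick $h\in T^1$ with $\gamma^X_t\,\rho^X_{e^{\imath\theta}}(x)=\mu^X_h(x)$, project by $\pi$ and use $\mu^M=\gamma^M|_{T^1}$ to get $s:=t\,h^{-1}\in T^d_m$; retracing the same orbit computation then identifies $e^{\imath\theta}=\chi(h)\,\chi_F(s)^{-1}$, i.e. $\Theta(s)=(t,e^{\imath\theta})$. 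Hence $\Theta$ descends to an isomorphism $T^d_m/T^1_x\cong\widehat{T}^d_{\hat m}$, and since $T^1_x$ is finite by Lemma \ref{lem:finite subgroupT1}, the two tori have the same dimension.

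I expect the main obstacle to be the correct bookkeeping of the fiber-rotation phases $\chi(h_s)$ and $\chi_F(s)$: keeping the inverses consistent when trading $\gamma^X$ for $\mu^X$ through the twisting character $\chi$, and making sure that each commutation invoked ($\gamma^X$ with $\rho^X$, and $\gamma^X|_{T^{d-1}_c}$ with $\mu^X$) is exactly one furnished by the hypothesis that $\beta^X$ commutes with $\mu^X$. Once the phase bookkeeping is pinned down, the well-definedness, kernel, and surjectivity all fall out of the single orbit identity above, read in the two directions.
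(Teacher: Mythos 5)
Your proof is correct and follows essentially the same route as the paper's: your homomorphism $\Theta$ is precisely the paper's $\psi_m$ (the paper writes the fiber character as $\delta_m$, which equals $\chi_F$ on $T^d_m=T^d_F$ by Lemma \ref{lem:characterFgamma}), and your verification that $\Theta$ lands in $\widehat{T}^d_{\hat{m}}$, the identification $\ker\Theta=T^1_x$ via freeness of $\rho^X$, and the surjectivity argument all coincide with the paper's steps (\ref{eqn:char stabi2}), Lemma \ref{lem:psim discrete}, and the concluding computation. The phase bookkeeping you flagged as the main risk is handled consistently, so no gap remains.
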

\begin{proof}
For every $m\in M$, there is a character $\delta_m:T^d_m\rightarrow S^1$ such that
\begin{equation}
\label{eqn:char stabil}
\gamma^X_k(x)=\rho^X_{\delta_m(k)}(x)\quad (x\in \pi^{-1}(m),\,k\in T^d_m).
\end{equation}
Let us factor $k=t\,h$ with $t\in T^{d-1}_c$, $h\in T^1$. Then (\ref{eqn:char stabil}) implies
\begin{equation}
\label{eqn:char stabi2}
\gamma^X_t\circ \rho^X_{\delta_m(k)^{-1}\,\chi(h)^{-1}}(x)=\mu^X_{h^{-1}}(x)\quad
\Rightarrow\quad \widetilde{\beta}^{\widehat{M}_{\boldsymbol{\nu}}}_{\hat{k}}\left(\hat{m}\right)
=\hat{m},
\end{equation}
where $\hat{k}:=\left(t,\delta_m(k)^{-1}\,\chi(h)^{-1}\right)\in \widehat{T}^d$.
Hence we obtain a Lie group homomorphism
\begin{equation}
\label{eqn:psim}
\psi_m:k=t\,h\in T^d_m\mapsto \hat{k}:=\left(t,\delta_m(k)^{-1}\,\chi(h)^{-1}\right)\in \widehat{T}^d_m.
\end{equation}
Let us set $T^1_m:=T^1\cap T^d_m$. Then
\begin{equation}
\label{eqn:kerpsim}
\ker(\psi_m)=\left\{h\in T^1_m\,:\, \delta_m(k)\,\chi(h)=1\right\}.
\end{equation}

\begin{lem}
\label{lem:psim discrete}
$\ker(\psi_m)=T^1_x$.
\end{lem}

\begin{proof}
By (\ref{eqn:char mediatra}) and (\ref{eqn:char stabil}), we have for $h\in T^1_m$
$$
\gamma^X_h(x)=\rho^X_{\delta_m(h)}(x)\quad \Rightarrow\quad 
\mu^X_h(x)= \rho^X_{\chi(h)\,\delta_m(h)}(x).
$$
In other words, $\ker(\psi_m)=T^1_x$.

\end{proof}

Let us prove that $\psi_m$ is surjective.
Suppose $(t,e^{\imath\theta})\in \widehat{T}^d_{\hat{m}}$, i.e.
$\widetilde{\beta}^{\widehat{M}}_{(t,e^{\imath\theta})}(\hat{m})=\hat{m}$.
There exists $h\in T^1$ such that
\begin{eqnarray*}
\gamma^X_t\circ \rho^X_{e^{\imath\,\theta}}(x)=\mu^X_{h^{-1}}(x)&\Rightarrow&
\gamma^X_{t\,h}\circ \rho^X_{\chi(h)}(x)=\rho^X_{e^{-\imath\,\theta}}(x)
\quad \Rightarrow\quad \gamma^M_{t\,h}(m)=m.
\end{eqnarray*}
Thus $k:=t\,h\in T^d_m$, whence 
$\gamma^X_{k}(x)=\rho^X_{\delta_m(k)}(x)$. We conclude
$\rho^X_{\delta_m(k)\,\chi(h)}(x)=\rho^X_{e^{-\imath\,\theta}}(x)$, so that
$e^{\imath\,\theta}=\delta_m(k)^{-1}\,\chi(h)^{-1}$.
It follows that $(t,e^{\imath\,\theta})=\psi_m(k)$.

\end{proof}

\subsection{The polytope $\widehat{\Delta}_{\boldsymbol{\nu}}$}

By Proposition \ref{prop:hamiltonian on Mhat} and 
Corollary \ref{cor:toric variety}, $\widehat{M}_{\boldsymbol{\nu}}$ is a K\"{a}hler
toric orbifold, and its associated convex rational simple polytope (\cite{del}, \cite{lt}) is
\begin{equation}
\widehat{\Delta}_{\boldsymbol{\nu}}:=
\Xi \left( \widehat{M}_{\boldsymbol{\nu}} \right)\subset
\widehat{\mathfrak{t}}^{d\,\vee}.
\end{equation}

We aim to describe the faces of $\widehat{\Delta}_{\boldsymbol{\nu}}$ in terms of the faces
of $\Delta$; to this end, we premise a few remarks.

\begin{lem}
\label{lem:invariance and transform}
Suppose that $R,\,S\subseteq M$; then 
$\widehat{R\cap S}\subseteq \widehat{R}\cap \widehat{S}$.
If furthermore $R$ and $S$ are $\mu^M$-invariant, then
$\widehat{R\cap S}=\widehat{R}\cap \widehat{S}$.
\end{lem}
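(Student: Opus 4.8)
The first inclusion needs no invariance and is purely formal. The plan is to use that $\pi^{-1}$ commutes with intersections, $\pi^{-1}(R\cap S)=\pi^{-1}(R)\cap\pi^{-1}(S)$, together with the elementary fact that the image of an intersection under any map is contained in the intersection of the images. This gives at once
\[
\widehat{R\cap S}=\widehat{\pi}_{\boldsymbol{\nu}}\big(\pi^{-1}(R)\cap\pi^{-1}(S)\big)\subseteq \widehat{\pi}_{\boldsymbol{\nu}}\big(\pi^{-1}(R)\big)\cap\widehat{\pi}_{\boldsymbol{\nu}}\big(\pi^{-1}(S)\big)=\widehat{R}\cap\widehat{S},
\]
which is the first assertion.

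For the reverse inclusion under the invariance hypothesis, the key structural facts I would invoke are that the fibres of $\widehat{\pi}_{\boldsymbol{\nu}}$ are precisely the $\mu^X$-orbits (since $\widehat{M}_{\boldsymbol{\nu}}=X/\mu^X$) and that $\pi$ intertwines $\mu^X$ with $\mu^M$, i.e. $\pi\circ\mu^X_h=\mu^M_h\circ\pi$ because $\mu^X$ lifts $\mu^M$. Concretely, I would take $\hat{m}\in\widehat{R}\cap\widehat{S}$ and choose two representatives: $x\in\pi^{-1}(R)$ with $\widehat{\pi}_{\boldsymbol{\nu}}(x)=\hat{m}$, and $y\in\pi^{-1}(S)$ with $\widehat{\pi}_{\boldsymbol{\nu}}(y)=\hat{m}$. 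Since $\widehat{\pi}_{\boldsymbol{\nu}}(x)=\widehat{\pi}_{\boldsymbol{\nu}}(y)$, there is $h\in T^1$ with $y=\mu^X_h(x)$; applying $\pi$ and using the equivariance yields $\pi(y)=\mu^M_h\big(\pi(x)\big)$.

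The invariance now closes the argument. We have $\pi(x)\in R$ and $\pi(y)\in S$; since $S$ is $\mu^M$-invariant, $\pi(x)=\mu^M_{h^{-1}}\big(\pi(y)\big)\in S$, so $\pi(x)\in R\cap S$, i.e. $x\in\pi^{-1}(R\cap S)$, whence $\hat{m}=\widehat{\pi}_{\boldsymbol{\nu}}(x)\in\widehat{R\cap S}$. I expect no serious obstacle here; the only point requiring care is that the two chosen representatives $x$ and $y$ need not lie over the same point of $M$, but rather over possibly distinct points of $R$ and $S$. One must therefore first transport $x$ to $y$ along the common $\mu^X$-orbit, and only after pushing this relation down to $M$ via $\pi\circ\mu^X_h=\mu^M_h\circ\pi$ can one invoke invariance at the level of the base. (In fact only the invariance of $S$ is used for this direction; by the symmetric argument, transporting $y$ into $R$, one could equally well use the invariance of $R$.)
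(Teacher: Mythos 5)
Your proof is correct and follows essentially the same route as the paper's: the first inclusion is the same formal observation, and for the reverse inclusion both arguments pick two representatives over $\hat{m}$, use that the fibre of $\widehat{\pi}_{\boldsymbol{\nu}}$ is a single $\mu^X$-orbit, push the orbit relation down to $M$ via the equivariance of $\pi$, and conclude by invariance. Your remark that only the invariance of one of the two sets is actually needed is a small sharpening of the paper's argument (which places both base points in $R\cap S$), but the underlying mechanism is identical.
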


\begin{proof}
Suppose $\hat{m}\in \widehat{R\cap S}$. Then there exist
$m\in R\cap S$, $x\in X$ such that $m\leftarrow x\rightarrow \hat{m}$;
hence $\widehat{m}\in \widehat{R}\cap \widehat{S}$,
so that  and $\widehat{R\cap S}\subseteq \widehat{R}\cap \widehat{S}$.

Suppose that $R$ and $S$ are $\widehat{\mu}^M$-invariant, and that
$\hat{m}\in \widehat{R}\cap \widehat{S}$. Hence there exist 
$m_1\in R$, $m_2\in S$ and $x_1,\,x_2\in X$ such that 
$m_1\leftarrow x_1\rightarrow \hat{m}$ and $m_2\leftarrow x_2\rightarrow \hat{m}$.
Hence $x_2\in T^1\cdot x_1$ ($\mu^X$-orbit) and by the equivariance of
$\pi$ this implies $m_2\in T^1\cdot m_1$ ($\mu^M$-orbit). Thus $m_1,\,m_2\in R\cap S$ and
so $\hat{m}\in \widehat{R\cap S}$. It follows that
$\widehat{R\cap S}\supseteq \widehat{R}\cap \widehat{S}$.
\end{proof}

\begin{lem}
\label{lem:closure transform}
If $R\subseteq M$, then $\overline{\widehat{R}}\subseteq \widehat{\overline{R}}$.
If in addition $R\subseteq M$ is $\mu^M$ invariant, then 
$\overline{\widehat{R}}= \widehat{\overline{R}}$.
\end{lem}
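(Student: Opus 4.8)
The plan is to treat the two inclusions separately, relying only on elementary topological properties of the two projections $\pi:X\rightarrow M$ and $\widehat{\pi}_{\boldsymbol{\nu}}:X\rightarrow \widehat{M}_{\boldsymbol{\nu}}$. Both are continuous surjections; $\pi$ is a locally trivial $S^1$-bundle, hence open, while $\widehat{\pi}_{\boldsymbol{\nu}}$ is the quotient map for the $T^1$-action $\mu^X$, hence also open. Moreover $X$ is compact (being the unit circle bundle over the compact manifold $M$) and both $M$ and the orbifold $\widehat{M}_{\boldsymbol{\nu}}$ are compact and Hausdorff, so $\pi$ and $\widehat{\pi}_{\boldsymbol{\nu}}$ are closed maps as well. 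I would record these observations first.

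For the first inclusion I would argue by compactness, requiring no invariance. Since $\overline{R}$ is closed in the compact space $M$ it is compact, so $\pi^{-1}(\overline{R})$ is compact, and therefore $\widehat{\overline{R}}=\widehat{\pi}_{\boldsymbol{\nu}}\big(\pi^{-1}(\overline{R})\big)$ is the continuous image of a compact set, hence compact and, by Hausdorffness of $\widehat{M}_{\boldsymbol{\nu}}$, closed. As it visibly contains $\widehat{R}=\widehat{\pi}_{\boldsymbol{\nu}}\big(\pi^{-1}(R)\big)$, it contains its closure, giving $\overline{\widehat{R}}\subseteq \widehat{\overline{R}}$.

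For the reverse inclusion I would use the elementary fact that for any continuous open map $f$ one has $f^{-1}(\overline{S})=\overline{f^{-1}(S)}$ (the inclusion $\supseteq$ is continuity, the inclusion $\subseteq$ uses openness). Applied to $\pi$ this gives $\pi^{-1}(\overline{R})=\overline{\pi^{-1}(R)}$. When $R$ is $\mu^M$-invariant, equivariance of $\pi$ (recall $\mu^X$ lifts $\mu^M$) makes $\pi^{-1}(R)$ a $\mu^X$-saturated subset, so that $\widehat{\pi}_{\boldsymbol{\nu}}^{-1}(\widehat{R})=\pi^{-1}(R)$. Applying the same closure--pullback identity to $f=\widehat{\pi}_{\boldsymbol{\nu}}$ with $S=\widehat{R}$ then yields $\widehat{\pi}_{\boldsymbol{\nu}}^{-1}\big(\overline{\widehat{R}}\big)=\overline{\widehat{\pi}_{\boldsymbol{\nu}}^{-1}(\widehat{R})}=\overline{\pi^{-1}(R)}=\pi^{-1}(\overline{R})$; pushing this equality forward along the surjection $\widehat{\pi}_{\boldsymbol{\nu}}$ gives $\overline{\widehat{R}}=\widehat{\pi}_{\boldsymbol{\nu}}\big(\pi^{-1}(\overline{R})\big)=\widehat{\overline{R}}$, as desired. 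Alternatively, one may bypass the saturation bookkeeping by a direct lifting argument: given $\widehat{m}\in \widehat{\overline{R}}$, choose $x$ with $\pi(x)=m\in \overline{R}$ and $\widehat{\pi}_{\boldsymbol{\nu}}(x)=\widehat{m}$, approximate $m$ by a sequence $m_n\in R$, lift it through a local trivialization of $\pi$ to $x_n\rightarrow x$ with $\pi(x_n)=m_n$, and observe that $\widehat{\pi}_{\boldsymbol{\nu}}(x_n)\in \widehat{R}$ converges to $\widehat{m}$.

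The only genuine point to check carefully is that $\widehat{\pi}_{\boldsymbol{\nu}}$ behaves as an open quotient map on the level of underlying topological spaces despite the orbifold singularities of $\widehat{M}_{\boldsymbol{\nu}}$, and that the saturation identity $\widehat{\pi}_{\boldsymbol{\nu}}^{-1}(\widehat{R})=\pi^{-1}(R)$ is exactly where $\mu^M$-invariance of $R$ is used: without it one only controls the $\mu^X$-saturation $T^1\cdot \pi^{-1}(R)$, which is the reason the hypothesis is imposed for the equality. Everything else is formal manipulation of closures under continuous open surjections, and I expect no further obstacle.
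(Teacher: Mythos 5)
Your proof is correct, and its skeleton is essentially the paper's: the first inclusion follows from closedness of $\widehat{\overline{R}}$ (which you justify by compactness of $\pi^{-1}(\overline{R})$ and Hausdorffness of the quotient, a point the paper leaves implicit), and the reverse inclusion rests on exactly the two facts the paper isolates, namely $\pi^{-1}(\overline{R})=\overline{\pi^{-1}(R)}$ (openness of $\pi$) and the saturation identity $\widehat{\pi}_{\boldsymbol{\nu}}^{-1}(\widehat{R})=\pi^{-1}(R)$ for invariant $R$ (the paper's Claim \ref{claim:invariance-closure}). The only difference is in the packaging of the last step: where the paper shows that every closed $S\supseteq \widehat{R}$ pulls back to a closed set containing $\pi^{-1}(\overline{R})$ and then pushes forward, you invoke openness of the quotient map $\widehat{\pi}_{\boldsymbol{\nu}}$ to obtain $\widehat{\pi}_{\boldsymbol{\nu}}^{-1}\big(\overline{\widehat{R}}\big)=\overline{\widehat{\pi}_{\boldsymbol{\nu}}^{-1}(\widehat{R})}$ and push that forward; both steps are sound (the quotient map of a continuous group action is always open, orbifold singularities notwithstanding).

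One remark in your closing paragraph should be corrected, though. You assert that the saturation identity is \emph{exactly} where $\mu^M$-invariance is used, and that this is ``the reason the hypothesis is imposed for the equality.'' In fact your own alternative lifting argument never uses invariance, and for good reason: once one has $\pi^{-1}(\overline{R})=\overline{\pi^{-1}(R)}$, continuity of $\widehat{\pi}_{\boldsymbol{\nu}}$ alone (the image of a closure is contained in the closure of the image) already gives
$$
\widehat{\overline{R}}
=\widehat{\pi}_{\boldsymbol{\nu}}\big(\pi^{-1}(\overline{R})\big)
=\widehat{\pi}_{\boldsymbol{\nu}}\big(\overline{\pi^{-1}(R)}\big)
\subseteq \overline{\widehat{\pi}_{\boldsymbol{\nu}}\big(\pi^{-1}(R)\big)}
=\overline{\widehat{R}}
$$
for an \emph{arbitrary} subset $R\subseteq M$. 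So the equality in the lemma holds without the invariance hypothesis; invariance is needed only for the particular saturation-based route that you (and the paper) follow, not for the statement itself. This does not affect the correctness of your proof of the lemma as stated, but the diagnosis of where the hypothesis enters is not accurate, and recognizing this would have let you shorten the argument to the one-line continuity estimate above.
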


\begin{proof}
We have by definition 
$$
\widehat{\overline{R}}=\widehat{\pi}_{\boldsymbol{\nu}}\left( \pi^{-1}\left(\overline{R}\right)  \right)
\supseteq \widehat{\pi}_{\boldsymbol{\nu}}\left( \pi^{-1}\left(R\right)  \right)
=\widehat{R};
$$
hence $\widehat{\overline{R}}$ is closed and contains $\widehat{R}$, i.e. 
$\widehat{\overline{R}}\supseteq \overline{\widehat{R}}$.

Before considering the reverse inclusion, let us premise that
- since $\pi:X\rightarrow M$ is an
$S^1$-bundle - $\pi^{-1}(\overline{R})=\overline{\pi^{-1}(R)}$.

Suppose that $R$ is $\mu^M$-invariant, and let $S\subseteq \widehat{M}_{\boldsymbol{\nu}}$
be closed with $\widehat{R}\subseteq S$. 
Then 
$\widehat{\pi}_{\boldsymbol{\nu}}^{-1}(S)\supseteq \widehat{\pi}_{\boldsymbol{\nu}}^{-1}(\widehat{R})$.

\begin{claim}
\label{claim:invariance-closure}
Given that $R$ is $\mu^M$-invariant, $\widehat{\pi}_{\boldsymbol{\nu}}^{-1}(\widehat{R})=
\pi^{-1}(R)$. 
\end{claim}

\begin{proof}
[Proof of Claim \ref{claim:invariance-closure}]
By construction, $\widehat{\pi}_{\boldsymbol{\nu}}^{-1}(\widehat{R})$ is the union of all
$\mu^X$-orbits passing through points of $\pi^{-1}(R)$. By equivariance, every
$\mu^X$-orbit projects down to $M$ under $\pi$ to a $\mu^M$-orbit, and each such orbit through a
point of $R$ is entirely conained in $R$. Therefore, all $\mu^X$-orbits trhough points of
$\pi^{-1}(R)$ are entirely contained in $\pi^{-1}(R)$, whence the claim. 
\end{proof}

Thus $\widehat{\pi}_{\boldsymbol{\nu}}^{-1}(S)\supseteq \pi^{-1}(R)$,
hence 
$$
\widehat{\pi}_{\boldsymbol{\nu}}^{-1}(S)\supseteq \overline{\pi^{-1}(R)}=
\pi^{-1}\left( \overline{R} \right)\quad\Rightarrow\quad
S\supseteq \widehat{\overline{R}}.
$$
We conclude that  $\widehat{\overline{R}}\subseteq \overline{\widehat{R}}$ when $R$ is $\mu^M$-invariant,
which completes the proof of Lemma \ref{lem:closure transform}. 

\end{proof}

Let as above $\mathcal{G}(\Delta)=\{F_1,\ldots,F_k\}$ be the collection of the
facets of $\Delta$. Recalling that 
$\Psi:M\rightarrow {\mathfrak{t}^d}^\vee$ is the moment map for $\gamma^M$,
for each $j$ let $M_j:=\Psi^{-1}(F_j)$ (see \S \ref{sctn:complexification}).
Then $M_j$ is a complex submanifold of codimension $1$ of $M$.
We shall set
$$
\widehat{M}_j:=\pi_{\boldsymbol{\nu}}\left( \pi^{-1}(M_j) \right),\quad
\widehat{M}_j^0:=\widehat{M^0_j}=\pi_{\boldsymbol{\nu}}\left( \pi^{-1}(M_j^0) \right);
$$
then $\widehat{M}_j$ is a complex suborbifold of $\widehat{M}_{\boldsymbol{\nu}}$ of codimension $1$,
and $\widehat{M}_j^0$ is open and dense in $\widehat{M}_j$.
Furthermore, $M_j$ is the fixed locus of the $1$-parameter subgroup $f_j:\tau\mapsto e^{\tau\,\boldsymbol{\upsilon}_j}$,
where $\boldsymbol{\upsilon}_j$ is as in (\ref{eqn:upsilonjlambdaj}) and (\ref{eqn:Delta inters}),
while
$M_j^0$ is the locus of those points in $M$ whose stabilizer subgroup in $T^d$ is 
exactly $f_j(S^1)$.

Since $M_j$ and $M_j^0$ are $\mu^M$-invariant for every $j$, in light of Proposition 
\ref{prop:same dim stabilizers}, Lemma \ref{lem:invariance and transform},
and Lemma \ref{lem:closure transform}
we conclude the following.

\begin{cor}
\label{cor:loci to loci}
For every $j,l\in \{1,\ldots,k\}$, the following holds:
\begin{enumerate}
\item $\widehat{M_j\cap M_l}=\widehat{M}_j\cap \widehat{M}_l$;
\item $\widehat{M}_j=\overline{\widehat{M_j^0}}$;
\item $\widehat{M}_j^0\cap \widehat{M}_l^0=\emptyset$ if $j\neq l$;
\item for every $j=1,\ldots,k$, 
$\bigcup_{j=1}^k\widehat{M}_j^0\subset \widehat{M}_{\boldsymbol{\nu}}$ is the locus of points with a $1$-dimensional stabilizer subgroup in $\widehat{T}^d$.
\end{enumerate}

\end{cor}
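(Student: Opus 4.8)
The plan is to derive all four assertions from Lemmas \ref{lem:invariance and transform} and \ref{lem:closure transform} together with Proposition \ref{prop:same dim stabilizers}, using throughout that each $M_j$ and $M_j^0$ is $\mu^M$-invariant. For (1) I would apply the invariant half of Lemma \ref{lem:invariance and transform} directly with $R=M_j$ and $S=M_l$: since both are $\mu^M$-invariant, $\widehat{M_j\cap M_l}=\widehat{M}_j\cap\widehat{M}_l$. Assertion (3) is the same mechanism applied to a disjointness input: points of $M_j^0$ have $\gamma^M$-stabilizer exactly $f_j(S^1)$, so for $j\neq l$ the distinctness of these one-parameter subgroups forces $M_j^0\cap M_l^0=\emptyset$; applying Lemma \ref{lem:invariance and transform} to the invariant sets $M_j^0,M_l^0$ then gives $\widehat{M}_j^0\cap\widehat{M}_l^0=\widehat{M_j^0\cap M_l^0}=\widehat{\emptyset}=\emptyset$.

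For (2) I would first record the standard toric fact, contained in Lemma \ref{lem:stabilizerMF}, that $M_j^0=\Psi^{-1}(F_j^0)$ is open and dense in $M_j=\Psi^{-1}(F_j)$, so that $\overline{M_j^0}=M_j$. Since $M_j^0$ is $\mu^M$-invariant, the invariant half of Lemma \ref{lem:closure transform} yields $\overline{\widehat{M}_j^0}=\widehat{\overline{M_j^0}}=\widehat{M}_j$, which is precisely (2).

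The substantive part is (4), and this is where Proposition \ref{prop:same dim stabilizers} enters. Given $\hat m$, I would choose $m\leftarrow x\rightarrow\hat m$; by that proposition $\widehat{T}^d_{\hat m}\cong T^d_m/T^1_x$ with $T^1_x$ finite, so $\dim\widehat{T}^d_{\hat m}=\dim T^d_m$. This dimension is independent of the choice of lift: any two such $m$ lie in a common $\mu^M$-orbit (by equivariance of $\pi$), and since $T^d$ is abelian their $\gamma^M$-stabilizers in fact coincide. Next I would invoke the orbit stratification of the toric manifold: every $m\in M$ lies in a unique $M_F^0=\Psi^{-1}(F^0)$, and by Lemma \ref{lem:stabilizerMF} its stabilizer $T^d_F$ has dimension equal to the codimension $c_F$ of $F$; hence $\dim T^d_m=1$ exactly when $F$ is a facet, i.e.\ exactly when $m\in\bigcup_{j}M_j^0$. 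Combining these, $\hat m$ has a one-dimensional $\widehat{T}^d$-stabilizer iff $\dim T^d_m=1$ iff $m\in\bigcup_j M_j^0$ iff $\hat m\in\widehat{\bigcup_j M_j^0}$ (the last equivalence because $\bigcup_j M_j^0$ is $\mu^M$-invariant); and since both $\pi^{-1}$ and $\widehat{\pi}_{\boldsymbol{\nu}}$ commute with unions, $\widehat{\bigcup_j M_j^0}=\bigcup_j\widehat{M}_j^0$, which is (4).

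I expect the steps (1)--(3) to be essentially bookkeeping with the two invariance lemmas. The only genuine content should lie in (4): specifically, one must check that $\dim T^d_m$ is well defined as a function of $\hat m$ (independence of the lift $x$) and that Proposition \ref{prop:same dim stabilizers} transports it faithfully to the stabilizer dimension on $\widehat{M}_{\boldsymbol{\nu}}$. \textbf{The key point} is the finiteness of $T^1_x$, which is exactly what guarantees that no dimension is lost in passing to the quotient.
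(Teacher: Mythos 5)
Your proposal is correct and follows essentially the same route as the paper: the paper offers no separate argument for this corollary beyond observing that it follows from Proposition \ref{prop:same dim stabilizers}, Lemma \ref{lem:invariance and transform} and Lemma \ref{lem:closure transform}, once one knows that the $M_j$ and $M_j^0$ are $\mu^M$-invariant; your write-up fleshes out exactly that derivation, and your treatment of (1), (2) and (4) --- including the two genuine checks in (4), namely independence of the stabilizer dimension from the choice of lift and the finiteness of $T^1_x$ --- is sound.

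One justification, however, is wrong as stated. In (3) you argue that $M_j^0\cap M_l^0=\emptyset$ because for $j\neq l$ the one-parameter subgroups $f_j(S^1)$ and $f_l(S^1)$ are distinct. They need not be: if $\boldsymbol{\upsilon}_l=-\boldsymbol{\upsilon}_j$ (parallel opposite facets, as already for $M=\mathbb{CP}^1$ or $\mathbb{CP}^1\times\mathbb{CP}^1$), then $f_j(S^1)=f_l(S^1)$ as subgroups of $T^d$, even though the facets are distinct. The disjointness you need is nevertheless immediate, but for a different reason: $M_j^0=\Psi^{-1}(F_j^0)$ and $M_l^0=\Psi^{-1}(F_l^0)$, and distinct facets of a convex polytope have disjoint relative interiors, so their preimages under $\Psi$ are disjoint. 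With that substitution, the rest of your argument for (3) (applying Lemma \ref{lem:invariance and transform} to the invariant sets $M_j^0$, $M_l^0$) goes through unchanged. The same caution applies to the phrase ``points of $M_j^0$ have stabilizer exactly $f_j(S^1)$'' as a \emph{characterization} of $M_j^0$: it pins down $M_j^0$ only up to union with the strata of facets sharing the same normal line, which is why the paper, in the proof of Proposition \ref{prop:bijectivefaces}, speaks of the $\widehat{M}_j^0$ as the \emph{connected components} of the one-dimensional-stabilizer locus rather than distinguishing them by their stabilizers.
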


Let us set $\widehat{F}_j:=\Xi(\widehat{M}_j)$
We can use the conclusions of Corollary \ref{cor:loci to loci} to relate the faces
of $\Delta$ and $\widehat{\Delta}_{\boldsymbol{\nu}}$.

\begin{prop}
\label{prop:bijectivefaces}
Let $\mathcal{F}_l(\widehat{\Delta}_{\boldsymbol{\nu}})$ and 
$\mathcal{G}(\widehat{\Delta}_{\boldsymbol{\nu}})
=\mathcal{F}_1(\widehat{\Delta}_{\boldsymbol{\nu}})$
be, respectively, the collections of codimension-$l$ faces and of facets 
of $\widehat{\Delta}_{\boldsymbol{\nu}}$. 
Then there are bijective correspondences
\begin{enumerate}
\item $F_j=\Psi(M_j)\in \mathcal{G}(\Delta)\mapsto \widehat{F}_j:=
\Xi(\widehat{M}_j)\in  \mathcal{G}(\widehat{\Delta}_{\boldsymbol{\nu}})$;
\item $\bigcap_{j=i_1}^{i_l}
F_{i_j}\in \mathcal{F}_l(\Delta)
\mapsto \bigcap_{j=i_1}^{i_l}
\widehat{F}_{i_j}\in \mathcal{F}_l(\widehat{\Delta}_{\boldsymbol{\nu}})$;

\item for every $j=1,\ldots,k$, the relative interior of $F_j$ is 
$F_j^0=\Psi(M_j^0)$;

\item for every $j=1,\ldots,k$, the relative interior of $\widehat{F}_j$ is 
$\widehat{F}_j^0=\Xi(\widehat{M}_j^0)$

\end{enumerate}

\end{prop}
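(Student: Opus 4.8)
The plan is to reduce the four claims to the standard dictionary for a symplectic toric orbifold between the face lattice of its moment polytope and the stratification by orbit type, feeding in the loci-to-loci correspondence of Corollary \ref{cor:loci to loci} and the stabilizer comparison of Proposition \ref{prop:same dim stabilizers}. Recall from \cite{lt} (see also \cite{g-toric}) that for the toric orbifold $\big(\widehat{M}_{\boldsymbol{\nu}},\,2\,\widehat{\omega}_{\boldsymbol{\nu}}\big)$, established in Corollary \ref{cor:toric variety}, with moment map $\Xi$, the relative interior of each face of $\widehat{\Delta}_{\boldsymbol{\nu}}$ is the $\Xi$-image of a single connected orbit-type stratum, the codimension of the face being the dimension of the $\widehat{T}^d$-stabilizer of the points in that stratum; in particular the relative interiors of the facets are exactly the images of the locus of points with $1$-dimensional stabilizer. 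Claim (3) is the classical instance of this statement for $M$ itself, namely item (7) of \S\ref{sctn:complexification}, which gives $\Psi\big(M_j^0\big)=F_j^0$.

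I would first prove (4), and then deduce (1). By Corollary \ref{cor:loci to loci}(4), the locus of points of $\widehat{M}_{\boldsymbol{\nu}}$ with $1$-dimensional stabilizer is $\bigcup_{j=1}^k\widehat{M}_j^0$, so its image under $\Xi$ is the union of the relative interiors of the facets. Each $\widehat{M}_j^0$ is connected, being the image under $\widehat{\pi}_{\boldsymbol{\nu}}$ of $\pi^{-1}(M_j^0)$, where $M_j^0=\Psi^{-1}(F_j^0)$ is a single $\widetilde{T}^d$-orbit by item (6) of \S\ref{sctn:complexification}. Moreover, for $m\in M_j^0$ we have $T^d_m=f_j(S^1)$ and, by Lemma \ref{lem:characterFgamma}, the character $\delta_m$ of (\ref{eqn:char stabil}) equals $\chi_{F_j}$ independently of $m$; hence by Proposition \ref{prop:same dim stabilizers} the stabilizer $\widehat{T}^d_{\hat{m}}$ is one and the same $1$-dimensional subtorus for every $\hat{m}\in \widehat{M}_j^0$. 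Thus $\widehat{M}_j^0$ lies in a single orbit-type stratum, so $\Xi\big(\widehat{M}_j^0\big)$ is contained in the relative interior of a single facet $\widehat{F}_j^0$. Since by Corollary \ref{cor:loci to loci}(3) the sets $\widehat{M}_j^0$ are pairwise disjoint and together exhaust the $1$-dimensional-stabilizer locus, and since each facet stratum is connected, the $\widehat{M}_j^0$ must coincide with the facet strata and match them bijectively; this gives $\Xi\big(\widehat{M}_j^0\big)=\widehat{F}_j^0$ and shows that $j\mapsto \widehat{F}_j$ is a bijection onto $\mathcal{G}\big(\widehat{\Delta}_{\boldsymbol{\nu}}\big)$, proving (4). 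Claim (1) then follows by taking closures, using Corollary \ref{cor:loci to loci}(2) and the continuity of $\Xi$ on the compact orbifold $\widehat{M}_{\boldsymbol{\nu}}$:
\[
\widehat{F}_j=\Xi\big(\widehat{M}_j\big)=\Xi\big(\overline{\widehat{M}_j^0}\big)=\overline{\Xi\big(\widehat{M}_j^0\big)}=\overline{\widehat{F}_j^0}.
\]

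To obtain (2), hence also the bijection on every $\mathcal{F}_l$, I would pass from facets to their intersections. A codimension-$l$ face of $\Delta$ is uniquely $F:=\bigcap_{j}F_{i_j}$ for an $l$-element increasing multi-index, and $M_F:=\Psi^{-1}(F)=\bigcap_j M_{i_j}$ is the associated closed locus; since each $M_{i_j}$ is $\mu^M$-invariant, iterating Corollary \ref{cor:loci to loci}(1) gives $\widehat{M_F}=\bigcap_j \widehat{M}_{i_j}$. By Proposition \ref{prop:same dim stabilizers} the points of $\widehat{M_F^0}=\widehat{\pi}_{\boldsymbol{\nu}}\big(\pi^{-1}(M_F^0)\big)$ have $l$-dimensional $\widehat{T}^d$-stabilizer, so $\Xi\big(\widehat{M_F}\big)=\overline{\Xi\big(\widehat{M_F^0}\big)}$ is a codimension-$l$ face $\widehat{F}$ contained in each $\widehat{F}_{i_j}$. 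Since $\widehat{\Delta}_{\boldsymbol{\nu}}$ is simple (as noted after (\ref{eqn:rational polytope tranformed})) and the facets $\widehat{F}_{i_1},\dots,\widehat{F}_{i_l}$ are distinct by (1), the nonempty intersection $\bigcap_j \widehat{F}_{i_j}$ is a face of codimension exactly $l$, whence $\bigcap_j \widehat{F}_{i_j}=\widehat{F}$. As the facet correspondence of (1) is a bijection and the face lattice of a simple polytope is determined by the incidences among its facets, this yields the desired bijection $\mathcal{F}_l(\Delta)\to \mathcal{F}_l\big(\widehat{\Delta}_{\boldsymbol{\nu}}\big)$ for every $l$.

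The main obstacle will be the matching step in the second paragraph: showing that each connected piece $\widehat{M}_j^0$ is the \emph{whole} of a facet stratum, and not merely a proper part of one, and that distinct indices $j$ produce distinct facets. This is precisely where the disjointness and covering properties of Corollary \ref{cor:loci to loci}(3)--(4) must be combined with the constancy of the stabilizer subtorus along $\widehat{M}_j^0$ (from Lemma \ref{lem:characterFgamma} and Proposition \ref{prop:same dim stabilizers}) and the connectedness of toric orbit-type strata; the delicate point is to rule out the possibility that two of the disjoint connected sets $\widehat{M}_j^0$ lie in, or accumulate onto, one and the same facet stratum.
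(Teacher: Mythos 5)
Your route is the same as the paper's: combine Corollary \ref{cor:loci to loci} and Proposition \ref{prop:same dim stabilizers} with the Lerman--Tolman dictionary between faces of the moment polytope and orbit-type strata; for claims (1), (2), (3) you in fact supply more detail than the paper does. The genuine gap is exactly the step you flag as the main obstacle, and the justification you offer for it does not work. From the facts that the sets $\widehat{M}_j^0$ are connected, pairwise disjoint, and exhaust the $1$-dimensional-stabilizer locus, and that each facet stratum $\Xi^{-1}\big(\widehat{F}^0\big)$ is connected, you conclude that the $\widehat{M}_j^0$ ``must coincide with the facet strata and match them bijectively''. This inference is not valid on purely topological grounds: a connected set can be partitioned into finitely many disjoint connected proper subsets (an interval is the disjoint union of two half-intervals), so nothing said so far prevents a single facet stratum from being the disjoint union of two of the $\widehat{M}_j^0$. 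Constancy of the stabilizer subtorus along each $\widehat{M}_j^0$ does not repair this, because two distinct facets can have proportional normal vectors (parallel facets) and hence identical stabilizer subtori, so the stabilizer does not separate strata.

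What is missing is an openness statement: each $\widehat{M}_j^0$ is open in the $1$-dimensional-stabilizer locus. This follows from a dimension count: $\widehat{M}_j^0$ is a suborbifold of $\widehat{M}_{\boldsymbol{\nu}}$ of real dimension $2(d-1)$, and so is the facet stratum containing it, so invariance of domain makes $\widehat{M}_j^0$ open in that stratum; since the facet strata are in turn open in the locus (facet interiors are open in the union of all facet interiors), each $\widehat{M}_j^0$ is open in the locus. Being also connected, disjoint and exhaustive, the $\widehat{M}_j^0$ are then precisely the connected components of the locus, i.e.\ precisely the facet strata---which is the assertion made in the paper's own (terse) proof. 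Alternatively one can argue complex-analytically: $\Xi^{-1}\big(\widehat{F}\big)$ is a connected codimension-one complex suborbifold, and if it were a union of distinct closed codimension-one complex suborbifolds $\widehat{M}_j\cup\widehat{M}_l$, invariance of domain would force $\widehat{M}_j=\widehat{M}_l$, whence $\pi^{-1}(M_j)=\pi^{-1}(M_l)$ by Claim \ref{claim:invariance-closure} and so $j=l$. With this point settled, your closure argument for (1) and your simplicity argument for (2) go through; note only that simplicity of $\widehat{\Delta}_{\boldsymbol{\nu}}$ should be quoted from \cite{lt} (it is the moment polytope of a symplectic toric orbifold), not from the discussion following (\ref{eqn:rational polytope tranformed}), since the identification of $\widehat{\Delta}_{\boldsymbol{\nu}}$ with the transform of $\Delta$ is established only in Theorem \ref{thm:case r=1}, whose proof relies on the present proposition.
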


\begin{proof}
By the theory in \cite{del} and \cite{lt}, the moment maps $\Psi$ and $\Xi$ are quotients
by the respective actions, i.e., the fibers are orbits. 
Furthermore, the relative interior of $F_j$ is 
$F_j^0=\Psi(M_j^0)$.

By Corollary \ref{cor:loci to loci}, the orbifolds $\widehat{M}_j^0$
are the connected components of the locus of points in $\hat{M}_{\boldsymbol{\nu}}$
having $1$-dimensional stabiliser subgroup in $\widehat{T}^d$.
Therefore, $\overline{\Xi(\widehat{M}_j^0})=\Xi(\widehat{M}_j)$ 
is a facet of $\widehat{\Delta}_{\boldsymbol{\nu}}$, and these are all the (distinct)
faces of $\widehat{\Delta}_{\boldsymbol{\nu}}$.

One argues similarly for the other faces.

\end{proof}

Let us next determine the normal vectors to the facets
$\widehat{F}_j$ of $\widehat{\Delta}_{\boldsymbol{\nu}}$.
This amounts to determining the stabilizer subgroup in $\widehat{T}^d$ of the points
in each relative interior $\widehat{F}_j^0$ (recall that $\widehat{T}^d$ acts
on $\widehat{M}_{\boldsymbol{\nu}}$ by $\beta^{\widehat{M}_{\boldsymbol{\nu}}}$ in
(\ref{eqn:beta descended})).

\begin{prop}
\label{prop:stabilizerhatFj0}
The stabilizer subgroup for $\beta^{\widehat{M}_{\boldsymbol{\nu}}}$ of any $\hat{m}\in \widehat{F}_j$
is the $1$-parameter subgroup of $\widehat{T}^d$ generated by 
$\widehat{\boldsymbol{\upsilon}}_j:=\boldsymbol{\upsilon}_j'
-(\lambda_j+\rho_{j}\,\delta)\,\partial^{S^1}_\theta\in \widehat{\mathfrak{t}}^d$.

\end{prop}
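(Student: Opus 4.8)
The plan is to reduce the computation of the stabilizer $\widehat{T}^d_{\hat{m}}$ to that of the stabilizer $T^d_m$ of a corresponding point $m\in M$ via the explicit isomorphism $\psi_m$ of Proposition \ref{prop:same dim stabilizers}, and then to read off the infinitesimal generator of its image by unwinding the two characters $\chi_{F_j}$ and $\chi$. First I would reduce to the generic case $\hat{m}\in \widehat{F}_j^0=\Xi\big(\widehat{M}_j^0\big)$. Indeed, by Corollary \ref{cor:loci to loci} and Proposition \ref{prop:bijectivefaces}, $\widehat{M}_j^0$ is exactly the locus of points of $\widehat{M}_{\boldsymbol{\nu}}$ whose stabilizer in $\widehat{T}^d$ is one-dimensional, and $\widehat{M}_j=\overline{\widehat{M}_j^0}$. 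Since the generator I am about to produce depends only on $j$, once it is shown to generate the stabilizer at every point of $\widehat{M}_j^0$, the same one-parameter subgroup fixes the closure $\widehat{M}_j$ by continuity, hence lies in the stabilizer of every $\hat{m}\in \widehat{F}_j$.

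Fix then $\hat{m}\in \widehat{F}_j^0$ and choose $m\leftarrow x\rightarrow \hat{m}$ with $m\in M_j^0$. By Lemma \ref{lem:stabilizerMF} and the description of $M_j^0$ preceding Corollary \ref{cor:loci to loci}, the stabilizer $T^d_m$ is precisely the one-parameter subgroup $\{e^{\tau\,\boldsymbol{\upsilon}_j}\}_{\tau}$ generated by the primitive normal $\boldsymbol{\upsilon}_j$. By Proposition \ref{prop:same dim stabilizers} we have $\widehat{T}^d_{\hat{m}}=\psi_m\big(T^d_m\big)$, where $\psi_m$ is the homomorphism of (\ref{eqn:psim}), namely $k=t\,h\mapsto \big(t,\,\delta_m(k)^{-1}\,\chi(h)^{-1}\big)$ after factoring $k=t\,h$ with $t\in T^{d-1}_c$ and $h\in T^1$. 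I would then substitute $k=e^{\tau\,\boldsymbol{\upsilon}_j}$ and use the decomposition (\ref{eqn:upsilonj decmp}), $\boldsymbol{\upsilon}_j=\boldsymbol{\upsilon}_j'+\rho_j\,\tilde{\boldsymbol{\nu}}$, so that $t=e^{\tau\,\boldsymbol{\upsilon}_j'}\in T^{d-1}_c$ and $h=e^{\tau\,\rho_j\,\tilde{\boldsymbol{\nu}}}\in T^1$.

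Finally I would evaluate the two characters. Since $m\in M_j^0$, Lemma \ref{lem:characterFgamma} identifies the character $\delta_m$ of (\ref{eqn:char stabil}) with $\chi_{F_j}$ of (\ref{eqn:characterChiF}) on $T^d_m$, giving $\delta_m\big(e^{\tau\,\boldsymbol{\upsilon}_j}\big)=e^{\imath\,\tau\,\lambda_j}$ (here $I_{F_j}=\{j\}$); and Lemma \ref{lem:deltachi} gives $\chi\big(e^{\tau\,\rho_j\,\tilde{\boldsymbol{\nu}}}\big)=e^{\imath\,\tau\,\rho_j\,\delta}$. Hence $\psi_m\big(e^{\tau\,\boldsymbol{\upsilon}_j}\big)=\big(e^{\tau\,\boldsymbol{\upsilon}_j'},\,e^{-\imath\,\tau\,(\lambda_j+\rho_j\,\delta)}\big)\in T^{d-1}_c\times S^1=\widehat{T}^d$, whose infinitesimal generator in $\widehat{\mathfrak{t}}^d=\mathfrak{t}^{d-1}_c\oplus\imath\,\mathbb{R}$ is exactly $\boldsymbol{\upsilon}_j'-(\lambda_j+\rho_j\,\delta)\,\partial^{S^1}_\theta=\widehat{\boldsymbol{\upsilon}}_j$, as in (\ref{eqn:hatupsilonj}). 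This proves the claim on $\widehat{F}_j^0$, and the extension to all of $\widehat{F}_j$ follows by the density argument of the first paragraph.

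I expect the delicate point to be the bookkeeping of the orientation and sign conventions: namely, matching the clockwise $\rho^X$-action (generated by $-\partial_\theta$) and the parametrization $\theta\mapsto e^{\imath\theta}$ of the $S^1$-factor of $\widehat{T}^d$ in (\ref{eqn:hatTd}), so that the exponent $-\imath\,\tau\,(\lambda_j+\rho_j\,\delta)$ translates into the stated coefficient $-(\lambda_j+\rho_j\,\delta)$ of $\partial^{S^1}_\theta$, together with checking that $\delta_m$ coincides with $\chi_{F_j}$ itself, and not with its inverse, on $T^d_m$.
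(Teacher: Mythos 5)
Your proof is correct, and the sign checks you flag at the end do come out as you assume: comparing (\ref{eqn:char stabil}) with Lemma \ref{lem:characterFgamma} and using freeness of $\rho^X$ gives $\delta_m=\chi_{F_j}$ (not its inverse) on $T^d_m$, while Lemma \ref{lem:deltachi} gives $\chi\big(e^{\tau\rho_j\tilde{\boldsymbol{\nu}}}\big)=e^{\imath\tau\rho_j\delta}$, so indeed
$\psi_m\big(e^{\tau\boldsymbol{\upsilon}_j}\big)=\big(e^{\tau\boldsymbol{\upsilon}_j'},\,e^{-\imath\tau(\lambda_j+\rho_j\delta)}\big)
=\exp_{\widehat{T}^d}\big(\tau\,\widehat{\boldsymbol{\upsilon}}_j\big)$.

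The difference from the paper is organizational rather than substantive, but it is a genuine one. The paper argues directly on $X$: starting from $\gamma^X_{e^{\tau\boldsymbol{\upsilon}_j}}(x)=\rho^X_{e^{\imath\tau\lambda_j}}(x)$ (Lemma \ref{lem:characterFgamma}), it uses (\ref{eqn:upsilonj decmp}), (\ref{eqn:char mediatra}) and Lemma \ref{lem:deltachi} to rewrite this as $x=\mu^X_{e^{\tau\rho_j\tilde{\boldsymbol{\nu}}}}\circ\beta^X_{e^{\tau\widehat{\boldsymbol{\upsilon}}_j}}(x)$, and then descends to $\widehat{M}_{\boldsymbol{\nu}}$; this exhibits the one-parameter subgroup generated by $\widehat{\boldsymbol{\upsilon}}_j$ \emph{inside} the stabilizer, with the reverse inclusion left implicit (it follows from the dimension count of Proposition \ref{prop:same dim stabilizers}). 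You instead evaluate the homomorphism $\psi_m$ of (\ref{eqn:psim}) on the generator of $T^d_m$ and invoke the surjectivity of $\psi_m$ onto $\widehat{T}^d_{\hat{m}}$ established in the proof of Proposition \ref{prop:same dim stabilizers}; the same two character evaluations then yield the exact equality $\widehat{T}^d_{\hat{m}}=\{\exp(\tau\,\widehat{\boldsymbol{\upsilon}}_j)\}$ in one stroke, which is a small but real gain in precision. Your opening reduction is also a point where you are more careful than the paper: for $\hat{m}$ lying over a proper face of $\widehat{F}_j$ the stabilizer is strictly larger than one-dimensional, so the literal equality in the statement can only hold over the relative interior $\widehat{F}_j^0$, and your closure argument correctly delivers only containment at boundary points, whereas the paper's proof silently restricts to $m\in F_j^0$.
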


\begin{rem}
Given Lemma \ref{lem:deltachi}, $\hat{\boldsymbol{\upsilon}}_j\in L(\widehat{T}^d)$; it is not claimed that $\hat{\boldsymbol{\upsilon}}_j$ is primitive.

\end{rem}

\begin{proof}
Assume $m\leftarrow x\rightarrow \widehat{m}$ with $m\in F_j^0$, whence
$\widehat{m}\in \widehat{F}_j^0$. The stabilizer subgroup of $m$ is then the 
$1$-paramenter subgroup
$\tau\mapsto e^{\tau\,\boldsymbol{\upsilon}_j}$, where $\boldsymbol{\upsilon}_j$ is as
in (\ref{eqn:jth face}).
Hence $\mu^M_{e^{\tau\,\boldsymbol{\upsilon}_j}}(m)=m$ for every $\tau\in \mathbb{R}$,
and by Lemma \ref{lem:characterFgamma} 

\begin{equation}
\label{eqn:gammaXstab}
\gamma^X_{ e^{  \tau\,\boldsymbol{\upsilon}_j       }      }(x)=
\rho^X_{e^{\tau\,\lambda_j}}(x)\quad(\tau\in \mathbb{R}).
\end{equation}
In view of (\ref{eqn:upsilonj decmp}), (\ref{eqn:char mediatra}),
and Lemma \ref{lem:deltachi}, (\ref{eqn:gammaXstab}) may be rewritten
as follows
\begin{eqnarray}
\label{eqn:gammaXstab1}
x&=&\gamma^X_{ e^{  \tau\,(\boldsymbol{\upsilon}_j'+\rho_{j}\tilde{\boldsymbol{\nu}})      }      }\circ \rho^X_{e^{-\tau\,\lambda_j}}(x)=\gamma^X_{ e^{  \tau\,\boldsymbol{\upsilon}_j'     }      }\circ \rho^X_{e^{-\tau\,\lambda_j}}
\circ \gamma^X_{\rho_{j}\tilde{\boldsymbol{\nu}} }(x)\nonumber\\
&=&\gamma^X_{ e^{  \tau\,\boldsymbol{\upsilon}_j'     }      }\circ \rho^X_{e^{-\tau\,\lambda_j}}
\circ \rho^X_{e^{-\tau\,\rho_{j}\,\delta}}
\circ \mu^X_{e^{\tau\,\rho_{j}\tilde{\boldsymbol{\nu}}} }(x)\nonumber\\
&=&\gamma^X_{ e^{  \tau\,\boldsymbol{\upsilon}_j'     }      }\circ 
\rho^X_{e^{-\tau\,(\lambda_j+\rho_{j}\,\delta)}}
\circ \mu^X_{e^{\tau\,\rho_{j}\tilde{\boldsymbol{\nu}}} }(x)\nonumber\\
&=&\mu^X_{e^{\tau\,\rho_{j}\tilde{\boldsymbol{\nu}} }}\circ \beta^X_{e^{\tau\,
[\boldsymbol{\upsilon}_j'
-(\lambda_j+\rho_{j}\,\delta)\,\partial^{S^1}_\theta]}}(x)
\quad(\tau\in \mathbb{R}),
\end{eqnarray}
where $\beta^X$ was introduced in (\ref{eqn:gammaXhat}).
Passing to the quotient, we can reformulate (\ref{eqn:gammaXstab1}) 
in terms of $\beta^{\widehat{M}_{\boldsymbol{\nu}}}$:
\begin{eqnarray}
\label{eqn:gammaXstab2}
\widehat{m}&=& \beta^{\widehat{M}_{\boldsymbol{\nu}}}_{e^{\tau\,
[\boldsymbol{\upsilon}_j'
-(\lambda_j+\rho_{j}\,\delta)\,\partial^{S^1}_\theta]}}(\widehat{m})
\quad(\tau\in \mathbb{R}).
\end{eqnarray}

\end{proof}

Given Proposition \ref{prop:stabilizerhatFj0}, the facets of $\widehat{\Delta}_{\boldsymbol{\nu}}$
are defined by equations of the form

\subsection{Proof of Theorem \ref{thm:case r=1}}
\label{sctn:proofT1}
We can now combine the previous results to a proof of Theorem \ref{thm:case r=1}.
Let us premise a piece of notation.
We shall denote by $\mathrm{d}^{S^1}\theta$ 
the dual basis in $\mathrm{Lie}(S^1)^\vee$
to $\partial_\theta^{S^1}$. Assuming $m\leftarrow x\rightarrow \hat{m}$,
by Proposition \ref{prop:hamiltonian on Mhat}
\begin{equation}
\label{eqn:Ximhatm}
\hat{\Xi}(\hat{m})= \frac{\Psi'(m)}{\Phi^{\tilde{\boldsymbol{\nu}}}(m)}+\frac{1}{\Phi^{\tilde{\boldsymbol{\nu}}}(m)}\,\mathrm{d}^{S^1}\theta =
\frac{\Psi'(m)+\mathrm{d}^{S^1}\theta}{\Psi^{\tilde{\boldsymbol{\nu}}}(m)+\delta}.
\end{equation}

\begin{proof}
[Proof of Theorem \ref{thm:case r=1}]
We have 
$\widehat{\Delta}_{\boldsymbol{\nu}}=\hat{\Xi}\big(\widehat{M}_{\boldsymbol{\nu}}\big)$. Assume 
$\rho\in \widehat{\Delta}_{\boldsymbol{\nu}}$, and choose a triple $m\leftarrow x\rightarrow \hat{m}$
with $\rho=\hat{\Xi}(\hat{m})$. 
Then $\langle\Psi(m),\boldsymbol{\upsilon}_j\rangle \ge \lambda_j$ for every $j=1,\ldots,k$.
With $\boldsymbol{\upsilon}_j$ as in (\ref{eqn:upsilonj decmp}) this yields for every $j$
\begin{equation}
\label{eqn:ineq1psi}
\langle \Psi'(m),\boldsymbol{\upsilon}_j'\rangle-(\lambda_j+\rho_j\,\delta)   \ge 
-\rho_j\left(\Psi^{\tilde{\boldsymbol{\nu}}}(m)+\delta\right)=
-\Phi^{\tilde{\boldsymbol{\nu}}}(m)\,\rho_j.
\end{equation}
In view of Proposition \ref{prop:stabilizerhatFj0} and
(\ref{eqn:Ximhatm}),
dividing (\ref{eqn:ineq1psi}) by $\Phi^{\tilde{\boldsymbol{\nu}}}(m)>0$, one gets
\begin{equation}
\label{eqn:ineq1psi1}
\left\langle\Xi'(\hat{m}),\widehat{\boldsymbol{\upsilon}}_j\right\rangle\ge -\rho_j.
\end{equation}
Hence, every $\rho\in \widehat{\Delta}_{\boldsymbol{\nu}}$ satisfies 
$\left\langle\rho,\widehat{\boldsymbol{\upsilon}}_j\right\rangle\ge -\rho_j$ for every $j$. Furthermore, the previous argument also shows that the inequalities are all strict if and only if 
$\rho\in \widehat{M}^0$ (notation as in (\ref{eqn:hatM0})), and that on the other hand equality holds for exactly one $j$ if and only if $\rho$ belongs to the corresponding facet $\hat{F}_j$.

Since we know already that $\widehat{\Delta}_{\boldsymbol{\nu}}$ is a rational convex 
polytope and the $\widehat{F}_j$'s are its facets, we conclude that
$$
\widehat{\Delta}_{\boldsymbol{\nu}}=
\bigcap_{j=1}^k\left\{\rho \in \widehat{\mathfrak{t}}_{\boldsymbol{\nu}}^\vee\,:\,\rho \left(\widehat{\boldsymbol{\boldsymbol{\upsilon}}}
_j\right)\ge -\rho_j\right\},
$$
and in particular that each $\widehat{\boldsymbol{\upsilon}}_j$ is inward-pointing.

The previous discussion completes the proof that shape of $\widehat{\Delta}_{\boldsymbol{\nu}}$ is as claimed in the statement of Theorem \ref{thm:case r=1},
except that $\widehat{\Delta}_{\boldsymbol{\nu}}$ is realized in the Lie coalgebra of
$\widehat{T}^d$ rather than $T^d$. 
To obtain the corresponding statement of Theorem
\ref{thm:case r=1} we need only compose with the isomorphism 
$T^d\cong T^{d-1}_c\times T^1\rightarrow T^{d-1}_c\times S^1$ given by the product of the identity and $e^{\vartheta \,\boldsymbol{\nu}}\mapsto e^{\imath\,\vartheta}$.

It remains to determine the marking of $\widehat{\Delta}_{\boldsymbol{\nu}}$, that is,
the assignment to each facet $\widehat{F}_j$ of the order $s_j\ge 1$ of the structure group
$G_j$ of
an arbitrary $\hat{m} \in \widehat{M}_j^0$. By construction, 
given $m\leftarrow x\rightarrow \hat{m}$,
up to isomorphism $G_j$ may be identified with the stabilizer subgroup
$T^1_x\leqslant T^1$ of $x$
under $\mu^X$. Now if $\mu^X_{e^{\vartheta\,\boldsymbol{\nu}}}(x)=x$ 
then $\mu^M_{e^{\vartheta\,\boldsymbol{\nu}}}(m)=m$ by equivariance of $\pi$. Since $m\in M_j^0$, this means that for some
(unique) $e^{\imath\,\vartheta'}\in S^1$
\begin{equation}
\label{eqn:stabrelXM}
e^{\vartheta\,\widetilde{\boldsymbol{\nu}}}=e^{\vartheta'\,\boldsymbol{\upsilon}_j}=
e^{\vartheta'\,(\boldsymbol{\upsilon}'_j+\rho_j\,\widetilde{\boldsymbol{\nu}})}\quad
\Rightarrow\quad 
e^{\vartheta'\,\boldsymbol{\upsilon}'_j}=e^{(\vartheta-\rho_j\,\vartheta')\,
\widetilde{\boldsymbol{\nu}}}\in T^{d-1}_c\cap T^1=(1)
\end{equation}
(notation as in (\ref{eqn:upsilonj decmp})).

In particular, since $\tilde{\boldsymbol{\nu}}$ is primitive, we see from (\ref{eqn:stabrelXM})
that
$e^{\imath\,\vartheta}=e^{\imath\,\rho_j\,\vartheta'}\in S^1$.
Let us distinguish the following cases, depending on the relation between 
$\widetilde{\boldsymbol{\nu}}$ and $\boldsymbol{\upsilon}_j$.

\textbf{Case 1.}
Suppose $\rho_j=0$, that is, 
$\boldsymbol{\upsilon}_j=\boldsymbol{\upsilon}_j'
=\widehat{\boldsymbol{\upsilon}}_j\in L(T^{d-1}_c)\subset \mathfrak{t}^{d-1}_c$.
Then $e^{\imath\,\vartheta}=1$ (and since $\boldsymbol{\upsilon}_j$ is primitive we obtain from (\ref{eqn:stabrelXM}) that
$e^{\imath\,\vartheta'}=1$ as well). Thus
$G_x$ is trivial in this case, that is, $s_j=1=(\widehat{\upsilon}_j)$.

At the opposite extreme, suppose that $\boldsymbol{\upsilon}_j\wedge \widetilde{\boldsymbol{\nu}}=0$.
Then $\boldsymbol{\upsilon}_j'=0$ and $\boldsymbol{\upsilon}_j=\pm \widetilde{\boldsymbol{\nu}}$
by primitivity.

\textbf{Case 2.}
Assume first $\boldsymbol{\upsilon}_j= \widetilde{\boldsymbol{\nu}}$, that is,
$\rho_j=1$; thus
$\widehat{\boldsymbol{\upsilon}}_j
=-(\lambda_j+\delta)\,\widetilde{\boldsymbol{\nu}}$. 
As $m\in F_j$,
\begin{equation}
\label{eqn:rellambda delta+}
0<\Phi^{\widetilde{\boldsymbol{\nu}}}(m)=\Psi^{\widetilde{\boldsymbol{\nu}}}(m)+\delta
=\Psi^{\boldsymbol{\upsilon}_j}(m)+\delta=\lambda_j+\delta.
\end{equation}
On the other hand, by (\ref{eqn:char mediatra}), Lemma \ref{lem:deltachi}, and
Lemma \ref{lem:characterFgamma}
\begin{equation}
\label{eqn:second case+}
\mu^X_{e^{\vartheta\,\widetilde{\boldsymbol{\nu}}}}(x)=
\gamma^X_{e^{\vartheta\,\widetilde{\boldsymbol{\nu}}}}\circ \rho^X_{e^{\imath\delta\vartheta}}(x)
=
\gamma^X_{e^{\vartheta\,\boldsymbol{\upsilon}_j}}\circ \rho^X_{e^{\imath\delta\vartheta}}(x)=
e^{-\imath\,\vartheta\,(\lambda_j+\delta)}\,x.
\end{equation}
Hence if $\widetilde{\boldsymbol{\nu}}=\boldsymbol{\upsilon}_j$, then $\delta+\lambda_j>0$,
and  $G_x$ is isomorphic to the group of $(\lambda_j+\delta)$-th roots of unity, that is,
$s_j=\lambda_j+\delta=(\widehat{\boldsymbol{\upsilon}}_j)$.

\textbf{Case 3.}
If $\boldsymbol{\upsilon}_j= -\widetilde{\boldsymbol{\nu}}$, then
$\rho_j=-1$; hence $\widehat{\boldsymbol{\upsilon}}_j
=-(\lambda_j-\delta)\,\widetilde{\boldsymbol{\nu}}$.  In place
of (\ref{eqn:rellambda delta+}) we have
\begin{equation}
\label{eqn:rellambda delta-}
0<\Phi^{\widetilde{\boldsymbol{\nu}}}(m)=\Psi^{\widetilde{\boldsymbol{\nu}}}(m)+\delta
=\Psi^{-\boldsymbol{\upsilon}_j}(m)+\delta=-\lambda_j+\delta,
\end{equation}
and in place of (\ref{eqn:second case+}) we obtain
\begin{equation}
\label{eqn:second case-}
\mu^X_{e^{\vartheta\,\widetilde{\boldsymbol{\nu}}}}(x)=
\gamma^X_{e^{\vartheta\,\widetilde{\boldsymbol{\nu}}}}\circ \rho^X_{e^{\imath\delta\vartheta}}(x)
=
\gamma^X_{e^{-\vartheta\,\boldsymbol{\upsilon}_j}}\circ \rho^X_{e^{\imath\delta\vartheta}}(x)=
e^{-\imath\,\vartheta\,(-\lambda_j+\delta)}\,x.
\end{equation}
Hence 
$\delta-\lambda_j>0$
and $G_x$ is isomorphic to the group of $(\delta-\lambda_j)$-th roots of unity. In particular,
$s_j=\delta-\lambda_j=(\widehat{\boldsymbol{\upsilon}}_j)$.

\textbf{Case 4.}
Suppose that $\boldsymbol{\upsilon}_j\not\in L(T^1)\cup L(T^{d-1}_c)$, that is,
$\rho_j\,\boldsymbol{\upsilon}_j'\neq \mathbf{0}$.
By (\ref{eqn:stabrelXM}) $e^{\imath\,\vartheta'}$ is a $(\boldsymbol{\upsilon}_j')$-th root
of unity, and $e^{\imath\,\vartheta}=e^{\imath\,\vartheta'\,\rho_j}$.
We have
\begin{eqnarray}
\label{eqn:intermediate case}
\mu^X_{e^{\vartheta\,\widetilde{\boldsymbol{\nu}}}}(x)&=&\gamma^X_{e^{\vartheta\,\widetilde{\boldsymbol{\nu}}}}
\circ \rho_{e^{\vartheta\,\delta}}^X(x)\nonumber\\
&=&\gamma^X_{e^{\vartheta'\,\boldsymbol{\upsilon}_j}}
\circ \rho_{e^{\vartheta'\,\rho_j\,\delta}}^X(x)
=e^{-\imath\,\vartheta'\,(\lambda_j+\rho_j\,\delta)}\,x.
\end{eqnarray}

Thus, $e^{\vartheta\,\widetilde{\boldsymbol{\nu}}}\in G_x$ if and only if
$e^{\imath\,\vartheta}=e^{\imath\,\rho_j\,\vartheta'}$, where $e^{\imath\,\vartheta'}$ is both
a $(\boldsymbol{\upsilon}_j')$-th root of unity 
(by (\ref{eqn:stabrelXM})
and a $(\lambda_j+\rho_j\,\delta)$-th
root of unity (by (\ref{eqn:intermediate case})), i.e. a $G.C.D.\big( (\boldsymbol{\upsilon}_j'),\lambda_j+\rho_j\,\delta  \big)$-th
root of unity. 

Since $\boldsymbol{\upsilon}_j$ is primitive, 
$G.C.D.\big((\boldsymbol{\upsilon}_j'),\rho_j  \big)=1$; therefore also
$$
G.C.D.\Big(G.C.D.\big( (\boldsymbol{\upsilon}_j'),\lambda_j+\rho_j\,\delta  \big),\rho_j\Big)=1.
$$
Hence we may assume that $e^{\imath\,\vartheta}$ is a primitive 
$G.C.D.\big( (\boldsymbol{\upsilon}_j'),\lambda_j+\rho_j\,\delta  \big)$-th
root of unity. In other words, $G_x$ is isomorphic to the group of $G.C.D.\big( (\boldsymbol{\upsilon}_j'),\lambda_j+\rho_j\,\delta  \big)$-th roots of unity, whence by Proposition 
\ref{prop:stabilizerhatFj0}
$$
s_j=G.C.D.\big( (\boldsymbol{\upsilon}_j'),\lambda_j+\rho_j\,\delta  \big)=
\left(\widehat{\boldsymbol{\upsilon}}_j\right).
$$

The proof of Theorem \ref{thm:case r=1} is complete.
\end{proof}

\begin{proof}
[Proof of Corollary \ref{cor:equalcohomology/Q}]
Since $J$ and $\widehat{J}_{\boldsymbol{\nu}}$ are torus invariant complex
structures on $M$ and $\widehat{M}_{\boldsymbol{\nu}}$, respectively, by
Theorem 9.1 of \cite{lt} both $M$ and $\widehat{M}_{\boldsymbol{\nu}}$ have
structures of complex toric varieties (of course in the case of $M$ this is
our starting assumption); furthermore, the corresponding fans 
$\mathrm{Fan}(M)$ and $\mathrm{Fan}(\widehat{M}_{\boldsymbol{\nu}})$
are defined by their respective polytopes, $\Delta$ and $\widehat{\Delta}_{\boldsymbol{\nu}}$. Since $\Delta$ and $\widehat{\Delta}_{\boldsymbol{\nu}}$
are simple and compact, $\mathrm{Fan}(M)$ and $\mathrm{Fan}(\widehat{M}_{\boldsymbol{\nu}})$ are simplicial and complete.

Hence the Betti numbers $\beta_j$ and $\widehat{\beta}_j$ of  $M$ and $\widehat{M}_{\boldsymbol{\nu}}$ are determined by the collection of the 
all the numbers $d_r$ and $\widehat{d}_r$ of 
$r$-dimensional cones in $\mathrm{Fan}(M)$ and $\mathrm{Fan}(\widehat{M}_{\boldsymbol{\nu}})$, respectively (\S 4.5 of \cite{ful}).
Thus it suffices to prove that $d_r=\widehat{d}_r$ for any $r$. 

On the other hand, in order 
to determine the fan $\mathrm{Fan}_\Gamma$ associated to a polytope $\Gamma$ we may assume 
without loss that
$\Gamma$ contains the origin in its interior; in this case, furthermore, 
the cones in $\mathrm{Fan}_\Gamma$ are the cones over the faces of the polar
polytope $\Gamma^0$ to $\Gamma$ (\S 1.5 of \cite{ful}). Hence we need to show the polar polytopes of
(suitable translates of) $\Delta$ and $\widehat{\Delta}_{\boldsymbol{\nu}}$
share the same number of faces in each dimension.
However, for any $d$-dimensional polytope $\Gamma$
in a $d$-dimensional real vector space, containing the origin in its interior, 
there
is an order-reversing bijection between the faces of $\Gamma$ and those of $\Gamma^0$, with corresponding faces $F$ and $F^*$ having dimensions adding up to $d-1$ (\textit{loc. cit.}). Thus the statement follows from Proposition \ref{prop:bijectivefaces}.

\end{proof}

It is in order to briefly digress on how $\widehat{\Delta}_{\boldsymbol{\nu}}$ in Theorem \ref{thm:case r=1}
depends on the choice of $T^{d-1}_c\leqslant T^d$. 

Suppose first that $\boldsymbol{\delta}=\mathbf{0}$ in (\ref{eqn:Trpsi}), so that
$\mu^X$ is the restriction of $\gamma^X$ to $T^1$.
Let $S^{d-1}_c,\,T^{d-1}_c\leqslant T^d$ be different complementary subtorii to $T^1$, so
that $T^d\cong S^{d-1}_c\times T^1\cong T^{d-1}_c\times T^1$; thus projecting 
onto $T^{d-1}_c$ along $T^1$ yields an isomorphism $P:S^{d-1}_c\cong T^{d-1}_c$.
Let us choose an isomorphism of the standard torus $T_{st}^{d-1}=(S^1)^{d-1}$ with
$S^{d-1}_c$, so that (composing with $P$) $T_{st}^{d-1}\cong S^{d-1}_c\cong T^{d-1}_c$.

We obtain actions $\varphi^X$ and $\psi^X$ of $T^{d-1}_{st}$ on $X$,  
by composing the previous isomorphisms with the restrictions of $\gamma^X$ to
$S^{d-1}_c$ and $T^{d-1}_c$ respectively; then $\varphi^X\neq \psi^X$ (unless
$S^{d-1}_c= T^{d-1}_c$) and, by construction, the two actions differ by the composition of
a character $T^{d-1}_{st}\rightarrow T^1$ with $\mu^X$. 
On the other hand, since $\varphi^X$ and $\psi^X$ commute with
$\mu^X$, they descend to symplectic actions $\varphi^{\widehat{M}_{\boldsymbol{\nu}}}$ and 
$\psi^{\widehat{M}_{\boldsymbol{\nu}}}$ of $T^{d-1}_{st}$
on $(\widehat{M}_{\boldsymbol{\nu}}, 2\,\widehat{\omega}_{\boldsymbol{\nu}})$; in fact,
by the previous remark and the construction of 
$\widehat{M}_{\boldsymbol{\nu}}$ as a quotient,
$\varphi^{\widehat{M}_{\boldsymbol{\nu}}}=\psi^{\widehat{M}_{\boldsymbol{\nu}}}$.

Thus $\varphi^X$ and $\psi^X$ are different contact lifts to $X$ of the same symplectic
action of $T^{d-1}_{st}$ on $\widehat{M}_{\boldsymbol{\nu}}$;
hence they correspond to different Hamiltonian structures for the latter action, whose moment
maps differ by a translation in ${\mathfrak{t}^{d-1}_{st}}^\vee$.

Let us consider the
action of $\widehat{T}^d:=T^{d-1}_{st}\times S^1$ 
on $\widehat{M}_{\boldsymbol{\nu}}$ given by the product of 
$\varphi^{\widehat{M}_{\boldsymbol{\nu}}}=\psi^{\widehat{M}_{\boldsymbol{\nu}}}$
and $\rho^{\widehat{M}_{\boldsymbol{\nu}}}$, where the latter is
the action of $S^1$ on $\widehat{M}_{\boldsymbol{\nu}}$ obtained by descending $\rho^X$.
Let us adopts the previous choices of Hamiltonian structures
(for the second factor, we use the same Hamiltonian struture as 
in the proof of Theorem \ref{thm:case r=1}, see Propositions
\ref{prop:twisted lift hat} and \ref{prop:hamiltonian on Mhat}). 
The corresponding moment maps then differ by a translation
by a constant in ${\mathfrak{t}^{d-1}_c}^\vee\times \{0\}$; hence so do the corresponding 
moment polytopes, say $\widehat{\Delta}_{\boldsymbol{\nu}}^\psi$ and 
$\widehat{\Delta}_{\boldsymbol{\nu}}^\varphi$.

The previous considerations may be extended to the case where
$\boldsymbol{\delta}\neq \mathbf{0}$, and therefore
$\varphi^{\widehat{M}_{\boldsymbol{\nu}}}\neq 
\psi^{\widehat{M}_{\boldsymbol{\nu}}}$.
In fact, if
$\boldsymbol{\delta}\neq \mathbf{0}$, then $\varphi^X$ and $\psi^X$ differ by the composition
of a morphism $T^{d-1}_{st}\rightarrow T^1$, say
of the form 
$e^{\imath\,\boldsymbol{\vartheta}}\mapsto 
e^{\imath\,\langle \mathbf{a},\boldsymbol{\vartheta}\rangle\,
\widetilde{\boldsymbol{\nu}}}$ 
where $\mathbf{a}\in \mathbb{Z}^{d-1}$,
with $\gamma^X$. Hence, passing to the
quotient, in view of (\ref{eqn:char mediatra})
the induced actions $\varphi^{\widehat{M}_{\boldsymbol{\nu}}}$ and 
$\psi^{\widehat{M}_{\boldsymbol{\nu}}}$ will now differ by the composition of a character 
$T^{d-1}_{st}\rightarrow S^1$ of the form 
$e^{\imath\,\boldsymbol{\vartheta}}\mapsto e^{-\imath\,\delta\,\langle \mathbf{a},\boldsymbol{\vartheta}\rangle}$ with $\rho^{\widehat{M}_{\boldsymbol{\nu}}}$.
Identifying the coalgebra of $T^{d-1}_{st}$ with $\imath\,\mathbb{R}^{d-1}$, 
the corresponding moment maps $\Phi^{\psi}$ and $\Phi^{\varphi}$ for 
$\psi^{\widehat{M}_{\boldsymbol{\nu}}}$ and $\varphi^{\widehat{M}_{\boldsymbol{\nu}}}$ 
are related by a relation of the form
$\Phi^{\psi}=\Phi^{\varphi}-\delta\,\mathbf{a}\,\Gamma$,
where $\Gamma:\widehat{M}_{\boldsymbol{\nu}}\rightarrow \imath\,\mathbb{R}$
is the moment map for $\rho^{\widehat{M}_{\boldsymbol{\nu}}}$ (recall from 
Proposition \ref{prop:hamiltonian on Mhat} that $\Gamma(\hat{m})
=\imath\,\Phi^{\tilde{\boldsymbol{\nu}}}(m)^{-1}$ if $m\leftarrow x\rightarrow \hat{m}$).

It follows that the two cones are related by a transformation in $\imath\,\mathbb{R}^{d}\times \imath\,\mathbb{R}$
of the form $\imath\,(\mathbf{x},y)\mapsto \imath\,(\mathbf{x}-y\,\delta\,\mathbf{a},y)$,
followed perhaps by a translation.

\bigskip

\section{The case of arbitrary $r$}

We shall now now remove the restriction that $r=1$, and allow any value
$1\le r\le d$. Before dealing directly with the geometric situation, we shall dwell on
some handy technical results.

\subsection{Preliminaries on transversality of polytopes}
\label{sctn:transverse polytopes}

\begin{defn}
\label{defn:transverse polytopes}
Let $V$ be a finite dimensional real vector space, $\Gamma\subset V$ a convex
polytope, $W\subseteq V$ an affine subspace.
We shall say that $\Gamma$ and $W$ \textit{meet transversely}, of that they are transverse to each other,
if $W$ is transverse to the relative interior $F^0$ of every face $F$ of $\Gamma$.
\end{defn}

In the hypothesis of Definition \ref{defn:transverse polytopes},
let us set $\Gamma_W:=\Gamma\cap W$. Clearly, $\Gamma_W$ is a convex polytope in 
$W$.

Let $\mathcal{F}(\Gamma)$ be the collection of faces of $\Gamma$ and
$\mathcal{G}(\Gamma)=\{F_1,\ldots,F_s\}\subseteq \mathcal{F}(\Gamma)$ be the subset of its facets.
For each $j=1,\ldots,s$ let $\ell_j\in V^\vee$ be an inward normal covector
to $F_j$, so that
$$
\Gamma=\bigcap _{j=1}^s\left\{ v\in V\,:\,\ell_j (v)\ge \lambda_j      \right\}
$$
for certain $\lambda_j\in \mathbb{R}$;
the $j$-th facet is thus
\begin{equation}
\label{eqn:jth facet Gamma}
F_j:=\Gamma\cap \big\{p\in V\,:\,\ell_j(p)=\lambda_j\}.
\end{equation}
If $L\in \mathcal{F}(\Gamma)$, there exists
a unique subset $I_L\subseteq \{1,\ldots,s\}$ such that
$L=\bigcap_{j\in I_L}F_j$.

We are interested in \textit{simple} polytopes (meaning that 
exactly $n$ facets of $\Gamma$ meet at each vertex, where $n=\dim_{\mathbb{R}}(V)$
\cite{lt}; 
if $\Gamma$ is simple, then every codimension-$k$
face $L\in \mathcal{F}(\Gamma)$ is the intersection of exactly $k$ facets, 
that is, $|I_L|$ equals the codimension of
$L$.

\begin{prop}
\label{prop:transverse polytope faces}
In the setting of Definition \ref{defn:transverse polytopes}, suppose that
$\Gamma$ and $W$ meet transversely. The following holds.

\begin{enumerate}
\item If $F\subseteq L$ are faces of $\Gamma$ and 
$W\cap F^0\neq \emptyset$, then 
$W\cap L^0\neq \emptyset$. 
\item If $W\cap \Gamma\neq \emptyset$, then $W\cap \Gamma^0\neq \emptyset$
\end{enumerate}

\end{prop}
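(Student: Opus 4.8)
The plan is to reduce the second assertion to the first and to concentrate all the work on part (1). For part (2), given a point $p\in W\cap \Gamma$, I would let $F\in\mathcal{F}(\Gamma)$ be the unique face whose relative interior contains $p$; then $W\cap F^0\neq\emptyset$, and applying part (1) with $L=\Gamma$ (the improper face, whose relative interior is $\Gamma^0$) yields $W\cap\Gamma^0\neq\emptyset$ immediately.

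For part (1), I would work with the facet presentation $\Gamma=\bigcap_{j=1}^s\{\ell_j\ge\lambda_j\}$ and use that, because $\Gamma$ is simple, every face $G$ is the intersection of exactly the facets indexed by $I_G$, the covectors $\{\ell_j\}_{j\in I_G}$ are then linearly independent, and $G^0$ consists of the points of $\Gamma$ with $\ell_j=\lambda_j$ for $j\in I_G$ and $\ell_j>\lambda_j$ for $j\notin I_G$. Since $F\subseteq L$ we have $I_L\subseteq I_F$. Fixing $p\in W\cap F^0$, the strategy is to produce a direction $v$ parallel to $W$ along which $p$ moves off the facets indexed by $I_F\setminus I_L$ while remaining on those indexed by $I_L$, so that $p+tv$ enters $L^0$ for small $t>0$.

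The crux is to extract such a $v$ from the transversality hypothesis. I would consider the evaluation map $\phi\colon V\to\mathbb{R}^{I_F}$, $\phi(v)=(\ell_j(v))_{j\in I_F}$. By simplicity $\phi$ is surjective with kernel $\mathrm{dir}(F)=\bigcap_{j\in I_F}\ker\ell_j$, the direction space of $\mathrm{aff}(F)$. Transversality of $W$ and $F^0$ at $p$ means exactly $\mathrm{dir}(W)+\mathrm{dir}(F)=V$; composing with $\phi$, this forces the restriction $\phi|_{\mathrm{dir}(W)}\colon\mathrm{dir}(W)\to\mathbb{R}^{I_F}$ to remain surjective. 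Hence I can choose $v\in\mathrm{dir}(W)$ with $\ell_j(v)=0$ for $j\in I_L$ and $\ell_j(v)=1$ for $j\in I_F\setminus I_L$. Then $p+tv\in W$ for every $t$, and a short check of the three families of inequalities — equality is preserved on $I_L$, strict inequality is created on $I_F\setminus I_L$, and the already-strict inequalities for $j\notin I_F$ persist for $t$ small — shows $p+tv\in W\cap L^0$ for all sufficiently small $t>0$.

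I expect the main obstacle to be precisely this translation of the geometric transversality condition into the surjectivity of $\phi|_{\mathrm{dir}(W)}$: it is the one place where the definition of transverse intersection and the simplicity of $\Gamma$ (which gives the independence of the $\ell_j$, and hence the clean identification $\ker\phi=\mathrm{dir}(F)$) must be used together. Once surjectivity is in hand, the construction of $v$ and the verification that $p+tv$ lands in $L^0$ are routine.
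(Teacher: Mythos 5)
Your reduction of part 2 to part 1 (taking $F$ to be the unique face with $p\in F^0$ and $L=\Gamma$ the improper face) is legitimate and slightly cleaner than the paper, which simply reruns the part-1 argument; and your three-family check of inequalities is correct as far as it goes. The genuine problem is that your construction of $v$ rests on simplicity of $\Gamma$, which is not a hypothesis of this Proposition: the statement is ``in the setting of Definition \ref{defn:transverse polytopes}'', i.e.\ an arbitrary convex polytope, and it is later invoked in that generality (simplicity is added as a separate hypothesis only in items 3 and 5 of Proposition \ref{prop:transverse polytope facets}); consistently, the paper's own proof never uses it. Without simplicity your key step fails in substance, not just in its justification: the covectors $\{\ell_j\}_{j\in I_F}$ can be linearly dependent, $\phi$ is then not surjective, and your system $\ell_j(v)=0$ for $j\in I_L$, $\ell_j(v)=1$ for $j\in I_F\setminus I_L$ can be inconsistent. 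Concretely, let $Q\subset\mathbb{R}^3$ be the square pyramid cut out by $\ell_1=-x-z\ge -1$, $\ell_2=x-z\ge -1$, $\ell_3=-y-z\ge -1$, $\ell_4=y-z\ge -1$, $z\ge 0$, so that $\ell_1+\ell_2-\ell_3-\ell_4=0$; take $\Gamma=Q\times[0,1]\subset\mathbb{R}^4$, $W=\mathbb{R}^3\times\{1/2\}$, $F$ the apex edge $\{(0,0,1)\}\times[0,1]$, and $L$ the facet $\Gamma\cap\{\ell_1=-1\}$. Then $\Gamma$ and $W$ meet transversely and $W\cap F^0\neq\emptyset$, but any $v$ with $\ell_1(v)=0$ and $\ell_2(v)=\ell_3(v)=\ell_4(v)=1$ would give $0+1-1-1=0$, a contradiction; no such $v$ exists in $V$, let alone in $\mathrm{dir}(W)$ (the conclusion of the Proposition is of course still true in this example).

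The repair is small, and it essentially converts your proof into the paper's. Two of your ingredients do survive without simplicity, provided $I_F$ denotes the set of \emph{all} facets containing $F$: one still has $\ker\phi=\mathrm{dir}(F)$ (because $F$ contains a relatively open subset of $\bigcap_{j\in I_F}\{\ell_j=\lambda_j\}$), and transversality then gives equality of images, $\phi(\mathrm{dir}(W))=\phi(V)$ --- but not surjectivity. So you must prescribe for $\phi(v)$ a value that visibly lies in $\phi(V)$, rather than the ad hoc vector $(0,\dots,0,1,\dots,1)$: pick $p'\in L^0$ (possible since $p\in F\subseteq L=\overline{L^0}$) and choose $v\in\mathrm{dir}(W)$ with $\phi(v)=\phi(p'-p)$. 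This value vanishes on $I_L$ and is strictly positive on $I_F\setminus I_L$, and your verification for $p+tv$, $t>0$ small, then goes through verbatim. This is exactly what the paper does in different clothing: there one writes a point $p'\in L^0$ near $p$ as $p'=w+q$ with $w\in\mathrm{dir}(W)$ small and $q\in F^0$, using that transversality makes the addition map $\mathrm{dir}(W)\times F^0\to V$ submersive, hence open, and the small vector $w$ plays the role of your $tv$.
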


More precisely, regarding 1. we shall show that for any
$p\in W\cap F^0$ and any open neighborhood $W'$ of $p$ in 
$W$ one has $W'\cap L^0\neq \emptyset$. Similarly, regarding 2. we shall show that for
any $p\in \Gamma_W$ and any open neighborhood $W'$ of $p$ in 
$W$ one has $W'\cap \Gamma^0\neq \emptyset$.

\begin{proof} 
[Proof of 1.]
Since $W\subseteq V$ is an affine subspace, it is a translate of
a vector subspace $\widehat{W}\subseteq V$. Suppose $p\in F^0\cap W$. Then
$W=p+\widehat{W}$, and by transversality the map 
$\rho:(w,q)\in \widehat{W}\times F^0\mapsto w+q\in V$ is submersive, hence open.
We have $p=\rho(\mathbf{0},p)$, hence the image of an arbitrary small neighborhood
of $(\mathbf{0},p)$ in $\widehat{W}\times F^0$ contains an open neighborhood of $p$.

Since $p\in F\subseteq L$, we can find points $p'\in L^0$ arbitrarily close to $p$.
For any such $p'$, therefore, there exist $w\in \widehat{W}$, $w\sim \mathbf{0}$, and
$q\in F^0$, $q\sim p$, such that
$p'=w+q$.

\begin{claim} 
\label{claim:GammaL0}
With the previous choices,
$w+p\in W\cap L^0$.
\end{claim}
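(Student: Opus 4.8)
The plan is to show $w+p\in W\cap L^0$ by checking directly the defining equalities and strict inequalities of the relative interior $L^0$, one facet at a time. The membership $w+p\in W$ is immediate, since $w\in\widehat{W}$ and $p\in W=p+\widehat{W}$; all the content lies in proving $w+p\in L^0$. I would first record two structural facts. Because $F\subseteq L$, the smaller face lies on at least as many facets, so $I_L\subseteq I_F$. And, as $\Gamma$ is simple, for a point $v\in\Gamma$ one has $v\in L^0$ if and only if $\ell_j(v)=\lambda_j$ for all $j\in I_L$ and $\ell_j(v)>\lambda_j$ for all $j\notin I_L$ (notation as in (\ref{eqn:jth facet Gamma})). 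The key input is the relation $p'=w+q$ with $p'\in L^0$, $q\in F^0$, together with the fact that, by the openness of $\rho$ at $(\mathbf{0},p)$, shrinking the neighborhood $W'$ of $p$ (equivalently, taking $p'$ close enough to $p$) lets us assume $w$ as close to $\mathbf{0}$, and $q$ as close to $p$, as we wish.

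I would then split into three cases according to the position of the index $j$. For $j\in I_L\subseteq I_F$: here $\ell_j(q)=\lambda_j$ (as $q\in F^0$) and $\ell_j(p')=\lambda_j$ (as $p'\in L^0$), so $\ell_j(w)=\ell_j(p')-\ell_j(q)=0$; since also $\ell_j(p)=\lambda_j$, this gives $\ell_j(w+p)=\lambda_j$. For $j\in I_F\setminus I_L$: now $\ell_j(q)=\lambda_j$ but $\ell_j(p')>\lambda_j$ (because $j\notin I_L$), whence $\ell_j(w)=\ell_j(p')-\lambda_j>0$, and with $\ell_j(p)=\lambda_j$ we obtain $\ell_j(w+p)>\lambda_j$. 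For $j\notin I_F$ (so $j\notin I_L$): here $\ell_j(p)>\lambda_j$ strictly because $p\in F^0$, and I would conclude $\ell_j(w+p)=\ell_j(p)+\ell_j(w)>\lambda_j$ as soon as $|\ell_j(w)|$ is smaller than the positive gap $\ell_j(p)-\lambda_j$. Collecting the three cases yields $\ell_j(w+p)=\lambda_j$ exactly for $j\in I_L$ and $\ell_j(w+p)>\lambda_j$ otherwise, hence $w+p\in L^0$.

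The main obstacle—really the only nontrivial point—is the third case, namely preserving the strict inequalities $\ell_j(w+p)>\lambda_j$ for $j\notin I_F$ under the perturbation by $w$; the first two cases are exact identities and need no smallness. This is handled uniformly: there are finitely many facets, each gap $\ell_j(p)-\lambda_j$ for $j\notin I_F$ is strictly positive, so by choosing $p'$ sufficiently close to $p$ I can force $\ell_j(w)$ below the minimum of these finitely many gaps simultaneously. This also yields the sharper assertion preceding Claim \ref{claim:GammaL0}, that $W'\cap L^0\neq\emptyset$ for every neighborhood $W'$ of $p$ in $W$, since the constructed point $w+p$ can be taken arbitrarily close to $p$.
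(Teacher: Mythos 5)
Your proof is correct and follows essentially the same route as the paper's: the same three-way case split on $j\in I_L$, $j\in I_F\setminus I_L$, and $j\notin I_F$, using the relation $p'=w+q$ to compute or bound $\ell_j(w)$ in the first two cases, and a uniform positive gap $\delta=\min\{\ell_j(p)-\lambda_j\,:\,j\notin I_F\}$ together with the smallness of $w$ in the last. The only cosmetic difference is that the characterization of $L^0$ by equalities on $I_L$ and strict inequalities elsewhere holds for arbitrary polytopes, so your appeal to simplicity of $\Gamma$ is unnecessary (and the proposition does not assume it).
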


\begin{proof}
[Proof of Claim \ref{claim:GammaL0}]
Clearly, $w+p\in W$ by construction. Let us prove that $w+p\in L^0$, i.e. that
$\ell_j(w+p)=\lambda_j$ if $j\in I_L$ and $\ell_j(w+p)>\lambda_j$ if $j\not\in I_L$.

Since $F\subseteq L$, we have $I_L\subseteq I_F$. 

If $j\in I_L$, we have 
$\ell_j(p')=\ell_j(q)=\ell(p)=\lambda_j$ since $p,\,p',\,q\in L$. Therefore,
$\ell_j(w)=\ell_j(p'-q)=\lambda_j-\lambda_j=0$. Thus
$\ell_j(w+p)=0+\lambda_j=\lambda_j$ for every $j\in I_L$, so that $w+p\in L$.

If $j\in I_F\setminus I_L$, we have 
$\ell_j(p')>\lambda_j$ since $p'\in L^0$, and 
$\ell_j(p)=\ell_j(q)=\lambda_j$ since $p,\,q\in F$. Therefore, 
$\ell_j(w)=\ell_j(p'-q)=\ell_j(p')-\ell_j(q)>0$, and so
$\ell_j(w+p)=\ell_j(w)+\lambda_j>\lambda_j$.

Let $\delta:=\min\{\ell_j(p)-\lambda_j\,:\,j\not\in I_F\}$; thus $\delta>0$, since 
$p\in F^0$.  
If $j\not\in I_F$, therefore,  
$\ell_j(p+w)\ge \lambda_j+\delta/2>\lambda_j$ as $w\sim \mathbf{0}$.
 
\end{proof}

Thus for every $p\in W\cap F^0$ we have found points $p+w$ 
arbitrarily close to $p$ in $W\cap L^0$, and this completes the proof of 1..

\end{proof}

\begin{proof}
[Proof of 2.]
This is a slight modification of the previous argument.
Suppose $p\in \Gamma\cap W$. If $p\in \Gamma^0$, there is nothing to prove. Otherwise,
$p\in F^0$ for some face $F\in \mathcal{F}(\Gamma)$. We can find $p'\in \Gamma^0$
arbitrarily close to $p$, and therefore - by the previous considerations - for any such $p'$
there exist $w\in \widehat{W}$ with $w\sim \mathbf{0}$ and $q\in F^0$ with $q\sim p$
such that $p'=w+q$. 

If $j\in I_F$, $\ell_j(p)=\ell_j(q)=\lambda_j$ since $p,\,q\in F$, and $\ell_j(p')>\lambda_j$ since $p'\in \Gamma^0$.
Therefore $\ell_j(w)=\ell_j(p')-\ell_j(q)>0$. Hence 
$\ell_j(p+w)=\lambda_j+\ell_j(w)>\lambda_j$ for all $j\in I_F$.

Let $\delta:=\min\{\ell_j(p)-\lambda_j\,:\,j\not\in I_F\}$; then $\delta>0$ since $p\in F^0$. Since
$w\sim \mathbf{0}$, $\ell_j(p+w)\ge \lambda_j+\delta/2>\lambda_j$ for all $j\not\in I_F$.

Thus $\ell_j(p+w)>\lambda_j$ for every $j=1,\ldots,r$, i.e. $p+w\in W\cap \Gamma^0$.

\end{proof}

\begin{cor}
\label{cor:interior intersection}
Under the hypothesis of Proposition \ref{prop:transverse polytope faces},
if $L$ is a face of $\Gamma$ and $\Gamma_{W}\cap L\neq \emptyset$,
then $\Gamma_{W}\cap L^0\neq \emptyset$.
\end{cor}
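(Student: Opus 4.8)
The plan is to reduce the statement directly to part 1 of Proposition \ref{prop:transverse polytope faces}, by locating the smallest face of $\Gamma$ through a given point of $\Gamma_W \cap L$. First I would choose an arbitrary $p \in \Gamma_W \cap L = \Gamma \cap W \cap L$, which the hypothesis guarantees to exist. Since every point of a convex polytope lies in the relative interior of exactly one of its faces, there is a unique face $F \in \mathcal{F}(\Gamma)$ with $p \in F^0$. The crucial observation is that $F \subseteq L$: concretely, in the notation of (\ref{eqn:jth facet Gamma}), setting $I_p := \{ j : \ell_j(p) = \lambda_j \}$ one has $F = \bigcap_{j \in I_p} F_j$, so that $I_F = I_p$; and since $p \in L = \bigcap_{j \in I_L} F_j$ we get $I_L \subseteq I_p = I_F$, whence $F = \bigcap_{j \in I_F} F_j \subseteq \bigcap_{j \in I_L} F_j = L$.

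With $p \in W \cap F^0$ and $F \subseteq L$ in hand, the next step is simply to invoke Proposition \ref{prop:transverse polytope faces}(1) for the pair $F \subseteq L$: since $W \cap F^0 \neq \emptyset$, it yields $W \cap L^0 \neq \emptyset$. Finally I would note that $L^0 \subseteq L \subseteq \Gamma$, so that intersecting with $\Gamma$ imposes no extra condition and $W \cap L^0 = \Gamma \cap W \cap L^0 = \Gamma_W \cap L^0$; this gives $\Gamma_W \cap L^0 \neq \emptyset$, as required.

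I do not expect a serious obstacle, as the Corollary is essentially a repackaging of part 1 of the Proposition. The only point deserving care is the identification of the face $F$ whose relative interior contains $p$, together with the verification that $F \subseteq L$; this rests on the standard decomposition of a polytope into the relative interiors of its faces, read off from the defining inequalities in (\ref{eqn:jth facet Gamma}). Everything else is a formal manipulation of set inclusions.
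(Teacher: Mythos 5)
Your proof is correct and takes essentially the same route as the paper's own argument: pick $p\in\Gamma_W\cap L$, pass to the unique face $F$ with $p\in W\cap F^0$, note $F\subseteq L$, and invoke part 1 of Proposition \ref{prop:transverse polytope faces}. The only difference is that you spell out, via the index sets $I_p$ and $I_L$ from (\ref{eqn:jth facet Gamma}), the inclusion $F\subseteq L$ that the paper asserts without comment.
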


\begin{proof}
If $p\in W\cap L$, there is a face $F\in \mathcal{F}(\Gamma)$ with
$F\subseteq L$ and $p\in W\cap F^0$. Hence $W\cap L^0\neq \emptyset$
by Proposition \ref{prop:transverse polytope faces}.
\end{proof}

\begin{prop}
\label{prop:transverse polytope facets}
Under the hypothesis of Proposition \ref{prop:transverse polytope faces}, the following holds:
\begin{enumerate}
\item the facets of $\Gamma_W$ are the non-empty intersections of $W$ with the facets of $\Gamma$;
\item $\Gamma_W^0=W\cap \Gamma^0$;
\item if $\Gamma$ is simple, then the codimension-$k$ faces of $\Gamma_W$ are the non-empty intersections of $W$ with the
codimension-$k$ faces of $\Gamma$;
\item if $F$ is a face of $\Gamma$ such that $F_W:=F\cap W\neq \emptyset$,
then the relative interior of $F_W$ is $F_W^0=F^0\cap W$;

\item if $\Gamma$ is simple, then so is $\Gamma_W$.
\end{enumerate}

\end{prop}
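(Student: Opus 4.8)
The plan is to take part 2 as the foundation and deduce everything else from it together with the codimension bookkeeping furnished by transversality, always routing nonemptiness statements through Proposition \ref{prop:transverse polytope faces} and Corollary \ref{cor:interior intersection}. First I would prove part 2, i.e. $\Gamma_W^0=W\cap\Gamma^0$. Since $\Gamma$ is full-dimensional, $\Gamma^0$ is its topological interior; the inclusion $W\cap\Gamma^0\subseteq\Gamma_W^0$ is immediate, because an interior point of $\Gamma$ has a full $V$-neighborhood inside $\Gamma$ whose trace on $W$ is a $W$-neighborhood inside $\Gamma_W$. This simultaneously shows $\mathrm{aff}(\Gamma_W)=W$ (using Proposition \ref{prop:transverse polytope faces}.2 to guarantee $W\cap\Gamma^0\neq\emptyset$ whenever $\Gamma_W\neq\emptyset$), so that relative interior in $W$ agrees with topological interior in $W$. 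For the reverse inclusion I would show that a point $p\in\Gamma_W$ lying on a proper face $F$, so $p\in F^0$ with $I_F\neq\emptyset$, cannot be interior to $\Gamma_W$: transversality $\widehat{W}+T_p(\mathrm{aff}\,F)=V$ together with $T_p(\mathrm{aff}\,F)\subseteq\bigcap_{j\in I_F}\ker\ell_j$ gives $\widehat{W}+\bigcap_{j\in I_F}\ker\ell_j=V$, whence the map $w\mapsto(\ell_j(w))_{j\in I_F}$ has nonzero image on $\widehat{W}$ and so produces $w\in\widehat{W}$ with $\ell_{j_0}(w)<0$ for some $j_0\in I_F$; then $p+tw\in W\setminus\Gamma$ for small $t>0$.

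Part 4 then follows by applying part 2 to the face $F$ in place of $\Gamma$: every face of $F$ is a face of $\Gamma$ contained in $F$, so the transversality of $\Gamma$ and $W$ makes $(F,W)$ a transverse pair in the sense of Definition \ref{defn:transverse polytopes}, and $F_W^0=W\cap F^0$ is exactly part 2 for that pair. Applying part 2 to $\Gamma$ itself also yields the boundary decomposition $\Gamma_W\setminus\Gamma_W^0=W\cap\partial\Gamma=\bigcup_{j}(W\cap F_j)$, which is the starting point for identifying faces.

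For part 1 I would check that each nonempty $W\cap F_j$ is a facet and that these exhaust the facets. It is a face of $\Gamma_W$ because the supporting hyperplane $\{\ell_j=\lambda_j\}$ of $F_j$ restricts to a supporting hyperplane of $\Gamma_W$ in $W$. To see it has codimension $1$, I use Corollary \ref{cor:interior intersection} to find $p\in W\cap F_j^0$, where transversality gives $\widehat{W}+\ker\ell_j=V$, so $\ell_j|_W$ is non-constant and $W\cap\{\ell_j=\lambda_j\}$ has dimension $\dim W-1$; conversely any facet of $\Gamma_W$ lies in the boundary decomposition above, hence in some $W\cap F_j$, and the dimension count forces equality. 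Part 3 is the same argument with facets replaced by codimension-$k$ faces $L=\bigcap_{j\in I_L}F_j$, where $|I_L|=k$ by simplicity: Corollary \ref{cor:interior intersection} produces a point of $W\cap L^0$, and the relation $\widehat{W}+\bigcap_{j\in I_L}\ker\ell_j=V$ gives $\dim(W\cap\mathrm{aff}\,L)=\dim W-k$, so $W\cap L$ is a codimension-$k$ face of $\Gamma_W$; combined with part 1 this gives the claimed bijection.

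Finally, for part 5, at a vertex $v=W\cap L$ of $\Gamma_W$ (with $L$ of codimension $\dim W$ in $\Gamma$, by part 3) the facets of $\Gamma_W$ through $v$ are exactly the $W\cap F_j$ with $j\in I_L$; since $\Gamma$ is simple, $|I_L|=\dim W$, and the transversality relation $\widehat{W}\oplus\bigcap_{j\in I_L}\ker\ell_j=V$ (now a direct sum by dimensions) makes $w\mapsto(\ell_j(w))_{j\in I_L}$ a linear isomorphism $\widehat{W}\to\mathbb{R}^{I_L}$, so the restricted normals $\{\ell_j|_W\}_{j\in I_L}$ are independent and the $\dim W$ facets through $v$ are distinct. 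Hence exactly $\dim W$ facets meet at each vertex and $\Gamma_W$ is simple. I expect the main obstacle to be the linear-algebra step shared by parts 1, 3 and 5: converting the transversality condition $\widehat{W}+T_pF=V$ into the exact codimension of $W\cap F$, and in the simple case into independence of the restricted normals; once that is nailed down, the remainder is bookkeeping on top of Proposition \ref{prop:transverse polytope faces} and Corollary \ref{cor:interior intersection}.
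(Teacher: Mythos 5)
Your proof is correct, but it is organized around a different foundation than the paper's, and several individual steps are genuinely different. The paper proves part 1 first and independently of part 2: Corollary \ref{cor:interior intersection} supplies a point of $W\cap F_j^0$, transversality makes $\ell_j$ restrict to a non-constant affine functional on $W$ (so $W\cap F_j$ is a facet of $\Gamma_W$), and the converse is obtained by perturbing a point in the relative interior of a given facet of $\Gamma_W$ until it lies in some $F_j^0$. Part 2 then drops out as the set identity $\Gamma_W^0=(W\cap\Gamma)\setminus\bigcup_j(W\cap F_j)=W\cap\Gamma^0$, parts 3 and 4 are proved simultaneously by the same pattern (simplicity gives $|I_F|=k$, transversality gives the codimension, and distinct faces have disjoint relative interiors), and part 5 is a one-line corollary of 3. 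You run the dependencies in the opposite direction: part 2 is proved directly by a separation argument (transversality yields $w\in\widehat W$ with $\ell_{j_0}(w)<0$, so a point of $\Gamma_W$ lying on a proper face of $\Gamma$ cannot be interior to $\Gamma_W$), part 4 follows by applying 2 to the face itself, parts 1 and 3 are recovered from the resulting boundary decomposition, and part 5 is proved by showing the restricted normals at a vertex of $\Gamma_W$ are linearly independent. Both routes consume exactly the same external inputs, namely Proposition \ref{prop:transverse polytope faces} and Corollary \ref{cor:interior intersection}. Yours isolates the linear algebra explicitly, which pays off in part 5 where the paper is very terse; the paper's ordering gets parts 2 and 5 essentially for free once 1 and 3 are in place.

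Two steps should be written out before your argument is complete. First, in part 4 you invoke ``part 2 for the pair $(F,W)$'', but your proof of part 2 uses full-dimensionality of the polytope (so that the relative interior is the topological interior and the facet inequalities alone present it). To apply it to a face you must pass to $\mathrm{aff}(F)$ and to the affine subspace $W\cap\mathrm{aff}(F)$, and check that transversality descends there: if $G$ is a face of $F$ and $x\in T\,\mathrm{aff}(F)$, write $x=w+g$ with $w\in\widehat W$ and $g\in T\,\mathrm{aff}(G)$; then $w=x-g\in \widehat W\cap T\,\mathrm{aff}(F)$, so $\bigl(\widehat W\cap T\,\mathrm{aff}(F)\bigr)+T\,\mathrm{aff}(G)=T\,\mathrm{aff}(F)$. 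Second, the converse halves of parts 1 and 3 (``the dimension count forces equality'', ``combined with part 1'') rest on two standard facts that should be quoted explicitly: a face of a polytope whose relative interior meets another face is contained in that face, and every proper face is the intersection of the facets containing it. With these, a facet of $\Gamma_W$, being covered by the finitely many facets $W\cap F_j$, must coincide with one of them, and an arbitrary face of $\Gamma_W$ is of the form $W\cap L$ for a face $L$ of $\Gamma$, whose codimension your forward dimension formula then pins down. Neither point is a real obstruction; both are exactly the kind of routine verification you flagged yourself at the end.
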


\begin{proof}
[Proof of 1.]
Let $F_j$ be the $j$-th facet of $\Gamma$ as in (\ref{eqn:jth facet Gamma}), 
and suppose $F_j\cap W\neq \emptyset$. 
Then $F_j^0\cap W\neq \emptyset$ by Corollary \ref{cor:interior intersection}.
Since $W$ meets $F_j^0$ transversely by assumption, $F_j^0\cap W$ has codimension one in $W$
and $\ell_j$ restricts to a non-constant 
affine linear functional on $W$. Thus, if $p\in F_j^0$ then every neighborhood of 
$p$ in $W$ intersects both $\Gamma^0$ and $\Gamma^c$. It follows that $F_j\cap W$
is a facet of $\Gamma_W$. 

Conversely, let $F$ be a facet of $\Gamma_W$, and let $p\in F^0$. Since 
$p\not\in \Gamma^0$ (for else $p\in \Gamma_W^0$), there
exists $j\in \{1,\ldots,s\}$ such that $p\in F_j$, and therefore $F_j^0\cap W\neq \emptyset$.
By the above $F_j\cap W$ is a facet of $\Gamma_W$.
Since a small neighborhood of $p$ in $W$ meets no facet of $\Gamma_W$ other
that $F$, we may slightly perturb $p$ in $F^0$ and assume that $p\in F^0\cap F_j^0$ and
therefore $F^0\cap W'=F_j^0\cap W'$ for some open neighborhood $W'$ of $p$ in $W$.
This forces $F=F_j\cap W$.
\end{proof}

\begin{proof}
[Proof of 2.]
Since every face is the intersection of the facets containing it, by 1. we have
$$
\Gamma_W^0=(W\cap \Gamma) \setminus \bigcup_{j=1}^s (W\cap F_j)
=W\cap \left(\Gamma\setminus \bigcup_{j=1}^s F_j\right)=
W\cap \Gamma^0.
$$
\end{proof}

\begin{proof}
[Proof of 3. and 4.]
If $F$ is a codimension-$k$ face of $\Gamma$ such that $F\cap W\neq \emptyset$, let us choose 
$p\in F^0\cap W\neq \emptyset$. Since $\Gamma$ is simple, 
$I_F=\{i_1,\ldots,i_k\}\subseteq \{1,\ldots,s\}$ and there is a neighborhood 
$W'$ of $p$ in $W$ such that 
$$W'\cap F=W'\cap F^0=W'\cap \bigcap_{j=1}^kF_{i_j}.$$ 
By transversality, $F^0\cap W$ has codimension
$k$ in $W$ and furthemore each $F_{i_j}$ is a facet of $\Gamma_W$ by 1.. 
Thus $F_W:=F\cap W$ is a face of $\Gamma_W$, since it is a non-empty intersection of facets,
and it has codimension-$k$ in $W$, since it has a non-empty open subset which has codimension $k$.
Furthermore, it is given by the intersection of the $k$ facets $F_{i_j}\cap \Gamma_W$ 
of $\Gamma_W$.
It then follows that 
$$
F_W^0=\bigcap_{j=1}^k(W\cap F_{i_j})\setminus \bigcup _{i\not\in I_F}(W\cap F_i)=
W\cap \left(\bigcap_{j=1}^kF_{i_j}\setminus \bigcup _{i\not\in I_F}F_i\right)=
W\cap F^0.
$$

Conversely, suppose that $K\subset \Gamma_W$ is a face, and suppose $p\in K^0$.
Since $p\not\in \Gamma^0$, there exists a unique face $F$ of $\Gamma$ such that $p\in F^0$.
By the previous discussion, $F_W=F\cap W$ is also a face of $\Gamma_W$, and 
$p\in F^0\cap W=F_W^0$.
Since distinct faces of $\Gamma_W$ have disjoint relative interiors,
$K=F_W$.
\end{proof}

\begin{proof}
[Proof of 5.]
By 3., every codimension-$k$ face of $\Gamma_W$ is the intersection of $k$ facets, and
this means that $\Gamma_W$ is simple.
\end{proof}

\subsection{The reduction $\overline{M}_{\boldsymbol{\nu}}$ and the circle bundle 
$Y_{\boldsymbol{\nu}}$}
\label{sctn:MnuYnu}

\begin{defn}
Given a subspace $\mathfrak{a}\subseteq {\mathfrak{t}^r}^\vee$, 
we shall denote by $\mathfrak{a}^\perp\subseteq \mathfrak{t}^r$ its
annihilator in $\mathfrak{t}^r$. Given 
$\boldsymbol{\rho}\in{\mathfrak{t}^r}^\vee$ we shall also set
$\boldsymbol{\rho}^\perp:=\mathrm{span}(\boldsymbol{\rho})^\perp$.

\end{defn}

\begin{defn}
A vector subspace $\mathfrak{c}\subseteq \mathfrak{t}^d$ is \textit{integral}
if $\mathfrak{c}\cap L(T)$ is a full-rank lattice in $\mathfrak{c}$ or, equivalently,
if $\mathfrak{c}$ is the Lie subalgebra of a closed embedded torus in $T^d$.

\end{defn}

Thus 
$\boldsymbol{\nu}^\perp\subseteq
\mathfrak{t}^r\subseteq \mathfrak{t}^d$ is a vector subspace of dimension $r-1$; since
$\boldsymbol{\nu}\in L(T)^\vee$, furthermore, $\boldsymbol{\nu}^\perp$ is integral. 
Let 
$T^{r-1}_{\boldsymbol{\nu}^\perp}\leqslant T^r\leqslant T^d$ be the (closed) torus with
Lie algebra $\boldsymbol{\nu}^\perp$; equivalently, if 
$\chi_{\boldsymbol{\nu}}(e^{\boldsymbol{\xi}}):=e^{2\,\pi\,\imath\,\langle\boldsymbol{\nu},\boldsymbol{\xi}\rangle}$
is the character of $T^r$ defined by $\boldsymbol{\nu}$, then $T^{r-1}_{\boldsymbol{\nu}^\perp}=
\ker(\chi_{\boldsymbol{\nu}})^0$ (the connected component of the kernel). 

We are interested
in $\widehat{M}_{\boldsymbol{\nu}}=X_{\boldsymbol{\nu}}/T^r$, the action being $\mu^X$ (notation as in (\ref{eqn:Mnudefn}) and (\ref{eqn:defn Mnu})). The latter quotient may be performed in stages. Namely, 
under Basic Assumption \ref{asmpt:basic}, $T^r$ acts in a locally free
manner on $X_{\boldsymbol{\nu}}$, whence \textit{a fortiori} so does $T^{r-1}_{\boldsymbol{\nu}^\perp}$. 
We first form the partial quotient $Y_{\boldsymbol{\nu}}
:=X_{\boldsymbol{\nu}}/T^{r-1}_{\boldsymbol{\nu}^\perp}$, where 
$T^{r-1}_{\boldsymbol{\nu}^\perp}$ acts on $X_{\boldsymbol{\nu}}$ by $\mu^X$; 
next, $\mu^X$ descends to a residual locally free action of $T^1_{\boldsymbol{\nu}}:=T^r/T^{r-1}_{\boldsymbol{\nu}^\perp}$
on $Y_{\boldsymbol{\nu}}$,
\begin{equation}
\label{eqn:muonYnu}
\mu^{Y_{\boldsymbol{\nu}}}:T^1_{\boldsymbol{\nu}}\times Y_{\boldsymbol{\nu}}
\rightarrow Y_{\boldsymbol{\nu}}.
\end{equation} 
Then
\begin{equation}
\label{eqn:partialquotient/quotientgroup}
\widehat{M}_{\boldsymbol{\nu}}=X_{\boldsymbol{\nu}}/T^r=Y_{\boldsymbol{\nu}}/T^1_{\boldsymbol{\nu}}.
\end{equation}

$Y_{\boldsymbol{\nu}}$
inherits a natural contact struture.
Let $\jmath_{\boldsymbol{\nu}}:X_{\boldsymbol{\nu}} \hookrightarrow X$ be the inclusion.
Then $\jmath_{\boldsymbol{\nu}}^*(\alpha)$ is $T^{r-1}_{\boldsymbol{\nu}^\perp}$-invariant.
Furthermore, (writing $\Phi$ for $\Phi\circ \pi$ with abuse of notation)
by definition of $X_{\boldsymbol{\nu}}$ we have
$\Phi\circ \jmath_{\boldsymbol{\nu}}=\lambda\,\boldsymbol{\nu}$ 
for some $\mathcal{C}^\infty$ function
$\lambda:X_{\boldsymbol{\nu}}\rightarrow \mathbb{R}_+$. Hence,
if $\boldsymbol{\xi}\in \mathfrak{t}^{r-1}_{\boldsymbol{\nu}^\perp}=\boldsymbol{\nu}^\perp$
then $\boldsymbol{\xi}_X^\Phi$ satisfies
$$
\left.\boldsymbol{\xi}_X^\Phi\right|_{X_{\boldsymbol{\nu}}}=\left.\boldsymbol{\xi}_M^\sharp\right|_{X_{\boldsymbol{\nu}}}-\lambda\,\langle\boldsymbol{\nu},\boldsymbol{\xi}\rangle\,
\left.\partial_\vartheta\right|_{X_{\boldsymbol{\nu}}}
=\left.\boldsymbol{\xi}_M^\sharp\right|_{X_{\boldsymbol{\nu}}}  \in \ker\left(\jmath_{\boldsymbol{\nu}}^*(\alpha)\right).
$$
In other words,
$[\iota(\boldsymbol{\xi}_X^\Phi)\,\alpha]\circ \jmath_{\boldsymbol{\nu}}=0$. 

We conclude the following.
Let $q_{\boldsymbol{\nu}}:X_{\boldsymbol{\nu}}\rightarrow Y_{\boldsymbol{\nu}}$ be the
projection. Thus we have arrows
$$
Y_{\boldsymbol{\nu}}\stackrel{q_{\boldsymbol{\nu}}}{\longleftarrow}
X_{\boldsymbol{\nu}}\stackrel{\jmath_{\boldsymbol{\nu}}}{\hookrightarrow}
X.
$$
\begin{lem}
There exists a differential 1-form (in the orbifold sense)
$\alpha_{\boldsymbol{\nu}}\in \Omega^1(Y_{\boldsymbol{\nu}})$, such that 
$q_{\boldsymbol{\nu}}^*(\alpha_{\boldsymbol{\nu}})=
\jmath_{\boldsymbol{\nu}}^*(\alpha)$.
\label{lem:quotient connectionnu}
\end{lem}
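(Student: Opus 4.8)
The plan is to show that the $1$-form $\jmath_{\boldsymbol{\nu}}^*(\alpha)$ is \emph{basic} for the locally free action of $T^{r-1}_{\boldsymbol{\nu}^\perp}$ on $X_{\boldsymbol{\nu}}$ by $\mu^X$, that is, simultaneously invariant under the action and horizontal with respect to it. Once this is established, the descent of basic forms along the orbifold quotient $q_{\boldsymbol{\nu}}:X_{\boldsymbol{\nu}}\rightarrow Y_{\boldsymbol{\nu}}$ produces the desired $\alpha_{\boldsymbol{\nu}}$; it is unique because $q_{\boldsymbol{\nu}}$ is a submersion, so $q_{\boldsymbol{\nu}}^*$ is injective on forms.

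First I would record invariance. The contact lift $\mu^X$ is generated by the contact vector fields $\boldsymbol{\xi}_X^\Phi$, which by definition preserve $\alpha$, i.e. $L_{\boldsymbol{\xi}_X^\Phi}\alpha=0$; since $T^{r-1}_{\boldsymbol{\nu}^\perp}$ is connected and generated by the flows of the $\boldsymbol{\xi}_X^\Phi$ with $\boldsymbol{\xi}\in\boldsymbol{\nu}^\perp$, the whole subtorus preserves $\alpha$. Because $\Phi$ is $T^r$-invariant (the coadjoint action of a torus being trivial), the locus $M_{\boldsymbol{\nu}}$ and hence $X_{\boldsymbol{\nu}}=\pi^{-1}(M_{\boldsymbol{\nu}})$ are $\mu^X$-invariant, so $\jmath_{\boldsymbol{\nu}}$ is $T^{r-1}_{\boldsymbol{\nu}^\perp}$-equivariant and $\jmath_{\boldsymbol{\nu}}^*(\alpha)$ is $T^{r-1}_{\boldsymbol{\nu}^\perp}$-invariant.

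Horizontality is exactly the computation preceding the statement. For $\boldsymbol{\xi}\in\mathfrak{t}^{r-1}_{\boldsymbol{\nu}^\perp}=\boldsymbol{\nu}^\perp=\ker\boldsymbol{\nu}$, the identity $\Phi\circ\jmath_{\boldsymbol{\nu}}=\lambda\,\boldsymbol{\nu}$ yields $\langle\Phi,\boldsymbol{\xi}\rangle\circ\jmath_{\boldsymbol{\nu}}=\lambda\,\langle\boldsymbol{\nu},\boldsymbol{\xi}\rangle=0$, so by (\ref{eqn:contact lift}) the restriction of $\boldsymbol{\xi}_X^\Phi$ to $X_{\boldsymbol{\nu}}$ coincides with the horizontal lift $\boldsymbol{\xi}_M^\sharp$, which lies in $\ker(\alpha)$. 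Therefore $\iota(\boldsymbol{\xi}_X^\Phi)\,\jmath_{\boldsymbol{\nu}}^*(\alpha)=0$ for every generator of the action, which is precisely horizontality.

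Being invariant and horizontal, $\jmath_{\boldsymbol{\nu}}^*(\alpha)$ is basic, and I would conclude by invoking the descent of basic forms to the quotient. The only delicate point is that $\mu^X$ is merely locally free, so $Y_{\boldsymbol{\nu}}$ is an orbifold rather than a manifold; I would handle this in the standard way by passing to local slices for the $T^{r-1}_{\boldsymbol{\nu}^\perp}$-action. On such a slice the finite stabilizer acts, the restricted form is invariant under it and horizontal, hence descends to the local finite quotient; invariance then guarantees that these local descents agree on overlaps and glue to a global orbifold $1$-form $\alpha_{\boldsymbol{\nu}}$ on $Y_{\boldsymbol{\nu}}$ with $q_{\boldsymbol{\nu}}^*(\alpha_{\boldsymbol{\nu}})=\jmath_{\boldsymbol{\nu}}^*(\alpha)$. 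The main obstacle is thus just this orbifold bookkeeping, the substantive content, namely horizontality, having already been verified.
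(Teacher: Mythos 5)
Your proof is correct and follows essentially the same route as the paper: you verify that $\jmath_{\boldsymbol{\nu}}^*(\alpha)$ is basic for the $T^{r-1}_{\boldsymbol{\nu}^\perp}$-action (invariance from the fact that the contact lift preserves $\alpha$, horizontality from the computation $\langle\Phi,\boldsymbol{\xi}\rangle\circ\jmath_{\boldsymbol{\nu}}=\lambda\,\langle\boldsymbol{\nu},\boldsymbol{\xi}\rangle=0$, so that $\boldsymbol{\xi}_X^\Phi$ restricts to the horizontal lift $\boldsymbol{\xi}_M^\sharp\in\ker(\alpha)$) and then descend to the orbifold quotient. The only difference is that you spell out the invariance argument and the slice-based orbifold descent, which the paper leaves implicit.
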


Under the stronger condition that
$T^{r-1}_{\boldsymbol{\nu}^\perp}$ acts freely on $M_{\boldsymbol{\nu}}$, the quotient 
\begin{equation}
\label{eqn:overlineMnu}
\overline{M}_{\boldsymbol{\nu}}:=M_{\boldsymbol{\nu}}/T^{r-1}_{\boldsymbol{\nu}^\perp}
\end{equation}
is smooth; furthemore, the action of $T^{r-1}_{\boldsymbol{\nu}^\perp}$
on $X_{\boldsymbol{\nu}}$ induced by $\mu^X$ is
also free, and therefore $Y_{\boldsymbol{\nu}}$ is non-singular.
In addition, $\rho^X$ (the action on $X$ generated by $-\partial_\theta$)
descends to a free $S^1$-action $\rho^{Y_{\boldsymbol{\nu}}}:S^1\times Y_{\boldsymbol{\nu}}
\rightarrow Y_{\boldsymbol{\nu}}$, and we also have
\begin{equation}
\label{eqn:overlineMnu1}
\overline{M}_{\boldsymbol{\nu}}=Y_{\boldsymbol{\nu}}/S^1.
\end{equation}
In addition,
$\alpha_{\boldsymbol{\nu}}$ in Lemma \ref{lem:quotient connectionnu}  is a connection form 
for $\rho^{Y_{\boldsymbol{\nu}}}$.

Furthermore, $\overline{M}_{\boldsymbol{\nu}}$ inherits a complex structure 
$\overline{J}_{\boldsymbol{\nu}}$ and a compatible symplectic structure $\overline{\omega}_{\boldsymbol{\nu}}$;
the triple $(\overline{M}_{\boldsymbol{\nu}},\,2\,\overline{\omega}_{\boldsymbol{\nu}},\,\overline{J}_{\boldsymbol{\nu}})$ is a 
Hodge manifold. More precisely, $(\overline{M}_{\boldsymbol{\nu}},
\,2\,\overline{\omega}_{\boldsymbol{\nu}})$ is the Marsden-Weinstein 
reduction of $(M,2\,\omega)$ under the restriction of the Hamiltonian action
$(\mu^M,\,\Phi)$ to $T^{r-1}_{\boldsymbol{\nu}^\perp}\leqslant T^r$, and
$\overline{J}_{\boldsymbol{\nu}}$ is determined from $J$ as in \cite{gs-gq}.

Let $\mathfrak{t}^1_{\boldsymbol{\nu}}$ denote the Lie algebra of 
$T^1_{\boldsymbol{\nu}}$, so that 
$$
\mathfrak{t}^1_{\boldsymbol{\nu}}\cong 
\mathfrak{t}^r/\boldsymbol{\nu}^\perp,\quad
{\mathfrak{t}^1_{\boldsymbol{\nu}}}^\vee 
\cong \mathrm{span}(\boldsymbol{\nu}).$$
By definition of $M_{\boldsymbol{\nu}}$, restricting $\Phi$ yields a map
$\Phi':M_{\boldsymbol{\nu}}\rightarrow \mathrm{span}(\boldsymbol{\nu})\cong 
{\mathfrak{t}^1_{\boldsymbol{\nu}}}^\vee$, which descends to a 
non-vanishing 
$T^1_{\boldsymbol{\nu}}$-equivariant map
$\overline{\Phi}:\overline{M}_{\boldsymbol{\nu}}\rightarrow {\mathfrak{t}^1_{\boldsymbol{\nu}}}^\vee$.

Let $\overline{\pi}_{\boldsymbol{\nu}}:Y_{\boldsymbol{\nu}}\rightarrow \overline{M}_{\boldsymbol{\nu}}$
and $\widehat{\pi}_{\boldsymbol{\nu}}:Y_{\boldsymbol{\nu}}\rightarrow \widehat{M}_{\boldsymbol{\nu}}$
be the projections. From the previous discussion and the theory in \cite{gs-gq} one obtains the
following.

\begin{prop}
\label{prop:reduction 1 dimensional case}
There is a positive holomorphic line bundle $(A_{\boldsymbol{\nu}},\,h_{\boldsymbol{\nu}})$ 
on $\overline{M}_{\boldsymbol{\nu}}$, such that:
\begin{enumerate}
\item $Y_{\boldsymbol{\nu}}$ is the unit circle bundle in $A_{\boldsymbol{\nu}}^\vee$;
\item $\alpha_{\boldsymbol{\nu}}$ is the normalized connection form associated to the unique compatible
covariant deriviative on $A_{\boldsymbol{\nu}}$;
\item $\mathrm{d}\alpha_{\boldsymbol{\nu}}
=2\,\overline{\pi}_{\boldsymbol{\nu}}^*(\overline{\omega}_{\boldsymbol{\nu}})$;
\item $\mu^{Y_{\boldsymbol{\nu}}}$ in (\ref{eqn:muonYnu}) descends to an action
$\mu^{\overline{M}_{\boldsymbol{\nu}}}:T^1_{\boldsymbol{\nu}}\times \overline{M}_{\boldsymbol{\nu}}
\rightarrow \overline{M}_{\boldsymbol{\nu}}$, which is holomorphic for $\overline{J}_{\boldsymbol{\nu}}$
and symplectic for $\overline{\omega}_{\boldsymbol{\nu}}$;
\item $\mu^{\overline{M}_{\boldsymbol{\nu}}}$ is Hamiltonian for $2\,\overline{\omega}_{\boldsymbol{\nu}}$,
with  moment map $\overline{\Phi}$;
\item $\mu^{Y_{\boldsymbol{\nu}}}$ is the contact lift of $(\mu^{\overline{M}_{\boldsymbol{\nu}}},
\overline{\Phi})$. 
\end{enumerate}

\end{prop}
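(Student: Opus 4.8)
The plan is to produce $(A_{\boldsymbol{\nu}},\,h_{\boldsymbol{\nu}})$ as the Hodge bundle of the principal $S^1$-bundle $\rho^{Y_{\boldsymbol{\nu}}}$, and then to read off the six assertions from the material already assembled in \S\ref{sctn:MnuYnu} together with the Kähler-reduction theory of \cite{gs-gq}. First I would define $A_{\boldsymbol{\nu}}^\vee$ to be the complex line bundle associated to the principal $S^1$-bundle $\overline{\pi}_{\boldsymbol{\nu}}:Y_{\boldsymbol{\nu}}\rightarrow\overline{M}_{\boldsymbol{\nu}}$ via the standard character, equipped with the Hermitian metric for which $Y_{\boldsymbol{\nu}}$ is precisely its unit circle bundle; dualizing gives $(A_{\boldsymbol{\nu}},h_{\boldsymbol{\nu}})$ and establishes (1). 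The form $\alpha_{\boldsymbol{\nu}}$ of Lemma \ref{lem:quotient connectionnu}, already known to be a connection form for $\rho^{Y_{\boldsymbol{\nu}}}$ satisfying $\mathrm{d}\alpha_{\boldsymbol{\nu}}=2\,\overline{\pi}_{\boldsymbol{\nu}}^*(\overline{\omega}_{\boldsymbol{\nu}})$, then corresponds to the unique metric-compatible covariant derivative $\nabla_{\boldsymbol{\nu}}$ on $A_{\boldsymbol{\nu}}$, yielding (2) and (3).

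For holomorphicity and positivity I would invoke the reduction mechanism of \cite{gs-gq}. Since the preceding discussion has already secured that $(\overline{M}_{\boldsymbol{\nu}},\,2\,\overline{\omega}_{\boldsymbol{\nu}},\,\overline{J}_{\boldsymbol{\nu}})$ is a Hodge manifold, obtained by descending $J$ along $M_{\boldsymbol{\nu}}\rightarrow\overline{M}_{\boldsymbol{\nu}}$, the point is to see that the CR structure of $X$ restricts to $X_{\boldsymbol{\nu}}$ and descends to a CR structure on $Y_{\boldsymbol{\nu}}$ compatible with $\overline{J}_{\boldsymbol{\nu}}$. Here I would use the computation from \S\ref{sctn:MnuYnu} that for $\boldsymbol{\xi}\in\boldsymbol{\nu}^\perp$ one has $\boldsymbol{\xi}_X^\Phi|_{X_{\boldsymbol{\nu}}}=\boldsymbol{\xi}_M^\sharp|_{X_{\boldsymbol{\nu}}}\in\ker(\jmath_{\boldsymbol{\nu}}^*\alpha)$: the reduction is taken \emph{inside} the contact distribution, so $\ker(\alpha_{\boldsymbol{\nu}})$ inherits a well-defined complex structure and makes $A_{\boldsymbol{\nu}}$ a holomorphic line bundle with $\nabla_{\boldsymbol{\nu}}$ as its Chern connection. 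As $\overline{\omega}_{\boldsymbol{\nu}}$ is a positive $(1,1)$-form, the curvature $-2\,\imath\,\overline{\omega}_{\boldsymbol{\nu}}$ certifies that $A_{\boldsymbol{\nu}}$ is positive.

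For the residual action I would argue that $\mu^X$ commutes with $\rho^X$ (a general property of contact lifts of a torus action, since the lift covers the action on $M$ by bundle automorphisms and $\rho^X$ is fiber rotation), so the descent $\mu^{Y_{\boldsymbol{\nu}}}$ of $T^1_{\boldsymbol{\nu}}$ commutes with $\rho^{Y_{\boldsymbol{\nu}}}$ and therefore further descends to $\mu^{\overline{M}_{\boldsymbol{\nu}}}$ on $\overline{M}_{\boldsymbol{\nu}}$; it is holomorphic because $\mu^M$ is and the complex structures are compatible, and symplectic because $\mu^{Y_{\boldsymbol{\nu}}}$ preserves $\alpha_{\boldsymbol{\nu}}$ hence $\overline{\omega}_{\boldsymbol{\nu}}$, which is (4). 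To identify the moment map in (5)--(6), I would lift $\boldsymbol{\eta}\in\mathfrak{t}^1_{\boldsymbol{\nu}}$ to $\widetilde{\boldsymbol{\eta}}\in\mathfrak{t}^r$ transverse to $\boldsymbol{\nu}^\perp$ and apply the contact-lift formula (\ref{eqn:contact lift}): the vertical component of $\widetilde{\boldsymbol{\eta}}_X$ along $\partial_\theta$ is $-\langle\Phi,\widetilde{\boldsymbol{\eta}}\rangle$, and on $X_{\boldsymbol{\nu}}$, where $\Phi=\lambda\,\boldsymbol{\nu}$, this descends through $q_{\boldsymbol{\nu}}$ to $-\langle\overline{\Phi},\boldsymbol{\eta}\rangle$ under the identification ${\mathfrak{t}^1_{\boldsymbol{\nu}}}^\vee\cong\mathrm{span}(\boldsymbol{\nu})$. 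This is exactly the assertion that $\mu^{Y_{\boldsymbol{\nu}}}$ is the contact lift of $(\mu^{\overline{M}_{\boldsymbol{\nu}}},\,\overline{\Phi})$, and that $\overline{\Phi}$ is the moment map.

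I expect the main obstacle to be the holomorphicity in the second paragraph, namely the coherent descent of the CR/complex structure to $Y_{\boldsymbol{\nu}}$ making $A_{\boldsymbol{\nu}}$ holomorphic. However, because the Hodge property of $\overline{M}_{\boldsymbol{\nu}}$ has already been established via \cite{gs-gq}, this step is an instance of the general principle that symplectic reduction commutes with polarization, so the actual work is to align the constructions of \S\ref{sctn:MnuYnu} with that machinery rather than to reprove it.
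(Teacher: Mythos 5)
Your proposal is correct and follows essentially the same route as the paper, which offers no separate argument for this proposition but derives it directly from the preceding discussion in \S\ref{sctn:MnuYnu} together with the reduction theory of \cite{gs-gq} (``quantization commutes with reduction''). Your associated-bundle construction of $A_{\boldsymbol{\nu}}^\vee$ from the principal $S^1$-bundle $Y_{\boldsymbol{\nu}}\rightarrow\overline{M}_{\boldsymbol{\nu}}$ coincides with the Guillemin--Sternberg quotient $\big(A^\vee|_{M_{\boldsymbol{\nu}}}\big)/T^{r-1}_{\boldsymbol{\nu}^\perp}$ implicit in the paper, and your verification of items (4)--(6) via the contact-lift formula (\ref{eqn:contact lift}) and the relation $\Phi\circ\jmath_{\boldsymbol{\nu}}=\lambda\,\boldsymbol{\nu}$ is exactly the computation the paper's citation is standing in for.
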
 

In other words, the description of $\widehat{M}_{\boldsymbol{\nu}}$ can be abstractly reduced to the
case $r=1$, with $M$ replaced by $\overline{M}_{\boldsymbol{\nu}}$ and 
$X$ by $Y_{\boldsymbol{\nu}}$. We need to describe how to transfer the toric structure to this
quotient picture.

\subsection{The toric structure of $\overline{M}_{\boldsymbol{\nu}}$}

W aim to verify that the toric setting is preserved in the reduction process 
of Proposition 
\ref{prop:reduction 1 dimensional case}. To this end, let us consider the saturation
$$
\tilde{M}_{\boldsymbol{\nu}}:=\mathbb{T}^{r-1}_{\boldsymbol{\nu}^\perp}\cdot M_{\boldsymbol{\nu}};
$$
thus $\tilde{M}_{\boldsymbol{\nu}}$ is the set of (semi)-stable points in $M$ for the
action of the complexification $\mathbb{T}^{r-1}_{\boldsymbol{\nu}^\perp}$ of
$T^{r-1}_{\boldsymbol{\nu}^\perp}$, with respect to the linearization 
induced by $\Phi$ (hence to the lift $\mu^X$).

Therefore, $m\in \tilde{M}_{\boldsymbol{\nu}}$ if and only if there exists 
a $T^{r-1}_{\boldsymbol{\nu}^\perp}$-invariant holomorphic section $\sigma$ 
of $A^{\otimes k}$, for some
$k\ge 1$, such that $\sigma (m)\neq 0$. Equivalently, $m\in \tilde{M}_{\boldsymbol{\nu}}$  if and only
if for some, and therefore for any, $x\in \pi^{-1}(m)\subseteq X$ there exists a CR function
$\tilde{\sigma}\in H(X)_k$ which is $T^{r-1}_{\boldsymbol{\nu}^\perp}$-invariant 
under $\mu^X$, and satisfies $\tilde{\sigma}(x)\neq 0$.
Here, $T^{r-1}_{\boldsymbol{\nu}^\perp}$-invariance means that 
\begin{equation}
\label{eqn:invariance spelledout}
\tilde{\sigma}^g=\tilde{\sigma}\quad
\forall\,g\in T^{r-1}_{\boldsymbol{\nu}^\perp} \quad \text{where}\quad \tilde{\sigma}^g:=
\tilde{\sigma}\circ \mu^X_{g^{-1}}.
\end{equation}

It is convenient to emphasize the holomorphic structure.
Recall that $\widetilde{\gamma}^M:\mathbb{T}^d\times M\rightarrow M$ 
denotes the complexification of $\gamma^M:T^d\times M\rightarrow M$;
similarly, we have complexified bundle actions 
$\widetilde{\gamma}^{A^\vee_0}:\mathbb{T}^d\times A^\vee_0\rightarrow A^\vee_0$ 
(the complexification of $\gamma^X$) and 
$\widetilde{\mu}^{A^\vee_0}:\mathbb{T}^r\times A^\vee_0\rightarrow A^\vee_0$
(the complexification of $\mu^X$).
Accordingly, we have associated linear representations of $\mathbb{T}^r$ and $\mathbb{T}^d$
on each space of global holomorphic sections $H^0(M,A^{\otimes k})$, $k=0,1,2,\ldots$. In fact,
$H^0(M,A^{\otimes k})$ is canonically isomorphic with the space of holomorphic functions 
on $A^\vee_0$ that are homogeneous of degree $k$, $\mathcal{H}_k(A^\vee_0)\subset \mathcal{O}(A^\vee_0)$,
and given $\hat{\sigma}\in \mathcal{H}_k(A^\vee_0)$ and $g\in \mathbb{T}^r$ we set
\begin{equation}
\label{eqn:invariance Tr}
\hat{\sigma}^g:=\hat{\sigma}\circ \widetilde{\mu}^{A^\vee_0}_{g^{-1}}.
\end{equation}
The correspondences
$$
\hat{\sigma}\in \mathcal{H}_k(A^\vee_0)\mapsto
\tilde{\sigma}\in H(X)_k\mapsto \sigma\in H^0(M,A^{\otimes k})
$$
are natural and equivariant isomorphisms.
Therefore, $m\in \tilde{M}_{\boldsymbol{\nu}}$  if and only
if for some, and therefore for any, $\ell\in A^\vee_0$ lying over $m$ there exists 
$\hat{\sigma}\in \mathcal{H}_k(A^\vee_0)$ which is $\mathbb{T}^{r-1}_{\boldsymbol{\nu}^\perp}$-invariant 
under (\ref{eqn:invariance Tr}), and satisfies $\hat{\sigma}(\ell)\neq 0$.

\begin{lem}
\label{lem:tildeMinvariance}
$\tilde{M}_{\boldsymbol{\nu}}$ is $\mathbb{T}^d$-invariant, that is, 
$
\widetilde{\gamma}^M_t(\tilde{M}_{\boldsymbol{\nu}})=\tilde{M}_{\boldsymbol{\nu}}
\, \forall\,t\in \mathbb{T}^d.
$
\end{lem}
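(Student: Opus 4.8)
The plan is to exploit the description of $\tilde{M}_{\boldsymbol{\nu}}$ as a semistability locus, phrased through $\mathbb{T}^{r-1}_{\boldsymbol{\nu}^\perp}$-invariant holomorphic functions on $A^\vee_0$ as in (\ref{eqn:invariance Tr}), together with the fact that $\mathbb{T}^d$ is abelian. The guiding principle is that the full torus normalizes its subgroup $\mathbb{T}^{r-1}_{\boldsymbol{\nu}^\perp}$ trivially, so $\widetilde{\gamma}^{A^\vee_0}$ permutes the space of $\mathbb{T}^{r-1}_{\boldsymbol{\nu}^\perp}$-invariant homogeneous functions and therefore preserves their common non-vanishing locus, which is exactly $\tilde{M}_{\boldsymbol{\nu}}$.

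First I would record the commutativity of the two complexified bundle actions, namely that $\widetilde{\gamma}^{A^\vee_0}_t$ and $\widetilde{\mu}^{A^\vee_0}_g$ commute for all $t\in\mathbb{T}^d$ and $g\in\mathbb{T}^r$. This descends by complexification from the commutativity of $\gamma^X$ and $\mu^X$ on $X$: indeed $\mu^X_h=\gamma^X_h\circ\rho^X_{\chi(h)}$ by (\ref{eqn:char mediatra}), and since $\rho^X$ (fiber rotation) is central and $\gamma^X$ is an action of the abelian group $T^d$, one has $\mu^X_h\circ\gamma^X_t=\gamma^X_t\circ\mu^X_h$. Note that I do \emph{not} need $\widetilde{\mu}^{A^\vee_0}$ to be the restriction of $\widetilde{\gamma}^{A^\vee_0}$ (which fails when $\boldsymbol{\delta}\neq\mathbf{0}$): only commutativity is used. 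Next, for $t\in\mathbb{T}^d$ and a $\mathbb{T}^{r-1}_{\boldsymbol{\nu}^\perp}$-invariant $\hat{\sigma}\in\mathcal{H}_k(A^\vee_0)$, I would verify that the translate $\hat{\sigma}^t:=\hat{\sigma}\circ\widetilde{\gamma}^{A^\vee_0}_{t^{-1}}$ is again $\mathbb{T}^{r-1}_{\boldsymbol{\nu}^\perp}$-invariant. This is a one-line computation: for $g\in\mathbb{T}^{r-1}_{\boldsymbol{\nu}^\perp}$,
\begin{equation*}
\hat{\sigma}^t\circ\widetilde{\mu}^{A^\vee_0}_{g^{-1}}
=\hat{\sigma}\circ\widetilde{\gamma}^{A^\vee_0}_{t^{-1}}\circ\widetilde{\mu}^{A^\vee_0}_{g^{-1}}
=\hat{\sigma}\circ\widetilde{\mu}^{A^\vee_0}_{g^{-1}}\circ\widetilde{\gamma}^{A^\vee_0}_{t^{-1}}
=\hat{\sigma}\circ\widetilde{\gamma}^{A^\vee_0}_{t^{-1}}=\hat{\sigma}^t,
\end{equation*}
using commutativity in the middle equality and the invariance of $\hat{\sigma}$ in the third.

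With these two facts I would conclude as follows. Let $m\in\tilde{M}_{\boldsymbol{\nu}}$ and $t\in\mathbb{T}^d$. By the semistability characterization there exist $k\ge1$, some $\ell\in A^\vee_0$ over $m$, and a $\mathbb{T}^{r-1}_{\boldsymbol{\nu}^\perp}$-invariant $\hat{\sigma}\in\mathcal{H}_k(A^\vee_0)$ with $\hat{\sigma}(\ell)\neq0$. Then $\hat{\sigma}^t$ is again $\mathbb{T}^{r-1}_{\boldsymbol{\nu}^\perp}$-invariant and $\hat{\sigma}^t\big(\widetilde{\gamma}^{A^\vee_0}_t(\ell)\big)=\hat{\sigma}(\ell)\neq0$, while $\widetilde{\gamma}^{A^\vee_0}_t(\ell)$ lies over $\widetilde{\gamma}^M_t(m)$. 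Hence $\widetilde{\gamma}^M_t(m)\in\tilde{M}_{\boldsymbol{\nu}}$, so $\widetilde{\gamma}^M_t(\tilde{M}_{\boldsymbol{\nu}})\subseteq\tilde{M}_{\boldsymbol{\nu}}$; applying this inclusion to $t^{-1}$ yields the reverse inclusion, whence equality.

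The computations are entirely formal, so I expect no serious obstacle. The only point demanding care is the commutativity of $\widetilde{\gamma}^{A^\vee_0}$ and $\widetilde{\mu}^{A^\vee_0}$, and in particular the observation that the character twist relating $\mu^X$ to $\gamma^X$ is harmless here because it is effected solely by the central fiber rotation $\rho^X$.
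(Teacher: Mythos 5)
Your proposal is correct and follows essentially the same route as the paper's own proof: translate the $\mathbb{T}^{r-1}_{\boldsymbol{\nu}^\perp}$-invariant homogeneous function by $\widetilde{\gamma}^{A^\vee_0}_{t^{-1}}$, use commutativity of the two complexified actions to check that the translate is still invariant, and observe it is nonzero at $\widetilde{\gamma}^{A^\vee_0}_t(\ell)$, which lies over $\widetilde{\gamma}^M_t(m)$. Your extra care in justifying the commutativity via the central twist $\rho^X$ (where the paper simply states that all actions commute) and in deducing equality from the inclusion applied to $t^{-1}$ are welcome refinements, not a different argument.
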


\begin{proof}
All actions involved commute. Suppose $m\in \tilde{M}_{\boldsymbol{\nu}}$ and let
$\hat{\sigma}\in \mathcal{H}_k(A^\vee_0)$ satisfy $\hat{\sigma}^g=\hat{\sigma}$
for all $g\in \mathbb{T}^r$,
and be such that 
$\tilde{\sigma}(\ell)\neq 0$ for some (hence any) $\ell\in A^\vee_0$ lying over $m$. 
Then for any $t\in \mathbb{T}^d$ we have
\begin{equation}
\label{eqn:tildesigmat}
0\neq \hat{\sigma}(\ell)=\hat{\sigma}\circ \tilde{\gamma}^{A^\vee_0}_{t^{-1}}
\circ \tilde{\gamma}^{A^\vee_0}_t(\ell).
\end{equation}
Clearly, $\hat{\sigma}\circ \widetilde{\gamma}^{A^\vee_0}_{t^{-1}}\in \mathcal{H}_k(A^\vee_0)$; furthermore, 
by the assumed invariance of $\hat{\sigma}$ we have for every
$g\in \mathbb{T}^{r-1}_{\boldsymbol{\nu}^\perp}$
\begin{equation}
\label{eqn:invariancepreserved}
\hat{\sigma}\circ \widetilde{\gamma}^{A^\vee_0}_{t^{-1}}\circ \widetilde{\mu}^{A^\vee_0}_{g^{-1}}
=\hat{\sigma}\circ \widetilde{\mu}^{A^\vee_0}_{g^{-1}}\circ \tilde{\gamma}^{A^\vee_0}_{t^{-1}}=
\hat{\sigma}\circ \widetilde{\gamma}^{A^\vee_0}_{t^{-1}}.
\end{equation}
Since $\widetilde{\gamma}^{A^\vee_0}_t(\ell)$ lies over $\tilde{\gamma}^{M}_t(m)$,
(\ref{eqn:tildesigmat}) and (\ref{eqn:invariancepreserved}) imply that
$\tilde{\gamma}^{M}_t(m)\in \tilde{M}_{\boldsymbol{\nu}}$.
\end{proof}

Recall that $M^0$ is the dense open subset where $\tilde{\gamma}^M$ is free and transitive.
Since $\tilde{M}_{\boldsymbol{\nu}}$ and $M^0$ are open and dense in $M$, 
$\tilde{M}_{\boldsymbol{\nu}}\cap M^0\neq \emptyset$. Therefore Lemma 
\ref{lem:tildeMinvariance} implies the following.

\begin{cor}
$\tilde{M}_{\boldsymbol{\nu}}\supseteq	 M^0$.
\end{cor}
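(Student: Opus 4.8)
The plan is to deduce the inclusion directly from the $\mathbb{T}^d$-invariance established in Lemma \ref{lem:tildeMinvariance}, using the transitivity of the complexified structure action on the free locus $M^0$. The essential mechanism is that $\tilde M_{\boldsymbol{\nu}}$, being $\mathbb{T}^d$-stable, is a union of $\widetilde\gamma^M$-orbits; since $M^0$ is a single such orbit, it suffices to know that $\tilde M_{\boldsymbol{\nu}}$ meets $M^0$ at all.

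First I would record that $\tilde M_{\boldsymbol{\nu}}=\mathbb{T}^{r-1}_{\boldsymbol{\nu}^\perp}\cdot M_{\boldsymbol{\nu}}$ is open (as the semistable locus for the linearized action) and dense in $M$, while $M^0$ is open and dense by construction. In an irreducible complex manifold two nonempty open dense subsets have nonempty intersection, so I may fix a base point $m_0\in \tilde M_{\boldsymbol{\nu}}\cap M^0$. This is exactly the observation already made in the sentence preceding the corollary.

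Next I would invoke that $\widetilde\gamma^M:\mathbb{T}^d\times M\rightarrow M$ acts transitively on $M^0$ (this is precisely the property defining $M^0$, recalled just above). Hence for an arbitrary $m\in M^0$ there exists $t\in\mathbb{T}^d$ with $\widetilde\gamma^M_t(m_0)=m$. Applying Lemma \ref{lem:tildeMinvariance}, which asserts $\widetilde\gamma^M_t(\tilde M_{\boldsymbol{\nu}})=\tilde M_{\boldsymbol{\nu}}$, and using $m_0\in\tilde M_{\boldsymbol{\nu}}$, I conclude $m=\widetilde\gamma^M_t(m_0)\in\tilde M_{\boldsymbol{\nu}}$. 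Since $m\in M^0$ was arbitrary, this gives $M^0\subseteq\tilde M_{\boldsymbol{\nu}}$, as claimed.

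I do not expect any genuine obstacle here: the argument is a three-line combination of density (to locate one point of $\tilde M_{\boldsymbol{\nu}}$ in $M^0$) and transitivity-plus-invariance (to spread that membership over the whole orbit $M^0$). The only point meriting a word of care is that transitivity holds on $M^0$ rather than on all of $M$, but this is harmless because $M^0$ is exactly the locus we wish to capture, and $m_0$ already lies in it.
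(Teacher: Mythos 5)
Your proof is correct and is essentially the paper's own argument: the paper likewise notes that $\tilde{M}_{\boldsymbol{\nu}}$ and $M^0$ are open and dense (hence meet), and then combines the $\mathbb{T}^d$-invariance of $\tilde{M}_{\boldsymbol{\nu}}$ from Lemma \ref{lem:tildeMinvariance} with the fact that $M^0$ is a single $\widetilde{\gamma}^M$-orbit to conclude $M^0\subseteq \tilde{M}_{\boldsymbol{\nu}}$. Your write-up merely makes explicit the orbit-spreading step that the paper leaves implicit.
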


As is well-known, we have a natural identification \cite{kir}, \cite{ness}, \cite{gs-gq}
\begin{equation}
\label{eqn:natural ssidenti}
\overline{M}_{\boldsymbol{\nu}}=M_{\boldsymbol{\nu}}/T^{r-1}_{\boldsymbol{\nu}^\perp}
\cong \tilde{M}_{\boldsymbol{\nu}}/\mathbb{T}^{r-1}_{\boldsymbol{\nu}^\perp},
\end{equation}
which will be left implicit in the following. Accordingly, we shall set
\begin{equation}
\label{eqn:M0}
\overline{M}_{\boldsymbol{\nu}}^0:=M^0/\mathbb{T}^{r-1}_{\boldsymbol{\nu}^\perp}\subseteq \overline{M}_{\boldsymbol{\nu}},
\end{equation}
an open and dense subset of $\overline{M}_{\boldsymbol{\nu}}$.

Let us define the quotient tori
\begin{equation}
\label{eqn:quotient tori}
T^{d-r+1}_q:=T^d/T^{r-1}_{\boldsymbol{\nu}^\perp},\quad
\mathbb{T}^{d-r+1}_q:=\mathbb{T}^d/\mathbb{T}^{r-1}_{\boldsymbol{\nu}^\perp};
\end{equation}
clearly $\mathbb{T}^{d-r+1}_q$ is the complexification of $T^{d-r+1}_q$.
Then there are induced quotient actions 
$$
\gamma^{\overline{M}_{\boldsymbol{\nu}}}:
T^{d-r+1}_q\times \overline{M}_{\boldsymbol{\nu}}\rightarrow
\overline{M}_{\boldsymbol{\nu}}, \quad \widetilde{\gamma}
^{\overline{M}_{\boldsymbol{\nu}}}:
\mathbb{T}^{d-r+1}_q\times \overline{M}_{\boldsymbol{\nu}}\rightarrow
\overline{M}_{\boldsymbol{\nu}},
$$
and $\widetilde{\gamma}
^{\overline{M}_{\boldsymbol{\nu}}}$ is the complexification of 
$\gamma^{\overline{M}_{\boldsymbol{\nu}}}$.
Furthermore, $T^1_{\boldsymbol{\nu}}\leqslant T^{d-r+1}_q$ is a 1-dimensional
subtorus
(notation as in 
(\ref{eqn:muonYnu}) and 
Proposition \ref{prop:reduction 1 dimensional case}), and the action 
$\mu^{\overline{M}_{\boldsymbol{\nu}}}$ in Proposition \ref{prop:reduction 1 dimensional case}
is the restriction of $\gamma^{\overline{M}_{\boldsymbol{\nu}}}$ to $T^1_{\boldsymbol{\nu}}$.

\begin{prop}
\label{prop:quotient torus}
$\widetilde{\gamma}
^{\overline{M}_{\boldsymbol{\nu}}}$ is free and transitive on
$\overline{M}_{\boldsymbol{\nu}}^0$.
\end{prop}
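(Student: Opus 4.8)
The plan is to exploit the defining property of $M^0$: by hypothesis $\widetilde{\gamma}^M$ is free and transitive on $M^0$, so $M^0$ is a single orbit of the complex torus $\mathbb{T}^d$ on which $\mathbb{T}^d$ acts freely, i.e. a $\mathbb{T}^d$-torsor. The proposition then reduces to the elementary fact that the quotient of a $G$-torsor by a closed subgroup $H$ is a $(G/H)$-torsor. Concretely, since $M^0\subseteq \tilde{M}_{\boldsymbol{\nu}}$ by the preceding Corollary and $M^0$ is $\mathbb{T}^{r-1}_{\boldsymbol{\nu}^\perp}$-invariant (being $\mathbb{T}^d$-invariant), its image $\overline{M}_{\boldsymbol{\nu}}^0=M^0/\mathbb{T}^{r-1}_{\boldsymbol{\nu}^\perp}$ is a well-defined subset of $\overline{M}_{\boldsymbol{\nu}}=\tilde{M}_{\boldsymbol{\nu}}/\mathbb{T}^{r-1}_{\boldsymbol{\nu}^\perp}$ under the identification (\ref{eqn:natural ssidenti}), and $\widetilde{\gamma}^{\overline{M}_{\boldsymbol{\nu}}}$ restricts to an action of $\mathbb{T}^{d-r+1}_q=\mathbb{T}^d/\mathbb{T}^{r-1}_{\boldsymbol{\nu}^\perp}$ on it.

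First I would establish transitivity. Given $[m_1],[m_2]\in \overline{M}_{\boldsymbol{\nu}}^0$ with representatives $m_1,m_2\in M^0$, transitivity of $\widetilde{\gamma}^M$ on $M^0$ furnishes $t\in \mathbb{T}^d$ with $\widetilde{\gamma}^M_t(m_1)=m_2$; passing to the quotient, the image $\bar t\in \mathbb{T}^{d-r+1}_q$ satisfies $\widetilde{\gamma}^{\overline{M}_{\boldsymbol{\nu}}}_{\bar t}([m_1])=[m_2]$, so the action is transitive.

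Next I would establish freeness. Suppose $\bar t\in \mathbb{T}^{d-r+1}_q$ stabilizes some $[m]\in \overline{M}_{\boldsymbol{\nu}}^0$ with $m\in M^0$, and lift $\bar t$ to $t\in \mathbb{T}^d$. By definition of the descended action, $\widetilde{\gamma}^M_t(m)$ and $m$ lie in the same $\mathbb{T}^{r-1}_{\boldsymbol{\nu}^\perp}$-orbit, so $\widetilde{\gamma}^M_t(m)=\widetilde{\gamma}^M_g(m)$ for some $g\in \mathbb{T}^{r-1}_{\boldsymbol{\nu}^\perp}$; hence $\widetilde{\gamma}^M_{g^{-1}t}(m)=m$. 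Since $\widetilde{\gamma}^M$ is free on $M^0$, this forces $g^{-1}t=1$, that is $t=g\in \mathbb{T}^{r-1}_{\boldsymbol{\nu}^\perp}$, whence $\bar t=1$ in $\mathbb{T}^{d-r+1}_q$. Thus the stabilizer of every point of $\overline{M}_{\boldsymbol{\nu}}^0$ is trivial.

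There is no serious obstacle here; the argument is essentially bookkeeping. The one point deserving a little care is the compatibility between the GIT identification (\ref{eqn:natural ssidenti}) and the descended action: I must know that two points of $M^0$ lying over the same point of $\overline{M}_{\boldsymbol{\nu}}^0$ differ precisely by an element of $\mathbb{T}^{r-1}_{\boldsymbol{\nu}^\perp}$, rather than by the coarser equivalence coming from semistability. But $M^0$ consists of free (hence stable) points, so the $\mathbb{T}^{r-1}_{\boldsymbol{\nu}^\perp}$-orbits in $M^0$ are closed and are exactly the fibers of the quotient map restricted to $M^0$; the identification is therefore automatic, and the freeness step above is justified.
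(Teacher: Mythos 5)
Your proof is correct and follows essentially the same route as the paper's: descend transitivity from the $\mathbb{T}^d$-torsor structure of $M^0$, then deduce freeness by lifting a stabilizing element and invoking freeness of $\widetilde{\gamma}^M$ on $M^0$ together with the fact that fibers of the quotient map are $\mathbb{T}^{r-1}_{\boldsymbol{\nu}^\perp}$-orbits. The only cosmetic difference is that the paper lifts $\overline{t}$ to the complementary torus $\mathbb{T}^{d-r+1}_c$ via the factorization (\ref{eqn:factorizationT1}), whereas you use an arbitrary lift in $\mathbb{T}^d$ and absorb the discrepancy into $\mathbb{T}^{r-1}_{\boldsymbol{\nu}^\perp}$; both arguments are equivalent.
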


Before giving the proof let us interject some pieces of notation.

\begin{enumerate}
\item Let us choose a complementary subtorus
$\widehat{T}^{1}_{\boldsymbol{\nu}}\leqslant T^r$ to $T^{r-1}_{\boldsymbol{\nu}^\perp}$, so that 
$T^r\cong \widehat{T}^{1}_{\boldsymbol{\nu}}\times T^{r-1}_{\boldsymbol{\nu}^\perp}$;
projecting yields an isomorphism
$\widehat{T}^{1}_{\boldsymbol{\nu}}\cong T^{1}_{\boldsymbol{\nu}}$.
Having chosen $\widehat{T}^{1}_{\boldsymbol{\nu}}$, there is a unique 
primitive $\widetilde{\boldsymbol{\nu}}\in L(\widehat{T}^{1}_{\boldsymbol{\nu}})$ such that
$\boldsymbol{\nu}(\widetilde{\boldsymbol{\nu}})=1$. 
Correspondingly, we have isomorphisms
$T^r\cong \widehat{T}^{1}_{\boldsymbol{\nu}}\times  T^{r-1}_{\boldsymbol{\nu}^\perp}$,
$L(T^r)\cong \mathbb{Z}\,\widetilde{\boldsymbol{\nu}}\oplus L(T^{r-1}_{\boldsymbol{\nu}^\perp})$,
and dually $L(T^r)^\vee\cong \mathbb{Z}\,{\boldsymbol{\nu}}\oplus L(T^{r-1}_{\boldsymbol{\nu}^\perp})^\vee$.
\item Let us choose a complementary
subtorus 
$T^{d-r}_c\leqslant T^d$ to $T^r$, so that
\begin{equation}
\label{eqn:factorizationT1}
T^d\cong T^{d-r}_c\times \widehat{T}^{1}_{\boldsymbol{\nu}}\times  T^{r-1}_{\boldsymbol{\nu}^\perp},
\quad \mathbb{T}^d\cong \mathbb{T}^{d-r}_c\times \widehat{\mathbb{T}}^{1}_{\boldsymbol{\nu}}\times  \mathbb{T}^{r-1}_{\boldsymbol{\nu}^\perp}.
\end{equation}
Then
\begin{equation}
\label{eqn:lattice decomp nu}
L(T^d)\cong L(T^{d-r}_c)\oplus \mathbb{Z}\,\widetilde{\boldsymbol{\nu}}\oplus 
L(T^{r-1}_{\boldsymbol{\nu}^\perp})
\end{equation}
and dually
\begin{equation}
\label{eqn:lattice decomp nu dual}
L(T^d)^\vee\cong L(T^{d-r}_c)^\vee\oplus \mathbb{Z}\,{\boldsymbol{\nu}}\oplus 
L(T^{r-1}_{\boldsymbol{\nu}^\perp})^\vee.
\end{equation}
\item Projection induces isomorphisms 
\begin{equation}
\label{eqn:factorTd-r+1}
{T}^{d-r+1}_c:={T}^{d-r}_c\times 
\widehat{{T}}^{1}_{\boldsymbol{\nu}}
\cong {T}^{d-r+1}_q,\quad\mathbb{T}^{d-r+1}_c:=\mathbb{T}^{d-r}_c\times 
\widehat{\mathbb{T}}^{1}_{\boldsymbol{\nu}}
\cong \mathbb{T}^{d-r+1}_q;
\end{equation}
we shall denote by $\mathbb{T}^{d-r}_q
\leqslant \mathbb{T}^{d-r+1}_q$ the image of $\mathbb{T}^{d-r}_c$, so that 
$\mathbb{T}^{d-r+1}_q\cong \mathbb{T}^{d-r}_q\times \mathbb{T}^{1}_{\boldsymbol{\nu}}$.
\item If $t\in \mathbb{T}^{d-r+1}_c$, we shall
denote by $\overline{t}\in \mathbb{T}^{d-r+1}_q$ its image, and for any 
$m\in \tilde{M}_{\boldsymbol{\nu}}$ we shall denote by $\overline{m}\in \overline{M}_{\boldsymbol{\nu}}$
its projection. 
\end{enumerate}

\begin{proof}
Suppose $\overline{m'},\,\overline{m''}\in \overline{M}_{\boldsymbol{\nu}}^0$,
and choose $m',\,m''\in \tilde{M}_{\boldsymbol{\nu}}^0$ lying over them. Then there exist a unique
$t\in \mathbb{T}^d$ such that $m''=\tilde{\gamma}^M_t(m')$. Let us factor
$t=t_1\,t_2\,t_3$ according to (\ref{eqn:factorizationT1}), that is, 
$t_1\in \mathbb{T}^{d-r}_c$, $t_2\in \widehat{\mathbb{T}}^{1}_{\boldsymbol{\nu}}$, $t_3\in \mathbb{T}^{r-1}_{\boldsymbol{\nu}^\perp}$. Hence
$$
m''=\tilde{\gamma}^M_{t_1}\circ\tilde{\gamma}^M_{t_2}\circ \tilde{\gamma}^M_{t_3} (m)
\quad\Rightarrow\quad \overline{m''}=\widetilde{\gamma}
^{\overline{M}_{\boldsymbol{\nu}}}_{\overline{t}_1}\circ
\widetilde{\gamma}
^{\overline{M}_{\boldsymbol{\nu}}}_{\overline{t}_2} (\overline{m'})
=\widetilde{\gamma}
^{\overline{M}_{\boldsymbol{\nu}}}_{\overline{t}_1\,\overline{t}_2} (\overline{m'}).
$$
This establishes that 
$\widetilde{\gamma}
^{\overline{M}_{\boldsymbol{\nu}}}$ is transitive on
$\overline{M}_{\boldsymbol{\nu}}^0$.

Suppose $\overline{m}\in \overline{M}_{\boldsymbol{\nu}}^0$, $\overline{t}\in \mathbb{T}^{d-r+1}_q$
and $\overline{m}=\widetilde{\gamma}
^{\overline{M}_{\boldsymbol{\nu}}}_{\overline{t}} (\overline{m})$.
Let us choose $m\in \tilde{M}_{\boldsymbol{\nu}}^0$ lying over $m$ and 
$t\in \mathbb{T}_c^{d-r+1}$ lying over $\overline{t}$.  
Then there exists $t'\in \mathbb{T}^{r-1}_{\boldsymbol{\nu}^\perp}$ such that
$m=\widetilde{\gamma}
^{M}_{t\,t'} (m)$; hence $t\,t'=1$ and therefore $t=t'=1$, so $\overline{t}=1$.
In conclusion,
$\widetilde{\gamma}
^{\overline{M}_{\boldsymbol{\nu}}}$ is free on
$\overline{M}_{\boldsymbol{\nu}}^0$.
\end{proof}

\begin{cor}
$(\overline{M}_{\boldsymbol{\nu}},\,\overline{J}_{\boldsymbol{\nu}})$ is a toric projective manifold.
\end{cor}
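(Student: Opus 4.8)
The plan is to assemble three ingredients that are already in place: $\overline{M}_{\boldsymbol{\nu}}$ is a compact Hodge (hence projective) manifold, its complex dimension equals the rank of the quotient torus $\mathbb{T}^{d-r+1}_q$, and $\mathbb{T}^{d-r+1}_q$ acts holomorphically on it with a dense open orbit. Together these are exactly the defining properties of a toric projective manifold, so the proof will be little more than a careful bookkeeping of what has already been proved.

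First I would record projectivity. Under the standing assumption that $T^{r-1}_{\boldsymbol{\nu}^\perp}$ acts freely on $M_{\boldsymbol{\nu}}$, the Marsden--Weinstein reduction $\overline{M}_{\boldsymbol{\nu}}=M_{\boldsymbol{\nu}}/T^{r-1}_{\boldsymbol{\nu}^\perp}$ is smooth, and by Proposition \ref{prop:reduction 1 dimensional case} it carries the positive holomorphic line bundle $(A_{\boldsymbol{\nu}},h_{\boldsymbol{\nu}})$ whose curvature is $-2\,\imath\,\overline{\omega}_{\boldsymbol{\nu}}$. Thus $(\overline{M}_{\boldsymbol{\nu}},2\,\overline{\omega}_{\boldsymbol{\nu}},\overline{J}_{\boldsymbol{\nu}})$ is a compact Hodge manifold, and Kodaira's embedding theorem exhibits it as a smooth projective variety.

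Next I would match dimensions. Since $M_{\boldsymbol{\nu}}$ has real codimension $r-1$ in $M$ (as recalled in the Introduction) and $T^{r-1}_{\boldsymbol{\nu}^\perp}$ has real dimension $r-1$, the reduction has real dimension $2d-2(r-1)$, i.e. complex dimension $d-r+1$; on the other hand $\dim_{\mathbb{C}}\mathbb{T}^{d-r+1}_q=d-r+1$ by construction in (\ref{eqn:quotient tori}). Hence the rank of the acting complex torus equals $\dim_{\mathbb{C}}\overline{M}_{\boldsymbol{\nu}}$. Finally, $\widetilde{\gamma}^{\overline{M}_{\boldsymbol{\nu}}}$ is the descent of the holomorphic $\mathbb{T}^d$-action $\widetilde{\gamma}^M$, hence holomorphic, and by Proposition \ref{prop:quotient torus} it is free and transitive on the dense open subset $\overline{M}^0_{\boldsymbol{\nu}}$. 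In particular the action is effective, and $\overline{M}^0_{\boldsymbol{\nu}}$ is a single dense orbit biholomorphic to $\mathbb{T}^{d-r+1}_q$. A smooth projective variety of complex dimension $n$ carrying an effective holomorphic action of an $n$-dimensional complex torus with a dense orbit is by definition a toric projective manifold, which concludes the argument.

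The one point deserving genuine care, and the step I expect to be the crux, is the consistency of the two dimension counts: it is precisely the equality $\dim_{\mathbb{C}}\overline{M}_{\boldsymbol{\nu}}=\dim_{\mathbb{C}}\mathbb{T}^{d-r+1}_q$ that certifies the acting torus has exactly the right rank, so that the dense free orbit of Proposition \ref{prop:quotient torus} is open and the toric condition is met rather than merely that of a variety with a large torus action. Everything else is a direct transcription of Propositions \ref{prop:reduction 1 dimensional case} and \ref{prop:quotient torus}.
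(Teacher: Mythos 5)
Your proposal is correct and follows essentially the same route as the paper, which states this corollary as an immediate consequence of Proposition \ref{prop:quotient torus} together with the earlier observation (via Proposition \ref{prop:reduction 1 dimensional case}) that $(\overline{M}_{\boldsymbol{\nu}},\,2\,\overline{\omega}_{\boldsymbol{\nu}},\,\overline{J}_{\boldsymbol{\nu}})$ is a compact Hodge manifold. Your explicit dimension count $\dim_{\mathbb{C}}\overline{M}_{\boldsymbol{\nu}}=d-r+1=\dim_{\mathbb{C}}\mathbb{T}^{d-r+1}_q$ is a detail the paper leaves implicit, and it is verified correctly.
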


We can similarly recover the structure of a toric symplectic manifold, as follows.
Let us choose $\tilde{\boldsymbol{\delta}}\in {\mathfrak{t}^d}^\vee$ such that 
$\boldsymbol{\delta}=\iota^t(\tilde{\boldsymbol{\delta}})$ as in (\ref{eqn:Trpsi})
and (\ref{eqn:Trpsi1}).

\begin{defn}
Given a subspace $\mathfrak{b}\subseteq \mathfrak{t}^d$,
we shall denote by $\mathfrak{b}^0\subseteq {\mathfrak{t}^d}^\vee$ its annhilator in
${\mathfrak{t}^d}^\vee$. 
\end{defn}

Hence 
${\boldsymbol{\nu}^\perp}^0\subseteq
{\mathfrak{t}^d}^\vee$ is a vector subspace of dimension $d-r+1$,
and 
\begin{equation}
\label{eqn:nuperp0nu}
{\boldsymbol{\nu}^\perp}^0=({\iota^t})^{-1}\big(\mathrm{span}(\boldsymbol{\nu})\big).
\end{equation}
%
%
%
Thus with notation as in (\ref{eqn:Trpsi1})
\begin{equation}
\label{eqn:alternative char}
M_{\boldsymbol{\nu}}=\Psi_{\tilde{\boldsymbol{\delta}}}^{-1}({\boldsymbol{\nu}^\perp}^0);
\end{equation}
hence $\left.\Psi_{\tilde{\boldsymbol{\delta}}}\right|_{M_{\boldsymbol{\nu}}}$ is
an equivariant map $M_{\boldsymbol{\nu}}\rightarrow {\boldsymbol{\nu}^\perp}^0
\cong {\mathfrak{t}^{d-r+1}_q}^\vee$. Therefore, $\left.\Psi_{\tilde{\boldsymbol{\delta}}}\right|_{M_{\boldsymbol{\nu}}}$ passes to the quotient and yields a $T^{d-r+1}_q$-equivariant map
$\overline{\Psi}_{\tilde{\boldsymbol{\delta}}}:\overline{M}_{\boldsymbol{\nu}}
\rightarrow {\mathfrak{t}^{d-r+1}_q}^\vee$, which 
is a moment map for $\gamma^{\overline{M}_{\boldsymbol{\nu}}}$.

We conclude the following.

\begin{lem}
\label{lem:sympl toric orbifold}
$(\overline{M}_{\boldsymbol{\nu}},\,
2\,\omega_{\boldsymbol{\nu}},\,\overline{\Psi}_{\tilde{\boldsymbol{\delta}}})$ is a 
symplectic toric orbifold \cite{lt}. 
Furthermore, the moment map $\overline{\Phi}$ in Proposition
\ref{prop:reduction 1 dimensional case} is induced by $\overline{\Psi}_{\tilde{\boldsymbol{\delta}}}$.

\end{lem}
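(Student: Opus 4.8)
The plan is to deduce the first assertion by assembling the structural facts already in place, and to get the second by a direct comparison of the two moment maps through the dual lattice identifications. Both parts are essentially bookkeeping once the relevant objects are lined up, so I would keep the emphasis on making the identifications consistent rather than on any analytic content.

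For the first assertion, I would start from the dimension count. Since $M_{\boldsymbol{\nu}}$ is connected, compact, and of real codimension $r-1$ in $M$, and $T^{r-1}_{\boldsymbol{\nu}^\perp}$ acts freely on it, the reduction $\overline{M}_{\boldsymbol{\nu}}$ is a compact connected symplectic manifold (hence in particular an orbifold) of real dimension $2d-2(r-1)=2(d-r+1)$. The quotient torus $T^{d-r+1}_q$ has dimension $d-r+1$, exactly half of $\dim_{\mathbb{R}}\overline{M}_{\boldsymbol{\nu}}$, and $\gamma^{\overline{M}_{\boldsymbol{\nu}}}$ has already been exhibited as a Hamiltonian action for $2\,\overline{\omega}_{\boldsymbol{\nu}}$ with moment map $\overline{\Psi}_{\tilde{\boldsymbol{\delta}}}$, the descent of $\Psi_{\tilde{\boldsymbol{\delta}}}|_{M_{\boldsymbol{\nu}}}$. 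By Proposition \ref{prop:quotient torus} the complexified action is free on the dense open set $\overline{M}_{\boldsymbol{\nu}}^0$, so $\gamma^{\overline{M}_{\boldsymbol{\nu}}}$ is effective. A half-dimensional effective Hamiltonian torus action on a compact connected symplectic orbifold is precisely the datum that makes $(\overline{M}_{\boldsymbol{\nu}},\,2\,\overline{\omega}_{\boldsymbol{\nu}},\,\overline{\Psi}_{\tilde{\boldsymbol{\delta}}})$ a symplectic toric orbifold in the sense of \cite{lt}.

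For the second assertion I would use that $\mu^{\overline{M}_{\boldsymbol{\nu}}}$ is the restriction of $\gamma^{\overline{M}_{\boldsymbol{\nu}}}$ to the subtorus $T^1_{\boldsymbol{\nu}}\leqslant T^{d-r+1}_q$. Writing $\jmath:\mathfrak{t}^1_{\boldsymbol{\nu}}\hookrightarrow \mathfrak{t}^{d-r+1}_q$ for the Lie algebra inclusion, a moment map for the restricted action is $\jmath^t\circ \overline{\Psi}_{\tilde{\boldsymbol{\delta}}}$: for $\boldsymbol{\xi}\in \mathfrak{t}^1_{\boldsymbol{\nu}}$ the generating vector field and its Hamiltonian are unchanged, so $\langle \overline{\Psi}_{\tilde{\boldsymbol{\delta}}},\jmath(\boldsymbol{\xi})\rangle=\langle \jmath^t\circ\overline{\Psi}_{\tilde{\boldsymbol{\delta}}},\boldsymbol{\xi}\rangle$. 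It then remains to identify $\jmath^t\circ\overline{\Psi}_{\tilde{\boldsymbol{\delta}}}$ with $\overline{\Phi}$.

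This last identification is a diagram chase through the dualities $\mathfrak{t}^{d-r+1}_q=\mathfrak{t}^d/\boldsymbol{\nu}^\perp$ with ${\mathfrak{t}^{d-r+1}_q}^\vee\cong {\boldsymbol{\nu}^\perp}^0$, and $\mathfrak{t}^1_{\boldsymbol{\nu}}=\mathfrak{t}^r/\boldsymbol{\nu}^\perp$ with ${\mathfrak{t}^1_{\boldsymbol{\nu}}}^\vee\cong \mathrm{span}(\boldsymbol{\nu})$. By (\ref{eqn:Trpsi1}) one has $\Phi=\iota^t\circ\Psi_{\tilde{\boldsymbol{\delta}}}$, and on $M_{\boldsymbol{\nu}}$ the map $\Psi_{\tilde{\boldsymbol{\delta}}}$ takes values in ${\boldsymbol{\nu}^\perp}^0$ by (\ref{eqn:alternative char}). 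By (\ref{eqn:nuperp0nu}) the restriction of $\iota^t$ to ${\boldsymbol{\nu}^\perp}^0$ lands in $\mathrm{span}(\boldsymbol{\nu})$, and under the above identifications it is precisely the transpose $\jmath^t$ of the subtorus inclusion. Hence $\Phi'=\jmath^t\circ\big(\Psi_{\tilde{\boldsymbol{\delta}}}|_{M_{\boldsymbol{\nu}}}\big)$, and passing to the $T^{r-1}_{\boldsymbol{\nu}^\perp}$-quotient gives $\overline{\Phi}=\jmath^t\circ\overline{\Psi}_{\tilde{\boldsymbol{\delta}}}$, which is the asserted statement that $\overline{\Phi}$ is induced by $\overline{\Psi}_{\tilde{\boldsymbol{\delta}}}$. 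The one point needing care — and the main, if modest, obstacle — is keeping these dual identifications coherent, in particular verifying that the target-side restriction $\iota^t|_{{\boldsymbol{\nu}^\perp}^0}$ really is the transpose of $\jmath$ and not merely some map onto $\mathrm{span}(\boldsymbol{\nu})$; once that is pinned down, the equality follows formally.
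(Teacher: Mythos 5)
Your proof is correct and takes essentially the same route as the paper, which states this lemma as a direct conclusion of the preceding discussion: Proposition \ref{prop:quotient torus} (giving effectiveness via freeness of the complexified action on $\overline{M}_{\boldsymbol{\nu}}^0$), the descent of $\left.\Psi_{\tilde{\boldsymbol{\delta}}}\right|_{M_{\boldsymbol{\nu}}}$ to a moment map for $\gamma^{\overline{M}_{\boldsymbol{\nu}}}$, and the identifications ${\boldsymbol{\nu}^\perp}^0\cong{\mathfrak{t}^{d-r+1}_q}^\vee$ and $\mathrm{span}(\boldsymbol{\nu})\cong{\mathfrak{t}^1_{\boldsymbol{\nu}}}^\vee$. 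Your contribution is merely to make explicit what the paper leaves implicit — the dimension count against the Lerman--Tolman definition and the $\iota^t$/$\jmath^t$ diagram chase showing $\overline{\Phi}=\jmath^t\circ\overline{\Psi}_{\tilde{\boldsymbol{\delta}}}$ — and both steps are carried out correctly.
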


\begin{rem}
Distinct choices of $\tilde{\boldsymbol{\delta}}$ determine distinct moment maps
$\overline{\Psi}_{\tilde{\boldsymbol{\delta}}}$, differing by a constant 
in ${\mathfrak{t}^r}^0\subseteq {\boldsymbol{\nu}^\perp}^0\cong {\mathfrak{t}^{d-r+1}_q}^\vee$.
\end{rem}

\subsection{The reduced moment polytope $\overline{\Delta}_{\boldsymbol{\nu}}$}
\label{sctn:momentpolytope}

We aim to clarify the relation between
the moment polytope $\overline{\Delta}_{\boldsymbol{\nu}}$ 
of $\overline{M}_{\boldsymbol{\nu}}$ and the moment polytope
$\Delta$ of $M$, and to interpret properties of $\gamma$ and $\Psi$
in terms of $\Delta$ and $\overline{\Delta}_{\boldsymbol{\nu}}$.

In view of (\ref{eqn:alternative char}) and the identification 
$ {\mathfrak{t}^{d-r+1}_q}^\vee\cong {\boldsymbol{\nu}^\perp}^0$,
\begin{equation}
\label{eqn:intersection of polytopes}
\overline{\Delta}_{\boldsymbol{\nu}}=
\overline{\Psi}_{\tilde{\boldsymbol{\delta}}}(\overline{M}_{\boldsymbol{\nu}})
=\Psi_{\tilde{\boldsymbol{\delta}}}(M_{\boldsymbol{\nu}})
=(\Delta+\tilde{\boldsymbol{\delta}})\cap {\boldsymbol{\nu}^\perp}^0.
\end{equation}

%



With notation as in (\ref{eqn:Trpsi}), (\ref{eqn:Trpsi1}),
and in view of Definition \ref{defn:transverse polytopes}, we have the following.

\begin{prop}
\label{prop:transverse polytope}
Suppose that $\Phi(m)\neq \mathbf{0}$ for every $m\in M$, and 
$\Phi^{-1}(\mathbb{R}_+\cdot \boldsymbol{\nu})\neq \emptyset$. 
The following conditions are equivalent:
\begin{enumerate}
\item $\Phi$ is transverse to $\mathbb{R}_+\cdot \boldsymbol{\nu}$;
\item $\Psi_{\tilde{\boldsymbol{\delta}}}$ is transverse to ${\boldsymbol{\nu}^\perp}^0$;
\item $\Psi$ is tansverse to  ${\boldsymbol{\nu}^\perp}^0-\tilde{\boldsymbol{\delta}}$;
\item ${\boldsymbol{\nu}^\perp}^0$ and $\Delta+\tilde{\boldsymbol{\delta}}$ meet transversely
in ${\mathfrak{t}^d}^\vee$;
\item ${\boldsymbol{\nu}^\perp}^0-\tilde{\boldsymbol{\delta}}$ and $\Delta$ meet transversely
in ${\mathfrak{t}^d}^\vee$;
\item if $T_1,\ldots,T_a\leqslant T^d$ are the (distinct) compact tori stabilizing some point
of $M_{\boldsymbol{\nu}}$, then the projection 
$\pi_q:T^d\rightarrow T^{d-r+1}_q$ restricts to a finite map $T_j\rightarrow \pi_q(T_j)$ for $j=1,\ldots,a$;
\item $T_j\cap T^{r-1}_{\boldsymbol{\nu}^\perp}$ is finite for $j=1,\ldots,a$.
\end{enumerate}

\end{prop}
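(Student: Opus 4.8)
The plan is to funnel all seven conditions through a single pointwise criterion attached to the faces of $\Delta$ that actually meet the relevant slice, and then to read that criterion off in each of the three languages: transversality of maps, transversality of polytopes, and finiteness of isotropy intersections. Throughout I write $W:={\boldsymbol{\nu}^\perp}^0$ and recall from (\ref{eqn:Trpsi1}) that $\Phi=\iota^t\circ\Psi_{\tilde{\boldsymbol{\delta}}}$, from (\ref{eqn:nuperp0nu}) that $W=(\iota^t)^{-1}(\mathrm{span}(\boldsymbol{\nu}))$, and from (\ref{eqn:alternative char}) that $M_{\boldsymbol{\nu}}=\Psi_{\tilde{\boldsymbol{\delta}}}^{-1}(W)$.

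First I would dispatch the three map-transversality statements (1), (2), (3). The equivalence (2)$\Leftrightarrow$(3) is immediate, since $\Psi_{\tilde{\boldsymbol{\delta}}}=\Psi+\tilde{\boldsymbol{\delta}}$ differs from $\Psi$ by a translation, which is a diffeomorphism carrying $W-\tilde{\boldsymbol{\delta}}$ onto $W$ and not altering derivatives. For (1)$\Leftrightarrow$(2) I would invoke the standard fact that if $g$ is a submersion then $g\circ h$ is transverse to a submanifold $S$ exactly when $h$ is transverse to $g^{-1}(S)$: here $g=\iota^t$ is surjective linear, hence a submersion, $h=\Psi_{\tilde{\boldsymbol{\delta}}}$, and $(\iota^t)^{-1}(\mathbb{R}_+\boldsymbol{\nu})$ is an open subset of $W$. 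Since both transversality conditions are tested only over the common preimage $M_{\boldsymbol{\nu}}=\Psi_{\tilde{\boldsymbol{\delta}}}^{-1}(W)$, where the image already lands in that open subset (which has the same tangent spaces as $W$), transversality to the open subset and to all of $W$ coincide there.

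The heart of the matter is the pointwise criterion. At any $m\in M_F^0$ the moment map relation $\langle\mathrm{d}\Psi_m(v),\boldsymbol{\xi}\rangle=2\,\omega_m(\boldsymbol{\xi}_M(m),v)$ together with nondegeneracy of $\omega_m$ gives $\mathrm{im}(\mathrm{d}\Psi_m)=(\mathfrak{t}^d_F)^0$, where $\mathfrak{t}^d_F=\mathrm{Lie}(T^d_F)$ is the isotropy algebra of Lemma \ref{lem:stabilizerMF}; this is precisely the direction space of $F$, hence of $F+\tilde{\boldsymbol{\delta}}$. As $\mathrm{d}\Psi_{\tilde{\boldsymbol{\delta}}}=\mathrm{d}\Psi$, transversality of $\Psi_{\tilde{\boldsymbol{\delta}}}$ to $W$ at $m$ reads $(\mathfrak{t}^d_F)^0+W={\mathfrak{t}^d}^\vee$, which upon taking annihilators and using $(A+B)^0=A^0\cap B^0$ and $W^0=\boldsymbol{\nu}^\perp$ becomes $\mathfrak{t}^d_F\cap\boldsymbol{\nu}^\perp=\{\mathbf{0}\}$. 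Because $\Psi_{\tilde{\boldsymbol{\delta}}}(M_F^0)=F^0+\tilde{\boldsymbol{\delta}}$ and $M_{\boldsymbol{\nu}}=\Psi_{\tilde{\boldsymbol{\delta}}}^{-1}(W)$, the faces for which this must be checked are exactly those with $(F^0+\tilde{\boldsymbol{\delta}})\cap W\neq\emptyset$, which are the ones appearing in Definition \ref{defn:transverse polytopes} for $\Delta+\tilde{\boldsymbol{\delta}}$ and $W$; moreover the direction space is constant along a face, so one interior intersection point decides the whole face. This gives (2)$\Leftrightarrow$(4) directly, while (4)$\Leftrightarrow$(5) is a translation and (3)$\Leftrightarrow$(5) is the same computation for $\Psi$ and $W-\tilde{\boldsymbol{\delta}}$.

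Finally I would link in (6) and (7). By Lemma \ref{lem:stabilizerMF} the isotropy of any $m\in M_F^0$ is $T^d_F$, so the distinct compact tori stabilizing points of $M_{\boldsymbol{\nu}}$ are precisely the $T^d_F$ with $(F^0+\tilde{\boldsymbol{\delta}})\cap W\neq\emptyset$, i.e.\ the same family just isolated. For such $T_j=T^d_F$, the closed subgroup $T_j\cap T^{r-1}_{\boldsymbol{\nu}^\perp}$ has Lie algebra $\mathfrak{t}^d_F\cap\boldsymbol{\nu}^\perp$, so the criterion $\mathfrak{t}^d_F\cap\boldsymbol{\nu}^\perp=\{\mathbf{0}\}$ is exactly the finiteness statement (7); and (6)$\Leftrightarrow$(7) is formal, since $\ker(\pi_q|_{T_j})=T_j\cap\ker\pi_q=T_j\cap T^{r-1}_{\boldsymbol{\nu}^\perp}$ and a homomorphism of compact tori has finite fibers iff its kernel is finite. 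The step I expect to be the main obstacle is not any single computation but the bookkeeping that forces the three indexing families to coincide — the faces tested by map-transversality over $M_{\boldsymbol{\nu}}$, the faces tested by Definition \ref{defn:transverse polytopes}, and the isotropy tori occurring along $M_{\boldsymbol{\nu}}$ — for which I would lean on $\Psi_{\tilde{\boldsymbol{\delta}}}(M_F^0)=F^0+\tilde{\boldsymbol{\delta}}$, on (\ref{eqn:alternative char}), and, if needed to pass between faces and their closures, on Proposition \ref{prop:transverse polytope faces}.
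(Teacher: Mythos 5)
Your proposal is correct and follows essentially the same route as the paper: the pivotal equivalence (2)$\Leftrightarrow$(4) rests on exactly the fact that $\mathrm{d}_m\Psi(T_mM)$ equals the direction space of the open face containing $\Psi(m)$ (the paper's (\ref{eqn:tgspaceface}), which you rederive from the moment-map identity rather than cite), and the translation and finiteness equivalences are handled by the same observations. The only organizational difference is that the paper links (7) back to (1) via local freeness of $T^{r-1}_{\boldsymbol{\nu}^\perp}$ on $M_{\boldsymbol{\nu}}$, whereas you link (7) to the face-wise criterion $\mathfrak{t}^d_F\cap\boldsymbol{\nu}^\perp=\{\mathbf{0}\}$ through Lemma \ref{lem:stabilizerMF}; both rest on the same identification of the stabilizers occurring along $M_{\boldsymbol{\nu}}$.
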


The proof of Proposition \ref{prop:transverse polytope} rests on the following property
of the moment map $\Psi$ of a toric symplectic manifold (see \cite{del}, \cite{g-toric}, 
\cite{lt}). Let $\Delta$ be the moment polytope and $F$ be a face of $\Delta$.
If $\boldsymbol{\xi}\in F^0$
$m\in 
\Psi^{-1}(\boldsymbol{\xi})$, then 
\begin{equation}
\label{eqn:tgspaceface}
\mathrm{d}_m\Psi(T_mM)=T_{\boldsymbol{\xi}}F^0.
\end{equation}

\begin{proof}
To begin with, by the hypothesis and the convexity of
$\Phi(M)$ \cite{gs-conv}, 
$\Phi^{-1}(\mathbb{R}_+\cdot \boldsymbol{\nu})=\Phi^{-1}\big( \mathrm{span}(\boldsymbol{\nu}) \big)$.

The equivalence of 1. and 2. follows from (\ref{eqn:nuperp0nu}).
That 2. is equivalent to 3. and that 4. is equivalent to 5. is obvious,
as is the equivalence of 6. and 7., given that 
$T^{d-r+1}_q=T^d/T^{r-1}_{\boldsymbol{\nu}^\perp}$.
On the other hand, 7. is equivalent to $T^{r-1}_{\boldsymbol{\nu}^\perp}$ acting locally freely on
$M_{\boldsymbol{\nu}}$, and this condition is equivalent to 1..

Thus it suffices to show that 
2. is equivalent to 4. Let us adopt $\Psi_{\tilde{\boldsymbol{\delta}}}$  as moment map
for the action of $T^d$ on $(M,2\,\omega)$, so that $\Delta_{\tilde{\boldsymbol{\delta}}}:=\Delta+\tilde{\boldsymbol{\delta}}$ is the corresponding moment polytope.
By definition, $\Psi_{\tilde{\boldsymbol{\delta}}}$ is transverse to ${\boldsymbol{\nu}^\perp}^0$ if and only if
for every $m\in \Psi_{\tilde{\boldsymbol{\delta}}}^{-1}({\boldsymbol{\nu}^\perp}^0)$ we have
\begin{equation}
\label{eqn:tansvcondition}
\mathrm{d}_m\Psi_{\tilde{\boldsymbol{\delta}}}(T_mM)+{\boldsymbol{\nu}^\perp}^0
={\mathfrak{t}^d}^\vee.
\end{equation}
Let $F$ be a face of $\Delta_{\tilde{\boldsymbol{\delta}}}$ such that
$F^0\cap {\boldsymbol{\nu}^\perp}^0\neq \emptyset$.
If $\boldsymbol{\xi}\in F^0\cap {\boldsymbol{\nu}^\perp}^0$
and $m\in \Psi_{\tilde{\boldsymbol{\delta}}}^{-1}(\boldsymbol{\xi})$ then by (\ref{eqn:tgspaceface}) and
(\ref{eqn:tansvcondition}) (with $\Psi$ replaced by $\Psi_{\tilde{\boldsymbol{\delta}}}$)
$$
T_{\boldsymbol{\xi}}F^0+{\boldsymbol{\nu}^\perp}^0=
\mathrm{d}_m\Psi_{\tilde{\boldsymbol{\delta}}}(T_mM)+{\boldsymbol{\nu}^\perp}^0=
{\mathfrak{t}^d}^\vee.
$$
Hence 2. implies 4.
The argument for the reverse implication is similar.
\end{proof}

\begin{prop}
\label{prop:stabilizer subgroup}
With the notation in Proposition \ref{prop:transverse polytope}, the closed subtori
$\overline{T}_j:=\pi_q(T_j)\leqslant T^{d-r+1}_q$, $j=1,\ldots,a$, are the subgroups
that appear as stabilizer subgroups of points in $\overline{M}_{\boldsymbol{\nu}}$.
\end{prop}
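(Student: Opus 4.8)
The plan is to reduce the whole statement to the single identity
$(T^{d-r+1}_q)_{\overline{m}}=\pi_q\big(T^d_m\big)$, where $m\in M_{\boldsymbol{\nu}}$ is any lift of $\overline{m}$ and $T^d_m\leqslant T^d$ is its $\gamma^M$-stabilizer. First I would observe that, by the very definition $\overline{M}_{\boldsymbol{\nu}}=M_{\boldsymbol{\nu}}/T^{r-1}_{\boldsymbol{\nu}^\perp}$, every $\overline{m}\in\overline{M}_{\boldsymbol{\nu}}$ admits a lift $m\in M_{\boldsymbol{\nu}}$; any two such lifts differ by an element of the abelian group $T^{r-1}_{\boldsymbol{\nu}^\perp}$ and hence share the same stabilizer $T^d_m$, and $T^d_m$ is, by the definition of the $T_j$, one of $T_1,\dots,T_a$. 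Conversely, each $T_j$ stabilizes some point of $M_{\boldsymbol{\nu}}$ and therefore arises this way. Granting the key identity, the stabilizer of $\overline{m}$ is then exactly $\pi_q(T_j)=\overline{T}_j$, and the proposition follows.

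To establish the key identity I would pass to the holomorphic picture, the point being that the full action $\gamma^{\overline{M}_{\boldsymbol{\nu}}}$ of $T^{d-r+1}_q$ does not preserve the level set $M_{\boldsymbol{\nu}}$ (only the subtorus $T^r$ does), whereas the saturation $\tilde{M}_{\boldsymbol{\nu}}$ is $\mathbb{T}^d$-invariant by Lemma \ref{lem:tildeMinvariance}. Using the identification $\overline{M}_{\boldsymbol{\nu}}\cong\tilde{M}_{\boldsymbol{\nu}}/\mathbb{T}^{r-1}_{\boldsymbol{\nu}^\perp}$ of (\ref{eqn:natural ssidenti}) together with the descended action $\widetilde{\gamma}^{\overline{M}_{\boldsymbol{\nu}}}$ of $\mathbb{T}^{d-r+1}_q=\mathbb{T}^d/\mathbb{T}^{r-1}_{\boldsymbol{\nu}^\perp}$, a direct orbit computation yields $(\mathbb{T}^{d-r+1}_q)_{\overline{m}}=\pi_q\big(\mathbb{T}^d_m\big)$: an element $\overline{t}=\pi_q(t)$ fixes $\overline{m}$ if and only if $\widetilde{\gamma}^M_t(m)$ lies in the $\mathbb{T}^{r-1}_{\boldsymbol{\nu}^\perp}$-orbit of $m$, equivalently $t\in\mathbb{T}^{r-1}_{\boldsymbol{\nu}^\perp}\cdot\mathbb{T}^d_m$, and $\pi_q$ annihilates the first factor.

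Next I would invoke Proposition 1.6 of \cite{sj-holoslice} to write $\mathbb{T}^d_m$ as the complexification of the compact stabilizer $T^d_m=T_j$; hence $\pi_q(\mathbb{T}^d_m)$ is the complexification of $\overline{T}_j=\pi_q(T_j)$. Intersecting with the compact torus and using that the maximal compact subgroup of a complex torus is its compact real form, I obtain $(T^{d-r+1}_q)_{\overline{m}}=(\mathbb{T}^{d-r+1}_q)_{\overline{m}}\cap T^{d-r+1}_q=\overline{T}_j$, which is precisely the key identity. Finally, condition $7$ of Proposition \ref{prop:transverse polytope} (equivalent to transversality) asserts that $T_j\cap T^{r-1}_{\boldsymbol{\nu}^\perp}$ is finite, so $\pi_q$ restricts to a finite-kernel homomorphism on $T_j$ and $\overline{T}_j$ is a genuine closed subtorus of dimension $\dim T_j$.

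The main obstacle is the comparison between the compact and the complexified stabilizers: the honest compact stabilizer cannot be read off by staying inside $M_{\boldsymbol{\nu}}$, because the complementary directions of $T^{d-r+1}_q$ push points off the level set, forcing one into the GIT description and requiring a careful matching of compact stabilizers with the compact parts of the holomorphic stabilizers through \cite{sj-holoslice}. The remaining ingredients — lifting each $\overline{m}$ to a point of $M_{\boldsymbol{\nu}}$ with torus stabilizer among the $T_j$, and extracting the finiteness of $T_j\cap T^{r-1}_{\boldsymbol{\nu}^\perp}$ from transversality — are routine once this comparison is in place.
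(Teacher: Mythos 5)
Your argument is mathematically sound, but it rests on a false premise that deserves correction, and the detour it forces is exactly what the paper avoids. You claim that only $T^r$ preserves $M_{\boldsymbol{\nu}}$, so that the compact stabilizer ``cannot be read off by staying inside $M_{\boldsymbol{\nu}}$.'' This is not so: the moment map $\Psi$ of the toric action $\gamma^M$ is invariant under all of $T^d$ (the coadjoint action of a torus is trivial), hence so is $\Phi=\iota^t\circ\Psi+\boldsymbol{\delta}$ by (\ref{eqn:Trpsi}), and therefore the whole of $T^d$ preserves $M_{\boldsymbol{\nu}}=\Phi^{-1}(\mathbb{R}_+\,\boldsymbol{\nu})$. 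What pushes points off the level set are the non-compact (complexified) directions --- which is precisely why the saturation $\tilde{M}_{\boldsymbol{\nu}}$ is needed for the GIT description --- not the compact directions of $T^{d-r+1}_q$. The ``main obstacle'' you describe therefore does not exist.

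Once this is corrected, what remains of your proof is a correct but genuinely different route to the same key identity $(T^{d-r+1}_q)_{\overline{m}}=\pi_q\big(T^d_m\big)$. The paper stays entirely in the compact picture: by $\pi_q$-equivariance of the projection $M_{\boldsymbol{\nu}}\rightarrow\overline{M}_{\boldsymbol{\nu}}$ one gets $\overline{T}_j\leqslant S:=(T^{d-r+1}_q)_{\overline{m}}$; conversely, lifting $S$ to $S'=S\times\{1\}$ under the splitting $T^d\cong T^{d-r+1}_q\times T^{r-1}_{\boldsymbol{\nu}^\perp}$, the collection $\tilde{S}$ of pairs $(s,t)\in S'\times T^{r-1}_{\boldsymbol{\nu}^\perp}$ with $\gamma^M_{s\,t}(m)=m$ is a closed subgroup of dimension $\dim S$ fixing $m$ (local freeness of $T^{r-1}_{\boldsymbol{\nu}^\perp}$ gives finitely many $t$ per $s$), hence it coincides with $T_j$ and projects onto $S$, so $S=\overline{T}_j$. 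Your version instead works in $\tilde{M}_{\boldsymbol{\nu}}/\mathbb{T}^{r-1}_{\boldsymbol{\nu}^\perp}$ (legitimate by Lemma \ref{lem:tildeMinvariance} and the identification (\ref{eqn:natural ssidenti})), obtains $(\mathbb{T}^{d-r+1}_q)_{\overline{m}}=\pi_q(\mathbb{T}^d_m)$ by the same orbit manipulation, converts via Proposition 1.6 of \cite{sj-holoslice}, and cuts back down using that a compact subgroup of a complex torus lies in its compact real form; each of these steps is valid, and the finiteness of $T_j\cap T^{r-1}_{\boldsymbol{\nu}^\perp}$ from condition 7 of Proposition \ref{prop:transverse polytope} indeed gives $\dim\overline{T}_j=\dim T_j$. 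What the paper's route buys is elementarity --- no complexification and no appeal to \cite{sj-holoslice}; what yours buys is an explicit matching of compact and holomorphic stabilizers on the quotient, which has some independent interest but is not needed for the statement.
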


\begin{proof}
If $T_j\leqslant T^d$ is the stabilizer subgroup of $m\in M_{\boldsymbol{\nu}}$, 
by equivariance of the projection
$M_{\boldsymbol{\nu}}\rightarrow \overline{M}_{\boldsymbol{\nu}}$
clearly
$\overline{T}_j\leqslant T^{d-r+1}_q$ is a closed subtorus stabilizing $\overline{m}$
(the image of $m$ in $\overline{M}_{\boldsymbol{\nu}}$).
Thus, if $S\leqslant T^{d-r+1}_q$ is the stabilizer of $\overline{m}$, then 
$\overline{T}_j\leqslant S$, and $S$ is a torus \cite{lt}. 

Given the isomorphism $T^d\cong T^{d-r+1}_c\times T^{r-1}_{\boldsymbol{\nu}^\perp}
\cong T^{d-r+1}_q\times T^{r-1}_{\boldsymbol{\nu}^\perp}$, we can
lift $S$ to a subgroup $S'=S\times \{1\}$ of $T^d$. Again by equivariance,
for every 
$s\in S'$ there exist finitely many $t\in T^{r-1}_{\boldsymbol{\nu}}$ such that
$\mu^M_{s\,t}(m)=m$. The collection $\tilde{S}$ of all such pairs
$(s,t)\in S'\times T^{r-1}_{\boldsymbol{\nu}}$ is a closed subgroup of $T^d$ of the
same dimension as $S$, stabilizing
$m$ (hence contained in $T_j$) 
and projecting onto $S$ in $T^{d-r+1}_q$; conversely, any element of $T^d$ stabilizing $m$
must have this form and therefore $\tilde{S}=T_j$. It follows that $S=\overline{T}_j$.

Hence the subgroups $\overline{T}_j\leqslant T^{d-r+1}_q$ are all the stabilizer subgroups of points in
$\overline{M}_{\boldsymbol{\nu}}$.

\end{proof}

Let $F$ be a facet of $\Delta$, so that $F+\tilde{\boldsymbol{\delta}}$
is a facet of $\Delta+\tilde{\boldsymbol{\delta}}$. Let $M_F:=\Psi^{-1}(F)=
\Psi_{\tilde{\boldsymbol{\delta}}}^{-1}(F+\tilde{\boldsymbol{\delta}})$,
$M_F^0:=\Psi^{-1}(F^0)$.
If $\boldsymbol{\upsilon}$ is an inward primitive normal vector to $F$, then $M_F^0$ is the locus
of points in $M$ having stabilizer the $1$-dimensional torus $S_F\leqslant T^d$
generated by $\boldsymbol{\upsilon}$. Furthermore, $M_F^0$ 
is open and dense in the $1$-codimensional complex
submanifold $M_F$ of $F$. 

\begin{prop}
\label{prop:MjMFtransverse}
Assume that the equivalent conditions of Proposition \ref{prop:transverse polytope}
are satisfied, and let $F$ be a facet of $\Delta$. Then:
\begin{enumerate}
\item $(F+\tilde{\boldsymbol{\delta}})\cap {\boldsymbol{\nu}^\perp}^0=
\Psi_{\tilde{\boldsymbol{\delta}}}(M_F\cap M_{\boldsymbol{\nu}})$, and in particular
$M_F\cap M_{\boldsymbol{\nu}}\neq \emptyset$ if and only if
$(F+\tilde{\boldsymbol{\delta}})\cap {\boldsymbol{\nu}^\perp}^0\neq \emptyset$;
\item if $M_F\cap M_{\boldsymbol{\nu}}\neq \emptyset$, then the intersection is transverse
in $M$.
\end{enumerate}
\end{prop}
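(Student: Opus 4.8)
The plan is to reduce both assertions to statements about the shifted moment map $\Psi_{\tilde{\boldsymbol{\delta}}}=\Psi+\tilde{\boldsymbol{\delta}}$, exploiting the identities $M_F=\Psi^{-1}(F)=\Psi_{\tilde{\boldsymbol{\delta}}}^{-1}(F+\tilde{\boldsymbol{\delta}})$ and $M_{\boldsymbol{\nu}}=\Psi_{\tilde{\boldsymbol{\delta}}}^{-1}({\boldsymbol{\nu}^\perp}^0)$ from (\ref{eqn:alternative char}). For part 1, these give
$$
M_F\cap M_{\boldsymbol{\nu}}=\Psi_{\tilde{\boldsymbol{\delta}}}^{-1}\big((F+\tilde{\boldsymbol{\delta}})\cap {\boldsymbol{\nu}^\perp}^0\big),
$$
and I would then apply $\Psi_{\tilde{\boldsymbol{\delta}}}$ and use the elementary identity $f\big(f^{-1}(S)\big)=S\cap \mathrm{im}(f)$. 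Since by the convexity theorem \cite{gs-conv} one has $\mathrm{im}(\Psi_{\tilde{\boldsymbol{\delta}}})=\Delta+\tilde{\boldsymbol{\delta}}\supseteq (F+\tilde{\boldsymbol{\delta}})\cap {\boldsymbol{\nu}^\perp}^0$, intersecting with the image changes nothing, giving $\Psi_{\tilde{\boldsymbol{\delta}}}(M_F\cap M_{\boldsymbol{\nu}})=(F+\tilde{\boldsymbol{\delta}})\cap {\boldsymbol{\nu}^\perp}^0$; the nonemptiness equivalence is then immediate.

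For part 2, I would fix $m\in M_F\cap M_{\boldsymbol{\nu}}$ and show $T_mM=T_mM_F+T_mM_{\boldsymbol{\nu}}$. The transversality hypothesis enters here through Proposition \ref{prop:transverse polytope}: it guarantees that $\Psi_{\tilde{\boldsymbol{\delta}}}$ is transverse to the linear subspace ${\boldsymbol{\nu}^\perp}^0$, so that $M_{\boldsymbol{\nu}}$ is a genuine submanifold with
$$
T_mM_{\boldsymbol{\nu}}=(\mathrm{d}_m\Psi)^{-1}\big({\boldsymbol{\nu}^\perp}^0\big)\supseteq \ker(\mathrm{d}_m\Psi),
$$
the inclusion holding because $\mathbf{0}\in {\boldsymbol{\nu}^\perp}^0$ (and $\mathrm{d}_m\Psi_{\tilde{\boldsymbol{\delta}}}=\mathrm{d}_m\Psi$). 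The computation would then rest on the single identity $\mathrm{d}_m\Psi(T_mM_F)=\mathrm{d}_m\Psi(T_mM)$: granting it, every $v\in T_mM$ can be written $v=u+(v-u)$ with $u\in T_mM_F$ chosen so that $\mathrm{d}_m\Psi(u)=\mathrm{d}_m\Psi(v)$, whence $v-u\in \ker(\mathrm{d}_m\Psi)\subseteq T_mM_{\boldsymbol{\nu}}$. This yields $T_mM=T_mM_F+\ker(\mathrm{d}_m\Psi)\subseteq T_mM_F+T_mM_{\boldsymbol{\nu}}$, i.e. transversality. Thus the role of the hypothesis is, in effect, only to make $M_{\boldsymbol{\nu}}$ smooth with the stated tangent space.

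The crux, and the step I expect to be the main obstacle, is establishing $\mathrm{d}_m\Psi(T_mM_F)=\mathrm{d}_m\Psi(T_mM)$ uniformly over $m\in M_F\cap M_{\boldsymbol{\nu}}$, including those $m$ whose image lies on the boundary of the facet $F$ (where the stabilizer jumps and $m\notin M_F^0$). To handle this I would let $G\in \mathcal{F}(\Delta)$ be the face with $\Psi(m)\in G^0$; since $m\in M_F$ one has $G\subseteq F$, and (\ref{eqn:tgspaceface}) gives $\mathrm{d}_m\Psi(T_mM)=T_{\Psi(m)}G^0$. The point is that $M_F$ is itself a compact toric symplectic manifold, with moment polytope $F$ and $G$ a face of $F$; applying (\ref{eqn:tgspaceface}) to $M_F$ (using that $\Psi(m)$ lies in the relative interior of $G$ viewed as a face of $F$) gives $\mathrm{d}_m\Psi(T_mM_F)=T_{\Psi(m)}G^0$ as well, and identifying the two closes the argument. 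Making the reduction to the toric structure of $M_F$ precise — or, equivalently, running the same computation in the Delzant local normal form, where $M_F=\{z_{j_0}=0\}$ and the complex normal line to $M_F$ is annihilated by $\mathrm{d}_m\Psi$ since the relevant moment component is quadratic transverse to the fixed locus — is the only genuinely delicate point; the rest is formal.
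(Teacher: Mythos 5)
Your proof is correct, and while your part 1 coincides with the paper's argument (both rest on $M_F=\Psi_{\tilde{\boldsymbol{\delta}}}^{-1}(F+\tilde{\boldsymbol{\delta}})$, $M_{\boldsymbol{\nu}}=\Psi_{\tilde{\boldsymbol{\delta}}}^{-1}({\boldsymbol{\nu}^\perp}^0)$, and surjectivity of $\Psi_{\tilde{\boldsymbol{\delta}}}$ onto $\Delta+\tilde{\boldsymbol{\delta}}$), your part 2 takes a genuinely different route. The paper never looks at faces or at (\ref{eqn:tgspaceface}): it introduces the moment map $\Lambda$ for the subtorus $T^{r-1}_{\boldsymbol{\nu}^\perp}$ induced by $\Psi_{\tilde{\boldsymbol{\delta}}}$, so that $M_{\boldsymbol{\nu}}=\Lambda^{-1}(\mathbf{0})$ and the hypothesis of Proposition \ref{prop:transverse polytope} becomes local freeness of $T^{r-1}_{\boldsymbol{\nu}^\perp}$ on $M_{\boldsymbol{\nu}}$; since $M_F$ is a $T^d$-invariant K\"ahler submanifold, $\Lambda_F:=\Lambda|_{M_F}$ is a moment map for the action on $M_F$, and transversality of $M_F$ and $M_{\boldsymbol{\nu}}$ along $M_F\cap M_{\boldsymbol{\nu}}=\Lambda_F^{-1}(\mathbf{0})$ is equivalent to $\mathbf{0}$ being a regular value of $\Lambda_F$, hence to local freeness of $T^{r-1}_{\boldsymbol{\nu}^\perp}$ on $M_F\cap M_{\boldsymbol{\nu}}$ --- which is inherited for free from local freeness on all of $M_{\boldsymbol{\nu}}$. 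Your proof is instead a direct tangent-space computation whose crux, $\mathrm{d}_m\Psi(T_mM_F)=\mathrm{d}_m\Psi(T_mM)$, you obtain by applying (\ref{eqn:tgspaceface}) to both $M$ and $M_F$; this is valid, but it forces you to carry the extra (standard) input that $M_F$ is itself a toric manifold with moment polytope $F$, and that the minimal face $G$ with $\Psi(m)\in G^0$ is a common face of $F$ and $\Delta$. What the paper's formulation buys is precisely the avoidance of that face analysis; what yours buys is transparency of the geometric mechanism, namely that $\mathrm{d}\Psi$ kills the normal bundle of $M_F$ along $M_F$, with the transversality hypothesis entering only through smoothness of $M_{\boldsymbol{\nu}}$ and $T_mM_{\boldsymbol{\nu}}\supseteq\ker(\mathrm{d}_m\Psi)$. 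Note finally that your key identity also follows from the same duality the paper exploits, with no toric input at all: for any Hamiltonian torus action, the image of the differential of the moment map at $m$ is the annihilator of the Lie algebra of the stabilizer $T^d_m$, and since $M_F$ is a $T^d$-invariant symplectic submanifold, the stabilizer of $m$ is the same whether $m$ is viewed in $M$ or in $M_F$; substituting this remark for your appeal to (\ref{eqn:tgspaceface}) would make your argument self-contained and dispense with the toric structure of $M_F$ entirely.
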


\begin{proof}
[Proof of 1.] Suppose $m\in M_F\cap M_{\boldsymbol{\nu}}$. Then 
$\Psi(m)\in F$ (since $m\in M_F$), whence $\Psi_{\tilde{\boldsymbol{\delta}}}(m)\in 
F+\tilde{\boldsymbol{\delta}}$; 
on the other hand $\Phi (m)\in \mathbb{R}\,\boldsymbol{\nu}$ (since $m\in M_{\boldsymbol{\nu}}$),
hence $\Psi_{\tilde{\boldsymbol{\delta}}}(m)\in {\boldsymbol{\nu}^\perp}^0$. Thus,
$\Psi_{\tilde{\boldsymbol{\delta}}}(m)\in (F+\tilde{\boldsymbol{\delta}})\cap{\boldsymbol{\nu}^\perp}^0$.

Conversely, suppose $\gamma\in (F+\tilde{\boldsymbol{\delta}})\cap{\boldsymbol{\nu}^\perp}^0$.
Thus there exists $m\in M$ such that $\gamma=\Psi_{\tilde{\boldsymbol{\delta}}}(m)
\in F+\tilde{\boldsymbol{\delta}}$ (i.e., $m\in M_F$), and 
$\Psi_{\tilde{\boldsymbol{\delta}}}(m)\in {\boldsymbol{\nu}^\perp}^0$, i.e. $m\in M_{\boldsymbol{\nu}}$.
Thus $m\in M_F\cap {\boldsymbol{\nu}^\perp}^0$, whence $\gamma\in \Psi_{\tilde{\boldsymbol{\delta}}}(M_F\cap M_{\boldsymbol{\nu}})$.
\end{proof}

Before giving the proof of 2., a remark is in order. The holomorphic and 
Hamiltonian action $(\gamma^M,\,\Psi_{\tilde{\boldsymbol{\delta}}})$ of $T^d$ on $(M,2\,\omega)$ 
restricts to a holomorphic and Hamiltonian action
$(\lambda^M,\Lambda)$ of $T^{r-1}_{\boldsymbol{\nu}^\perp}$, where the moment map
$\Lambda:M\rightarrow {\mathfrak{t}^{r-1}_{\boldsymbol{\nu}^\perp}}^\vee$ is induced by
$\Psi_{\tilde{\boldsymbol{\delta}}}$ in the standard manner. 
Then $M_{\boldsymbol{\nu}}=\Lambda^{-1}(\mathbf{0})$ and the transversality hypothesis
in Proposition \ref{prop:transverse polytope} are equivalent to the condition that
$\mathbf{0}$ be a regular value of $\Lambda$, or - still equivalently - that 
$T^{r-1}_{\boldsymbol{\nu}^\perp}$ act locally freely on $M_{\boldsymbol{\nu}}$.

\begin{proof}
[Proof of 2.]
$M_F$ is a K\"{a}hler submanifold of $(M,J,2\,\omega)$. 
It is furthermore $T^d$-invariant, hence
$\Lambda$ restricts to a moment map 
$\Lambda_F:M_F\rightarrow {\mathfrak{t}^{r-1}_{\boldsymbol{\nu}^\perp}}^\vee$ for the
action of $T^{r-1}_{\boldsymbol{\nu}^\perp}$ on $M_F$. Transversality of $M_F$ and 
$M_{\boldsymbol{\nu}}$ is then equivalent to $\mathbf{0}$ being a regular value for
$\Lambda_F$, hence to $T^{r-1}_{\boldsymbol{\nu}^\perp}$ acting locally freely on
$\Lambda_F^{-1}(0)$. 
However $T^{r-1}_{\boldsymbol{\nu}^\perp}$ does act
locally freely on $\Lambda_F^{-1}(0)=M_{\boldsymbol{\nu}}\cap M_F$, since it acts locally
freely on all of $M_{\boldsymbol{\nu}}$.
 
\end{proof}

Assume that the conditions in Proposition \ref{prop:transverse polytope}
are satisfied, and let $F$ be a facet of $\Delta$ such that 
$F_{\boldsymbol{\nu}}:=(F+\tilde{\boldsymbol{\delta}})\cap {\boldsymbol{\nu}^\perp}^0\neq \emptyset$.
Then $F_{\boldsymbol{\nu}}$ is a facet of $\overline{\Delta}_{\boldsymbol{\nu}}$.
Furthermore, if 
$\overline{M}_F:=(M_F\cap M_{\boldsymbol{\nu}})/T^{r-1}_{\boldsymbol{\nu}^\perp}\subseteq
\overline{M}_{\boldsymbol{\nu}}$, we can draw the following conclusion from the previous discussion.

\begin{cor}
$\overline{M}_F$ is a complex suborbifold of $\overline{M}_{\boldsymbol{\nu}}$, and 
$\overline{M}_F=\overline{\Psi}_{\tilde{\boldsymbol{\delta}}}^{-1}(F_{\boldsymbol{\nu}})$.
\end{cor}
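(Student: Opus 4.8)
The plan is to recognize $\overline{M}_F$ as the Marsden--Weinstein reduction of the K\"ahler submanifold $M_F$, and then to identify its image inside $\overline{M}_{\boldsymbol{\nu}}$ through the moment map. I would start from the two conclusions of Proposition \ref{prop:MjMFtransverse}: part (2) supplies the transversality of $M_F$ and $M_{\boldsymbol{\nu}}$, while part (1) supplies the moment-image identity $\Psi_{\tilde{\boldsymbol{\delta}}}(M_F\cap M_{\boldsymbol{\nu}})=F_{\boldsymbol{\nu}}$.

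For the suborbifold claim, recall that $M_F$ is a $T^d$-invariant (hence $T^{r-1}_{\boldsymbol{\nu}^\perp}$-invariant) K\"ahler submanifold of $(M,J,2\,\omega)$, so the action $(\lambda^M,\Lambda)$ of $T^{r-1}_{\boldsymbol{\nu}^\perp}$ restricts to $M_F$ with moment map $\Lambda_F=\Lambda|_{M_F}$, and $M_F\cap M_{\boldsymbol{\nu}}=\Lambda_F^{-1}(\mathbf{0})$. As observed in the proof of Proposition \ref{prop:MjMFtransverse}, the transversality is precisely the assertion that $\mathbf{0}$ is a regular value of $\Lambda_F$ and that $T^{r-1}_{\boldsymbol{\nu}^\perp}$ acts locally freely there; hence $\overline{M}_F$ is a K\"ahler orbifold, namely the reduction of $M_F$. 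Since the inclusion $M_F\hookrightarrow M$ is holomorphic and $T^{r-1}_{\boldsymbol{\nu}^\perp}$-equivariant and carries $\Lambda_F^{-1}(\mathbf{0})$ into $M_{\boldsymbol{\nu}}=\Lambda^{-1}(\mathbf{0})$, it descends to an injective holomorphic map $\overline{M}_F\hookrightarrow\overline{M}_{\boldsymbol{\nu}}$ (injectivity follows from the $T^{r-1}_{\boldsymbol{\nu}^\perp}$-invariance of $M_F$). By the functoriality of the reduction of the complex structure in \cite{gs-gq}, the complex structure induced on $\overline{M}_F$ agrees with the restriction of $\overline{J}_{\boldsymbol{\nu}}$, so $\overline{M}_F$ is a complex suborbifold.

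For the equality $\overline{M}_F=\overline{\Psi}_{\tilde{\boldsymbol{\delta}}}^{-1}(F_{\boldsymbol{\nu}})$, I would use the defining relation $\overline{\Psi}_{\tilde{\boldsymbol{\delta}}}\circ q=\Psi_{\tilde{\boldsymbol{\delta}}}|_{M_{\boldsymbol{\nu}}}$, where $q:M_{\boldsymbol{\nu}}\to\overline{M}_{\boldsymbol{\nu}}$ is the quotient projection and the identification ${\mathfrak{t}^{d-r+1}_q}^\vee\cong{\boldsymbol{\nu}^\perp}^0$ is understood. If $\overline{m}=q(m)$ with $m\in M_F\cap M_{\boldsymbol{\nu}}$, then $\overline{\Psi}_{\tilde{\boldsymbol{\delta}}}(\overline{m})=\Psi_{\tilde{\boldsymbol{\delta}}}(m)\in(F+\tilde{\boldsymbol{\delta}})\cap{\boldsymbol{\nu}^\perp}^0=F_{\boldsymbol{\nu}}$ by part (1), giving $\overline{M}_F\subseteq\overline{\Psi}_{\tilde{\boldsymbol{\delta}}}^{-1}(F_{\boldsymbol{\nu}})$. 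Conversely, if $\overline{m}=q(m)$ with $m\in M_{\boldsymbol{\nu}}$ and $\overline{\Psi}_{\tilde{\boldsymbol{\delta}}}(\overline{m})\in F_{\boldsymbol{\nu}}\subseteq F+\tilde{\boldsymbol{\delta}}$, then $\Psi_{\tilde{\boldsymbol{\delta}}}(m)\in F+\tilde{\boldsymbol{\delta}}$, i.e. $m\in M_F$; hence $m\in M_F\cap M_{\boldsymbol{\nu}}$ and $\overline{m}\in\overline{M}_F$, which gives the reverse inclusion.

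The step I expect to be the main obstacle is verifying that $\overline{M}_F$ is genuinely a \emph{complex} suborbifold, i.e. that the Guillemin--Sternberg complex structure it inherits as the reduction of $M_F$ coincides with the restriction of $\overline{J}_{\boldsymbol{\nu}}$. This is where the $T^d$-invariance of $M_F$ (hence holomorphicity and equivariance of $M_F\hookrightarrow M$) must be used carefully, together with the compatibility of the horizontal distributions for the two reductions; by contrast, the moment-map equality is essentially immediate from Proposition \ref{prop:MjMFtransverse}.
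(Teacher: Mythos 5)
Your proof is correct and follows essentially the route the paper intends: the corollary is stated there without a separate proof, as an immediate consequence of Proposition \ref{prop:MjMFtransverse} (whose part 2 makes $M_F\cap M_{\boldsymbol{\nu}}$ an invariant submanifold on which $T^{r-1}_{\boldsymbol{\nu}^\perp}$ acts locally freely, so that $\overline{M}_F$ is the K\"{a}hler reduction of $M_F$ sitting inside $\overline{M}_{\boldsymbol{\nu}}$) together with part 1 and the defining relation between $\overline{\Psi}_{\tilde{\boldsymbol{\delta}}}$ and $\Psi_{\tilde{\boldsymbol{\delta}}}|_{M_{\boldsymbol{\nu}}}$. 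Your unpacking --- the two-sided inclusion argument via $M_F=\Psi_{\tilde{\boldsymbol{\delta}}}^{-1}(F+\tilde{\boldsymbol{\delta}})$, and the compatibility of the reduced complex structure with $\overline{J}_{\boldsymbol{\nu}}$ via the Guillemin--Sternberg description --- is exactly what the paper leaves implicit.
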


\subsection{Smoothness conditions on $\overline{\Delta}_{\boldsymbol{\nu}}$}
\label{sctn:smoothnessDelta}

Proposition \ref{prop:transverse polytope} characterizes the transversality of $\Phi$ to
$\mathbb{R}\,\boldsymbol{\nu}$ in terms of the mutual position of $\Delta$ and 
${\boldsymbol{\nu}^\perp}^0$ in ${\mathfrak{t}^d}^\vee$. 
This condition ensures that $M_{\boldsymbol{\nu}}$ is a submanifold,
that $T^{r-1}_{\boldsymbol{\nu}}$ acts locally freely on it, and therefore that
$\overline{M}_{\boldsymbol{\nu}}$ is a K\"{a}hler orbifold.
Since our present focus is on the case where $\overline{M}_{\boldsymbol{\nu}}$ is a K\"{a}hler 
manifold, we want to similarly characterize this stronger condition using $\Delta$ and 
${\boldsymbol{\nu}^\perp}^0$. 

By the discussion in \S \ref{sctn:transverse polytopes}, (\ref{eqn:intersection of polytopes}), 
and Proposition \ref{prop:transverse polytope},
$\overline{\Delta}_{\boldsymbol{\nu}}$ is the convex polytope in ${\boldsymbol{\nu}^\perp}^0$
having as facets the non-empty intersections of ${\boldsymbol{\nu}^\perp}^0$ with the 
facets of $\Delta+\tilde{\boldsymbol{\delta}}$.
Equivalently, it is the convex hull of the intersection of
${\boldsymbol{\nu}^\perp}^0$ with the $(d-r+1)$-codimensional (i.e., $(r-1)$-dimensional)
faces of $\Delta+\tilde{\boldsymbol{\delta}}$. The connected component of such 
intersection are precisely verteces of $\overline{\Delta}_{\boldsymbol{\nu}}$; furthermore, if
$F\subseteq \Delta$ is an $(r-1)$-dimensional face, then 
$(F+\tilde{\boldsymbol{\delta}})\cap {\boldsymbol{\nu}^\perp}^0=(F^0+\tilde{\boldsymbol{\delta}})\cap {\boldsymbol{\nu}^\perp}^0$, since by transversality ${\boldsymbol{\nu}^\perp}^0$ must have empty
intersection with any face of lesser dimension.

Let $\mathcal{G}(\Delta)=\{F_1,\ldots,F_k\}$ be the collection of facets of
$\Delta$, so that the collection of facets of $\Delta+\tilde{\boldsymbol{\delta}}$ is
$\mathcal{G}(\Delta+\tilde{\boldsymbol{\delta}})=\{F_1+\tilde{\boldsymbol{\delta}},\ldots,F_k+\tilde{\boldsymbol{\delta}}\}$.
Thus for every $j=1,\ldots,s$ there exist unique $\boldsymbol{\upsilon}_j\in L(T^d)$ primitive
and $\lambda_j\in \mathbb{R}$ such that
\begin{equation}
\label{eqn:Delta inters delta}
\Delta+\tilde{\boldsymbol{\delta}}=\bigcap_{j=1}^k\{\ell \in \mathfrak{t}^\vee\,:\,
\ell (\boldsymbol{\upsilon}_j)\ge \lambda_j+\delta_j\},
\quad \delta_j:=\tilde{\boldsymbol{\delta}}(\boldsymbol{\upsilon}_j).
\end{equation}

Let us assume that the $F_j$'s have been so numbered that
$(F_j+\tilde{\boldsymbol{\delta}})\cap {\boldsymbol{\nu}^\perp}^0
\neq \emptyset$ for $j=1,\ldots,l$, and 
$(F_j+\tilde{\boldsymbol{\delta}})\cap {\boldsymbol{\nu}^\perp}^0
= \emptyset$ for $l+1\le j\le k$. 
Hence by (\ref{eqn:Delta inters delta}) and the previous discussion
(with the usual identification ${\mathfrak{t}^{d-r+1}_q}^\vee\cong {\boldsymbol{\nu}^\perp}^0$)
\begin{equation}
\label{eqn:Delta inters delta nuperp0}
{\mathfrak{t}^{d-r+1}_q}^\vee\supset
\overline{\Delta}_{\boldsymbol{\nu}}\cong\bigcap_{j=1}^l\{\gamma \in {\boldsymbol{\nu}^\perp}^0\,:\,
\gamma (\boldsymbol{\upsilon}_j)\ge \lambda_j+\delta_j\}.
\end{equation}

In view of (\ref{eqn:factorizationT}), (\ref{eqn:lattice decomp nu}), 
and (\ref{eqn:factorTd-r+1})
there exist unique
$\boldsymbol{\upsilon}_j'\in L(T^{d-r}_c)$, $\rho_j\in \mathbb{Z}$,
$\boldsymbol{\upsilon}_j''\in L(T^{r-1}_{\boldsymbol{\nu}^\perp})$
such that
\begin{equation}
\label{eqn:decompj}
\boldsymbol{\upsilon}_j=\boldsymbol{\upsilon}_j'+\rho_j\,\widetilde{\boldsymbol{\nu}}+
\boldsymbol{\upsilon}_j''.
\end{equation}
Therefore (\ref{eqn:decompj}) may be rewritten as
\begin{equation}
\label{eqn:Delta inters delta nuperp01}
\overline{\Delta}_{\boldsymbol{\nu}}\cong
\bigcap_{j=1}^l\{\gamma \in {\boldsymbol{\nu}^\perp}^0\,:\,
\gamma (\boldsymbol{\upsilon}_j'+\rho_j\,\widetilde{\boldsymbol{\nu}})\ge \lambda_j+\delta_j\}.
\end{equation}

\begin{defn}
For $I=\{i_1,\ldots,i_a\}\subseteq \{1,\ldots,l\}$, let 
$\mathfrak{s}_I:=\mathrm{span}(\boldsymbol{\upsilon}_j\,:\,j\in I\}\subseteq \mathfrak{t}^d$ and let 
$S_I\leqslant T^d$ be the closed subtorus with Lie subalgebra $\mathfrak{s}_I$.
\end{defn}

Suppose that $I$ is such that
$F:=F_{i_1}\cap \ldots\cap F_{i_a}$ is a face of $\Delta$; then, since
$\Delta$ is a Delzant polytope, the sequence of normal vectors
$(\boldsymbol{\upsilon}_{i_1},\ldots,\boldsymbol{\upsilon}_{i_a})$ is a primitive system
in $L(T^d)$ (meaning that it can be extended to a lattice basis).
Furthermore, $S_I$ is the stabilizer subgroup of any $m\in \Psi^{-1}(F^0)$.

In particular, let $F$ be a codimension-$a$ face of $\Delta$ such that 
$(F+\tilde{\boldsymbol{\delta}})\cap {\boldsymbol{\nu}^\perp}^0
\neq \emptyset$ (whence $(F^0+\tilde{\boldsymbol{\delta}})\cap {\boldsymbol{\nu}^\perp}^0
\neq \emptyset$ by Corollary \ref{cor:interior intersection}).
Then there is a unique $I_F=\{i_1,\ldots,i_a\}\subseteq \{1,\ldots,l\}$, such that
$F$ is the intersection of the facets $F_{i_1},\ldots,F_{i_a}$, and so
$(\boldsymbol{\upsilon}_{i_1},\ldots,\boldsymbol{\upsilon}_{i_a})$ is a primitive system.

\begin{lem}
\label{lem:primitive systems}
Under the previous assumption, and with notation
(\ref{eqn:decompj}),
the following conditions are equivalent:
\begin{enumerate}
\item $T^{r-1}_{\boldsymbol{\nu}^\perp}$ acts freely on 
$\Psi_{\tilde{\boldsymbol{\delta}}}^{-1}\big(
(F^0+\tilde{\boldsymbol{\delta}})\cap {\boldsymbol{\nu}^\perp}^0\big)
$;
\item $(\boldsymbol{\upsilon}_{i_1}'+\rho_{i_1}\,\widetilde{\boldsymbol{\nu}},
\ldots,\boldsymbol{\upsilon}_{i_a}'+\rho_{i_a}\,\widetilde{\boldsymbol{\nu}})$ is a
primitive system. 
 
\end{enumerate}

\end{lem}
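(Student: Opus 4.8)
The plan is to reduce both conditions to a single group-theoretic statement about the intersection $S_{I_F}\cap T^{r-1}_{\boldsymbol{\nu}^\perp}$ inside $T^d$, and then to recognize, via a lattice computation in the fixed splitting (\ref{eqn:factorizationfrakT}), that the obstruction to triviality of this intersection is exactly the primitivity asserted in (2). First I would rewrite the preimage in (1). Since $\Psi_{\tilde{\boldsymbol{\delta}}}=\Psi+\tilde{\boldsymbol{\delta}}$ and by (\ref{eqn:alternative char}), one has $\Psi_{\tilde{\boldsymbol{\delta}}}^{-1}\big((F^0+\tilde{\boldsymbol{\delta}})\cap{\boldsymbol{\nu}^\perp}^0\big)=\Psi^{-1}(F^0)\cap M_{\boldsymbol{\nu}}$, which is nonempty by the hypothesis $(F+\tilde{\boldsymbol{\delta}})\cap{\boldsymbol{\nu}^\perp}^0\neq\emptyset$ together with Corollary \ref{cor:interior intersection}. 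Every point $m$ of this set lies in $\Psi^{-1}(F^0)$, hence (by the discussion preceding the Lemma, and Lemma \ref{lem:stabilizerMF}) has $\gamma^M$-stabilizer exactly $S_{I_F}$; therefore its stabilizer for the restricted action of $T^{r-1}_{\boldsymbol{\nu}^\perp}$ equals $S_{I_F}\cap T^{r-1}_{\boldsymbol{\nu}^\perp}$, independently of $m$. Thus condition (1) is equivalent to $S_{I_F}\cap T^{r-1}_{\boldsymbol{\nu}^\perp}=\{1\}$.

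Next I would pass to lattices. Let $p:\mathfrak{t}^d\to\mathfrak{t}^{d-r+1}_q$ be the projection along $\mathfrak{t}^{r-1}_{\boldsymbol{\nu}^\perp}$ in (\ref{eqn:factorizationfrakT}), identified with the quotient Lie algebra via (\ref{eqn:factorTd-r+1}); by (\ref{eqn:decompj}) it sends $\boldsymbol{\upsilon}_{i_m}\mapsto\boldsymbol{\upsilon}_{i_m}'+\rho_{i_m}\widetilde{\boldsymbol{\nu}}$. Writing $L_q:=p(L(T^d))=L(T^{d-r}_c)\oplus\mathbb{Z}\,\widetilde{\boldsymbol{\nu}}$ (using the splitting (\ref{eqn:lattice decomp nu})), condition (2) asserts precisely that the rank-$a$ sublattice $\Lambda':=\mathrm{span}_{\mathbb{Z}}\big(p(\boldsymbol{\upsilon}_{i_1}),\ldots,p(\boldsymbol{\upsilon}_{i_a})\big)$ is saturated in $L_q$. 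Because $(\boldsymbol{\upsilon}_{i_1},\ldots,\boldsymbol{\upsilon}_{i_a})$ is a primitive system in $L(T^d)$, the lattice of $S_{I_F}$ is $\Lambda:=\mathfrak{s}_{I_F}\cap L(T^d)=\mathrm{span}_{\mathbb{Z}}(\boldsymbol{\upsilon}_{i_1},\ldots,\boldsymbol{\upsilon}_{i_a})$, so $S_{I_F}=\mathfrak{s}_{I_F}/\Lambda$. Moreover, the transversality hypothesis, through Proposition \ref{prop:transverse polytope}, forces $S_{I_F}\cap T^{r-1}_{\boldsymbol{\nu}^\perp}$ to be finite; hence $\mathfrak{s}_{I_F}\cap\mathfrak{t}^{r-1}_{\boldsymbol{\nu}^\perp}=0$ and $p$ restricts to a linear isomorphism of $\mathfrak{s}_{I_F}$ onto $\mathfrak{s}_q:=p(\mathfrak{s}_{I_F})$.

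Finally, identifying $T^{d-r+1}_q=T^d/T^{r-1}_{\boldsymbol{\nu}^\perp}$ with $\mathfrak{t}^{d-r+1}_q/L_q$ so that the quotient map $\pi_q$ is induced by $p$, I would note that $S_{I_F}\cap T^{r-1}_{\boldsymbol{\nu}^\perp}=\ker(\pi_q|_{S_{I_F}})$ consists of the classes in $\mathfrak{s}_{I_F}/\Lambda$ of those $\boldsymbol{\xi}\in\mathfrak{s}_{I_F}$ with $p(\boldsymbol{\xi})\in L_q$. Since $p|_{\mathfrak{s}_{I_F}}$ is injective, the set of such $\boldsymbol{\xi}$ is $(p|_{\mathfrak{s}_{I_F}})^{-1}(\mathfrak{s}_q\cap L_q)$, while $\Lambda=(p|_{\mathfrak{s}_{I_F}})^{-1}(\Lambda')$; as $p|_{\mathfrak{s}_{I_F}}$ is a bijection onto $\mathfrak{s}_q$, the kernel is trivial exactly when $\mathfrak{s}_q\cap L_q=\Lambda'$, i.e. exactly when $\Lambda'$ is saturated in $L_q$. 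This is condition (2), closing the chain $(1)\Leftrightarrow S_{I_F}\cap T^{r-1}_{\boldsymbol{\nu}^\perp}=\{1\}\Leftrightarrow(2)$.

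The crux I anticipate is the final identification: one must cleanly separate the two distinct obstructions to the vanishing of $S_{I_F}\cap T^{r-1}_{\boldsymbol{\nu}^\perp}$ — transversality of Lie algebras, which is supplied for free by Proposition \ref{prop:transverse polytope} and only yields finiteness, versus saturation of the projected lattice $\Lambda'$, which controls the residual finite part and is governed precisely by the primitivity in (2). Everything else is bookkeeping within the chosen splitting.
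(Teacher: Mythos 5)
Your proposal is correct and follows essentially the same route as the paper's proof: both first reduce condition (1) to the triviality of $S_{I_F}\cap T^{r-1}_{\boldsymbol{\nu}^\perp}$ (using that every point of $\Psi^{-1}(F^0)\cap M_{\boldsymbol{\nu}}$ has $\gamma^M$-stabilizer exactly $S_{I_F}$), and then convert that triviality into condition (2) by means of the splittings (\ref{eqn:factorizationT}) and (\ref{eqn:lattice decomp nu}). The only difference is presentational: the paper performs the second step through two explicit exponential identities (one per implication), exploiting that $\left(T^{d-r}_c\times \widehat{T}^{1}_{\boldsymbol{\nu}}\right)\cap T^{r-1}_{\boldsymbol{\nu}^\perp}$ is trivial, while you package the same lattice content into the single statement that $\ker\big(\pi_q|_{S_{I_F}}\big)\cong(\mathfrak{s}_q\cap L_q)/\Lambda'$ vanishes precisely when $\Lambda'$ is saturated, which is condition (2).
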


\begin{proof}
Suppose $\gamma\in (F^0+\tilde{\boldsymbol{\delta}})\cap {\boldsymbol{\nu}^\perp}^0
$, and choose $m\in M$ such that 
$\Psi_{\tilde{\boldsymbol{\delta}}}(m)=\gamma$. Then $\Psi(m)\in F^0$, and so the stabilizer subgroup
of $m$ is $S_{I_F}$. Hence $m$ has trivial stabilizer in $T^{r-1}_{\boldsymbol{\nu}^\perp}$ 
if and only if $T^{r-1}_{\boldsymbol{\nu}^\perp}\cap S_{I_F}$ is trivial. 

Suppose that 1. holds, and let $\vartheta_j\in \mathbb{R}$, $j=1,\ldots,a$, 
be such that $$\exp\left(\sum_{j=1}^a\vartheta_j\,
(\boldsymbol{\upsilon}_{i_j}'+\rho_{i_j}\,\widetilde{\boldsymbol{\nu}})\right)=1.$$
Then 
$$\exp\left(\sum_{j=1}^a\vartheta_j\,
\boldsymbol{\upsilon}_{i_j}\right)=\exp\left(\sum_{j=1}^a\vartheta_j\,
\boldsymbol{\upsilon}_{i_j}''\right)\in 
S_{I_F}\cap  T^{r-1}_{\boldsymbol{\nu}^\perp}
=(1).$$
Thus necessarily $\vartheta_j\in 2\,\pi\,\mathbb{Z}$ because $(\boldsymbol{\upsilon}_{i_j})_j$
is a primitive system. Hence 2. holds.

Conversely, assume that 2. holds. Suppose that $t\in  S_{I_F}\cap  T^{r-1}_{\boldsymbol{\nu}^\perp}$.
There exist $\vartheta_j$, $j=1,\ldots,a$, and 
$\boldsymbol{\xi}\in \mathfrak{t}^{r-1}_{\boldsymbol{\nu}^\perp}$ such that 
\begin{eqnarray*}
\lefteqn{t=\exp\left(\sum_{j=1}^a\,\vartheta_j\,\boldsymbol{\upsilon}_j\right)=\exp(\boldsymbol{\xi})}\\
& \Rightarrow&\exp\left(\sum_{j=1}^a\vartheta_j\,(\boldsymbol{\upsilon}_{i_j}'+\rho_{i_j}\,\widetilde{\boldsymbol{\nu}})\right)=\exp\left( \boldsymbol{\xi}
-\sum_{j=1}^a\,\vartheta_j\,\boldsymbol{\upsilon}_j''\right)\\
&\Rightarrow&\exp\left(\sum_{j=1}^a\vartheta_j\,(\boldsymbol{\upsilon}_{i_j}'+\rho_{i_j}\,\widetilde{\boldsymbol{\nu}})\right)\in 
\left(T^{d-r}_c\times \widehat{T}^{1}_{\boldsymbol{\nu}}\right)\cap  T^{r-1}_{\boldsymbol{\nu}^\perp}
=(1)\\
&\Rightarrow&\vartheta_j\in 2\pi\,\mathbb{Z},\quad \forall\,j=1,\ldots,a\quad
\Rightarrow\quad t=1,
\end{eqnarray*}
where we have made use of (\ref{eqn:factorizationT}).
Hence 2. implies 1..

\end{proof}

This can be strengthened as follows.

\begin{prop}
\label{prop:free action Tr-1}
Under the previous assumptions, the following conditions are equivalent:
\begin{enumerate}
\item $T^{r-1}_{\boldsymbol{\nu}^\perp}$ acts freely on $M_{\boldsymbol{\nu}}$;
\item for every $(r-1)$-dimensional face $F$ of $\Delta$ such that
$(F+\tilde{\boldsymbol{\delta}})\cap {\boldsymbol{\nu}^\perp}^0
\neq \emptyset$, with $I_F=\{i_1,\ldots,i_{d-r+1}\}\subseteq \{1,\ldots,l\}$,
the sequence $$(\boldsymbol{\upsilon}_{i_1}'+\rho_{i_1}\,\widetilde{\boldsymbol{\nu}},
\ldots,\boldsymbol{\upsilon}_{i_{d-r+1}}'+\rho_{i_{d-r+1}}\,\widetilde{\boldsymbol{\nu}})$$
is a primitive system;
\item for every $(b+r-1)$-dimensional face $F$ of $\Delta$ (with $b\ge 0$) such that
$(F+\tilde{\boldsymbol{\delta}})\cap {\boldsymbol{\nu}^\perp}^0
\neq \emptyset$, with $I_F=\{i_1,\ldots,i_{d-b-r+1}\}\subseteq \{1,\ldots,l\}$,
the sequence $$(\boldsymbol{\upsilon}_{i_1}'+\rho_{i_1}\,\widetilde{\boldsymbol{\nu}},
\ldots,\boldsymbol{\upsilon}_{i_{d-b-r+1}}'+\rho_{i_{d-b-r+1}}\,\widetilde{\boldsymbol{\nu}})$$
is a primitive system.
\end{enumerate}
\end{prop}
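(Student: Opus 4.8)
The plan is to reduce the global freeness assertion in condition (1) to the local, face-by-face criterion already furnished by Lemma \ref{lem:primitive systems}, and then to reconcile the hypotheses imposed at different face dimensions by an elementary subsystem argument.

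First I would stratify $M_{\boldsymbol{\nu}}$ by the relative interiors of the faces of $\Delta$. Any $m\in M_{\boldsymbol{\nu}}$ has $\Psi(m)$ in the relative interior $F^0$ of a unique face $F$, and since $\Psi_{\tilde{\boldsymbol{\delta}}}(m)\in{\boldsymbol{\nu}^\perp}^0$ we have $(F^0+\tilde{\boldsymbol{\delta}})\cap{\boldsymbol{\nu}^\perp}^0\neq\emptyset$; conversely every such $m$ lies in $\Psi_{\tilde{\boldsymbol{\delta}}}^{-1}\big((F^0+\tilde{\boldsymbol{\delta}})\cap{\boldsymbol{\nu}^\perp}^0\big)$. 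Because ${\boldsymbol{\nu}^\perp}^0$ has codimension $r-1$ in ${\mathfrak{t}^d}^\vee$ and meets every face transversely (Proposition \ref{prop:transverse polytope}), only faces of dimension at least $r-1$, i.e. of the form $\dim F=b+r-1$ with $b\geq 0$, can meet ${\boldsymbol{\nu}^\perp}^0$; moreover for such a face $F$ one has $(F+\tilde{\boldsymbol{\delta}})\cap{\boldsymbol{\nu}^\perp}^0\neq\emptyset$ if and only if $(F^0+\tilde{\boldsymbol{\delta}})\cap{\boldsymbol{\nu}^\perp}^0\neq\emptyset$, by Corollary \ref{cor:interior intersection}. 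Thus $M_{\boldsymbol{\nu}}$ is the disjoint union of the strata $\Psi_{\tilde{\boldsymbol{\delta}}}^{-1}\big((F^0+\tilde{\boldsymbol{\delta}})\cap{\boldsymbol{\nu}^\perp}^0\big)$ over the faces $F$ of dimension $b+r-1$ meeting the transversal, and all the relevant facets carry indices in $\{1,\ldots,l\}$. Since freeness is a pointwise condition, $T^{r-1}_{\boldsymbol{\nu}^\perp}$ acts freely on $M_{\boldsymbol{\nu}}$ exactly when it acts freely on each stratum; by Lemma \ref{lem:primitive systems} this is equivalent to the primitivity of the system $(\boldsymbol{\upsilon}_{i_j}'+\rho_{i_j}\,\widetilde{\boldsymbol{\nu}})_{j}$ for every such $F$, which is precisely condition (3). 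This establishes (1) $\Leftrightarrow$ (3).

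It remains to show (2) $\Leftrightarrow$ (3). The implication (3) $\Rightarrow$ (2) is the special case $b=0$. For (2) $\Rightarrow$ (3) I would invoke the elementary fact that any subsystem of a primitive system of lattice vectors is itself primitive (extend to a lattice basis and discard the superfluous vectors). Let $F$ be a $(b+r-1)$-dimensional face meeting the transversal, with $b>0$. Then $(F+\tilde{\boldsymbol{\delta}})\cap{\boldsymbol{\nu}^\perp}^0$ is a $b$-dimensional face of $\overline{\Delta}_{\boldsymbol{\nu}}$; by the face correspondence of Proposition \ref{prop:transverse polytope facets} its vertices are cut out by $(r-1)$-dimensional faces $F'\subseteq F$ of $\Delta$, each of which meets the transversal in its relative interior. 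Fixing one such $F'$, the inclusion $F'\subseteq F$ gives $I_F\subseteq I_{F'}$, so the system indexed by $I_F$ is a subsystem of the one indexed by $I_{F'}$; the latter is primitive by condition (2), hence so is the former, which is condition (3) for $F$. This completes the chain of equivalences.

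The principal point requiring care is the bookkeeping in the first paragraph: one must verify that the strata $\Psi_{\tilde{\boldsymbol{\delta}}}^{-1}\big((F^0+\tilde{\boldsymbol{\delta}})\cap{\boldsymbol{\nu}^\perp}^0\big)$ genuinely exhaust $M_{\boldsymbol{\nu}}$ and are indexed exactly by the faces of dimension at least $r-1$ meeting the transversal, so that Lemma \ref{lem:primitive systems} applies to each and nothing is missed. The dimension count and the passage to relative interiors rest squarely on the transversality results of \S\ref{sctn:transverse polytopes}, while the remaining (2) $\Leftrightarrow$ (3) step is purely combinatorial.
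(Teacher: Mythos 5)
Your proof is correct and follows essentially the same route as the paper's: the pointwise stratification of $M_{\boldsymbol{\nu}}$ by the relative interiors of the faces of $\Delta$ combined with Lemma \ref{lem:primitive systems}, together with the vertex-plus-subsystem argument for the purely combinatorial equivalence of (2) and (3). The only difference is organizational --- you prove (1) $\Leftrightarrow$ (3) first and then (2) $\Leftrightarrow$ (3), while the paper proves (1) $\Leftrightarrow$ (2) and then (2) $\Leftrightarrow$ (3); your ordering has the small merit of making explicit that the stratification step requires primitivity at faces of \emph{every} dimension meeting ${\boldsymbol{\nu}^\perp}^0$, a point the paper's (2) $\Rightarrow$ (1) step uses tacitly.
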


\begin{proof}
That 1. implies 2. follows immediately from Lemma \ref{lem:primitive systems}.
Suppose that 2. holds. Let $m\in M_{\boldsymbol{\nu}}$. If $m\in M^0$ (i.e.,
$\Psi(m)\in \Delta^0$), then $T^d$ acts freely at $m$, hence so does $T^{r-1}_{\boldsymbol{\nu}^\perp}$.
Otherwise, $\Psi(m)\in F^0$ for a unique face $F$ of $\Delta$, whence
$\Psi_{\tilde{\boldsymbol{\delta}}}(m)\in 
(F^0+\tilde{\boldsymbol{\delta}})\cap {\boldsymbol{\nu}^\perp}^0$.
Applying again Lemma \ref{lem:primitive systems}, we conclude that $T^{r-1}_{\boldsymbol{\nu}^\perp}$
acts freely at $m$. Thus if 2. holds then $T^{r-1}_{\boldsymbol{\nu}^\perp}$ acts freely
at every $m\in \Psi_{\tilde{\boldsymbol{\delta}}}^{-1}({\boldsymbol{\nu}^\perp}^0)
=M_{\boldsymbol{\nu}}$, i.e. 1. holds.
That 3. implies 2. is obvious, since 2. is formally the special case of 3. with $b=0$. 
Suppose that 2. holds, and let $F$ be a $(b+r-1)$-dimensional face of
$\Delta$ as in the statement 
of 3; then $F_{\boldsymbol{\nu}}:=(F+\tilde{\boldsymbol{\delta}})\cap {\boldsymbol{\nu}^\perp}^0$ is a 
$b$-dimensional face of $\overline{\Delta}_{\boldsymbol{\nu}}$. Therefore $F_{\boldsymbol{\nu}}$
contains a vertex $\gamma$ of $\overline{\Delta}_{\boldsymbol{\nu}}$. Hence 
there exists an $(r-1)$-dimensional face $F'$ of $\Delta$, as in the statement of 2., such that
$\{\gamma\}=(F'+\tilde{\boldsymbol{\delta}})\cap {\boldsymbol{\nu}^\perp}^0$.
The sequence of normal vectors corresponding to $F'$ contains the sequence corresponding to
$F$, and since a subsystem of a primitive system is necessarily also primitive, we conclude that 
3. holds.

\end{proof}

\subsection{Proof of Theorem \ref{thm:case r general}}

We can now build on the previous discussion, we give the proof of the Theorem.

\begin{proof}
[Proof of Theorem \ref{thm:case r general}]
Under the given assumptions on $\Delta$ and ${\boldsymbol{\nu}^\perp}^0$,
$\overline{M}_{\boldsymbol{\nu}}$ is a toric manifold, acted upon by
${T}^{d-r+1}_q\cong {T}^{d-r+1}_c={T}^{d-r}_c\times 
\widehat{{T}}^{1}_{\boldsymbol{\nu}}$ in (\ref{eqn:factorTd-r+1}),
and with associated moment 
polytope $\overline{\Delta}_{\boldsymbol{\nu}}$.
Furthermore, by 
(\ref{eqn:overlineMnu1}) and the discussion in \S \ref{sctn:MnuYnu},
$Y_{\boldsymbol{\nu}}$ is the unit circle bundle associated to the positive line
bundle $(A_{\boldsymbol{\nu}},\,h_{\boldsymbol{\nu}})$ 
on $\overline{M}_{\boldsymbol{\nu}}$. In addition, 
$\widehat{M}_{\boldsymbol{\nu}}=Y_{\boldsymbol{\nu}}/T^1_{\boldsymbol{\nu}}$ by
(\ref{eqn:partialquotient/quotientgroup}), where $T^1_{\boldsymbol{\nu}}$
acts on $Y_{\boldsymbol{\nu}}$ by the contact lift 
$\mu^{Y_{\boldsymbol{\nu}}}$ of the Hamiltonian action $(\mu^{\overline{M}_{\boldsymbol{\nu}}},
\overline{\Phi})$ (Proposition \ref{prop:reduction 1 dimensional case}).

We are therefore
in the situation of Theorem \ref{thm:case r=1}, with the folowing replacements:
$M$ by $\overline{M}_{\boldsymbol{\nu}}$;
$T^d$ by ${T}^{d-r+1}_q\cong {T}^{d-r+1}_c={T}^{d-r}_c\times 
\widehat{{T}}^{1}_{\boldsymbol{\nu}}
$;
$\Delta$ by $\overline{\Delta}_{\boldsymbol{\nu}}$;
$X$ by $Y_{\boldsymbol{\nu}}$;
$\Psi$ by 
$\overline{\Psi}_{\tilde{\boldsymbol{\delta}}}$; 
$T^r$ by $T^1_{\boldsymbol{\nu}}\cong \widehat{{T}}^{1}_{\boldsymbol{\nu}}$; $\Phi$ by 
$\overline{\Phi}$; $T^{d-1}_c$ by ${T}^{d-r}_c$.
Furthermore, the constants $\lambda_j$ are replaced by
$\lambda_j+\delta_j$ for $j=1,\ldots,l$ in view of (\ref{eqn:Delta inters delta nuperp0}), 
and the scalar $\delta$ is taken to vanish by Lemma \ref{lem:sympl toric orbifold}
(once $\Psi$ has been replaced by $\overline{\Psi}_{\tilde{\boldsymbol{\delta}}}$, no further
translation is required).

The statement of Theorem \ref{thm:case r general} is now an immediate consequence of
Theorem \ref{thm:case r=1}.
\end{proof}

\end{document}